\documentclass[a4paper]{amsart}

\usepackage{lmodern}
\usepackage{amsmath}
\usepackage{amsthm}
\usepackage{caption}
\usepackage{amssymb}
\usepackage{enumerate}
\usepackage{enumitem}
\usepackage{mathrsfs}

\usepackage{tikz}
\usetikzlibrary{arrows,shapes}
\usepackage{tikz-cd}

\usepackage[parfill]{parskip}

\usepackage{hyperref}
\usepackage{cleveref}
\numberwithin{equation}{section}
\newtheorem{theorem}{Theorem}[section]
\newtheorem{definition}{Definition}[section]
\newtheorem{lemma}{Lemma}[section]

\newtheorem{corollary}{Corollary}[section]

\newtheorem{problem}{Problem}[section]

\newtheorem{proposition}{Proposition}[section]

\newtheorem{remark}{Remark}[section]

\usepackage{amsfonts}
\usepackage{array}
\usepackage{verbatim}

\usepackage{twoopt}

\usepackage{color}
\newcommand{\norm}[1]{\left\|{#1}\right\|}

\DeclareFontEncoding{FMS}{}{}
\DeclareFontSubstitution{FMS}{futm}{m}{n}
\DeclareFontEncoding{FMX}{}{}
\DeclareFontSubstitution{FMX}{futm}{m}{n}
\DeclareSymbolFont{fouriersymbols}{FMS}{futm}{m}{n}
\DeclareSymbolFont{fourierlargesymbols}{FMX}{futm}{m}{n}
\DeclareMathDelimiter{\VERT}{\mathord}{fouriersymbols}{152}{fourierlargesymbols}{147}

\newcommand{\nnorm}[1]{\ensuremath{\left\VERT{#1}\right\VERT}}


\newcommand{\ip}[2]{\left<#1,#2\right>}  



\newcommand{\<}[1]{\left<{#1}\right>}


\DeclareMathOperator{\var}{Var}

\newcommand{\averN}[1][i]{\frac1N\sum_{#1=1}^N}

\author{Thomas Holding}
\address{Mathematics Institute\\
	Zeeman Building\\
	University of Warwick\\
	Coventry CV4 7AL, UK
	}
\email{T.Holding@Warwick.ac.uk}
\title[Propagation of chaos via {G}livenko-{C}antelli]{Propagation of chaos for {H}\"older continuous interaction kernels via {G}livenko-{C}antelli}
\date{\today}
\begin{document}

\begin{abstract}
We develop a new technique for establishing quantitative propagation of chaos for systems of interacting particles. Using this technique we prove propagation of chaos for diffusing particles whose interaction kernel is merely H\"older continuous, even at long ranges. Moreover, we do not require specially prepared initial data. On the way, we establish a law of large numbers for SDEs that holds over a class of vector fields simultaneously. The proofs bring together ideas from empirical process theory and stochastic flows.
\end{abstract}
\maketitle
\section{Introduction}
We consider the following system of $N$ particles diffusing in $\mathbb{R}^d$:
\begin{equation}\label{eq:many-particle-system}
\left\{\begin{aligned}
&dX^{i,N}_t=b^N_t(X_t^{i,N})dt+dB^{i,N}_t,\quad i=1,\dotsc,N,\\
&b^N_t(x)=\frac1N\sum_{i=1}^NK(x,X^{i,N}_t),\\
&(X_0^{i,N})_{i=1}^N\text{ i.i.d. with law }f_0.
\end{aligned}\right.
\end{equation}
where $B^{i,N}$ are i.i.d. standard $d$-dimensional Brownian motions and $K(x,y):\mathbb{R}^d\times\mathbb{R}^d\to\mathbb{R}^d$ is the interaction kernel. We are interested in the derivation of a \emph{mean-field} model in the $N\gg 1$ regime for the density of particles $f_t(x)$. One expects that $f$ should solve the non-linear convection-diffusion equation:
\begin{equation}\label{eq:limit-equation}
\left\{\begin{aligned}
&\partial_tf_t+\nabla\cdot(b^\infty_tf_t)-\tfrac12\Delta f_t=0,\qquad (t,x)\in(0,T)\times\mathbb{R}^d,\\
&b^\infty_t(x)=\int f_t(y)K(x,y)\,dy,\\
&f_0(x)\text{ initial condition}.
\end{aligned}\right.
\end{equation}
To rigorously derive this limit one has to show that the \emph{empirical measure}
\begin{equation}\label{eq:empirical-measure}
\mu_t^N=\averN\delta_{X^{i,N}_t}
\end{equation}
converges weakly to the solution $f_t$ to \eqref{eq:limit-equation} as $N\to\infty$. This convergence of the empirical measure to a deterministic limit is known as chaoticity of the particle system \cite{Sznitman}. At the initial time $t=0$ this property is given, as the particles are i.i.d.. At any later time the particles are no longer independent. Establishing that, nevertheless, they are chaotic and the empirical measure converges as $N\to\infty$ is the problem of showing \emph{propagation of chaos}. Of particular interest is making these notions \emph{quantitative} - obtaining explicit polynomial (in $N$) bounds on some probability distance $d(\mu^N_t,f_t)$.

Establishing propagation of chaos is a central part of the rigorous mathematical derivation of macroscopic or mesoscopic continuum models from microscopic laws governing the motion of particles \cite{Golse-Mouhot-Ricci-2013empirical,Jabin2014review}. The notion dates back to Boltzmann's idea of \emph{molecular chaos} used for the derivation of the Boltzmann equation from Newtonian collisions of gas particles. More generally, the notion of propagation of chaos is used in the derivation of the Vlasov-Poisson and Vlasov-Maxwell equations for galaxies and plasmas, for models of swarming \cite{Carrillo-Choi-Hauray,carrillo-flocking-2016review}, in the Vortex dynamics interpretation of the Euler equation \cite{Fournier-Haurray-Mischler-NS}, the particles method for numerical integration of PDEs, the theory of particle filters in statistics \cite{moral2013mean}, the derivation of the spatially homogeneous Boltzmann equation from Kac's model \cite{Michler-Mouhot-Kacs-program}, among many others.

The main general technique for establishing quantitative propagation of chaos in this regime is the coupling method of Sznitman \cite{Sznitman}, which requires $K$ to be \emph{Lipschitz continuous}. Following this, many authors have obtained results assuming that $K(x,y)$ is Lipschitz except for a singularity at $x=y$, and in the presence of specially prepared initial conditions \cite{Fournier-Hauray-Landau,Carrillo-Choi-Hauray,Jabin-Hauray-Vlasov,Jabin2014review}. For these techniques, the presence of the noise is a hindrance as it makes it harder to control the distances between particles.

\textbf{Main result:} In this work we develop a new method for establishing quantitative propagation of chaos, and apply it to give quantitative estimates of propagation of chaos of the system \eqref{eq:many-particle-system} under the assumption that \emph{$K$ is merely H\"older continuous}. In particular, this covers cases where $K$ is \emph{nowhere Lipschitz} and the result does not require specially prepared initial conditions. However, the result relies completely on the presence of noise, and fails at the first hurdle in its absence.

\emph{Second order systems.} We also consider second order Langevin systems of the form:
\begin{equation}\label{eq:2nd-order-many-particle-system}
\left\{\begin{aligned}&\begin{aligned}
&dX^{i,N}_t=V^{i,N}_tdt,\\
&dV^{i,N}_t=b^N_t(X_t^{i,N})dt-\kappa V_t^{i,N}dt+dB^{i,N}_t,\end{aligned} \quad i=1,\dotsc,N,\\
&b^N_t(x)=\frac1N\sum_{i=1}^NK(x,X^{i,N}_t),\\
&(X_0^{i,N},V^{i,N}_0)_{i=1}^N\text{ i.i.d. with law }f_0(x,v).
\end{aligned}\right.
\end{equation}
where again $B^{i,N}_t$ are i.i.d. standard $d$-dimensional Brownian motions, and $\kappa$ is a constant (possibly zero). $X^{i,N}\in\mathbb{R}^d$ is the spatial position of the $i$th particle and $V^{i,N}\in\mathbb{R}^d$ is its velocity. The associated \emph{mean-field} model is the non-linear kinetic Fokker-Planck equation:
\begin{equation}\label{eq:2nd-order-limit-equation}
\left\{\begin{aligned}
&\partial_tf_t+v\cdot\nabla_xf_t-\kappa\nabla_v\cdot(vf_t)+b^\infty_t\cdot\nabla_v f_t-\tfrac12\Delta_v f_t=0,\quad (t,x,v)\in(0,T)\times\mathbb{R}^d\times\mathbb{R}^d,\\
&b^\infty_t(x)=\int \left(\int f_t(y,v)\,dv\right)K(x,y)\,dy,\\
&f_0(x,v)\text{ initial condition}.
\end{aligned}\right.
\end{equation}
Such systems model Newtonian particles under pairwise interaction forces that depend only on the spatial positions of the particles, and whose velocities are driven by independent white noises.

The empirical measure is given by
\begin{equation}\label{eq:2nd-order-empirical-measure}
\mu_t^N=\averN\delta_{(X^{i,N}_t,V_t^{i,N})}.
\end{equation}

\textbf{Main result:} We give quantitative estimates of propagation of chaos of the system \eqref{eq:2nd-order-many-particle-system} under the assumption that \emph{$K$ is H\"older continuous with H\"older exponent greater than $2/3$.} Again this applies to interaction kernels that are \emph{nowhere Lipschitz} and the result does not require specially prepared initial conditions. The restriction of the H\"older exponent to be at least $2/3$ is due to the degeneracy of the generator of the diffusion process in the spatial variable. The generator is only \emph{hypoelliptic} rather than \emph{elliptic} and this reduces the regularising effect on the dynamics. 

On the way to proving the propagation of chaos, we also establish a \emph{Glivenko-Cantelli} theorem \cite{Van-der-Vaart-Wellner} for SDEs over all bounded H\"older continuous vector fields, (with a similar result in the second order case). This will be discussed in more detail below in \cref{subsubsec:empirical-process-theory} with a more precise statement, but we provide an informal statement below.

\emph{Glivenko-Cantelli theorem for SDEs (informal statement)}\label{thm:informal-GV-thm}
Let $X^{b}_t$ solve $dX_t=b_t(X_t)\,dt+dB_t$ for a vector field $b$, and $(X^{b,i,N}_t)_{i=1}^N$ be $N$ i.i.d. copies of $X^b$. Then
\begin{equation*}
\mathbb{E}\sup_b\sup_{t\in[0,T]}d(\mu^{b,N}_t,f^b_t)\le CN^{-\gamma}
\end{equation*}
where $\mu^{b,N}$ is the empirical measure associated with $(X^{b,i,N})_{i=1}^N$, $f^b$ is the law of $X^b$, $d$ is some metric on the space of probability measures and the supremum is over all smooth vector fields $b$ with $\alpha$-H\"older norm bounded by a uniform constant.

The proof uses recent results on stochastic flows for rough drifts (see e.g. \cite{Flandoli-Transport-equation,Banos-Duedahl,mohammed2015,Fedrizzi-Flandoli,wang2015degenerate,fedrizzi2016regularity} among others), and methods from empirical process theory.

\textbf{Acknowledgements:} The author would like to thank Vittoria Silvestri for a helpful discussion regarding the continuity of the stochastic process defined in \cref{subsec:1st-order-stochastis-process}, Zhenfu Wang for discussion of \cite{Jabin-Wang} and for comments on the presentation of an earlier draft of this manuscript and Jos\'e A. Carrillo for mathematical comments during the preparation of this work. We also wish to thank Franco Flandoli for a discussion that lead to the discovery of an error in a previous version of this manuscript. The author acknowledges the support of the EPSRC funded (EP/H023348/1) Cambridge Centre for Analysis.

\subsection{Layout of the paper}
The paper is laid out as follows. In \cref{subsec:preliminaries} we give preliminary definitions. \cref{sec:main-results} presents the main results of the paper.  In \cref{sec:discussion} we discuss prior work, compare our method to existing techniques and discuss open questions. The proofs of the results begin in \Cref{sec:empirical-process-theory} where we give the proof of \cref{thm:GV-intro}. Then in \cref{sec:propagation-of-chaos} we apply the results proved in \cref{sec:empirical-process-theory} to prove \cref{thm:propagation-of-chaos}. \cref{sec:counterexample} provides the proof of \cref{prop:counterexample}. Finally \cref{sec:appendix} presents some properties of metric entropy which are used in the earlier sections of the manuscript.

\subsection{Preliminaries}\label{subsec:preliminaries}
Before we state the main results of this work we require some preliminary definitions.

We will always work with a single probability space $(\Omega,\mathcal{F},\mathbb{P})$ that contains $N$ i.i.d. Brownian motions $(B^{i,N})_{i=1}^N$ defined for times $[0,T]$ for a fixed final time $T$. We emphasise that throughout this work $N$ is a fixed number and we will never take a limit $N\to\infty$. We denote the $\mathrm{L}^p$ norm on the probability space as $\nnorm{\cdot}_p$. \emph{Deterministic} norms are denoted with a double bar, e.g. $\norm{\cdot}_{\mathrm{L}^\infty(\mathbb{R}^d)}$. The space of Borel probability measures on $\mathbb{R}^d$ is denoted $\mathrm{P}(\mathbb{R}^d)$, those with finite $p$th moment are denoted $\mathrm{P}_p(\mathbb{R}^d)$. We also make use of the following norm.
\begin{definition}[Sub-Gaussian norm]\label{def:nnorm}
For a random variable $X$ we define the \emph{sub-Gaussian}\cite{Van-der-Vaart-Wellner,vershynin2010introduction} norm\footnote{The sub-Gaussian norm is sometimes called the $\psi_2$ norm and denoted $\norm{\cdot}_{\psi_2}$. This notation is used, for example, in \cite{Van-der-Vaart-Wellner}.} $\nnorm{X}$ by
\begin{equation*}
\nnorm{X}=\sup_{p\ge1}\frac{1}{\sqrt{p}}\nnorm{X}_p.
\end{equation*}
When $\nnorm{X}$ is finite we say that the random variable $X$ is \emph{sub-Gaussian}. If the random variable with law  $\mu\in \mathrm{P}(\mathbb{R}^d)$ is \emph{sub-Gaussian} then we say that $\mu$ is \emph{sub-Gaussian}.
\end{definition}
The space of random variables with finite sub-Gaussian norm coincides with an exponential Orlicz space \cite{Van-der-Vaart-Wellner}. It is easy to see that if $X$ is a random variable with $\nnorm{X}=c$, then $X$ obeys the tail bound
\begin{equation}\label{eq:nnorm-subgaussian-tail-bound}
\mathbb{P}(|X|>u)\le C\exp\left(-\frac{Cu^2}{c^2}\right)
\end{equation}
for any $u>0$, and absolute constants $C$. We refer the reader to \cite{Van-der-Vaart-Wellner,vershynin2010introduction} for more details.

We define $\mathrm{Lip1}$ to be the set of functions $h:\mathbb{R}^d\to\mathbb{R}$ that are $1$-Lipschitz and vanish at zero.

To metrise weak convergence in $\mathrm{P}(\mathbb{R}^d)$, we define the following metrics:
\begin{definition}[Bounded Lipschitz metric]
For $\mu,\nu\in \mathrm{P}(\mathbb{R}^d)$ we define the \emph{bounded-Lipschitz (BL) metric} $d_{\mathrm{BL}}$ by
\begin{equation*}
d_{\mathrm{BL}}(\mu,\nu)=\sup\left\{\int h\,d\mu-\int h\,d\nu: h\in \mathrm{Lip1}, \norm{h}_{\mathrm{L}^\infty(\mathbb{R}^d)}\le 1\right\}.
\end{equation*}
\end{definition}
\begin{definition}[Monge-Kantorovich-Wasserstein-(Rubinstein) metric]
For $\mu,\nu\in \mathrm{P}_1(\mathbb{R}^d)$ we define the \emph{Monge-Kantorovich-Wasserstein-(Rubinstein) (MKW) metric} $d_{\mathrm{MKW}}$ by
\begin{equation*}
d_{\mathrm{MKW}}(\mu,\nu)=\sup\left\{\int h\,d\mu-\int h\,d\nu:h\in\mathrm{Lip1}\right\}.
\end{equation*}
\end{definition}
The MKW metric can also be defined using optimal transportation as
\begin{equation*}
d_{\mathrm{MKW}}(\mu,\nu)=\inf\left\{\int |x-y|\,d\pi(x,y): \pi\in \mathrm{P}(\mathbb{R}^d\times\mathbb{R}^d)\text{ has marginals $\mu$ and $\nu$}\right \}.
\end{equation*}
We will mostly use the first definition as it more amenable to the tools of empirical process theory.

Integral to the proofs is the concept of metric entropy\cite{Van-der-Vaart-Wellner}.
\begin{definition}[Metric entropy]
Let $(\mathcal{X},d)$ be a totally bounded metric space. Then the \emph{metric entropy} of $\mathcal{X}$ is defined for $\varepsilon>0$ by
\begin{equation*}
H(\varepsilon,\mathcal{X},d)=\inf\{\log(m):(x_i)_{i=1}^m\text{ is an $\varepsilon$-net of }\mathcal{X}\},
\end{equation*}
where an $\varepsilon$-net is a set of points $(x_i)_{i=1}^m\subseteq \mathcal{X}$ for which every $x\in \mathcal{X}$ has $d(x,x_i)\le\varepsilon$ for some $i$.
\end{definition}
We summarise the various properties and estimates of metric entropy used throughout this work in \cref{sec:appendix} for the convenience of the reader.

The usual Lebesgue spaces will be denoted $\mathrm{L}^p(\mathbb{R}^d)$ for $p\in[1,\infty]$. We denote the usual H\"older spaces on $\mathbb{R}^d$ for $k$ a positive integer and $\alpha\in(0,1)$ as $\mathrm{C}^{k,\alpha}(\mathbb{R}^d)$, and the (fractional) Sobolev space of differentiability $s\ge0$ and integrability $p\in[1,\infty]$ as $\mathrm{W}^{s,p}(\mathbb{R}^d)$. When we wish to emphasis which variable a norm is respect to we will denote it with a subscript, e.g. $\mathrm{L}^q_x(\mathbb{R}^d)$.

To allow sets of functions that do not decay at infinity to have finite metric entropy we make use of weighted spaces. For $x\in\mathbb{R}^d$ we define $\<x=\sqrt{1+|x|^2}$.
\begin{definition}[Weighted $\mathrm{L}^p$ spaces]
For $r\in\mathbb{R}$ and $q\in[1,\infty]$ we define $\mathrm{L}^{r,q}(\mathbb{R}^d)$ as the space of functions $h$ such that $h\<x^{r}\in \mathrm{L}^q(\mathbb{R}^d)$ with the norm
\begin{equation*}
\norm{h}_{\mathrm{L}^{r,q}(\mathbb{R}^d)}=\norm{h\<x^{r}}_{\mathrm{L}^q(\mathbb{R}^d)}\left(=\left(\int |h|^q\<x^{rq}\,dx\right)^{1/q}\text{ when }q\ne\infty\right).
\end{equation*}
\end{definition}
In particular, $\mathrm{L}^{0,q}(\mathbb{R}^d)=\mathrm{L}^q(\mathbb{R}^d)$.

Next we define of `abstract H\"older spaces'. We will use these in the proofs rather than the usual Sobolev spaces because they behave more naturally under composition with H\"older continuous functions. 
\begin{definition}\label{def:parameterised-Holder-space}
Let $(\mathrm{V},\norm{\cdot}_\mathrm{V})$ be a Banach space of functions $U\to\mathbb{R}$ for $U=\mathbb{R}^d$ or $U=[0,T]\times\mathbb{R}^d$. Let $\alpha\in(0,1]$ and $k$ be a non-negative integer, then we define the space $\Lambda^{k,\alpha}(\mathrm{V})$ as those functions $h\in \mathrm{V}$ for which the following norm is finite
\begin{equation*}
\begin{aligned}
\norm{h}_{\Lambda^{k,\alpha}(\mathrm{V})}=&\sup_{|\beta|\le k}\norm{\partial^\beta h}_\mathrm{V}\\
&+\sup_{|\beta|=k}\sup_{y,z\in ,y\ne z}\norm{\frac{(\partial^\beta h)(\cdot+z)-(\partial^\beta h)(\cdot+y)}{|y-z|^\alpha}1_{\cdot+z\in U,\cdot+y\in U}}_\mathrm{V}.
\end{aligned}
\end{equation*}
where $\beta$ ranges over multi-indices.
\end{definition}
Note that for $\alpha\in(0,1)$, $\Lambda^{0,\alpha}(\mathrm{L}^\infty(\mathbb{R}^d))$ is the H\"older space $\mathrm{C}^{0,\alpha}(\mathbb{R}^d)$. More generally, if $q\in[1,\infty],s=k+\alpha$ is not an integer then $\Lambda^{k,\alpha}(\mathrm{L}^q(\mathbb{R}^d))$ is the Besov space $B^{s}_{q,\infty}$ (see \cite{Function-spaces-III} for the definitions and properties of the Besov spaces). In particular the fractional Sobolev space $\mathrm{W}^{s,q}(\mathbb{R}^d)$ embeds continuously in $\Lambda^{k,\alpha}(\mathrm{L}^q(\mathbb{R}^d))$.

Due to the driving Brownian motions, the natural regularity for the vector field $b^N$ in \eqref{eq:many-particle-system} is a parabolic space.
\begin{definition}[Parabolic space]
Let $(\mathrm{V},\norm{\cdot}_\mathrm{V})$ be a Banach space of functions $[0,T]\times \mathbb{R}^d\to\mathbb{R}$. Let $\alpha\in(0,1]$ then we define the parabolic space $\Lambda^{0,\alpha}_{para}(\mathrm{V})$ is the space of functions $h\in \mathrm{V}$ with the following norm finite
\begin{equation*}
\begin{aligned}
\norm{h}_{\Lambda^{0,\alpha}_{para}(\mathrm{W})}&=\norm{h}_\mathrm{W}+\\
&\sup_{\substack{(s,y),(t,z)\in\mathbb{R}^{1+d},\\(s,y)\ne (t,z)}}\norm{\frac{h(\cdot+t,\cdot+z)-h(\cdot+s,\cdot+y)}{|y-z|^\alpha+|s-t|^{\alpha/2}}1_{\cdot+(t,z)\in U,\cdot+(s,y)\in U}}_\mathrm{W}.
\end{aligned}
\end{equation*}
\end{definition}

We also define the particular case of the parabolic H\"older spaces.
\begin{definition}[Parabolic H\"older space]
For $\alpha\in(0,1)$ the \emph{parabolic H\"older space} $\mathrm{C}^{0,\alpha}_{para}$ is the space of continuous functions $\varphi:[0,T]\times\mathbb{R}^d\to\mathbb{R}$ with the norm
\begin{equation*}
\norm{\varphi}_{\mathrm{C}^{0,\alpha}_{para}([0,T]\times\mathbb{R}^d)}=\sup_{(t,x)\in [0,T]\times\mathbb{R}^d}|\varphi(t,x)|+\sup_{\substack{(s,y),(t,x)\in([0,T]\times \mathbb{R}^d)^2,\\(s,y)\ne (t,x)}} \frac{|\varphi(t,x)-\varphi(s,y)|}{|t-s|^{\alpha/2}+|x-y|^\alpha}.
\end{equation*}
\end{definition}
Just as $\Lambda^{0,\alpha}(\mathrm{L}^\infty(\mathbb{R}^d))$ is the H\"older space $\mathrm{C}^{0,\alpha}(\mathbb{R}^d)$ (for $\alpha\in(0,1)$) the parabolic H\"older space $\mathrm{C}^{0,\alpha}_{para}(\mathbb{R}^d)$ is equal to $\Lambda^{0,\alpha}_{para}(\mathrm{L}^\infty(\mathbb{R}^d))$.
 
Stochastic flows are the analogue of the flow map of an ODE in the stochastic setting \cite{kunita1997stochastic}.
\begin{definition}[$\mathrm{C}^{k,\beta}$ stochastic flow]\label{def:stochastic-flow}
We say that a random map $\phi_{s,t}:\mathbb{R}^n\to\mathbb{R}^n$ is a $\mathrm{C}^{k,\beta}$ ($k\ge1$,$\beta\in(0,1)$) stochastic flow of diffeomorphisms if it satisfies the following
\begin{enumerate}
\item $\phi_{t,t}$ is the identity map almost surely.
\item $\phi_{u,t}\circ\phi_{s,u}=\phi_{s,t}$ holds as maps, almost surely for all $s<u<t$.
\item $\phi_{s,t}(x)$ is $k$-times differentiable with respect to $x$ and all the derivatives are continuous, with the $k$-th derivative $\beta$-H\"older continuous. Furthermore, the map $\phi_{s,t}:\mathbb{R}^n\to\mathbb{R}^n$ is a diffeomorphism of $\mathbb{R}^n$ almost surely.
\end{enumerate}
\end{definition}
Note that a $\mathrm{C}^{k,\beta}$ stochastic flow need not be globally $\mathrm{C}^{k,\beta}$ as both it and its derivatives may grow without bound as $|x|\to\infty$.

We say that a stochastic differential equation (for $X\in\mathbb{R}^n$) generates if $\mathrm{C}^{k,\beta}$ stochastic flow of diffeomorphisms if the solution map 
\begin{align*}
\phi_{s,t}&:\mathbb{R}^n\to\mathbb{R}^n\\
&X_s\mapsto X_t
\end{align*}
has a version that is a $\mathrm{C}^{k,\beta}$ stochastic flow of diffeomorphisms.

\begin{definition}[Glivenko-Cantelli class]\label{def:glivenko-cantelli-class}
Let $\mathbb{Q}$ be a probability measure on a measurable space $(\mathcal{X},\mathcal{A})$ and $\mathcal{F}$ a class of measurable functions $\mathcal{X}\to\mathbb{R}$. We say that $\mathcal{F}$ is a \emph{Glivenko-Cantelli} class (with respect to $\mathbb{Q}$) if 
\begin{equation*}
\sup_{f\in\mathcal{F}}\left| \averN f(X^{i,N})-\int f\,d\mathbb{Q}\right|\to 0 
\end{equation*}
in probability or almost surely, where $(X^{i,N})_{i=1}^\infty$ are i.i.d. with law $\mathbb{Q}$.
\end{definition}
\begin{remark}
Strictly speaking, the convergence in the definition above should be in \emph{outer} probability or \emph{outer} almost surely (see \cite[\S1]{Van-der-Vaart-Wellner} for the definition and properties of the outer integral), as the supremum may fail to be measurable in general. In this work, however, all considered suprema will be measurable and we will have no need of the more technical definition of the outer integral.
\end{remark}

In this work we consider both the first order many particle system \eqref{eq:many-particle-system} and the second order many particle system \eqref{eq:2nd-order-many-particle-system} along with their respective limit equations. We hope that the reader will admit us the abuse of notation of using the same symbols $X,f,\mu^N$ for each, as which is considered will be clear from the context.

\section{Main results}\label{sec:main-results}
In this section we present the main results of this manuscript.
\subsection{Propagation of chaos}
The first such results are on propagation of chaos.
\subsubsection{First order systems.} 
For first order systems we have the full regularising effect of the driving noise and we only require that the interaction kernel $K$ is H\"older continuous for some positive exponent. Under this assumption we achieve sub-Gaussian concentration of the Wasserstein distance over compact time intervals around the initial distance. Recall that the norm $\nnorm{\cdot}$ is defined by \cref{def:nnorm} and is equivalent to a sub-Gaussian tail bound \eqref{eq:nnorm-subgaussian-tail-bound}. Note that we \emph{cannot} expect $\nnorm{d_{\mathrm{MKW}}(\mu^N_t,f_t)}$ to be small in general, as even at the initial time we have that
\begin{equation*}
\nnorm{d_{\mathrm{MKW}}(\mu^N_0,f_0)}=\infty 
\end{equation*}
unless $f_0$ has sub-Gaussian tails. For this reason we bound instead a compensated quantity  $d_{\mathrm{MKW}}(\mu^N_t,f_t)-d_{\mathrm{MKW}}(\mu_0^N,f_0)$. Bounds upon $d_{\mathrm{MKW}}(\mu^N_0,f_0)$ are a separate question to \emph{propagation} of chaos and are considered elsewhere (e.g. \cite{dobric1995asymptotics,fournier2015rate}).

\begin{theorem}[Propagation of chaos for first order systems]\label{thm:propagation-of-chaos}
Let $(X^{i,N})_{i=1}^N$ be a solution of the first order many particle system \eqref{eq:many-particle-system}, $\mu^N$ be the associated empirical measure given by \eqref{eq:empirical-measure} and $f_t(x)$ be the solution to the limit equation \eqref{eq:limit-equation}. Then the following hold:
\begin{enumerate}
\item \textbf{H\"older interactions:} Let $K\in \mathrm{C}^{0,\alpha}(\mathbb{R}^d\times\mathbb{R}^d;\mathbb{R}^d)$ and $f_0\in \mathrm{P}_p(\mathbb{R}^d)\cap \mathrm{L}^{r,2}(\mathbb{R}^d)$ for some $\alpha\in(0,1]$, $2\ne p>1$ and $r>1+(d/2)$.  Then it holds that
\begin{equation}\label{eq:propagation-of-chaos}
\nnorm{\sup_{t\in[0,T]}d_{\mathrm{MKW}}(\mu^N_t,f_t)-d_{\mathrm{MKW}}(\mu_0^N,f_0)}\le CN^{-\gamma}, \text{ for any }\gamma<\frac1{2+\max(\frac{d+2}{\alpha^2},\frac{d}{p-1})}.
\end{equation}
\item \textbf{Sobolev interactions:} Let $K(x,y)=W(x-y)$ for $W\in \mathrm{W}^{s,q}(\mathbb{R}^d;\mathbb{R}^d)$ with $1\ge s>(2+d)/q$ and $q\in(2,\infty]$. Assume that $f_0\in \mathrm{L}^{r,q'}(\mathbb{R}^d)\cap \mathrm{P}_p(\mathbb{R}^d)$ for some $2\ne p>d/q$, $r>(d/q)+(d/2)+1$ and where $(1/q)+(1/q')=1/2$. Then it holds that
\begin{equation}\label{eq:propagation-of-chaos-Sobolev}
\nnorm{\sup_{t\in[0,T]}d_{\mathrm{MKW}}(\mu^N_t,f_t)-d_{\mathrm{MKW}}(\mu_0^N,f_0)}\le CN^{-\gamma}, \text{ for any }\gamma<\frac1{2+\max(\frac{d+2}{s^2},\frac{d}{p-1})}.
\end{equation}

\end{enumerate}
\end{theorem}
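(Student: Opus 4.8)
We prove \cref{thm:propagation-of-chaos} by reducing propagation of chaos to the Glivenko--Cantelli theorem for SDEs (\cref{thm:GV-intro}) through a self-consistency argument, with the stochastic flow serving as the bridge between the genuinely interacting system \eqref{eq:many-particle-system} and the i.i.d.\ systems to which \cref{thm:GV-intro} applies. First I would collect the standing facts. Using the regularisation-by-noise results quoted in the introduction, for $b$ in a ball of the (weighted) parabolic H\"older space the equation $dY_s=b_s(Y_s)\,ds+dB_s$ generates a $\mathrm{C}^{1,\beta}$ stochastic flow $\phi^b_{s,t}$ with a priori bounds and, crucially, a \emph{Lipschitz} dependence on the drift, $|\phi^b_{0,t}(x)-\phi^{b'}_{0,t}(x)|\le C(x,B)\,\|b-b'\|_\ast$ with $C(x,B)$ having all moments and $\|\cdot\|_\ast$ an appropriate weighted parabolic norm, uniformly over the ball (the Zvonkin transform); and that \eqref{eq:limit-equation} has a unique solution with $b^\infty\in\mathrm{C}^{0,\alpha}_{para}$, constructed by a fixed point exploiting that $K$ is H\"older (for part (2), carried out in the abstract spaces of \cref{def:parameterised-Holder-space} since $K=W(\cdot-\cdot)$ with $W$ only Sobolev). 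Running the flow $\phi^{b,i}$ driven by $B^{i,N}$ and writing $f^b_t$ for the law of $Y^b_t$ with $Y^b_0\sim f_0$ (so $f=f^{b^\infty}$), we get $X^{i,N}_t=\phi^{b^N,i}_{0,t}(X^{i,N}_0)$, hence $\mu^N_t=\mu^{b^N,N}_t$ where $\mu^{b,N}_t=\averN\delta_{\phi^{b,i}_{0,t}(X^{i,N}_0)}$ is, for each \emph{frozen} $b$, the empirical measure of $N$ i.i.d.\ copies of $Y^b$; and, since $\sup_s[b^N_s]_{\mathrm{C}^{0,\alpha}}\le[K]_\alpha$ deterministically while the time-H\"older seminorm of $b^N$ is controlled by the Brownian modulus, $b^N$ lies in a fixed ball of the class off an event of sub-Gaussian-negligible probability, which we dispose of crudely.

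The heart of the proof is a self-consistency loop on the drift. Put $\Gamma_t[b](x)=\int K(x,y)\,f^b_t(y)\,dy$, the ``would-be self-consistent update'' of $b$, so that $b^\infty=\Gamma[b^\infty]$, and split
\[
b^N_t-b^\infty_t=\bigl(b^N_t-\Gamma_t[b^N]\bigr)+\bigl(\Gamma_t[b^N]-b^\infty_t\bigr).
\]
The first bracket is $\int K(\cdot,y)\,d(\mu^{b^N,N}_t-f^{b^N}_t)(y)$, a linear functional of the increment $\mu^{b,N}_t-f^b_t$ of an i.i.d.\ empirical measure evaluated at $b=b^N$; this is bounded uniformly in $t$ and in the frozen $b$ by \cref{thm:GV-intro}, applied not in the $d_{\mathrm{MKW}}$ form but in the analogous form for the H\"older observables $\{K(x,\cdot):x\in\mathbb{R}^d\}$ in the weighted norm $\|\cdot\|_\ast$, which is the same metric-entropy computation (the weight being what makes $\{K(x,\cdot)\}_x$ a finite-entropy family over all of $\mathbb{R}^d$ --- this is where $f_0\in\mathrm{L}^{r,2}$ and $r>1+d/2$ enter, and in part (2) where $f_0\in\mathrm{L}^{r,q'}$). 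The second bracket is $\int K(\cdot,y)\,d(f^{b^N}_t-f_t)(y)$, bounded by the flow stability in the drift: coupling $Y^{b^N}$ and $Y^{b^\infty}$ through a common driving noise and applying the Zvonkin estimate gives $d_{\mathrm{MKW}}(f^{b^N}_t,f_t)\le C\int_0^t\|b^N_s-b^\infty_s\|_\ast\,ds$, hence $\|\Gamma_t[b^N]-b^\infty_t\|_\ast\le C\int_0^t\|b^N_s-b^\infty_s\|_\ast\,ds$. This last dependence is \emph{linear}: the apparent obstruction --- that transporting a merely H\"older drift along trajectories turns the pathwise comparison into a nonlinear Gronwall inequality $\tfrac{d}{dt}\delta\lesssim\delta^\alpha$, which does not propagate smallness --- is precisely what the stochastic flow removes, and is why the result fails without noise. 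Feeding the two bounds into a \emph{linear} Gronwall inequality gives $\nnorm{\sup_{t\in[0,T]}\|b^N_t-b^\infty_t\|_\ast}\le CN^{-\gamma}$ with $\gamma$ as in \eqref{eq:propagation-of-chaos}.

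For the assembly, by the triangle inequality and $\mu^{b,N}_0=\mu^N_0$, $f^b_0=f_0$ for every $b$,
\[
d_{\mathrm{MKW}}(\mu^N_t,f_t)-d_{\mathrm{MKW}}(\mu^N_0,f_0)\le\bigl(d_{\mathrm{MKW}}(\mu^{b^N,N}_t,f^{b^N}_t)-d_{\mathrm{MKW}}(\mu^N_0,f_0)\bigr)+d_{\mathrm{MKW}}(f^{b^N}_t,f_t).
\]
The first term is $\le CN^{-\gamma}$ in $\nnorm{\cdot}$ directly by \cref{thm:GV-intro}: the compensation by $d_{\mathrm{MKW}}(\mu^N_0,f_0)$ is exactly the increment that theorem controls, so the uncompensated quantity --- not even sub-Gaussian when $f_0$ has heavy tails --- is never estimated, and the Fournier--Guillin-type rate for $d_{\mathrm{MKW}}(\mu^N_0,f_0)$ is what contributes the $d/(p-1)$ term in the exponent. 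The second term is $\le C\int_0^T\|b^N_s-b^\infty_s\|_\ast\,ds\le CN^{-\gamma}$ in $\nnorm{\cdot}$ by the previous paragraph. Taking $\sup_{t\in[0,T]}$ and $\nnorm{\cdot}$ yields \eqref{eq:propagation-of-chaos}. Part (2) runs identically with $\mathrm{C}^{0,\alpha}$ replaced throughout by the relevant weighted Besov scale $\Lambda^{k,\beta}$ --- which is why \cref{def:parameterised-Holder-space} is phrased abstractly, these spaces composing well with H\"older maps unlike the Sobolev spaces --- replacing $\alpha$ by $s$ in the entropy exponent and giving \eqref{eq:propagation-of-chaos-Sobolev}; the exclusions $p\ne2$, $p\ne d/q$ are artifacts of the empirical-measure approximation rate.

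Essentially all of the analytic difficulty sits in \cref{thm:GV-intro} and the stochastic-flow input; granting those, the loop above is mechanical. The point needing care beyond bookkeeping is that \cref{thm:GV-intro} is applied to the \emph{random} drift $b^N$: this requires it to hold uniformly over a ball of drifts (so it can be specialised at $b^N(\omega)$), requires $b^N$ confined to such a ball with sub-Gaussian-controllable exceptional probability, and --- inside the loop --- requires the ``observable'' version for the H\"older family $\{K(x,\cdot)\}$ in the weighted norm with a rate matching the $d_{\mathrm{MKW}}$ version. These should come out of the proof of \cref{thm:GV-intro} with at most cosmetic changes, but must be set up there in the right generality, and this, together with supplying the weighted stochastic-flow stability estimates for H\"older drifts, is where I expect the real work to lie.
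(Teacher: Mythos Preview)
Your overall architecture matches the paper's proof: the triangle-inequality split $d_{\mathrm{MKW}}(\mu^N_t,f_t)\le d_{\mathrm{MKW}}(\mu^{b^N,N}_t,f^{b^N}_t)+d_{\mathrm{MKW}}(f^{b^N}_t,f_t)$, the first term handled by \cref{thm:GV-intro} uniformly over a ball of drifts, confinement of $b^N$ to that ball via Brownian time-regularity, and a self-consistency Gr\"onwall loop on the second term. The ``observable'' version of the Glivenko--Cantelli theorem you invoke for the family $\{K(x,\cdot)\}_x$ is exactly \cref{prop:ULLN-for-h} in the paper.

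There is, however, a genuine gap in your stability step. You write that the Zvonkin/flow estimate gives $d_{\mathrm{MKW}}(f^{b^N}_t,f_t)\le C\int_0^t\|b^N_s-b^\infty_s\|_\ast\,ds$, ``hence'' $\|\Gamma_t[b^N]-b^\infty_t\|_\ast\le C\int_0^t\|b^N_s-b^\infty_s\|_\ast\,ds$. The first inequality is indeed linear --- this is what the stochastic flow buys --- but the ``hence'' is not: $\Gamma_t[b](x)-b^\infty_t(x)=\int K(x,y)\,d(f^b_t-f_t)(y)$ with $K(x,\cdot)$ merely $\alpha$-H\"older, so testing against $d_{\mathrm{MKW}}$ only yields $\|\Gamma_t[b^N]-b^\infty_t\|_\ast\le C\,d_{\mathrm{MKW}}(f^{b^N}_t,f_t)^\alpha$, and your Gr\"onwall becomes nonlinear, $u(t)\le \varepsilon+C(\int_0^t u)^\alpha$, which does not preserve the claimed rate. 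You have correctly identified that the flow linearises the map $b\mapsto X^b$ (and hence $b\mapsto f^b$ in Wasserstein), but there is a \emph{second} source of nonlinearity --- the map $f\mapsto\int K(\cdot,y)f(dy)$ from $(\mathrm{P}_1,d_{\mathrm{MKW}})$ --- which the flow does not touch. The paper sidesteps this by running the stability step entirely at the PDE level: a weighted $\mathrm{L}^2$ energy estimate on the linear equation for $g=f^{b^N}-f^{b^\infty}$ (\cref{lem:weighted-energy-estimate}) gives $\|f^{b^N}_t-f_t\|_{\mathrm{L}^{1,1}}\le C\int_0^t\|b^N_s-b^\infty_s\|_{\mathrm{L}^{-r,q}}\,ds$ linearly, and then \emph{boundedness} (not H\"older continuity) of $K$ gives $\|\Gamma_t[b^N]-b^\infty_t\|_{\mathrm{L}^{-r,q}}\le C\|f^{b^N}_t-f_t\|_{\mathrm{L}^1}$, closing a genuinely linear loop. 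The stochastic flow is used only inside \cref{thm:GV-intro} and \cref{prop:ULLN-for-h}, never in the stability step.

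A minor correction: the $d/(p-1)$ in the exponent does not come from a Fournier--Guillin rate for $d_{\mathrm{MKW}}(\mu_0^N,f_0)$ --- that quantity is subtracted and never estimated --- but from the metric entropy of $\mathrm{Lip1}$ in the weighted norm $\mathrm{L}^{-p,\infty}$, which enters the chaining argument inside \cref{thm:GV-intro} (see \cref{rem:Lip1-metric-d/p-1}).
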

By combining the above theorem and the results on the convergence of $d_{\mathrm{MKW}}(\mu^N_0,f_0)$ in \cite{fournier2015rate} we can obtain the following simple corollary.
\begin{corollary}
Under the assumptions of \cref{thm:propagation-of-chaos} we have
\begin{equation*}
\mathbb{E}\sup_{t\in[0,T]}d_{\mathrm{MKW}}(\mu^N_t,f_t)\le \mathbb{E}d_{\mathrm{MKW}}(\mu_0^N,f_0)+CN^{-\gamma}
\end{equation*}
with $\gamma$ as given in the respective cases (1), (2) of \cref{thm:propagation-of-chaos}. Furthermore, if $p$ is large enough (depending only on $d$) then it holds that 
\begin{equation*}
\mathbb{E}\sup_{t\in[0,T]}d_{\mathrm{MKW}}(\mu^N_t,f_t)\le CN^{-\gamma}.
\end{equation*}
\end{corollary}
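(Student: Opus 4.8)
The final statement is a direct corollary of \cref{thm:propagation-of-chaos} combined with the sharp rates for the Wasserstein distance between the empirical measure of i.i.d.\ samples and their common law \cite{fournier2015rate}. The argument is short; the only two points that need a little care are the passage from the sub-Gaussian norm $\nnorm{\cdot}$ to the expectation, and a comparison of the two rate exponents to see that the initial-data error is never the bottleneck.

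For the first inequality I would start from the pointwise triangle inequality
\begin{equation*}
\sup_{t\in[0,T]}d_{\mathrm{MKW}}(\mu^N_t,f_t)\le\Big(\sup_{t\in[0,T]}d_{\mathrm{MKW}}(\mu^N_t,f_t)-d_{\mathrm{MKW}}(\mu_0^N,f_0)\Big)+d_{\mathrm{MKW}}(\mu_0^N,f_0),
\end{equation*}
take expectations, and bound the expectation of the first (signed) term by its sub-Gaussian norm, using that $\mathbb{E}|Y|=\nnorm{Y}_1\le\nnorm{Y}$ (take $p=1$ in \cref{def:nnorm}) applied to $Y=\sup_{t\in[0,T]}d_{\mathrm{MKW}}(\mu^N_t,f_t)-d_{\mathrm{MKW}}(\mu_0^N,f_0)$, whose sub-Gaussian norm is $\le CN^{-\gamma}$ by \cref{thm:propagation-of-chaos}. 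This gives $\mathbb{E}\sup_{t}d_{\mathrm{MKW}}(\mu^N_t,f_t)\le\mathbb{E}d_{\mathrm{MKW}}(\mu_0^N,f_0)+CN^{-\gamma}$ at once.

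For the second inequality I would observe that, since $(X_0^{i,N})_{i=1}^N$ are i.i.d.\ with law $f_0\in\mathrm{P}_p(\mathbb{R}^d)$, $\mu_0^N$ is precisely the empirical measure of $N$ i.i.d.\ draws from $f_0$ and $d_{\mathrm{MKW}}(\mu_0^N,f_0)$ is the Wasserstein-$1$ distance, so \cite{fournier2015rate} yields $\mathbb{E}d_{\mathrm{MKW}}(\mu_0^N,f_0)\le C\varepsilon_N$ with $\varepsilon_N=N^{-1/2}$ for $d=1$, $\varepsilon_N=N^{-1/2}\log(1+N)$ for $d=2$, $\varepsilon_N=N^{-1/d}$ for $d\ge3$, provided $p$ is large enough (an explicit threshold depending only on $d$) that the moment-limited term $N^{-(p-1)/p}$ in that estimate does not dominate. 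It then remains to check $\varepsilon_N\le CN^{-\gamma}$, which reduces to $\gamma<1/d$ (and $\gamma<1/2$ when $d\le2$, the logarithm being absorbed since the inequality is strict). Here I would invoke the structure of the exponent in \cref{thm:propagation-of-chaos}: since $\alpha\le1$ (resp.\ $s\le1$) one has $\tfrac{d+2}{\alpha^2}\ge d+2$ (resp.\ $\tfrac{d+2}{s^2}\ge d+2$), so in both cases of the theorem
\begin{equation*}
\gamma<\frac{1}{2+\max\big(\tfrac{d+2}{\alpha^2},\tfrac{d}{p-1}\big)}\le\frac{1}{d+4}<\frac{1}{d},
\end{equation*}
which combined with the first step gives $\mathbb{E}\sup_{t\in[0,T]}d_{\mathrm{MKW}}(\mu^N_t,f_t)\le CN^{-\gamma}$. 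I do not expect a genuine obstacle: the only mild bookkeeping is keeping track of the $d=1,2$ cases of \cite{fournier2015rate} and confirming that the propagation-of-chaos rate $N^{-\gamma}$ is always strictly coarser than the initial-data rate once $p$ is large, which the bound $\gamma<1/(d+4)$ makes transparent.
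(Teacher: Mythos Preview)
Your argument is correct and is exactly the approach the paper intends: it states only that the corollary follows ``by combining the above theorem and the results on the convergence of $d_{\mathrm{MKW}}(\mu^N_0,f_0)$ in \cite{fournier2015rate}'', and you have supplied the missing details. In particular, your explicit verification that $\gamma<1/(d+4)<1/d$ (so the initial-data rate from \cite{fournier2015rate} never limits the final bound once $p$ is large enough to avoid the moment-limited regime) is precisely the bookkeeping the paper leaves implicit.
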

We now give some remarks on \cref{thm:propagation-of-chaos}.
\begin{remark}
We do not require any special preparation for the initial condition $f_0$, and the initial particle positions are chosen i.i.d. according to $f_0$. In particular, any compactly supported uniformly bounded density will satisfy all the assumptions of the theorem. The restriction to i.i.d. initial particle positions is made throughout this work, both to simplify the proofs and as it is a natural initial condition. However, the author believes that there is no fundamental reason why this could not be dropped with additional work.
\end{remark}
\begin{remark}\label{rem:Lip1-metric-d/p-1}
In both cases in the above theorem the $d/(p-1)$ term in $\gamma$ comes from the metric entropy of the space $\mathrm{Lip1}$ in a weighted $\mathrm{L}^\infty$ norm (see \cref{lem:metric-entropy-of-Lip1}). The requirement that $p\ne2$ is merely to avoid complicating the theorem statements with logarithmic correction terms for this critical weight. Results for $p=2$ can be obtained by using the inclusion $\mathrm{P}_2\subset \mathrm{P}_p$ for any $p<2$. We maintain the avoidance of $p=2$ throughout the other results of the paper.
\end{remark} 
\begin{remark}
If we consider instead the bounded Lipschitz metric $d_{\mathrm{BL}}$, then the same method of proof allows us to estimate
\begin{equation*}
\nnorm{\sup_{t\in[0,T]}d_{\mathrm{BL}}(\mu^N_t,f_t)}\le CN^{-\gamma}
\end{equation*}
under the same assumptions as \cref{thm:propagation-of-chaos} and with the same corresponding values of $\gamma$. This is because the bounded Lipschitz metric is almost surely bounded by $2$ and there is no need to compensate for the initial error. Additionally, the exponent $\gamma$ can be improved to anything less than $1/(2+\max((d+2)/\alpha^2))$ in (1) and $1/(2+\max((d+2)/s^2))$ in (2), as the metric entropy estimate for the bounded Lipschitz functions is smaller than that for $\mathrm{Lip1}$ (see also \cref{rem:Lip1-metric-d/p-1}). Analogous results in the bounded Lipschitz metric can be formulated for the Glivenko-Cantelli theorems for SDEs (\cref{thm:GV-intro,thm:2nd-order-GV-intro}) presented later in this work, and for the second order propagation of chaos result \cref{thm:2nd-order-propagation-of-chaos} below.
\end{remark}
\begin{remark}\label{rem:relax-assumption-on-K} 
The assumption on $K$ in (2) may be weakened to 
\begin{equation*}
K\in \Lambda^{0,s}(\mathrm{L}^\infty_y(\mathbb{R}^d;\mathrm{L}^q_x(\mathbb{R}^d;\mathbb{R}^d)))
\end{equation*}
for the same $s$ and $q$. (This space is defined in \cref{def:parameterised-Holder-space}.) Note that this condition is implied by the assumption in (2). We provide the proof under this weaker assumption. Note that with this weakened assumption the case $q=\infty$ collapses into (1) with $\alpha=s$ as $\Lambda^{0,s}(\mathrm{L}^\infty(\mathbb{R}^d\times\mathbb{R}^d;\mathbb{R}^d))$ is exactly the H\"older space $\mathrm{C}^{0,s}(\mathbb{R}^d\times\mathbb{R}^d;\mathbb{R}^d)$.
\end{remark}
\begin{remark}
In case (2) in the above theorem the assumptions on $K$ imply that $K\in \mathrm{C}^{0,\alpha}(\mathbb{R}^d\times\mathbb{R}^d;\mathbb{R}^d)$ for $\alpha=s-(d/q)>0$ by Sobolev embedding. So in this sense case (2) is weaker than case (1). The advantage of (2) is that $\gamma$ is obtained from the Sobolev exponent $s$ instead of the (smaller) H\"older exponent $\alpha$. In particular, note that singularities of the form 
\begin{equation*}
K(x,y)=W(x-y)=|x-y|^\alpha, \qquad \alpha\in(0,1)
\end{equation*} 
 are (locally) only $\alpha$-H\"older, while they are (locally) in the Sobolev space $\mathrm{W}^{1,q}$ for any $q<d/(1-\alpha)$. For this type of singularity the exponent $\gamma$ obtained by (2) is substantially better than that from (1).
\end{remark}
\begin{remark}
The integrability assumptions on $f_0$ in the above theorem are not optimal, and the author has made little attempt to optimise them. In particular, the proof does not use the regularising effect (gain of integrability and smoothness) of the parabolic limit equation in the PDE estimates, instead relying on more elementary $\mathrm{L}^2$ energy estimates. A more careful analysis is beyond the scope of this manuscript, in which we are primarily concerned with the assumptions on $K$ and upon the exponent $\gamma$ obtained, and not on optimal integrability of the initial data.
\end{remark}

\subsubsection{Second order systems.} In the second order case we have a similar result, but with the restriction that $\alpha>2/3$ as we have merely a gain of $2/3$ derivatives from the hypoelliptic regularising effect of the noise. However, the particle spatial trajectories have better time regularity properties than in the first order case (at least $\mathrm{C}^1$ rather than $\mathrm{C}^{0,(1/2)-\varepsilon}$) and as a result we obtain a better exponent $\gamma$.
\begin{theorem}[Propagation of chaos for second order systems]\label{thm:2nd-order-propagation-of-chaos}
Let $(X^{i,N},V^{i,N})_{i=1}^N$ be a solution of the second order many particle system \eqref{eq:2nd-order-many-particle-system}, $\mu^N$ be the associated empirical measure given by \eqref{eq:2nd-order-empirical-measure} and $f_t(x)$ be the solution to the limit equation \eqref{eq:2nd-order-limit-equation}. Then the following hold:
\begin{enumerate}
\item \textbf{H\"older interactions:} Let $K\in \mathrm{C}^{0,\alpha}(\mathbb{R}^d\times\mathbb{R}^d;\mathbb{R}^d)$ and $f_0\in \mathrm{P}_p(\mathbb{R}^d\times\mathbb{R}^d)\cap \mathrm{L}^{r,2}(\mathbb{R}^d\times\mathbb{R}^d)$ for $\alpha\in(2/3,1]$, $2\ne p>1$ and $r >1+d$. Then there are finite constants $c,C$ such that the following holds
\begin{equation}\label{eq:2nd-order-propagation-of-chaos}
\nnorm{\left[\sup_{t\in[0,T]}d_{\mathrm{MKW}}(\mu^N_t,f_t)-cd_{\mathrm{MKW}}(\mu^N_0,f_0)\right]_+}\le CN^{-\gamma},
\end{equation}
where here and throughout $[a]_+$ is the positive part of $a$, and $\gamma$ is given by
\begin{equation*}
\gamma=\frac1{2+\max(\tfrac{d+1}{\alpha^2},\tfrac{d}{p-1})}.
\end{equation*}
\item \textbf{Sobolev interactions:} Let $K(x,y)=W(x-y)$ for some $W\in \mathrm{W}^{s,q}(\mathbb{R}^d;\mathbb{R}^d)$ with $q\in(2,\infty]$, $q>(d+1)/s$ and $3/2\ge s>(2/3)+(d/q)$. Let $f_0\in \mathrm{P}_p(\mathbb{R}^d\times\mathbb{R}^d)\cap \mathrm{L}^{r,q'}(\mathbb{R}^d\times\mathbb{R}^d)$ for some $p>d/q$ with also $p>2$, $r>(d/q)+d+1$ and where $(1/q)+(1/q')=1/2$. Then there are finite constants $c,C$ such that the following holds
\begin{equation*}
\begin{aligned}
\nnorm{\left[\sup_{t\in[0,T]}d_{\mathrm{MKW}}(\mu^N_t,f_t)-cd_{\mathrm{MKW}}(\mu^N_0,f_0)\right]_+}\le CN^{-\gamma},\\
\end{aligned}
\end{equation*}
where 
\begin{equation*}
\gamma=\begin{cases}
\dfrac{1}{2+\tfrac{d+1}{s^2}},&\text{if }s\le 1,\\
\dfrac{1}{2+\max(\tfrac{d+1}s,d)}, &\text{otherwise.}
\end{cases}
\end{equation*}
\end{enumerate} 
\end{theorem}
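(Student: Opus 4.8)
The plan is to deduce \cref{thm:2nd-order-propagation-of-chaos} from the second-order Glivenko--Cantelli theorem for SDEs (\cref{thm:2nd-order-GV-intro}), which carries the genuine analytic difficulty and which I use here as a black box, together with an elementary linear stability estimate for the kinetic Fokker--Planck equation \eqref{eq:2nd-order-limit-equation}. The organising observation is that the \emph{random} drift $b^N$ of \eqref{eq:2nd-order-many-particle-system} is an admissible drift for \cref{thm:2nd-order-GV-intro}: for every realisation the spatial $\mathrm{C}^{0,\alpha}$ norm (resp.\ Sobolev/Besov norm, in case (2)) of $x\mapsto b^N_t(x)=\averN K(\cdot,X^{i,N}_t)$ is bounded by $\norm{K}$, and since $\dot X^{i,N}_t=V^{i,N}_t$ the spatial trajectories are $\mathrm{C}^1$ in time, so the parabolic regularity of $b^N$ is controlled by $\sup_{i,t}|V^{i,N}_t|$, which by the Langevin structure of \eqref{eq:2nd-order-many-particle-system} and the tail bound \eqref{eq:nnorm-subgaussian-tail-bound} is sub-Gaussian. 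Using that \eqref{eq:2nd-order-many-particle-system} is well posed (pathwise uniqueness for bounded H\"older drift with non-degenerate velocity noise, via the stochastic-flow theory for degenerate diffusions \cite{wang2015degenerate,fedrizzi2016regularity}), evaluating the solution map $b\mapsto(\mu^{b,N}_t)_{t\le T}$ at $b=b^N$ returns the particle empirical measure $\mu^N$. Writing $g^b_t$ for the (deterministic-in-$b$) solution of the linear kinetic Fokker--Planck equation with drift $b$ and datum $f_0$, so that $g^{b^\infty}=f$ and $g^{b^N}$ is the law of the auxiliary diffusion obtained by freezing the interaction drift at $b^N$, \cref{thm:2nd-order-GV-intro} applied at $b=b^N$ gives
\[
\nnorm{\Big[\sup_{t\le T}d_{\mathrm{MKW}}(\mu^N_t,g^{b^N}_t)-c\,d_{\mathrm{MKW}}(\mu^N_0,f_0)\Big]_+}\le CN^{-\gamma},
\]
with $\gamma$ (and the constant $c\ge1$) as in the statement; this is where the hypoelliptic restriction $\alpha>2/3$ (resp.\ the constraints on $s,q$) enters, through the regularity available for the degenerate flow, and where the metric-entropy bounds produce the precise exponent, including the branch of $\gamma$ for $s>1$ where the relevant entropy saturates to that of Lipschitz classes.

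It remains to show that $g^{b^N}$ is close to $f$. First I would estimate $b^N-b^\infty$: since $b^N_t(x)-b^\infty_t(x)=\int K(x,y)\,(\rho^N_t-\rho_{f,t})(dy)$ with $\rho^N_t$, $\rho_{f,t}$ the spatial marginals of $\mu^N_t$ and $f_t$, I split $\rho^N_t-\rho_{f,t}$ as (the spatial marginal of $\mu^N_t$ minus that of $g^{b^N}_t$) plus (the spatial marginal of $g^{b^N}_t$ minus $\rho_{f,t}$). Because $\{K(x,\cdot):x\in\mathbb{R}^d\}$ is a uniformly bounded family in the (weighted) H\"older, resp.\ Sobolev, class over which \cref{thm:2nd-order-GV-intro} controls fluctuations, the first part is again $O(N^{-\gamma})+c\,d_{\mathrm{MKW}}(\mu^N_0,f_0)$ in $\nnorm{\cdot}$ uniformly in $t$ and $x$, \emph{with no loss of a H\"older power}; the second part equals $\int K(x,y)\big(\int(g^{b^N}_t-f_t)(y,v)\,dv\big)\,dy$ and is dominated by the size of $h_t:=g^{b^N}_t-f_t$ in a weighted $\mathrm{L}^2$ (resp.\ $\mathrm{L}^{q'}$) norm. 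Next, $h$ solves a linear kinetic Fokker--Planck equation with zero datum and source $-\nabla_v\!\cdot\!\big((b^N_t-b^\infty_t)f_t\big)$; in the (suitably weighted) energy the transport terms $v\cdot\nabla_x$, $\kappa\nabla_v\!\cdot\!(v\,\cdot)$ and $b^N\cdot\nabla_v$ are antisymmetric or of order zero, the $v$-dissipation absorbs the derivative falling on $h$ after one integration by parts in the source, and a Gr\"onwall estimate gives, on a short interval $[a,a+\tau]$, $\sup_{[a,a+\tau]}\norm{h}\le C\norm{h_a}+C\sup_{[a,a+\tau]}\norm{b^N_t-b^\infty_t}_{\mathrm{L}^\infty_x}$ (with $h_0=0$). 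Since $C$ is not small, I would close the coupled system for $(\norm{b^N-b^\infty},\norm{h})$ by iterating over $\lceil T/\tau\rceil$ subintervals with $\tau$ fixed small enough to absorb the $C\sup\norm{h}$ term; this leaves $\sup_{[0,T]}\norm{b^N-b^\infty}$, hence $\sup_{[0,T]}\norm{h}$, bounded by $C(N^{-\gamma}+d_{\mathrm{MKW}}(\mu^N_0,f_0))$ in $\nnorm{\cdot}$, with the constant and the coefficient of $d_{\mathrm{MKW}}(\mu^N_0,f_0)$ growing geometrically in the number of subintervals but staying finite since $T,\tau$ are fixed. Combining via $d_{\mathrm{MKW}}(\mu^N_t,f_t)\le d_{\mathrm{MKW}}(\mu^N_t,g^{b^N}_t)+d_{\mathrm{MKW}}(g^{b^N}_t,f_t)$, taking the supremum in $t$, subtracting a suitable multiple of $d_{\mathrm{MKW}}(\mu^N_0,f_0)$, taking positive parts and applying $\nnorm{\cdot}$ yields \eqref{eq:2nd-order-propagation-of-chaos}. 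Case (2) runs identically, with the energy estimate using $\mathrm{L}^q$--$\mathrm{L}^{q'}$ duality (matching the hypothesis $f_0\in\mathrm{L}^{r,q'}$) and the Sobolev version of \cref{thm:2nd-order-GV-intro}, producing the stated $\gamma$ in both the $s\le1$ and $s>1$ regimes.

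The step I expect to be the main obstacle, beyond \cref{thm:2nd-order-GV-intro} itself, is ensuring that the apparent circularity is not real: the bounds on $d_{\mathrm{MKW}}(\mu^N_t,g^{b^N}_t)$ and on the fluctuation part of $b^N-b^\infty$ must come \emph{solely} from \cref{thm:2nd-order-GV-intro}, hence be a priori, uniform over all admissible drifts, and independent of $d_{\mathrm{MKW}}(\mu^N_t,f_t)$ for $t>0$, leaving only $\norm{h_t}$ to be controlled self-referentially, and it is controlled through a \emph{linear} PDE on which Gr\"onwall (plus the short-interval iteration) legitimately closes on $[0,T]$. It is precisely here that the method departs from Sznitman's coupling \cite{Sznitman}: comparing the \emph{densities} $g^{b^N}$ and $f$ through a linear equation avoids the $\alpha$-th power loss, and the consequent failure of Gr\"onwall, that comparing individual trajectories $X^{i,N}$ to auxiliary i.i.d.\ trajectories would incur for merely H\"older $K$. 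Two subsidiary technical points, both specific to the second-order setting, are that the energy estimate has only $v$-dissipation available (harmless on a finite interval, no hypocoercivity needed, but it forces the $\nabla_v$ in the source to be matched exactly against the available derivative, which is why the admissible integrability of $f_0$ is tied to $q'$ and $p$), and that the passage from ``\cref{thm:2nd-order-GV-intro} for deterministic $b$'' to ``\cref{thm:2nd-order-GV-intro} at the random $b^N$'' requires the measurability of the relevant suprema and the control in $\nnorm{\cdot}$ of the (sub-Gaussian) parabolic regularity of $b^N$; both are handled by the $\mathrm{C}^1$ time regularity of the spatial trajectories together with the Gaussian tail estimate \eqref{eq:nnorm-subgaussian-tail-bound}.
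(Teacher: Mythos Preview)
Your overall architecture matches the paper's: mollify $K$, show $b^N$ lies (with high probability) in a fixed class $\mathcal{C}$ of admissible drifts, apply the second-order Glivenko--Cantelli theorem to control $d_{\mathrm{MKW}}(\mu^{b^N,N}_t,f^{b^N}_t)$ uniformly over $\mathcal{C}$, and close via a weighted $\mathrm{L}^2$ energy estimate and Gr\"onwall on $h=f^{b^N}-f^{b^\infty}$. The short-interval iteration is not needed; a single Gr\"onwall on $[0,T]$ in the weighted $\mathrm{L}^{1,1}$ norm suffices, exactly as in the paper's \cref{lem:2nd-order-weighted-energy-estimate}.

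There is, however, a genuine gap in your control of the fluctuation part of $b^N-b^\infty$. You write that ``$\{K(x,\cdot):x\in\mathbb{R}^d\}$ is a uniformly bounded family in the (weighted) H\"older class over which \cref{thm:2nd-order-GV-intro} controls fluctuations'' and conclude ``with no loss of a H\"older power''. This conflates two different classes: \cref{thm:2nd-order-GV-intro} takes a supremum over \emph{drifts} $b\in\mathcal{C}$ (which is a H\"older class), but the distance it controls is $d_{\mathrm{MKW}}$, whose test functions are \emph{Lipschitz}. The functions $K(x,\cdot)$ are only $\alpha$-H\"older in $y$, so the bound $\big|\int K(x,y)\,d(\mu^{b^N,N}_t-f^{b^N}_t)(y)\big|\le C\,d_{\mathrm{MKW}}(\mu^{b^N,N}_t,f^{b^N}_t)$ is simply false. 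The paper handles this term by a \emph{separate} uniform LLN (\cref{prop:ULLN-for-h}), applied with $h=K$ and $\beta=\alpha$ (resp.\ $\beta=s$). Because the increment $X^{b,i,N}_t\mapsto K(\cdot,X^{b,i,N}_t)$ is only $\beta$-H\"older, the semi-metric on $\mathcal{C}$ in that proof is $\norm{\cdot}^\beta$, which inflates the metric-entropy exponent from $k=(d+1)/s$ to $k/\beta=(d+1)/s^2$, and the resulting rate is $\gamma_2=1/(2+(d+1)/s^2)$ for $s\le 1$. This is precisely the origin of the $\alpha^2$ (resp.\ $s^2$) in the theorem's exponent; your ``no loss'' claim would yield $1/(2+(d+1)/\alpha)$, which is too strong.

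Two smaller points. First, $\sup_{i,t}|V^{i,N}_t|$ is not sub-Gaussian unless $f_0$ is; what one actually controls is the \emph{average} $\tfrac1N\sum_i\sup_t|V^{i,N}_t|$, which by \cref{lem:SDE-bounds} is bounded by an average of i.i.d.\ variables with finite second moment (initial data plus Brownian suprema), so Chebyshev gives $\mathbb{P}(\|b^N\|>A)\le CN^{-1}$; this is the content of \cref{lem:2nd-order-a-priori-time-regularity}. Second, for $s>1$ in case (2) the relevant $\beta$ in \cref{prop:ULLN-for-h} is $1$ (since $K$ is then Lipschitz in $y$), so $\gamma_2=1/(2+(d+1)/s)$ and the final $\gamma$ is the minimum with $\gamma_1=1/(2+\max((d+1)/s,d))$, explaining the branching in the statement.
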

Using the elementary inequality $x\le [x-y]_++y$ for $x,y\ge0$, we can use \cref{thm:2nd-order-propagation-of-chaos} to obtain bounds on the expectation:
\begin{corollary}
	Under the assumptions of \cref{thm:2nd-order-propagation-of-chaos} we have
	\begin{equation*}
	\mathbb{E}\sup_{t\in[0,T]}d_{\mathrm{MKW}}(\mu^N_t,f_t)\le C\mathbb{E}d_{\mathrm{MKW}}(\mu_0^N,f_0)+CN^{-\gamma}
	\end{equation*}
	with $\gamma$ as given in the respective cases (1), (2) of \cref{thm:2nd-order-propagation-of-chaos}. Furthermore, if $p$ is large enough (depending only on $d$) then it holds that 
	\begin{equation*}
	\mathbb{E}\sup_{t\in[0,T]}d_{\mathrm{MKW}}(\mu^N_t,f_t)\le CN^{-\gamma}.
	\end{equation*}
\end{corollary}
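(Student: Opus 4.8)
The plan is to deduce the corollary directly from \cref{thm:2nd-order-propagation-of-chaos} (the analogous statement after \cref{thm:propagation-of-chaos} follows by an identical argument), using only two elementary facts: that the sub-Gaussian norm dominates the $\mathrm{L}^1$ norm on the probability space, and that $x\le [x-y]_+ + y$ for all $x,y\ge 0$.

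First I would note that, taking $p=1$ in \cref{def:nnorm}, every random variable $Z$ satisfies $\mathbb{E}|Z|=\nnorm{Z}_1\le\nnorm{Z}$; applied to the nonnegative random variable $Z=\left[\sup_{t\in[0,T]}d_{\mathrm{MKW}}(\mu^N_t,f_t)-c\,d_{\mathrm{MKW}}(\mu^N_0,f_0)\right]_+$ together with \cref{thm:2nd-order-propagation-of-chaos} this gives
\begin{equation*}
\mathbb{E}\left[\sup_{t\in[0,T]}d_{\mathrm{MKW}}(\mu^N_t,f_t)-c\,d_{\mathrm{MKW}}(\mu^N_0,f_0)\right]_+\le CN^{-\gamma}.
\end{equation*}
Next I would invoke $x\le [x-y]_++y$ (immediate since $[x-y]_+\ge x-y$) with $x=\sup_{t\in[0,T]}d_{\mathrm{MKW}}(\mu^N_t,f_t)$ and $y=c\,d_{\mathrm{MKW}}(\mu^N_0,f_0)$, and take expectations:
\begin{equation*}
\mathbb{E}\sup_{t\in[0,T]}d_{\mathrm{MKW}}(\mu^N_t,f_t)\le\mathbb{E}\left[\sup_{t\in[0,T]}d_{\mathrm{MKW}}(\mu^N_t,f_t)-c\,d_{\mathrm{MKW}}(\mu^N_0,f_0)\right]_+ + c\,\mathbb{E}d_{\mathrm{MKW}}(\mu^N_0,f_0).
\end{equation*}
Bounding the first term on the right by $CN^{-\gamma}$ and relabelling $\max(C,c)$ as $C$ yields the first displayed inequality of the corollary.

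For the second inequality it remains only to control the initial sampling error $\mathbb{E}d_{\mathrm{MKW}}(\mu^N_0,f_0)$, the expected Wasserstein distance between $f_0\in\mathrm{P}_p(\mathbb{R}^d\times\mathbb{R}^d)$ and the empirical measure of $N$ i.i.d. draws from it. I would quote the quantitative estimates of Fournier–Guillin \cite{fournier2015rate}: once $p$ exceeds a threshold depending only on $d$, this error is at most $CN^{-1/(2d)}$ (with an extra $\log N$ factor when $d=1$). Substituting into the first inequality — enlarging $p$ further if necessary so the hypotheses of \cref{thm:2nd-order-propagation-of-chaos} still hold — then gives $\mathbb{E}\sup_{t\in[0,T]}d_{\mathrm{MKW}}(\mu^N_t,f_t)\le CN^{-\gamma}$. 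Everything here is formal except one point, which is the main (mild) obstacle: one must check that this dimension-limited sampling rate is no slower than $N^{-\gamma}$. This is exactly where the shape of the exponent in \cref{thm:2nd-order-propagation-of-chaos} is used — the term $\tfrac{d+1}{\alpha^2}$ (respectively $\tfrac{d+1}{s^2}$) forces $\gamma$ to be small enough that the comparison with the Fournier–Guillin rate goes through in the stated range of parameters.
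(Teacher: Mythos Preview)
Your proposal is correct and follows exactly the route the paper indicates: the paper's entire ``proof'' is the one-line remark preceding the corollary, namely ``Using the elementary inequality $x\le [x-y]_++y$ for $x,y\ge0$,'' combined with the earlier citation of \cite{fournier2015rate} for the initial sampling error. You have simply written this out in full, including the passage from $\nnorm{\cdot}$ to $\nnorm{\cdot}_1$ and the appeal to Fournier--Guillin for the second claim.
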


We make some remarks on \cref{thm:2nd-order-propagation-of-chaos} (see also the remarks after \cref{thm:propagation-of-chaos} which are applicable here as well).
\begin{remark}
In the first order case (\cref{thm:propagation-of-chaos}) we admitted as evident the well-posedness of both the limit PDE \eqref{eq:limit-equation} and the particle system \eqref{eq:many-particle-system}. In the second order case neither is a priori obvious due to the degeneracy of the noise and the roughness of the coefficients. We note here that the well-posedness of the particle system \eqref{eq:2nd-order-many-particle-system} is a consequence of the existence of a differentiable stochastic flow (see \cite{wang2015degenerate}). That the limit PDE \eqref{eq:2nd-order-limit-equation} is also well-posed may be obtained from this by standard methods. However, we point out to the reader that for most of the paper the proofs are done on mollified ($C^1_b$) vector fields, and so existence and uniqueness is not a concern.
\end{remark}
\begin{remark}
We need to subtract $cd_{\mathrm{MKW}}(\mu_0^N,f_0)$ (for $c>1$) and take the positive part, while in \cref{thm:propagation-of-chaos} we could choose $c=1$ and the non-negativity of the expression was automatic. This is because the initial error may be amplified by the dynamics as the vector fields involved are not bounded (see the proof of \cref{lem:reference-process-mkw-bound} for details).
\end{remark}
\begin{remark}\label{rem:2nd-order-relax-assumption-on-K}
The assumption on $K$ in (1) can be weakened to 
\begin{equation*}
K\in \Lambda^{0,s}(\mathrm{L}^\infty_y(\mathbb{R}^d;\mathrm{L}^q_x(\mathbb{R}^d;\mathbb{R}^d)))
\end{equation*}
for $s\in(2/3,1]$, $q>(d+1)/s$ and $p>d/q$. (This space is defined in \cref{def:parameterised-Holder-space}). This includes the result stated in the theorem as the case $q=\infty$. 

The assumption on $K$ in (2) can be weakened when $s>1$ to 
\begin{equation*}
W\in \Lambda^{1,(s-1)}(\mathrm{L}^{q}(\mathbb{R}^d;\mathbb{R}^d))
\end{equation*}
under the additional assumption that $p\ge4$. This implies the assumption on $K$ in the theorem statement. Note that the $s\le 1$ case of this weakened assumption was included in the relaxation of (1) directly above.

In each case the proof is given for these weakened assumptions.
\end{remark}

\subsection{Empirical process \& Glivenko-Cantelli theorems for SDEs}\label{subsubsec:empirical-process-theory}

In this subsection we define the empirical process hinted at in the informal statement \cref{thm:informal-GV-thm}. We present first the definitions and results for first order systems.

\subsubsection{First order systems.}\label{subsec:1st-order-stochastis-process} Let $\tilde{\mathcal{C}}^{\alpha}$ be the set of vector fields given by
\begin{equation}
\tilde{\mathcal{C}}^{\alpha}=\{b:\norm{b}_{\mathrm{C}([0,T];\mathrm{C}^{0,\alpha}(\mathbb{R}^d;\mathbb{R}^d))}\le C\}
\end{equation}
for some $\alpha\in(0,1)$ and $C<\infty$ fixed. For any $b\in\tilde{\mathcal{C}}$ and any $N\in\mathbb{N}$ denote $(X^{b,i,N})^{N}_{i=1}$ as the solution to
\begin{equation}\label{eq:many-particle-system-empirical}
\left\{\begin{aligned}
&dX^{b,i,N}_t=b_t(X_t^{b,i,N})dt+dB^{i,N}_t,\\
&X^{b,i,N}_0=X_0^{i,N}
\end{aligned}\quad i=1,\dotsc,N\right.
\end{equation}
where $X_0^{i,N}$ and $B^{i,N}$ are the same as in \eqref{eq:many-particle-system}. Note that $(X^{b,i,N})_{i=1}^N$ are i.i.d. by construction. The \emph{empirical process} $(\mu^{b,N}_t)_{t\in[0,T],b\in\tilde{\mathcal{C}}^\alpha}$ is defined by
\begin{equation}\label{eq:empirical-process}
\mu^{b,N}_t=\averN\delta_{X^{b,i,N}_t}.
\end{equation}
Note that for any $b\in\tilde{\mathcal{C}}^{\alpha}$ and any $i\in\{1,\dotsc,N\}$, the law of $X^{b,i,N}_t$ is given by $f^b_t$, the solution to the following parabolic PDE:
\begin{equation}\label{eq:empirical-limit-pde}
\left\{\begin{aligned}
&\partial_t f_t^b+\nabla\cdot(b_tf_t^b)-\tfrac12 \Delta f_t^b=0,\qquad (t,x)\in(0,T)\times\mathbb{R}^d,\\
&f^b_0(x)=f_0(x)\text{ initial condition}.\\
\end{aligned}\right.
\end{equation}
We would like to be able to consider $\mu^{b,N}_t$ for \emph{random} $b\in\tilde{\mathcal{C}}^\alpha$. To do so we need that the stochastic process $(X^{b,i,N})_{i=1}^N$ indexed by $t\in[0,T]$ and $b\in\tilde{\mathcal{C}}^\alpha$ be (almost surely) continuous. Let us be precise about this for the benefit of readers less familiar with such notions. We wish to construct a (random) map (in other words a stochastic process) $\varphi$ defined by
\begin{align*}
\varphi:([0,T],|\cdot|)\times (\tilde{\mathcal{C}}^{\alpha},\mathrm{L}^\infty([0,T];\mathrm{L}^{-r,\infty}(\mathbb{R}^d;\mathbb{R}^d)))&\to (\mathbb{R}^d)^N\\
(t,b)&\mapsto (X^{b,i,N}_t)_{i=1}^N,
\end{align*}
for some $r>0$. The statement that $\varphi$ is continuous at a point $(t,b)\in [0,T]\times\tilde{\mathcal{C}}^{\alpha}$ means that for any sequence $t_n,b_n\in [0,T]\times\tilde{\mathcal{C}}^{\alpha}$ converging to $t,b$ as $n\to\infty$ in the topologies in the above display, we have that $ (X^{b_n,i,N}_{t_n})_{i=1}^N\to (X^{b,i,N}_t)_{i=1}^N$ as $n\to\infty$ in $\mathbb{R}^{d\cdot N}$.

 We ask that $\varphi$ be almost surely continuous, i.e.
\begin{equation}\label{eq:stochastic-process-is-almost-surely-cts}
\mathbb{P}(\,\forall (t,b)\in[0,T]\times\tilde{\mathcal{C}}^{\alpha}, \varphi\text{ is continuous at }(t,b))=1.
\end{equation}
This is a \emph{much stronger} requirement than with the quantifiers switched, i.e.
\begin{equation}\label{eq:stochastic-process-is-cts-at-each-point-almost-surely}
\forall (t,b)\in[0,T]\times\tilde{\mathcal{C}}^{\alpha}, \mathbb{P}(\varphi\text{ is continuous at }(t,b))=1.
\end{equation}
The former implies the latter but not vice versa.

The size of the index set $\tilde{\mathcal{C}}^{\alpha}$ causes a technical issue that although \eqref{eq:stochastic-process-is-cts-at-each-point-almost-surely} can be shown, it is impossible to show \eqref{eq:stochastic-process-is-almost-surely-cts}:
\begin{proposition}\label{lem:impossible-to-have-cts-process}
Let $f_0\in \mathrm{P}_p(\mathbb{R}^d)$ for some $p>1$. Then the process $(X^{b,i,N}_t)_{i=1}^N$ indexed by $b\in\tilde{\mathcal{C}}^{\alpha}$ and $t\in[0,T]$ cannot be modified to give an almost surely continuous process with the $\mathrm{L}^\infty([0,T];\mathrm{L}^{-r,\infty}(\mathbb{R}^d;\mathbb{R}^d))$ (any $r>0$) topology on $\tilde{\mathcal{C}}^{\alpha}$.
\end{proposition}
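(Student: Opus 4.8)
The plan is to argue by contradiction, reducing the statement to the non-existence of a continuous modification for a process indexed by a \emph{separable} set, which can then be obstructed directly. Since the coordinate processes $(t,b)\mapsto X^{b,i,N}_t$, $i=1,\dots,N$, are independent, $\varphi$ is jointly continuous if and only if each of them is, so it suffices to treat $N=1$, i.e.\ the process $(t,b)\mapsto X^b_t$ where $X^b$ solves $dX^b_t=b_t(X^b_t)\,dt+dB_t$ with $X^b_0\sim f_0$; and it is enough to show that $b\mapsto X^b_{t_0}$ has no almost surely continuous modification for a single fixed $t_0\in(0,T]$. The point is that $(\tilde{\mathcal{C}}^{\alpha},\mathrm{L}^\infty([0,T];\mathrm{L}^{-r,\infty}(\mathbb{R}^d;\mathbb{R}^d)))$ is \emph{separable}: the closed ball $\{h:\norm{h}_{\mathrm{C}^{0,\alpha}(\mathbb{R}^d;\mathbb{R}^d)}\le C\}$ is compact in the weighted norm $\mathrm{L}^{-r,\infty}$ (a weighted Arzel\`a--Ascoli argument, using the decay of $\<x^{-r}$ to handle large $x$; cf.\ \cref{sec:appendix}), so $\tilde{\mathcal{C}}^{\alpha}$ embeds into $\mathrm{C}([0,T];K)$ for a compact metric space $K$ and is therefore separable. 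Fix a countable $\mathrm{L}^\infty([0,T];\mathrm{L}^{-r,\infty})$-dense set $\mathcal{D}\subset\tilde{\mathcal{C}}^{\alpha}$. If $\tilde\varphi$ were an almost surely continuous modification, then on a \emph{single} full-measure event $\Omega_0$ we would have simultaneously that $\tilde\varphi(t_0,\cdot)$ is continuous and that $\tilde\varphi(t_0,d)=X^d_{t_0}$ for every $d\in\mathcal{D}$; thus on $\Omega_0$ the function $d\mapsto X^d_{t_0}$ on $\mathcal{D}$ would be the restriction of a continuous function on all of $\tilde{\mathcal{C}}^{\alpha}$. It therefore suffices to exhibit a non-null event on which $d\mapsto X^d_{t_0}$ \emph{fails} to extend to a continuous function on $\tilde{\mathcal{C}}^{\alpha}$.

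To this end I would construct, from the driving Brownian path, a $\tau$-continuous one-parameter family $\lambda\mapsto b^{\lambda}\in\tilde{\mathcal{C}}^{\alpha}$, $\lambda\in[0,1]$ (enlarging $\mathcal{D}$ so that $\{b^{q}:q\in\mathbb{Q}\cap[0,1]\}\subset\mathcal{D}$), such that on a non-null event the sample function $\lambda\mapsto X^{b^{\lambda}}_{t_0}$ is discontinuous at a parameter value $\lambda^{*}$ which is \emph{atomlessly distributed}. Then on $\Omega_0$ the continuous function $\tilde\varphi(t_0,\cdot)$ would agree with $\lambda\mapsto X^{b^{\lambda}}_{t_0}$ at every rational $\lambda$, forcing $\lim_{q\to\lambda^{*}}X^{b^{q}}_{t_0}$ to exist and contradicting the discontinuity — this is the standard reason why a process such as $\lambda\mapsto\mathbf 1_{\{U\le\lambda\}}$, continuous in probability but jumping at an atomless random time $U$, admits no continuous modification. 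The family $b^{\lambda}$ should carry a Peano-type branch point — a spatial region where the deterministic flow of $b^{\lambda}$ bifurcates — placed so that whether the realised trajectory is carried into one branch or the other is decided, through the Brownian increments over a short window, precisely by the sign of $\lambda-\lambda^{*}$. That this can happen although $b\mapsto X^b_{t_0}$ is continuous at \emph{each fixed} $b$ almost surely is exactly the discrepancy between \eqref{eq:stochastic-process-is-cts-at-each-point-almost-surely} and \eqref{eq:stochastic-process-is-almost-surely-cts}: regularisation by noise removes the Peano ambiguity for a fixed drift, but only off a drift-dependent null set, and by letting the drift track the noise we arrange to sit on that null set.

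The crux is the construction in the previous paragraph: producing $b^{\lambda}$ inside $\tilde{\mathcal{C}}^{\alpha}$ with uniform H\"older bounds, depending $\tau$-continuously on $\lambda$, and such that the endpoints $X^{b^{\lambda}}_{t_0}$ genuinely fail continuity in $\lambda$ on a set of positive probability. This amounts to a quantitative failure of stability of SDEs with merely H\"older drift under weak convergence of the drift, established ``against'' the noise rather than with its help — the relevant stochastic-flow estimates being those of \cite{Flandoli-Transport-equation,Fedrizzi-Flandoli} and related works — together with a measurability check so that $\lambda^{*}$ is a bona fide random variable with a continuous law and the exceptional event is measurable and non-null. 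The weight $\<x^{-r}$ and the non-compactness of $\mathbb{R}^d$ play no part in the obstruction itself; they serve only to make $\tilde{\mathcal{C}}^{\alpha}$ totally bounded, and hence separable, which is what permits the reduction in the first paragraph.
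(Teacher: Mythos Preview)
Your separability reduction in the first paragraph is correct and indeed underlies any argument of this type. The gap is at the self-described ``crux'': you do not actually construct the family $b^\lambda$, and the setup as written is inconsistent. You say $b^\lambda$ is built ``from the driving Brownian path'' (hence random), but then enlarge the \emph{deterministic} countable set $\mathcal{D}$ to contain the random elements $b^q$ for rational $q$ --- this cannot be done. If instead the $b^\lambda$ are meant to be deterministic and only $\lambda^*$ random, you give no indication of how to produce such a family: the existence of a deterministic curve $\lambda\mapsto b^\lambda$ in $\tilde{\mathcal{C}}^\alpha$ along which $\lambda\mapsto X^{b^\lambda}_{t_0}$ jumps at an atomless random point is precisely the nontrivial content of the proposition, and \cref{cor:lipschitz-dependence-of-SDEs-upon-b} (Lipschitz dependence in $L^1$ with a random constant) shows any such jump has to be rather delicate to arrange.

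The paper's route is shorter. Subtract the noise: $Y_t=X_t-B_t$ solves the random ODE $dY_t=b_t(Y_t+B_t)\,dt$. Now take the \emph{single random} drift $\tilde b_t(x)=\min(|x-B_t|^\alpha,1)\in\tilde{\mathcal{C}}^\alpha$; the corresponding ODE becomes $dY_t=\min(|Y_t|^\alpha,1)\,dt$, $Y_0=0$ (for $f_0=\delta_0$, $d=1$), the textbook Peano example with non-unique solutions. If an almost surely continuous modification existed on $\tilde{\mathcal{C}}^\alpha$, its value at the random index $\tilde b$ would equal the limit of $X^{b^n}$ along \emph{every} smooth approximating sequence $b^n\to\tilde b$ in $\mathcal{C}^\alpha$; but one can choose two such sequences whose ODE trajectories converge to different solutions of the limit ODE (the zero solution versus a non-zero one), contradicting uniqueness of the limit. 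The simplification over your plan is to evaluate the hypothetical continuous map at a single well-chosen \emph{random} index, rather than to engineer a one-parameter slice carrying a random discontinuity.
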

This is because, roughly speaking, constructing this process would give uniqueness for SDEs with \emph{random} $\alpha$-H\"older coefficients, and there are simple counterexamples. We refer the reader to the proof of \cref{lem:impossible-to-have-cts-process} for further details.

For this reason we define $\mathcal{C}^{\alpha}$ as the set of \emph{smooth} ($\mathrm{C}^1_b$ in $x$) vector fields in $\tilde{\mathcal{C}}^{\alpha}$, i.e.
\begin{equation}
\mathcal{C}^{\alpha}=\mathrm{C}([0,T];\mathrm{C}^1_b(\mathbb{R}^d;\mathbb{R}^d))\cap \tilde{\mathcal{C}}^{\alpha}.
\end{equation}
Note that $\mathcal{C}^\alpha$ contains vector fields with $\mathrm{C}^1_b$ norm arbitrarily large. Also $\mathcal{C}^\alpha$ is dense in $\tilde{\mathcal{C}}^\alpha$ in the $\mathrm{L}^\infty([0,T];\mathrm{L}^{-r,\infty}(\mathbb{R}^d;\mathbb{R}^d))$ topology for any $r>0$.

For this set of vector fields we \emph{can} construct a continuous stochastic process.
\begin{theorem}\label{thm:cts-empirical-process}
Let $f_0\in \mathrm{P}_p(\mathbb{R}^d)$ for some $p>1$. Then the process $(X^{b,i,N}_t)_{i=1}^N$ defined by \eqref{eq:many-particle-system-empirical} and indexed by $t\in[0,T],b\in\mathcal{C}^{\alpha}$ has a modification that is continuous, where $\mathcal{C}^{\alpha}$ is equipped with the $\mathrm{L}^\infty([0,T];\mathrm{L}^{-r,\infty}(\mathbb{R}^d;\mathbb{R}^d))$ topology (any $r\in(0,p)$). As a consequence, the same holds for the empirical process $(\mu^{b,N}_t)_{t\in[0,T],b\in\mathcal{C}^{\alpha}}$ given by \eqref{eq:empirical-process} above, mapping into the space of probability measures equipped with the weak topology.
\end{theorem}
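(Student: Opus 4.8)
The plan is to realise the continuous modification through the stochastic flow of the SDE. For a smooth $b\in\mathcal{C}^{\alpha}$ the equation \eqref{eq:many-particle-system-empirical} is classically well posed and generates a stochastic flow $\phi^{b}_{s,t}$, so $X^{b,i,N}_t=\phi^{b}_{0,t}(X^{i,N}_0)$ with all $N$ coordinates driven by the same Brownian paths $B^{i,N}$ from the same (random) starting points. The whole problem thus reduces to producing a jointly continuous version of $(t,b)\mapsto(\phi^{b}_{0,t}(X^{i,N}_0))_{i=1}^{N}$, with $\mathcal{C}^{\alpha}$ carrying the $\mathrm{L}^{\infty}([0,T];\mathrm{L}^{-r,\infty}(\mathbb{R}^d;\mathbb{R}^d))$ topology. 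I would obtain this by a Kolmogorov--Chentsov/chaining argument indexed by the metric space $[0,T]\times\mathcal{C}^{\alpha}$, whose two inputs are a quantitative modulus-of-continuity estimate for the solution map in $(t,b)$ and a metric entropy bound for $\mathcal{C}^{\alpha}$.

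The core input is the stability of the flow in the drift, and this is where the noise is indispensable. Given $b,b'\in\mathcal{C}^{\alpha}$ driven by the same Brownian motion from the same point, I would write the $b'$-trajectory in the coordinates of the $b$-flow, $X^{b',i,N}_t=\phi^{b}_{0,t}(\psi_t)$ with $\psi_0=X^{i,N}_0$ and $\psi$ of finite variation; since the noise is additive the martingale part of $t\mapsto\phi^{b}_{0,t}(y)$ is $B^{i,N}_t$, independent of $y$, so the chain rule applies without an It\^o correction and the drift of $\phi^{b}$ cancels $b_t$, leaving the drift-free ODE
\[
d\psi_t=(\nabla\phi^{b}_{0,t}(\psi_t))^{-1}\big(b'_t(X^{b',i,N}_t)-b_t(X^{b',i,N}_t)\big)\,dt.
\]
Using $|b'_t(x)-b_t(x)|\le\|b'_t-b_t\|_{\mathrm{L}^{-r,\infty}}\langle x\rangle^{r}$, integrating, and applying $\phi^{b}_{0,t}$ to $X^{i,N}_0$ and to $\psi_t$, one gets, on the (bounded) range of $x$ actually visited,
\[
\sup_{t\le T}\big|X^{b,i,N}_t-X^{b',i,N}_t\big|\;\le\;C\,\Big(\sup_{s\le T}\|\nabla\phi^{b}_{0,s}\|\Big)\Big(\int_0^{T}\|(\nabla\phi^{b}_{0,s})^{-1}\|\,\langle X^{b',i,N}_s\rangle^{r}\,ds\Big)\,\|b-b'\|_{\mathrm{L}^{\infty}_t\mathrm{L}^{-r,\infty}_x}.
\]
Taking $\mathrm{L}^{m}(\Omega)$ norms and using H\"older's inequality, the two bracketed factors are controlled by (i) the moment bounds for $\nabla\phi^{b}$ and its inverse furnished by the stochastic-flow theory for H\"older drifts, which depend only on the $\mathrm{C}([0,T];\mathrm{C}^{0,\alpha})$-bound defining $\tilde{\mathcal{C}}^{\alpha}$ and crucially \emph{not} on the possibly enormous $\mathrm{C}^1_b$-norm of $b$, and (ii) the estimate $\langle X^{b',i,N}_s\rangle^{r}\lesssim\langle X^{i,N}_0\rangle^{r}+\sup_{u\le T}\langle B^{i,N}_u\rangle^{r}+C$, whose $\mathrm{L}^m(\Omega)$ norm is finite precisely because $rm<p$ is attainable (this is why $r\in(0,p)$ is required) and $f_0\in\mathrm{P}_p$. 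This gives $\nnorm{\sup_{t\le T}|X^{b,i,N}_t-X^{b',i,N}_t|}_{m}\le C_m\,\|b-b'\|_{\mathrm{L}^{\infty}_t\mathrm{L}^{-r,\infty}_x}$ for $m<p/r$, uniformly over $b,b'\in\tilde{\mathcal{C}}^{\alpha}$, together with the elementary time-regularity $\nnorm{X^{b,i,N}_t-X^{b,i,N}_s}_{m}\le C|t-s|^{1/2}$, which follows from $X^{b,i,N}_t-X^{b,i,N}_s=\int_s^{t}b_u(X^{b,i,N}_u)\,du+(B^{i,N}_t-B^{i,N}_s)$ and boundedness of $b$.

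With these increment bounds I would run a chaining argument against the metric entropy of $\mathcal{C}^{\alpha}$ in the $\mathrm{L}^{\infty}([0,T];\mathrm{L}^{-r,\infty})$ norm, which is finite and polynomial in $1/\varepsilon$ by the estimates in \cref{sec:appendix} (the H\"older-ball entropy on balls, glued to the trivial bound $\lesssim\langle R\rangle^{-r}$ outside a radius $R\sim\varepsilon^{-1/r}$). Because the covering numbers of $\mathcal{C}^{\alpha}$ grow super-polynomially, I would organise the chaining multiscale-by-mollification: approximate each $b$ by $b*\rho_{\delta}$ with $\|b-b*\rho_{\delta}\|_{\mathrm{L}^{\infty}}\lesssim\delta^{\alpha}$, treat the families $\{b*\rho_{\delta}:b\in\tilde{\mathcal{C}}^{\alpha}\}$ at dyadic scales $\delta=2^{-k}$, localise the tail contributions of $\langle X^{i,N}_0\rangle$ and $\sup_{u}|B^{i,N}_u|$ (summing the exceptional sets via Borel--Cantelli), and patch the estimates over a countable dense subset of $\mathcal{C}^{\alpha}$ --- legitimate since $\mathcal{C}^{\alpha}$ is separable in this topology, sitting inside $\mathrm{C}([0,T];\mathcal{K})$ with $\mathcal{K}$ the $\mathrm{C}^{0,\alpha}$-ball, which is compact in $\mathrm{L}^{-r,\infty}$. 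This produces the continuous modification of $(t,b)\mapsto(X^{b,i,N}_t)_{i=1}^{N}$. The empirical-process statement is then immediate: $\mu^{b,N}_t$ is the image of $(X^{b,i,N}_t)_{i=1}^{N}$ under $(x_i)_{i=1}^{N}\mapsto\frac1N\sum_{i=1}^{N}\delta_{x_i}$, which is continuous from $(\mathbb{R}^d)^{N}$ into $\mathrm{P}(\mathbb{R}^d)$ with the weak topology, so $(t,b)\mapsto\mu^{b,N}_t$ inherits continuity.

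The main obstacle is the flow-stability estimate: one needs a bound on the difference of the two solutions that is genuinely continuous --- in fact Lipschitz --- in the drift difference measured in a \emph{weak} (negatively weighted $\mathrm{L}^{\infty}$) norm, with constants blind to the smooth norm of $b$. Without the noise this is false --- two solutions of a merely H\"older ODE can separate even from identical data --- which is exactly the obstruction recorded in \cref{lem:impossible-to-have-cts-process}, so the use of the stochastic-flow theory for rough drifts and its uniform-in-$\tilde{\mathcal{C}}^{\alpha}$ moment bounds is essential. A secondary difficulty is that the covering numbers of $\mathcal{C}^{\alpha}$ are too large for a naive Kolmogorov or Dudley bound, which is why the chaining has to be arranged through mollification scales and why the moment budget $m<p/r$ (hence the constraint $r<p$ and the hypothesis $p>1$) is exploited.
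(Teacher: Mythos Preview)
Your approach is more elaborate than necessary and contains a genuine gap. The paper's proof is elementary and takes a completely different route: it sets $Y_t=X_t-B_t$, so that $Y$ solves the random ODE $dY_t=b_t(Y_t+B_t)\,dt$. For $b\in\mathcal{C}^\alpha$ this vector field is spatially Lipschitz with constant $L_b=\|b\|_{C([0,T];C^1_b)}$, so on the single null set $\mathcal{N}$ where $B$ fails to be continuous the ODE is solved pathwise and Gr\"onwall gives $\sup_t|X^b_t-X^{\tilde b}_t|\le Te^{L_bT}\|b-\tilde b\|_{L^\infty_tL^\infty_x}$ for every $\omega\notin\mathcal{N}$. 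The null set is independent of $b$, so this is already almost sure continuity on all of $\mathcal{C}^\alpha$; a localisation then replaces $L^\infty$ by $L^{-r,\infty}$. The $C^1_b$ norm is used directly and the Lipschitz constant is allowed to depend on $b$ --- this is exactly why the theorem is stated for $\mathcal{C}^\alpha$ rather than $\tilde{\mathcal{C}}^\alpha$, and the paper remarks explicitly after the proof that the $C^1_b$ bound was used here.

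The gap in your argument is the entropy claim: $\mathcal{C}^\alpha$ is \emph{not} totally bounded in $L^\infty([0,T];L^{-r,\infty}(\mathbb{R}^d;\mathbb{R}^d))$, because its definition imposes no quantitative time regularity (merely $C([0,T];C^{0,\alpha})$). For instance $b_n(t,x)=\sin(nt)g(x)$ with fixed $g\in C^1_b$ lies in $\mathcal{C}^\alpha$ but has no convergent subsequence. The appendix estimates you cite concern parabolic H\"older or $\Lambda^{k,\alpha}([0,T]\times\mathbb{R}^d)$ balls, which \emph{do} carry time regularity; those are the sets $\mathcal{C}$ appearing later under the hypothesis \eqref{eq:metric-entropy-of-C-assumption}, not $\mathcal{C}^\alpha$ itself. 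Separability, which you note correctly, is not enough for chaining. Your spatial mollification does nothing about the time direction. In fact, since your stability estimate uses only the H\"older bound, if the chaining went through it would produce a continuous modification on the closure of $\mathcal{C}^\alpha$, hence on $\tilde{\mathcal{C}}^\alpha$, directly contradicting \cref{lem:impossible-to-have-cts-process}. The flow-stability estimate you derive is essentially \cref{cor:lipschitz-dependence-of-SDEs-upon-b} and is what drives the later Glivenko--Cantelli results, but it is not the tool for this theorem.
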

In the style of language of \eqref{eq:stochastic-process-is-almost-surely-cts}, this theorem states that we can construct the process $\mu^{b,N}_t$ in such a way that
\begin{equation*}
\mathbb{P}\left(\begin{aligned}&\text{For any sequence }(t_n,b_n)\to(t,b)\text{ as }n\to\infty \text{ in }[0,T]\times\mathcal{C}^\alpha,\\
&\text{ we have }\mu^{b_n,N}_{t_n}\to \mu^{b,N}_t\text{ as }n\to\infty\text{ weakly in }\mathrm{P}(\mathbb{R}^d)).
\end{aligned}\right)=1
\end{equation*}
where the convergence of $b_n$ is in $\mathrm{L}^\infty([0,T];\mathrm{L}^{-r,\infty}(\mathbb{R}^d;\mathbb{R}^d))$. 

The inability to construct the process on the full set of $\alpha$-H\"older continuous vector fields $\tilde{\mathcal{C}}^{\alpha}$ means that the following results are a priori, in the sense that they must be applied to smoothed $(\mathrm{C}^1_b)$ vector fields, but are uniform in the degree of smoothness.

Our main result on this empirical process is that the Wasserstein distance between the empirical measure $\mu^{b,N}_t$ and the law $f_t^b$ has (polynomial in $N$) sub-Gaussian concentration about the initial distance $d_{\mathrm{MKW}}(\mu_0^N,f_0)$.
\begin{theorem}[Glivenko-Cantelli theorem for SDEs]\label{thm:GV-intro}
Let $f_0\in \mathrm{P}_p(\mathbb{R}^d)$ for some $2\ne p>1$. Assume that $\mathcal{C}\subset \mathcal{C}^{\alpha}$ obeys the metric entropy bound
\begin{equation}\label{eq:metric-entropy-of-C-assumption}
H(\varepsilon,\mathcal{C},\norm{\cdot}_{\mathrm{L}^\infty([0,T];\mathrm{L}^{-r,\infty}(\mathbb{R}^d;\mathbb{R}^d))})\le C\varepsilon^{-k}
\end{equation}
for some $r\in(1,p)$. Then it holds that
\begin{equation}\label{eq:supremum}
\nnorm{\sup_{t\in[0,T],b\in\mathcal{C}}d_{\mathrm{MKW}}(\mu_t^{b,N},f_t^b)-d_{\mathrm{MKW}}(\mu^N_0,f_0)}\le CN^{-\gamma},
\end{equation}
with
\begin{equation*}
\gamma=\frac1{2+\max(d,d/(p-1),k)}.
\end{equation*}
\end{theorem}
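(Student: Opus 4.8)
The plan is to bound the supremum in \eqref{eq:supremum} by combining a chaining argument in the vector-field variable $b$ with a continuity-of-empirical-measure estimate in $t$, and to carry the whole argument at the level of the test functions $h\in\mathrm{Lip1}$ that appear in the definition of $d_{\mathrm{MKW}}$. The key idea is to write, for fixed $b$ and $t$,
\begin{equation*}
d_{\mathrm{MKW}}(\mu_t^{b,N},f_t^b)=\sup_{h\in\mathrm{Lip1}}\left[\averN h(X^{b,i,N}_t)-\mathbb{E}h(X^{b,1,N}_t)\right]=\sup_{h\in\mathrm{Lip1}}G^{b,N}_t(h),
\end{equation*}
so that the quantity to estimate is $\sup_{t,b,h}G^{b,N}_t(h)$ where now the index set is $[0,T]\times\mathcal{C}\times\mathrm{Lip1}$ (the latter taken in a weighted $\mathrm{L}^\infty$ norm, as in \cref{rem:Lip1-metric-d/p-1} and \cref{lem:metric-entropy-of-Lip1}). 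The compensation by $d_{\mathrm{MKW}}(\mu^N_0,f_0)$ enters because at a fixed reference vector field $b_0$ and time $0$ the process $G^{b_0,N}_0(h)$ is exactly $\int h\,d\mu^N_0-\int h\,df_0$, whose supremum over $h$ is $d_{\mathrm{MKW}}(\mu^N_0,f_0)$; so we only need sub-Gaussian control of the \emph{increments} of $G$ away from this base point.

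The core estimate is an increment bound: I would show that for $b,b'\in\mathcal{C}$, $t,t'\in[0,T]$, and $h,h'\in\mathrm{Lip1}$,
\begin{equation*}
\nnorm{G^{b,N}_t(h)-G^{b',N}_{t'}(h')}\lesssim \frac{1}{\sqrt N}\Bigl(\norm{h-h'}_{\mathrm{L}^{-r,\infty}}+\|X^{b,1,N}_t-X^{b',1,N}_{t'}\|_{\text{appropriate}}+\dotsb\Bigr),
\end{equation*}
where the $N^{-1/2}$ comes from the i.i.d. structure (each summand is a bounded-in-$\mathrm{L}^2$ independent centred term, so Hoeffding/sub-Gaussian concentration of the empirical average applies), the $\norm{h-h'}$ term handles the $\mathrm{Lip1}$ direction directly, and the particle-displacement term must in turn be controlled by $\norm{b-b'}_{\mathrm{L}^\infty([0,T];\mathrm{L}^{-r,\infty})}$ and by $|t-t'|$. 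Controlling the dependence of $X^{b,i,N}_t$ on $b$ in the \emph{negative-order} $\mathrm{L}^\infty$ norm is where the stochastic-flow machinery cited after \cref{thm:informal-GV-thm} enters: one differentiates (or takes finite differences of) the solution map in the drift and uses the regularising effect of the Brownian motion to absorb the roughness of $b$, picking up moment bounds on the Jacobian of the flow; the moment assumption $f_0\in\mathrm{P}_p$ with $r\in(1,p)$ is exactly what makes the weighted norm integrable against the law of the particles. Once this increment bound is in hand, Dudley's entropy integral (or the tail-bound version of the chaining theorem from \cite{Van-der-Vaart-Wellner}) applied over the product index set gives
\begin{equation*}
\nnorm{\sup_{t,b\in\mathcal{C}}\bigl(d_{\mathrm{MKW}}(\mu^{b,N}_t,f^b_t)-d_{\mathrm{MKW}}(\mu^N_0,f_0)\bigr)}\lesssim \frac{1}{\sqrt N}\int_0^{D}\sqrt{H(\varepsilon,\,[0,T]\times\mathcal{C}\times\mathrm{Lip1},\,d)}\,d\varepsilon,
\end{equation*}
and the entropy of the product splits additively into the $\varepsilon^{-k}$ bound \eqref{eq:metric-entropy-of-C-assumption}, the $\varepsilon^{-d/(p-1)}$ bound for $\mathrm{Lip1}$ from \cref{lem:metric-entropy-of-Lip1}, and a $\varepsilon^{-d}$-type bound for the time direction combined with the $d$-dimensional state space. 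The entropy integral converges only down to a truncation scale $\varepsilon_N\sim N^{-\theta}$; optimising the split between the chaining contribution above $\varepsilon_N$ and the crude $\mathrm{L}^2$ bound below it produces exactly $\gamma=1/(2+\max(d,d/(p-1),k))$, with the three terms in the maximum corresponding to the three entropy exponents. The case $p\ne2$ is used precisely to avoid a logarithmic correction at the critical weight, as noted in \cref{rem:Lip1-metric-d/p-1}.

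The main obstacle I expect is the flow-dependence estimate: bounding $X^{b,i,N}_t-X^{b',i,N}_t$ in terms of $\norm{b-b'}$ measured in the weak negative-order norm $\mathrm{L}^{-r,\infty}$ rather than a strong norm. A naive Gronwall estimate would require $\|b-b'\|_{\mathrm{L}^\infty}$, which is useless for chaining because $\mathcal{C}^\alpha$ is not totally bounded in that norm (indeed it contains vector fields of arbitrarily large $\mathrm{C}^1_b$ norm). The resolution has to use the stochastic flow: write the difference of the two SDE solutions via the variation-of-constants/It\^o-Tanaka trick, trading the spatial roughness of $b-b'$ for time regularity using the heat semigroup, so that only an integral of $b-b'$ against a smooth kernel appears, which \emph{can} be estimated in the negative-order norm. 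Making this quantitative and uniform over $\mathcal{C}^\alpha$ (with the right power of $N$ and sub-Gaussian moments, using the Jacobian bounds for $\mathrm{C}^{1,\beta}$ stochastic flows of \cref{def:stochastic-flow}) is the technical heart of the argument; everything else is chaining bookkeeping and the metric-entropy estimates collected in \cref{sec:appendix}.
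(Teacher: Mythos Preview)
Your overall architecture is right: reduce to a supremum over the product index set $[0,T]\times\mathcal{C}\times\mathrm{Lip1}$, use the metric entropy of each factor, and balance a net-point concentration estimate against a continuity estimate. The three entropy exponents $k$, $d/(p-1)$ and $d$ enter exactly as you say, and the final optimisation produces the stated $\gamma$. The difficulty is that the increment bound you propose does not hold, and this forces a different chaining mechanism from the one you describe.

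The issue is the claimed bound $\nnorm{G^{b,N}_t(h)-G^{b',N}_{t'}(h')}\lesssim N^{-1/2}\cdot d((t,b,h),(t',b',h'))$. For this to follow from sub-Gaussian concentration of the i.i.d.\ sum you need each summand $\varphi^{i}(t,b,h)-\varphi^{i}(t',b',h')$ to be sub-Gaussian with norm proportional to the distance. But neither the $h$-increment nor the $b$-increment has this property. In the $h$-direction, $(h-h')(X^{b,i,N}_t)$ is bounded by $\langle X^{b,i,N}_t\rangle^{p}\|h-h'\|_{\mathrm{L}^{-p,\infty}}$, and $\langle X\rangle^{p}$ is only in $\mathrm{L}^1$ under the hypothesis $f_0\in\mathrm{P}_p$. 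In the $b$-direction, the stochastic-flow estimate (\cref{cor:lipschitz-dependence-of-SDEs-upon-b} in the paper) gives $|X^{b}_t-X^{\tilde b}_t|\le L\,\|b-\tilde b\|_{\mathrm{L}^\infty([0,T];\mathrm{L}^{-r,\infty})}$ where the random Lipschitz constant $L$ again has only finite \emph{expectation}: it is a product of $\sup_s\langle X_s\rangle^{r}$ and a weighted supremum of the flow Jacobian, and neither factor is sub-Gaussian. Consequently the hypotheses of Dudley's entropy integral are not available, and truncating the integral does not repair this.

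The paper bypasses this with two devices. First, it introduces a reference process $\widetilde{X}^{i,N}_t=X^{i,N}_0$ and works with the compensated summands $\varphi^{i}(t,b,h)=h(X^{b,i,N}_t)-h(\widetilde{X}^{i,N}_t)$ (centred). These have a sub-Gaussian \emph{envelope} $Y^{i}=C(1+\sup_t|B^{i,N}_t|)$ uniform over the whole index set (\cref{lem:SDE-subgaussian-bounds}); the subtraction is exactly what produces the compensating $d_{\mathrm{MKW}}(\mu^N_0,f_0)$ via \cref{lem:mkw-triangle-with-references,lem:reference-process-mkw-bound}. Second, and crucially, the paper does \emph{not} use multiscale chaining. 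It uses a single $\varepsilon$-net argument (Proposition~\ref{prop:main-entropy-argument}) whose ``pointwise Lipschitz'' assumption only asks for a random bound $G(x,\varepsilon)$ with $\mathbb{E}G(x,\varepsilon)\le C\varepsilon$, i.e.\ an $\mathrm{L}^1$ modulus. At each net point one invokes the sub-Gaussian LLN via the envelope $Y$; between net points one bounds the increment by $\min(2Y,G(x^m,\varepsilon))$, whose centred average is again sub-Gaussian by the envelope while its expectation is controlled by the $\mathrm{L}^1$ bound on $G$. The Orlicz maximal inequality over the net then gives $C\varepsilon+CN^{-1/2}\sqrt{H(\varepsilon)}$, and optimising in $\varepsilon$ yields $N^{-1/(2+k)}$.

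Finally, your proposed mechanism for the $b$-dependence (It\^o--Tanaka / variation-of-constants against the heat kernel) is not the one used. The paper argues directly from the existence of a $\mathrm{C}^{1,\alpha'}$ stochastic flow $\phi$ (\cref{thm:existence-of-stochastic-flow}): writing $\psi(u)=\phi_{u,t}\circ\tilde\phi_{0,u}(X_0)$ and telescoping $\psi(t)-\psi(0)$ over a partition gives $|X^b_t-X^{\tilde b}_t|\le J\int_0^t|b_s-\tilde b_s|(X^{\tilde b}_s)\,ds$ with $J$ a local Lipschitz constant of the flow, which after weighting becomes the $\mathrm{L}^{-r,\infty}$ estimate with the $\mathrm{L}^1$ random constant $L$ described above. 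No PDE/heat-kernel smoothing enters at this step; the regularisation is already encapsulated in the differentiability of the flow.
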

As discussed below the statement of \cref{thm:propagation-of-chaos}, we can easily use this bound to obtain estimates on the expectation of the Wasserstein distance.
\begin{corollary}\label{cor:GV-intro-expectation}
Under the assumptions of \cref{thm:GV-intro} it holds that
\begin{equation}
\mathbb{E}\sup_{t\in[0,T],b\in\mathcal{C}}d_{\mathrm{MKW}}(\mu_t^{b,N},f_t^b)\le \mathbb{E}d_{\mathrm{MKW}}(\mu^N_0,f_0)+CN^{-\gamma}, \quad \gamma=\frac{1}{2+\max(d,d/(p-1),k)}.
\end{equation}
Moreover, if $p$ is large enough depending only on $d,k$ then it holds that
\begin{equation}
\mathbb{E}\sup_{t\in[0,T],b\in\mathcal{C}}d_{\mathrm{MKW}}(\mu_t^{b,N},f_t^b)\le CN^{-\gamma}, \quad \gamma=\frac{1}{2+\max(d,k)}.
\end{equation}
\end{corollary}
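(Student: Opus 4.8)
The plan is to deduce the corollary directly from \cref{thm:GV-intro} by passing from the sub-Gaussian norm to the ordinary expectation. The only structural fact needed is that $\nnorm{\cdot}$ dominates the first moment: for any random variable $Z$,
\begin{equation*}
\mathbb{E}|Z| = \nnorm{Z}_1 \le \sup_{p\ge 1}p^{-1/2}\nnorm{Z}_p = \nnorm{Z},
\end{equation*}
using $p=1$ in the defining supremum of \cref{def:nnorm}. First I would apply this to $Z = \sup_{t\in[0,T],b\in\mathcal{C}}d_{\mathrm{MKW}}(\mu^{b,N}_t,f^b_t) - d_{\mathrm{MKW}}(\mu^N_0,f_0)$; here one should note that $Z$ is a genuine (measurable) random variable, since by \cref{thm:cts-empirical-process} the empirical process $(\mu^{b,N}_t)$ has a modification that is jointly continuous in $(t,b)$, so the supremum may be computed over a countable dense subset of $[0,T]\times\mathcal{C}$. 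Combining with the bound \eqref{eq:supremum} of \cref{thm:GV-intro} gives $\mathbb{E}Z \le \mathbb{E}|Z| \le \nnorm{Z} \le CN^{-\gamma}$. Since $f_0\in\mathrm{P}_p(\mathbb{R}^d)$ with $p>1$ has finite first moment, $\mathbb{E}d_{\mathrm{MKW}}(\mu^N_0,f_0)\le 2\int|x|\,df_0(x)<\infty$, so I may add $\mathbb{E}d_{\mathrm{MKW}}(\mu^N_0,f_0)$ to both sides and use linearity of the expectation to obtain the first displayed inequality.

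For the second displayed inequality I would bound the initial error $\mathbb{E}d_{\mathrm{MKW}}(\mu^N_0,f_0)$ using the quantitative empirical-measure estimates of Fournier and Guillin \cite{fournier2015rate}: since the $(X^{i,N}_0)_{i=1}^N$ are i.i.d. with law $f_0\in\mathrm{P}_p$, for $p$ above a threshold depending only on $d$ one has $\mathbb{E}d_{\mathrm{MKW}}(\mu^N_0,f_0) \le CN^{-1/2}$ when $d=1$, $\le CN^{-1/2}\log(1+N)$ when $d=2$, and $\le CN^{-1/d}$ when $d\ge 3$. When in addition $p>2$ the term $d/(p-1)$ is no longer the maximum, so the exponent of \cref{thm:GV-intro} reduces to $\gamma = 1/(2+\max(d,k))$. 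A dimension-by-dimension comparison then finishes the job: $\gamma\le 1/(2+d)<1/d$ for $d\ge 3$, while $\gamma\le 1/3<1/2$ for $d\in\{1,2\}$, which in particular absorbs the logarithmic factor in $d=2$. Hence each of the Fournier--Guillin right-hand sides is $\le CN^{-\gamma}$, and adding this to $\mathbb{E}Z\le CN^{-\gamma}$ from the first part yields the claim.

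I do not expect a genuine obstacle: the whole argument is bookkeeping layered on top of \cref{thm:GV-intro}. The two points deserving attention are the measurability of the supremum, which is exactly what \cref{thm:cts-empirical-process} supplies and without which one would have to work with outer expectations, and the fact that the Fournier--Guillin rate takes the anomalous form $N^{-1/2}$ (up to a logarithm when $d=2$) rather than the generic $N^{-1/d}$ in low dimensions, so the final exponent comparison must be carried out case-by-case in $d$ rather than uniformly.
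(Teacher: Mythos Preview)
Your proposal is correct and follows exactly the approach the paper intends: passing from the sub-Gaussian norm to expectation via $\mathbb{E}|Z|\le\nnorm{Z}$, then invoking the Fournier--Guillin rates \cite{fournier2015rate} on $\mathbb{E}d_{\mathrm{MKW}}(\mu^N_0,f_0)$ and comparing exponents. The paper itself gives no detailed proof of this corollary, merely pointing to \cite{fournier2015rate}, so your filling in of the measurability remark and the dimension-by-dimension rate comparison is entirely appropriate.
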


\begin{remark}
Similar results with weaker non-polynomial rates may be easily obtained with minor modification of the proof for the case that different types of estimates on the metric entropy hold. In particular convergence to zero of \eqref{eq:supremum} as $N\to\infty$ will hold for any set $\mathcal{C}\subset \mathcal{C}^\alpha$ that is totally bounded  in the norm used in \eqref{eq:metric-entropy-of-C-assumption}.
\end{remark} 
\begin{remark}
Despite the density of $\mathcal{C}^\alpha$ in $\tilde{\mathcal{C}}^\alpha$ in the $\mathrm{L}^\infty([0,T];\mathrm{L}^{-r,\infty}(\mathbb{R}^d;\mathbb{R}^d))$ norm, we cannot replace the subset $\mathcal{C}$ with its closure in $\mathcal{C}^\alpha$ in this norm in the above theorem. This is due to the difficulty in defining the process considered over such a large index set (see \cref{lem:impossible-to-have-cts-process}).
\end{remark}
\begin{remark}
We call this result a Glivenko-Cantelli theorem as it implies that
\begin{equation*}
\mathcal{F}=\{\omega\mapsto h(X^b_t(\omega)):h\text{ is $1$-Lipschitz}, t\in[0,T],b\in\mathcal{C}\},
\end{equation*}
is a Glivenko-Cantelli class with respect to the Weiner measure (see \cref{def:glivenko-cantelli-class}).
\end{remark}

The proof of \cref{thm:GV-intro} also provides better estimates of weaker measures of distance, see \cref{prop:ULLN-for-h} in \cref{sec:empirical-process-theory} below. 

Applications of \Cref{thm:GV-intro} combined with well known metric entropy of function spaces \cite{Edmunds-Triebel,Function-spaces-III} gives explicit convergence rates. In particular, for the parabolic H\"older scale of spaces we obtain:
\begin{corollary}
Let $f_0\in \mathrm{P}_p(\mathbb{R}^d)$ for some $2\ne p>1$, and let $\mathcal{C}^{\alpha}_{para}$ be given by
\begin{equation*}
\mathcal{C}^{\alpha}_{para}=\mathrm{C}([0,T];\mathrm{C}^1_b(\mathbb{R}^d;\mathbb{R}^d))\cap \{b:\norm{b}_{\mathrm{C}^{0,\alpha}_{para}([0,T]\times\mathbb{R}^d;\mathbb{R}^d)}\le C\}
\end{equation*}
for some constants $C\in(0,\infty)$ and $\alpha\in(0,1)$. Then it holds that
\begin{equation}\label{eq:lln-holder-scale}
\nnorm{\sup_{t\in[0,T],b\in\mathcal{C}}d_{\mathrm{MKW}}(\mu_t^{b,N},f_t^b)-d_{\mathrm{MKW}}(\mu_0^N,f_0)}\le CN^{-\gamma}, \quad \gamma=\frac1{2+\max(\tfrac{d+2}\alpha,\tfrac{d}{p-1})}.
\end{equation}
\end{corollary}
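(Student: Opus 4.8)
The plan is to deduce \eqref{eq:lln-holder-scale} directly from \cref{thm:GV-intro}, applied with $\mathcal{C}=\mathcal{C}^{\alpha}_{para}$. Two points must be verified: first, that $\mathcal{C}^{\alpha}_{para}\subset\mathcal{C}^{\alpha}$ (for a suitable choice of the defining constant), so that the theorem applies; and second, that $\mathcal{C}^{\alpha}_{para}$ satisfies the metric entropy hypothesis \eqref{eq:metric-entropy-of-C-assumption} with exponent $k=(d+2)/\alpha$ for some $r\in(1,p)$. Granting both, \cref{thm:GV-intro} yields \eqref{eq:lln-holder-scale} with $\gamma=1/(2+\max(d,d/(p-1),(d+2)/\alpha))$, and since $\alpha\le1$ forces $(d+2)/\alpha\ge d+2>d$, the term $d$ in the maximum is redundant and we recover exactly the claimed $\gamma$. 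Since $p>1$ the interval $(1,p)$ is non-empty, so it suffices to obtain the entropy bound for one fixed such $r$.

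The inclusion $\mathcal{C}^{\alpha}_{para}\subset\mathcal{C}^{\alpha}$ is immediate from the definitions, up to harmlessly enlarging the constant: if $\norm{b}_{\mathrm{C}^{0,\alpha}_{para}([0,T]\times\mathbb{R}^d)}\le C$, then for each fixed $t\in[0,T]$ one has $\norm{b_t}_{\mathrm{L}^\infty(\mathbb{R}^d)}\le C$ and $|b_t(x)-b_t(y)|\le C|x-y|^{\alpha}$, hence $\norm{b}_{\mathrm{C}([0,T];\mathrm{C}^{0,\alpha}(\mathbb{R}^d))}\le 2C$; combined with the common factor $\mathrm{C}([0,T];\mathrm{C}^1_b(\mathbb{R}^d;\mathbb{R}^d))$ this is precisely the definition of $\mathcal{C}^{\alpha}$.

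The substance of the argument is therefore the metric entropy estimate, which belongs to the material collected in \cref{sec:appendix}: a ball of the parabolic H\"older space $\mathrm{C}^{0,\alpha}_{para}([0,T]\times\mathbb{R}^d)$, measured in the weighted norm $\mathrm{L}^\infty([0,T];\mathrm{L}^{-r,\infty}(\mathbb{R}^d;\mathbb{R}^d))$, has metric entropy $\lesssim\varepsilon^{-(d+2)/\alpha}$ whenever $r>1$. I would prove this by a dyadic decomposition in the space variable: write $\mathbb{R}^d=\bigcup_{j\ge0}A_j$ with $A_0=\{|x|<1\}$ and $A_j=\{2^{j-1}\le|x|<2^j\}$ for $j\ge1$, so that the weight $\<x^{-r}$ is comparable to $2^{-jr}$ on $[0,T]\times A_j$. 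Hence it suffices to approximate each restriction $b|_{[0,T]\times A_j}$ to precision $\varepsilon\,2^{jr}$ in the ordinary sup norm. Since the parabolic metric $|t-s|^{1/2}+|x-y|$ has box-counting dimension $d+2$, the (sharp, logarithm-free) covering number of a ball of $\mathrm{C}^{0,\alpha}_{para}$ over the cylinder $[0,T]\times A_j$ at resolution $\eta$ in sup norm is $\exp\!\big(CT\,2^{jd}\,\eta^{-(d+2)/\alpha}\big)$; taking $\eta=\varepsilon\,2^{jr}$ gives $\log N_j\lesssim 2^{jd}(\varepsilon\,2^{jr})^{-(d+2)/\alpha}=\varepsilon^{-(d+2)/\alpha}\,2^{j(d-r(d+2)/\alpha)}$ (and $N_j=1$ as soon as $\varepsilon\,2^{jr}$ exceeds the diameter of the ball). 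Summing over $j$, the geometric series $\sum_{j\ge0}2^{j(d-r(d+2)/\alpha)}$ converges because $r>1>d\alpha/(d+2)$, and we obtain the total bound $H(\varepsilon)\lesssim\varepsilon^{-(d+2)/\alpha}$.

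The main obstacle is exactly this last estimate. The point to isolate is that the decay of the weight forces the H\"older accuracy demanded on the annulus $A_j$ to coarsen geometrically as $j\to\infty$, and this gain beats the merely polynomial growth $2^{jd}$ of the volumes of the annuli; this is what makes the geometric series converge and keeps $k$ equal to the clean value $(d+2)/\alpha$ rather than something larger. With the two hypotheses of \cref{thm:GV-intro} verified, \eqref{eq:lln-holder-scale} follows at once, and the associated expectation bound is obtained exactly as in \cref{cor:GV-intro-expectation}.
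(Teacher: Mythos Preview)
Your proposal is correct and follows essentially the same route as the paper: apply \cref{thm:GV-intro} after checking the inclusion $\mathcal{C}^{\alpha}_{para}\subset\mathcal{C}^{\alpha}$ and the metric entropy bound $H(\varepsilon)\lesssim\varepsilon^{-(d+2)/\alpha}$, which the paper records as \cref{lem:metric-entropy-of-Lip1}(3) and proves via \cref{lem:metric-entropy-of-weighted-anisotropic-spaces}. Your dyadic-annulus argument for the entropy bound is a minor variant of the paper's unit-cube localisation in \cref{lem:metric-entropy-of-weighted-anisotropic-spaces}; both rest on the same mechanism (weighted precision $\varepsilon\langle x\rangle^{r}$ on each piece, summable total log-covering number), and your version has the small advantage of being self-contained rather than invoking the anisotropic Besov entropy estimates from \cite{Function-spaces-III}.
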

As before, this estimate can be combined with estimates of the initial distance $d_{\mathrm{MKW}}(\mu_0^N,f_0)$ to obtain results corresponding to \cref{cor:GV-intro-expectation}. Similar results can be easily obtained for the non-parabolic spaces. While we do not claim that the $\gamma$ in \eqref{eq:lln-holder-scale} is optimal, the result is optimal for the H\"older scale in the sense that no such estimate is possible for $\alpha=0$. In fact:
\begin{proposition}\label{prop:counterexample}
Let $f_0\in \mathrm{P}_p(\mathbb{R}^d)$ for some $p>1$ and let $\mathcal{C}^{0}$ be given by
\begin{equation*}\label{eq:counterexample-class}
\mathcal{C}^{0}=\mathrm{C}([0,T];\mathrm{C}^1_b(\mathbb{R}^d;\mathbb{R}^d))\cap \{b:\norm{b}_{\mathrm{C}([0,T]\times\mathbb{R}^d;\mathbb{R}^d)}\le C\}
\end{equation*}
for some constant $C\in(0,\infty)$. Then it holds that
\begin{equation}\label{eq:inf-lower-bound-counterexample}
\inf_{N\ge1}\mathbb{E}\sup_{t\in[0,T],b\in\mathcal{C}}d_{\mathrm{BL}}(\mu_t^{b,N},f_t^b)\ge c>0.
\end{equation}
\end{proposition}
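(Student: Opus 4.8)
The plan is to exhibit, for each $N$, a random (that is, $\omega$-dependent) vector field $b_{N}\in\mathcal{C}^{0}$ that forces the $N$ particles onto $N$ prescribed curves while leaving an independent ``test particle'' essentially undisturbed; this keeps the bounded-Lipschitz distance between $\mu^{b_{N},N}_{T}$ and $f^{b_{N}}_{T}$ bounded below, uniformly in $N$. The guiding observation is that a drift of sup-norm at most $C$ cannot steer a particle along a \emph{deterministic} curve (it cannot cancel $\dot B^{i,N}$, which has unbounded variation), but it \emph{can} steer it along a curve that co-moves with its own noise. I would therefore use the curves
\[
\zeta^{i}_{t}=X^{i,N}_{0}+B^{i,N}_{t}+\tfrac{C}{2}t\,e_{1},\qquad t\in[0,T],
\]
so $\zeta^{i}_{0}=X^{i,N}_{0}$, and for a small parameter $\delta=\delta_{N}>0$ (fixed last) let $b_{N}$ be a smooth field equal to $\tfrac{C}{2}e_{1}-R(x-\zeta^{i}_{t})$ on the $\delta$-tube around $\zeta^{i}$ whenever that tube is $2\delta$-separated from the others, equal to $0$ off all tubes, and patched with smooth cut-offs of total mass at most $1$ in between; here $R$ is a fixed smooth restoring field with $R(0)=0$, $|R|\le C/2$, $R(u)\cdot u\ge0$. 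Then $b_{N}\in\mathrm{C}([0,T];\mathrm{C}^{1}_{b})$ with $\norm{b_{N}}_{\mathrm{C}([0,T]\times\mathbb{R}^{d})}\le C$, and $\omega\mapsto b_{N}$ is measurable.

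The co-moving choice is what makes the particles follow the rails: with $u^{i}_{t}=X^{b_{N},i,N}_{t}-\zeta^{i}_{t}$, the $dB^{i,N}$-terms cancel and $\dot u^{i}_{t}=-R(u^{i}_{t})$ with $u^{i}_{0}=0$ on the time set where the $i$-th rail is isolated, forcing $u^{i}\equiv0$ there; the exceptional ``overlap'' times are a.s.\ Lebesgue-null when $d\ge2$ (rails meet only on a null event, save perhaps at $t=0$ if $f_{0}$ has atoms) and have measure vanishing with $\delta$ when $d=1$ (one-dimensional local time), so in every case $\sup_{t}|u^{i}_{t}|\le\varepsilon_{N}\to0$ as $\delta_{N}\to0$. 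Hence $\mu^{b_{N},N}_{T}$ is, up to $\varepsilon_{N}$, the empirical measure of the i.i.d.\ points $X^{i,N}_{0}+B^{i,N}_{T}+\tfrac{C}{2}T\,e_{1}$, i.e.\ of $N$ samples of $\nu':=(\mathrm{id}+\tfrac{C}{2}T\,e_{1})_{*}\nu$ with $\nu:=f_{0}*g_{T}$ ($g_{T}$ the heat kernel), so $a_{N}:=\mathbb{E}\,d_{\mathrm{BL}}(\mu^{b_{N},N}_{T},\nu')\to0$ by the classical Glivenko--Cantelli (Varadarajan) theorem for i.i.d.\ samples. By contrast the law $f^{b_{N}}_{T}=\mathrm{Law}(Y_{T})$ of $dY=b_{N,t}(Y)\,dt+dW$, $Y_{0}\sim f_{0}$, with $(Y_{0},W)$ \emph{independent} of $(X^{j,N}_{0},B^{j,N})_{j}$, barely feels the tubes: $Y_{T}=(Y_{0}+W_{T})+E$ with $|E|\le C\Theta$, $\Theta$ the total time $Y$ spends in the $N$ tubes; splitting $[0,T]=[0,\tau]\cup[\tau,T]$ and bounding the density of $Y_{s}$ for $s\ge\tau$ gives $\mathbb{E}\Theta\le\tau+TN\,c(\tau)\,\delta_{N}^{\,d}$, so choosing $\tau=\tau_{N}\to0$ and then $\delta_{N}$ small makes $d_{\mathrm{BL}}(f^{b_{N}}_{T},\nu)\le\mathbb{E}\min(|E|,2)\le2C\tau_{N}\to0$.

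To finish: $\nu$ is a probability measure on $\mathbb{R}^{d}$ and $CT/2>0$, so its translate $\nu'$ differs from $\nu$ and $\Delta:=d_{\mathrm{BL}}(\nu,\nu')$ is a positive constant depending only on $f_{0},C,T,d$. By the triangle inequality $\mathbb{E}\,d_{\mathrm{BL}}(\mu^{b_{N},N}_{T},f^{b_{N}}_{T})\ge\Delta-a_{N}-2C\tau_{N}-\varepsilon_{N}$, and since $a_{N},\tau_{N},\varepsilon_{N}\to0$ this is $\ge\Delta/2$ for all large $N$; together with the trivial bound $\mathbb{E}\sup_{t\in[0,T],b\in\mathcal{C}^{0}}d_{\mathrm{BL}}(\mu^{b,N}_{t},f^{b}_{t})\ge\mathbb{E}\,d_{\mathrm{BL}}(\mu^{0,N}_{T},\nu)>0$ (valid for \emph{every} $N$, as $\nu=f_{0}*g_{T}$ is non-atomic whereas $\mu^{0,N}_{T}$ has at most $N$ atoms) this yields $\inf_{N\ge1}\mathbb{E}\sup_{t,b}d_{\mathrm{BL}}(\mu^{b,N}_{t},f^{b}_{t})\ge c>0$.

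I expect the obstacles to be technical rather than conceptual: (i) assembling the $\omega$-dependent drift as a genuine $\mathrm{C}([0,T];\mathrm{C}^{1}_{b})$ field with the sup-bound $C$ preserved through the (in $d=1$, frequent) tube overlaps, and checking that the rail-following error $\varepsilon_{N}$ really vanishes with $\delta_{N}$; and (ii) the occupation-time estimate making the thin tubes negligible for the independent particle $Y$, which is where the $\tau$/$\delta$ book-keeping lives and is the only place the Brownian geometry of $\mathbb{R}^{d}$ could enter, though the split into $[0,\tau]$ and $[\tau,T]$ keeps it dimension-free.
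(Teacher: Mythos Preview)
Your route is genuinely different from the paper's and the test-particle half is sound, but the rail-following half has a real gap. The claim that the overlap times are ``a.s.\ Lebesgue-null when $d\ge2$'' is false for any fixed $\delta>0$: the set $\{t:|\zeta^{i}_{t}-\zeta^{j}_{t}|<2\delta\}$ is open in $t$ and nonempty with positive probability. More seriously, the bound $\sup_{t}|u^{i}_{t}|\le\varepsilon_{N}\to0$ is not justified. During an overlap the patched drift can move $u^{i}$ at rate up to $3C/2$ while your restoring field $R$ (with $|R|\le C/2$) pulls back at rate at most $C/2$; in $d=1$ the total overlap time of rail $i$ with the other $N-1$ rails is of order $N\delta_{N}$ (occupation-time/local-time scaling), so the accumulated deviation is of order $N\delta_{N}$, which does \emph{not} stay below $\delta_{N}$. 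The particle then exits its tube and is lost. The fix is to drop $R$ entirely and take
\[
b_{N}(t,x)=\tfrac{C}{2}e_{1}\,\chi(t,x),\qquad \chi(t,x)=1-\prod_{i=1}^{N}\bigl(1-\eta_{\delta_{N}}(x-\zeta^{i}_{t})\bigr),
\]
with $\eta_{\delta}$ a smooth bump equal to $1$ at the origin and supported in $B(0,\delta)$. Then $\chi\in[0,1]$, $\chi(t,\zeta^{i}_{t})=1$ for every $i$ and every $t$ (even during overlaps), $b_{N}\in\mathcal{C}^{0}$, and $u^{i}$ solves the Lipschitz ODE $\dot u^{i}=\tfrac{C}{2}e_{1}\bigl[\chi(t,u^{i}+\zeta^{i}_{t})-1\bigr]$ with $u^{i}_{0}=0$. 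Since $u^{i}\equiv0$ is a solution and the ODE has unique solutions, the particle follows its rail \emph{exactly}, so $\varepsilon_{N}=0$ and no overlap analysis is needed. Your occupation-time bound for the independent test particle and the triangle-inequality finish then go through unchanged.

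For comparison, the paper argues by a two-colour sorting trick: it tags half the particles red and half blue, and builds an adapted drift (bumps centred at the \emph{current} particle positions, switched off whenever a red particle is near a blue one) that pushes red right and blue left. The conclusion is indirect: were the uniform law of large numbers to hold, both the red and the blue empirical measures would be close to the common law $f^{b^{N}}_{T}$ and hence to each other, contradicting the sorting. This avoids analysing $f^{b^{N}}_{T}$ altogether. Your approach is more direct---you actually identify $\mu^{b_{N},N}_{T}\approx\nu'$ and $f^{b_{N}}_{T}\approx\nu$---at the cost of the occupation-time bookkeeping; the paper's is shorter but relies on the red/blue symmetry to cancel the unknown law.
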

Note that the use of $d_{\mathrm{BL}}$ in \eqref{eq:inf-lower-bound-counterexample} is a stronger statement than if $d_{\mathrm{MKW}}$ were used as the Wasserstein metric generates a stronger topology. The proof of \cref{prop:counterexample} is provided in \cref{sec:counterexample} where a stochastic control problem is introduced, which is solvable only if no such uniform law of large numbers can hold.

\subsubsection{Second order systems.} The definitions in the second order case are analogous to those in the first order case, but with give them in full for completeness. Let $\tilde{\mathcal{C}}^{\alpha}$ be the set of vector fields given by
\begin{equation}
\tilde{\mathcal{C}}^{\alpha}=\{b:\norm{b}_{\mathrm{C}([0,T];\mathrm{C}^{0,\alpha}(\mathbb{R}^{d};\mathbb{R}^d))}\le C\}
\end{equation}
for some constants $C,\alpha$ with $\alpha\in(2/3,1)$. Define $(X^{b,i,N},V^{b,i,N})_{i=1}^N$ for $b\in\tilde{\mathcal{C}}^\alpha$ and $N\in\mathbb{N}$ as the solution to 
\begin{equation}\label{eq:2nd-order-many-particle-system-empirical}
\left\{\begin{aligned}
&dX^{b,i,N}_t=V^{b,i,N}_tdt,\\
&dV^{b,i,N}_t=b_t(X_t^{b,i,N})dt-\kappa V_t^{b,i,N}dt+dB^{i,N}_t,\\
&(X_0^{b,i,N},V^{i,N}_0)=(X^{i,N}_0,V^{i,N}_0),
\end{aligned} \quad i=1,\dotsc,N,
\right.
\end{equation}
where $X_0^{i,N},V_0^{i,N}$ and $B^{i,N}$ are the same as in \eqref{eq:2nd-order-many-particle-system}.

The \emph{empirical process} $(\mu^{b,N}_t)_{t\in[0,T],b\in\tilde{\mathcal{C}}^\alpha}$ is defined by
\begin{equation}\label{eq:2nd-order-empirical-process}
\mu^{b,N}_t=\averN\delta_{(X^{b,i,N}_t,V^{b,i,N}_t)}.
\end{equation}
For any $b\in\tilde{\mathcal{C}}^\alpha$ and $i\in\{1,\dotsc,N\}$, the law of $(X^{i,N}_t,V^{i,N}_t)$ is $f^b_t$ which solves the following degenerate parabolic PDE:
\begin{equation}\label{eq:2nd-order-empirical-limit-pde}
\left\{\begin{aligned}
&\partial_tf^b_t+v\cdot\nabla_xf^b_t-\kappa\nabla_v\cdot(vf^b_t)+b_t\cdot\nabla_vf_t^b-\tfrac12\Delta_v f_t^b=0,\quad (t,x)\in(0,T)\times\mathbb{R}^d\times\mathbb{R}^d,\\
&f^b_0(x,v)=f_0(x,v)\text{ initial condition}.\\
\end{aligned}\right.
\end{equation}
As before we must work with a dense smooth subset
\begin{equation}
\mathcal{C}^\alpha=\mathrm{C}([0,T];\mathrm{C}^1_b(\mathbb{R}^d;\mathbb{R}^d))\cap \tilde{\mathcal{C}}^\alpha.
\end{equation}
The stochastic process indexed by $\mathcal{C}^\alpha$ has a continuous modification.
\begin{theorem}\label{thm:2nd-order-cts-empirical-process}
Let $f_0\in \mathrm{P}_p(\mathbb{R}^{2d})$ for some $p>1$. Then the process $(X^{b,i,N}_t,V_t)_{i=1}^N$ indexed by $t\in[0,T],b\in\mathcal{C}^\alpha$ has a modification that is continuous, where $\mathcal{C}^\alpha$ is equipped with the $\mathrm{L}^\infty([0,T];\mathrm{L}^{-r,\infty}(\mathbb{R}^d;\mathbb{R}^d))$ topology (any $r>0$). As a consequence, the same holds for the empirical process $(\mu^{b,N}_t)_{t\in[0,T],b\in\mathcal{C}}$ in the weak topology on $\mathrm{P}(\mathbb{R}^d)$.
\end{theorem}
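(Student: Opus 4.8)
The plan is to follow the scheme of \cref{thm:cts-empirical-process}, replacing the Brownian stochastic-flow estimates for \eqref{eq:many-particle-system-empirical} by their hypoelliptic counterparts for the kinetic system \eqref{eq:2nd-order-many-particle-system-empirical}. Throughout I would fix $r\in(0,p)$: on the uniformly bounded set $\tilde{\mathcal{C}}^\alpha$ the $\mathrm{L}^{-r,\infty}$ topology coincides, for every $r>0$, with the topology of local uniform convergence, so the precise value is immaterial. The goal is a modification of $(t,b)\mapsto(X^{b,i,N}_t,V^{b,i,N}_t)_{i=1}^N$ that is a.s.\ jointly continuous on $[0,T]\times\mathcal{C}^\alpha$; the statement for $\mu^{b,N}_t$ then follows by composing with the map $(\mathbb{R}^{2d})^N\to\mathrm{P}(\mathbb{R}^{2d})$, $(z_i)_{i=1}^N\mapsto\tfrac1N\sum_i\delta_{z_i}$, which is continuous from the Euclidean to the weak topology.

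First I would record a priori bounds that are uniform over $b\in\tilde{\mathcal{C}}^\alpha$. From the Duhamel representation
\[V^{b,i,N}_t=e^{-\kappa t}V^{i,N}_0+\int_0^te^{-\kappa(t-s)}b_s(X^{b,i,N}_s)\,ds+\int_0^te^{-\kappa(t-s)}\,dB^{i,N}_s\]
and $\norm{b}_{\mathrm{C}([0,T];\mathrm{C}^{0,\alpha})}\le C$ one obtains $\sup_{t,b}\bigl(\nnorm{X^{b,i,N}_t}_p+\nnorm{V^{b,i,N}_t}_p\bigr)<\infty$ from $f_0\in\mathrm{P}_p(\mathbb{R}^{2d})$, together with the time-increment bounds $\nnorm{X^{b,i,N}_t-X^{b,i,N}_s}_p\le C|t-s|$ (the spatial trajectory is $\mathrm{C}^1$ in $t$ since $\dot{X}^{b,i,N}=V^{b,i,N}$) and a H\"older-$1/2$-in-time bound for $V^{b,i,N}$ coming from the stochastic convolution, the other terms being $\mathrm{L}^p$-Lipschitz in $t$. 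This controls the $[0,T]$-direction of the index set.

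The crux, and the step I expect to be hardest, is a stability estimate in the drift:
\[\nnorm{\bigl(X^{b_1,i,N}_t-X^{b_2,i,N}_t,\ V^{b_1,i,N}_t-V^{b_2,i,N}_t\bigr)}\le C\,\norm{b_1-b_2}_{\mathrm{L}^\infty([0,T];\mathrm{L}^{-r,\infty}(\mathbb{R}^d))}^{\theta}\]
holding uniformly for $b_1,b_2\in\mathcal{C}^\alpha$ and $t\in[0,T]$, with a fixed $\theta>0$ and $C$ depending on $b_1,b_2$ \emph{only} through their common $\mathrm{C}^{0,\alpha}$ bound, not through their (unbounded) $\mathrm{C}^1$ norms. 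Coupling the two systems through the same $(X_0^{i,N},V_0^{i,N},B^{i,N})$ and differencing, the Brownian terms cancel and the drift contributes $b_1(s,X^{b_1}_s)-b_2(s,X^{b_2}_s)$; the part $b_1(s,X^{b_2}_s)-b_2(s,X^{b_2}_s)$ is harmless, being bounded by $\norm{b_1-b_2}_{\mathrm{L}^{-r,\infty}}\langle X^{b_2}_s\rangle^r$ and absorbed by the moment bounds (here $r<p$ is used), but $b_1(s,X^{b_1}_s)-b_1(s,X^{b_2}_s)$ is only bounded by $C|X^{b_1}_s-X^{b_2}_s|^\alpha$, so for $\alpha<1$ a Gr\"onwall argument does not close; indeed \cref{lem:impossible-to-have-cts-process} shows no such estimate can hold on all of $\tilde{\mathcal{C}}^\alpha$. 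The resolution is to exploit the regularisation by the degenerate noise: a kinetic It\^o--Tanaka/Zvonkin change of variables built from the solution $u^b$ of the backward Kolmogorov equation $\partial_t u^b+\mathcal{L}^b u^b-\lambda u^b=-b$ for the generator $\mathcal{L}^b=v\cdot\nabla_x-\kappa v\cdot\nabla_v+b_t\cdot\nabla_v+\tfrac12\Delta_v$ of \eqref{eq:2nd-order-many-particle-system-empirical} conjugates the system to one with globally Lipschitz coefficients, reducing the comparison of the two systems to that of $u^{b_1}$ and $u^{b_2}$. This is exactly where hypoellipticity enters and forces $\alpha>2/3$: the operator $\mathcal{L}^b$ gains only $2/3$ of a spatial derivative, so only for $\alpha>2/3$ is $\nabla u^b$ H\"older continuous with norm small enough (for $\lambda$ large) to make the change of variables admissible — for this I would invoke the differentiable-stochastic-flow and stability results of \cite{wang2015degenerate,fedrizzi2016regularity} and the rough-flow literature cited in the introduction, whose bounds depend on $b$ only via $\norm{b}_{\mathrm{C}^{0,\alpha}}$. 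Finally, boundedness of $\tilde{\mathcal{C}}^\alpha$ in $\mathrm{C}^{0,\alpha}$ upgrades convergence of $b_1-b_2$ to $0$ in $\mathrm{L}^{-r,\infty}$ to convergence in a weighted $\mathrm{C}^{0,\alpha'}$ ($\alpha'<\alpha$) by interpolation, and Schauder stability for the backward PDE turns this into the displayed power bound on $u^{b_1}-u^{b_2}$, hence on the particle systems.

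With these sub-Gaussian time- and drift-increment bounds in hand, I would close the argument by a Kolmogorov--Chentsov/chaining argument carried out in the intrinsic pseudometric of the process — which the stability estimate controls by the $\mathrm{L}^{-r,\infty}$ metric — together with the metric-entropy estimates of \cref{sec:appendix}, producing the desired jointly continuous modification on $[0,T]\times\mathcal{C}^\alpha$ exactly as in the proof of \cref{thm:cts-empirical-process}. Composing with the continuous empirical-measure map gives the stated consequence for $(\mu^{b,N}_t)_{t\in[0,T],b\in\mathcal{C}^\alpha}$. I expect the drift-stability step to be the main obstacle: obtaining a genuine power-of-the-weak-norm rate that is uniform over the $\mathrm{C}^1$-unbounded class $\mathcal{C}^\alpha$ and strong enough to beat the metric entropy of the relevant drift classes, with the restriction $\alpha>2/3$ being precisely the price of the degeneracy of the noise.
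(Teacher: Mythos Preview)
Your approach is far more elaborate than what the theorem requires, and you have misread the paper's proof of \cref{thm:cts-empirical-process}. That proof does \emph{not} use Kolmogorov--Chentsov or chaining, nor does it use any hypoelliptic regularisation; it simply exploits the defining property of $\mathcal{C}^\alpha$, namely that every $b\in\mathcal{C}^\alpha$ lies in $\mathrm{C}([0,T];\mathrm{C}^1_b(\mathbb{R}^d;\mathbb{R}^d))$. After subtracting the Brownian motion one obtains a random ODE with spatially Lipschitz right-hand side (Lipschitz constant $L_b=\norm{b}_{\mathrm{C}([0,T];\mathrm{C}^1_b)}$), and a one-line Gr\"onwall argument gives the pathwise bound
\[
\sup_{t\in[0,T]}|X^b_t-X^{\tilde b}_t|\le T e^{L_bT}\norm{b-\tilde b}_{\mathrm{L}^\infty},
\]
followed by a localisation to replace $\mathrm{L}^\infty$ by $\mathrm{L}^{-r,\infty}$. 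The second-order case is handled identically: set $Y_t=(X_t,V_t-B_t)$, so that $Y$ solves the random ODE $dY_t=F_b(t,Y_t,B_t)\,dt$ with $F_b$ Lipschitz in $Y$ (using $b\in\mathrm{C}^1_b$ and the linearity of the remaining terms), and repeat the Gr\"onwall. The paper says exactly this: ``As the proof in the second order case is no harder, we leave it to the reader.''

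The ``main obstacle'' you identify --- a drift-stability estimate with constants depending on $b_1,b_2$ only through their common $\mathrm{C}^{0,\alpha}$ bound --- is simply not needed here. Continuity is a pointwise property: at each fixed $b\in\mathcal{C}^\alpha$ the modulus may depend on $L_b$, and $L_b<\infty$ for every such $b$ even though it is unbounded over $\mathcal{C}^\alpha$. The Remark immediately following the proof of \cref{thm:cts-empirical-process} explicitly flags that the $\mathrm{C}^1_b$ bound is used for this theorem, while the uniform-in-$b$ estimate you are after is the content of \cref{cor:lipschitz-dependence-of-SDEs-upon-b} and is reserved for the Glivenko--Cantelli results. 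Your route via Zvonkin/hypoelliptic Schauder plus chaining would likely succeed (it essentially re-derives \cref{cor:lipschitz-dependence-of-SDEs-upon-b} and then runs Kolmogorov--Chentsov against the metric entropy of $\mathcal{C}^\alpha$), and would even yield a uniform H\"older modulus, but it is substantial overkill for this statement and imports the restriction $\alpha>2/3$ into a place where the paper needs no such restriction.
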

For the second order system the main result of this subsection is the following.
\begin{theorem}[Glivenko-Cantelli theorem for SDEs (second order case)]\label{thm:2nd-order-GV-intro}
Let $f_0\in \mathrm{P}_p(\mathbb{R}^{2d})$ for some $2\ne p>1$ and assume $\alpha\in(2/3,1)$. Assume that $\mathcal{C}\subset \mathcal{C}^\alpha$ obeys the metric entropy bound
\begin{equation}\label{eq:2nd-order-metric-entropy-of-C-assumption}
H(\varepsilon,\mathcal{C},\norm{\cdot}_{\mathrm{L}^\infty([0,T];\mathrm{L}^{-r,\infty}(\mathbb{R}^d;\mathbb{R}^d))})\le C\varepsilon^{-k}
\end{equation}
for some $r\in(1,p)$. Let $\mu^{b,N}$ and $f^b$ be defined by \eqref{eq:2nd-order-empirical-process} and \eqref{eq:2nd-order-empirical-limit-pde} respectively. Then it holds that
\begin{equation}\label{eq:2nd-order-supremum}
\nnorm{\left[\sup_{t\in[0,T],b\in\mathcal{C}}d_{\mathrm{MKW}}(\mu_t^{b,N},f_t^b)-cd_{\mathrm{MKW}}(\mu_0^N,f_0)\right]_+}\le CN^{-\gamma},
\end{equation}
where $\gamma$ is given by
\begin{equation}\label{eq:2nd-order-GV-gamma}
 \gamma=\frac1{2+\max(d,d/(p-1),k)}.
\end{equation}
\end{theorem}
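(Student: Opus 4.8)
The plan is to follow the proof of the first-order Glivenko--Cantelli theorem \cref{thm:GV-intro} carried out in \cref{sec:empirical-process-theory}, substituting the first-order stochastic-flow input by its hypoelliptic counterpart; only one step is genuinely new. Write $Z^{b,i,N}_t=(X^{b,i,N}_t,V^{b,i,N}_t)$ and $Z^{i,N}_0=(X^{i,N}_0,V^{i,N}_0)$, and let $\Phi_t\colon\mathbb{R}^{2d}\to\mathbb{R}^{2d}$ be the linear free--damping map $\Phi_t(x,v)=(x+a_\kappa(t)v,e^{-\kappa t}v)$ with $a_\kappa(t)=(1-e^{-\kappa t})/\kappa$ (and $a_0(t)=t$), i.e.\ the solution flow of \eqref{eq:2nd-order-many-particle-system-empirical} with $b\equiv 0$ and the noise switched off; set $c:=1+\sup_{t\in[0,T]}\norm{\Phi_t}_{\mathrm{op}}\in(1,\infty)$. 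Since $b$ is bounded uniformly over $\mathcal{C}^\alpha$, Duhamel's formula for $V^{b,i,N}$ together with $X^{b,i,N}_t=X^{i,N}_0+\int_0^t V^{b,i,N}_s\,ds$ shows that $Z^{b,i,N}_t-\Phi_t(Z^{i,N}_0)$ is sub-Gaussian with $\nnorm{\cdot}$ bounded by an absolute constant, uniformly in $b\in\mathcal{C}^\alpha$, and a standard moment estimate gives $\mathbb{E}\bigl[\sup_{t\in[0,T]}|Z^{b,i,N}_t|^p\bigr]\le C$ uniformly in $b$. For $h\in\Lip$, Kantorovich--Rubinstein duality and the splitting $h(Z^{b,i,N}_t)=(h(Z^{b,i,N}_t)-h(\Phi_tZ^{i,N}_0))+h(\Phi_tZ^{i,N}_0)$ give
\begin{equation*}
\int h\,d\mu^{b,N}_t-\int h\,df^b_t=A_h(t,b)+B_h(t),\qquad A_h(t,b)=\frac1N\sum_{i=1}^N a_i(t,b,h),
\end{equation*}
where $a_i(t,b,h)=(h(Z^{b,i,N}_t)-h(\Phi_tZ^{i,N}_0))-\mathbb{E}(h(Z^{b,i,N}_t)-h(\Phi_tZ^{i,N}_0))$ and $B_h(t)=\int h\circ\Phi_t\,d\mu^N_0-\int h\circ\Phi_t\,df_0$. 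Since $h\circ\Phi_t$ is $\norm{\Phi_t}_{\mathrm{op}}$-Lipschitz, $|B_h(t)|\le c\,d_{\mathrm{MKW}}(\mu^N_0,f_0)$, hence
\begin{equation*}
\Bigl[\sup_{t\in[0,T],\,b\in\mathcal{C}}d_{\mathrm{MKW}}(\mu^{b,N}_t,f^b_t)-c\,d_{\mathrm{MKW}}(\mu^N_0,f_0)\Bigr]_+\le\sup_{(t,b,h)\in\mathcal{I}}A_h(t,b),\qquad \mathcal{I}:=[0,T]\times\mathcal{C}\times\Lip ,
\end{equation*}
and it suffices to prove $\nnorm{\sup_{\mathcal{I}}A_h(t,b)}\le CN^{-\gamma}$ (the supremum being measurable by \cref{thm:2nd-order-cts-empirical-process}), where the summands $a_i$ are i.i.d., mean zero, and sub-Gaussian with $\nnorm{a_i}\le C_0$ uniformly over $\mathcal{I}$.

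This is now a uniform law of large numbers over $\mathcal{I}$, to be handled by a chaining argument as in \cref{sec:empirical-process-theory}. The pointwise bound is Hoeffding's inequality for sub-Gaussian sums, $\nnorm{A_h(t,b)}\le C_0 N^{-1/2}$, and it remains to estimate the increments of the $a_i$ in the three directions of $\mathcal{I}$. \emph{(a) The $\Lip$-direction} is identical to the first-order case: $h(Z^{b,i,N}_t)-h(\Phi_tZ^{i,N}_0)$ depends on $h$ only through a $2$-Lipschitz increment, so comparing $h,h'$ in a suitable weighted $\mathrm{L}^\infty$ norm and invoking the weighted metric entropy of $\Lip$ (\cref{lem:metric-entropy-of-Lip1}, cf.\ \cref{rem:Lip1-metric-d/p-1}; the hypothesis $p\ne 2$ is used to avoid a logarithmic correction at the critical weight) together with the uniform $p$-th moment bound, this direction contributes the $\max(d,d/(p-1))$ term to $\gamma$. \emph{(b) The $t$-direction} is, if anything, more favourable than in the first-order case, since $X^{b,i,N}$ is $\mathrm{C}^1$ in time and $V^{b,i,N}$ is H\"older in time of any exponent $<1/2$, so $|Z^{b,i,N}_t-Z^{b,i,N}_{t'}|\lesssim|t-t'|^{1/2}(1+\sup_s|V^{b,i,N}_s|)+|B^i_t-B^i_{t'}|$; choosing the time net polynomially fine in $N$ and using the uniform $p$-th moment of $\sup_s|V^{b,i,N}_s|$, this direction contributes only a negligible power of $N^{-1}$. \emph{(c) The $\mathcal{C}$-direction} requires the quantitative flow stability estimate
\begin{equation*}
\nnorm{\textstyle\sup_{t\in[0,T]}\bigl|Z^{b,i,N}_t-Z^{b',i,N}_t\bigr|}\le C\,\norm{b-b'}_{\mathrm{L}^\infty([0,T];\mathrm{L}^{-r,\infty}(\mathbb{R}^d;\mathbb{R}^d))}
\end{equation*}
\emph{uniformly over $b,b'\in\mathcal{C}^\alpha$}, in particular uniformly in their (arbitrarily large) $\mathrm{C}^1_b$ norms, with the perturbation measured in the weak norm of \eqref{eq:2nd-order-metric-entropy-of-C-assumption}.

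The hard part is (c): a Gr\"onwall estimate on $\norm{b-b'}_{\mathrm{L}^\infty}$ is useless, being neither uniform in the smoothness of $b$ nor stated in the weak norm, and one must instead use the hypoelliptic (kinetic) analogue of the Zvonkin/It\^o--Tanaka transform. Concretely, one solves the degenerate backward Kolmogorov equation $\partial_t u+\mathcal{L}_b u=b-b'$ on $[0,T]\times\mathbb{R}^{2d}$ for the kinetic generator $\mathcal{L}_b=v\cdot\nabla_x-\kappa v\cdot\nabla_v+b\cdot\nabla_v+\tfrac12\Delta_v$, exploiting the hypoelliptic smoothing that gains two derivatives in $v$ and $2/3$ of a derivative in $x$ --- this is exactly what forces $\alpha>2/3$, and also why the weight exponent must satisfy $1<r<p$ so that the accompanying moment estimates close --- so that $\nabla_v u$ is uniformly small and $u$ uniformly Lipschitz; then It\^o's formula applied to $Z^{b,i,N}_t+(0,u_t(Z^{b,i,N}_t))$ removes the rough part of the drift difference and leaves a remainder controlled \emph{linearly} by $\norm{b-b'}$ in the weak norm, uniformly in the $\mathrm{C}^1_b$ data. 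This is carried out for the first-order system (with the elliptic generator) in \cref{sec:empirical-process-theory}; the kinetic version follows the same scheme using the degenerate-parabolic Schauder and $\mathrm{L}^p$ estimates for $\mathcal{L}_b$ from the stochastic-flow literature for hypoelliptic rough drifts, with every exponent degraded through the $2/3$-versus-$1$ hypoelliptic loss --- which is precisely why the restriction $\alpha>2/3$ appears but the \emph{form} of $\gamma$ in \eqref{eq:2nd-order-GV-gamma} is unchanged from \cref{thm:GV-intro}.

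With (a)--(c) in hand the chaining is routine. Covering $\mathcal{C}$ by a $\delta$-net of cardinality $\le\exp(C\delta^{-k})$ (by \eqref{eq:2nd-order-metric-entropy-of-C-assumption}), $[0,T]$ by a polynomially fine net, and (a weighted truncation of) $\Lip$ by an $\varepsilon$-net, a single-scale estimate bounds $\nnorm{\sup_{\mathcal{I}}A_h}$ by the sub-Gaussian maximum over the resulting net, of size $\lesssim N^{-1/2}(\delta^{-k/2}+\varepsilon^{-d/2}+\text{lower order})$, plus the net resolution error, of size $\lesssim\delta+\varepsilon+(\text{lower order})$ by (a)--(c) and the linearity in (c). Optimising $\delta\sim N^{-1/(2+k)}$ and $\varepsilon\sim N^{-1/(2+d)}$, and tracking the $d/(p-1)$ contribution from the weighted $\Lip$-entropy in (a), yields $\nnorm{\sup_{\mathcal{I}}A_h}\le CN^{-\gamma}$ with $\gamma=1/(2+\max(d,d/(p-1),k))$, which together with the reduction of the first paragraph is \eqref{eq:2nd-order-supremum}--\eqref{eq:2nd-order-GV-gamma}.
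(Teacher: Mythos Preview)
Your overall architecture is correct and matches the paper: reduce via the reference process $\Phi_t(Z_0^{i,N})$ to a supremum of centred i.i.d.\ sums over $\mathcal{I}=[0,T]\times\mathcal{C}\times\Lip$, then run a single--scale net argument (the paper packages this as \cref{prop:main-entropy-argument}), verifying increment bounds in each of the three directions. Your treatment of the $h$-- and $t$--directions, the use of the $\Lip$ entropy with weight giving the $\max(d,d/(p-1))$ term, and the final optimisation all agree with the paper.

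The genuine divergence is in step (c). The paper does \emph{not} run a Zvonkin/It\^o--Tanaka argument here; it invokes the existence of a $\mathrm{C}^{1,\alpha'}$ stochastic flow for the kinetic SDE (cited as a black box from the hypoelliptic literature, \cref{thm:existence-of-stochastic-flow}) and then proves the stability estimate by a telescoping/interpolation argument (\cref{cor:lipschitz-dependence-of-SDEs-upon-b}): with $\phi,\tilde\phi$ the flows of $b,\tilde b$, one writes $\psi(u)=\phi_{u,t}\circ\tilde\phi_{0,u}(Z_0)$ so that $\psi(0)=Z^b_t$, $\psi(t)=Z^{\tilde b}_t$, and bounds the Riemann-sum of $|\psi(t_{k+1})-\psi(t_k)|$ by $J\int_0^t|b_s-\tilde b_s|(X^{\tilde b}_s)\,ds$ with $J$ the (random) Lipschitz constant of the flow. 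The weighted norm then appears simply because $|b_s-\tilde b_s|(X^{\tilde b}_s)\le\langle X^{\tilde b}_s\rangle^r\|b_s-\tilde b_s\|_{\mathrm L^{-r,\infty}}$. Your sketch ``solve $\partial_t u+\mathcal L_b u=b-b'$ and apply It\^o'' is closer to how the flow theorems themselves are proved, but as written it does not directly produce $|Z^b_t-Z^{b'}_t|$; the standard Zvonkin setup has right-hand side $b$ (to straighten one drift), not $b-b'$, and extracting a bound linear in the \emph{weighted} $\mathrm L^{-r,\infty}$ norm from kinetic Schauder estimates is not immediate. So your route is plausible but under-specified, and your remark that the first-order case ``is carried out'' via Zvonkin in \cref{sec:empirical-process-theory} is inaccurate---the paper uses the flow-telescoping argument there as well.

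One correction: the increment bound you state in (c), $\nnorm{\sup_t|Z^{b,i,N}_t-Z^{b',i,N}_t|}\le C\|b-b'\|$, is too strong and is not what the paper proves or needs. The random Lipschitz constant $L$ in \cref{cor:lipschitz-dependence-of-SDEs-upon-b} has only \emph{finite expectation} (it is a product of a flow-derivative supremum with all moments and a $p$-th--moment term, neither sub-Gaussian in general). The chaining in \cref{prop:main-entropy-argument} accommodates this: one caps the increment by the sub-Gaussian envelope via $\min(2Y^{i,N},G^{i,N})$, which is simultaneously sub-Gaussian (from $Y$) and has expectation $\lesssim\varepsilon$ (from $G$). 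Your single-scale argument goes through once you make this replacement, but as stated the sub-Gaussian increment claim is a gap.
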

As before we can use this result to obtain estimates like the following.
\begin{corollary}
Under the assumptions of \cref{thm:2nd-order-GV-intro}, the following holds
\begin{equation}
\mathbb{E}\sup_{t\in[0,T],b\in\mathcal{C}}d_{\mathrm{MKW}}(\mu_t^{b,N},f_t^b)\le C\mathbb{E}d_{\mathrm{MKW}}(\mu^N_0,f_0)+CN^{-\gamma},
\end{equation}
with $\gamma$ given by \eqref{eq:2nd-order-GV-gamma}. Moreover, if $p$ is large enough depending only on $d,k$ then it holds that
\begin{equation}
\mathbb{E}\sup_{t\in[0,T],b\in\mathcal{C}}d_{\mathrm{MKW}}(\mu_t^{b,N},f_t^b)\le CN^{-\gamma}, \quad \gamma=\frac{1}{2+\max(d,k)}.
\end{equation}
\end{corollary}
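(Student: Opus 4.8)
The plan is to derive the corollary from \cref{thm:2nd-order-GV-intro} by purely soft arguments: the bound relating the sub-Gaussian norm to the $\mathrm{L}^1$ norm, the elementary inequality $x\le[x-cy]_++cy$ valid for non-negative reals, and --- for the second assertion only --- the known polynomial rate of convergence of the empirical measure of i.i.d.\ samples.

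First I would recall from \cref{def:nnorm} that $\nnorm{Z}=\sup_{p\ge1}p^{-1/2}\nnorm{Z}_p\ge\nnorm{Z}_1=\mathbb{E}|Z|$ for any random variable $Z$ (take $p=1$ in the supremum). Applying this with $Z=\bigl[\sup_{t\in[0,T],b\in\mathcal{C}}d_{\mathrm{MKW}}(\mu_t^{b,N},f_t^b)-c\,d_{\mathrm{MKW}}(\mu_0^N,f_0)\bigr]_+$ and invoking \eqref{eq:2nd-order-supremum} gives
\begin{equation*}
\mathbb{E}\bigl[\sup_{t\in[0,T],b\in\mathcal{C}}d_{\mathrm{MKW}}(\mu_t^{b,N},f_t^b)-c\,d_{\mathrm{MKW}}(\mu_0^N,f_0)\bigr]_+\le CN^{-\gamma}.
\end{equation*}
Next, for any $x,y\ge0$ and $c\ge1$ the inequality $x\le[x-cy]_++cy$ holds: if $x\le cy$ it reduces to $x\le cy$, and otherwise $[x-cy]_+=x-cy$ and equality holds. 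Taking $x=\sup_{t\in[0,T],b\in\mathcal{C}}d_{\mathrm{MKW}}(\mu_t^{b,N},f_t^b)$ and $y=d_{\mathrm{MKW}}(\mu_0^N,f_0)$, both almost surely non-negative, then taking expectations and combining with the previous display yields the first inequality (after enlarging $C$ to absorb $c$).

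For the second assertion, note that once $p$ is large the term $d/(p-1)$ in \eqref{eq:2nd-order-GV-gamma} is dominated by $\max(d,k)$, so $\gamma$ becomes $1/(2+\max(d,k))$ and it remains only to bound $\mathbb{E}d_{\mathrm{MKW}}(\mu_0^N,f_0)$ by $CN^{-\gamma}$. Since $(X_0^{i,N},V_0^{i,N})_{i=1}^N$ are i.i.d.\ with law $f_0\in\mathrm{P}_p(\mathbb{R}^{2d})$, the quantitative estimate of \cite{fournier2015rate} bounds $\mathbb{E}d_{\mathrm{MKW}}(\mu_0^N,f_0)$ by a constant depending on a moment of $f_0$ times a negative power of $N$ whose exponent improves as $p\to\infty$; choosing $p$ large enough, depending only on $d$ and $k$, makes that power at least $\gamma$, which closes the argument. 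I do not expect a genuine obstacle here --- the whole proof is formal manipulation together with a citation --- and the only point requiring a moment's care is the dimensional bookkeeping: the samples live in $\mathbb{R}^{2d}$, so the dimension-dependent exponent furnished by \cite{fournier2015rate} must be checked against $\gamma$ once the moment parameter is chosen large.
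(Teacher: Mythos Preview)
Your proposal is correct and follows essentially the same approach as the paper. The paper does not spell out a proof for this corollary but indicates (``As before\dots'') that it follows from \cref{thm:2nd-order-GV-intro} via the elementary inequality $x\le[x-y]_++y$ for $x,y\ge0$ together with the estimates on $\mathbb{E}d_{\mathrm{MKW}}(\mu_0^N,f_0)$ from \cite{fournier2015rate}; this is exactly what you do, including the correct observation that the samples live in $\mathbb{R}^{2d}$.
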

As in the first order case we can obtain results in the H\"older scale of spaces. In this case, however, it makes more sense to consider the usual non-parabolic spaces.
\begin{corollary}\label{cor:2nd-order-glivenco-cantelli-for-holder-functions}
Let $f_0\in \mathrm{P}_p(\mathbb{R}^{2d})$ for some $2\ne p>1$. Let $\alpha\in(2/3,1)$ and consider the class of $\alpha$-H\"older functions defined by
\begin{equation*}
\mathcal{C}=\mathrm{C}([0,T];\mathrm{C}^1_b(\mathbb{R}^d;\mathbb{R}^d))\cap \{b:\norm{b}_{\mathrm{C}^{0,\alpha}([0,T]\times\mathbb{R}^d;\mathbb{R}^d)}\le C\}.
\end{equation*}
Then it holds that
\begin{equation}
\nnorm{\left[\sup_{t\in[0,T],b\in\mathcal{C}}d_{\mathrm{MKW}}(\mu_t^{b,N},f_t^b)-cd_{\mathrm{MKW}}(\mu_0^N,f_0)\right]_+}\le CN^{-\gamma},
\end{equation}
where
\begin{equation*}
\gamma=\frac1{2+\max(\tfrac{d+1}\alpha,\tfrac d{p-1})}.
\end{equation*}
\end{corollary}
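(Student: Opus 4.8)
The plan is to derive this as a direct corollary of the general second-order Glivenko--Cantelli theorem \cref{thm:2nd-order-GV-intro}: it suffices to check that $\mathcal{C}$ is an admissible index class and then to compute the metric entropy exponent $k$ appearing in \eqref{eq:2nd-order-metric-entropy-of-C-assumption}. First I would fix any weight exponent $r$ with $1<r<p$, which is possible since $p>1$. That $\mathcal{C}\subset\mathcal{C}^\alpha$ is elementary: the bound $\norm{b}_{\mathrm{C}^{0,\alpha}([0,T]\times\mathbb{R}^d;\mathbb{R}^d)}\le C$ controls, at each fixed time, $\norm{b_t}_{\mathrm{L}^\infty_x}$ and the spatial H\"older seminorm of $b_t$, and also gives continuity of $t\mapsto b_t$ in $\mathrm{L}^\infty_x$; combined with the (automatic) uniform $\mathrm{C}^1_b$ bound for $b\in\mathrm{C}([0,T];\mathrm{C}^1_b(\mathbb{R}^d;\mathbb{R}^d))$, interpolation between $\mathrm{L}^\infty_x$ and $\mathrm{C}^1_b$ upgrades this to continuity of $t\mapsto b_t$ into $\mathrm{C}^{0,\alpha}(\mathbb{R}^d;\mathbb{R}^d)$, so $b\in\tilde{\mathcal{C}}^\alpha$ up to an inessential change of constant.

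The only real work is the metric entropy bound. The class $\mathcal{C}$ is contained in the H\"older ball $\{b:\norm{b}_{\mathrm{C}^{0,\alpha}([0,T]\times\mathbb{R}^d;\mathbb{R}^d)}\le C\}$ over the $(d+1)$-dimensional domain $[0,T]\times\mathbb{R}^d$. Over a bounded $(d+1)$-dimensional set, the classical H\"older-ball entropy estimate gives metric entropy of order $\varepsilon^{-(d+1)/\alpha}$ in the uniform norm; the unboundedness of $\mathbb{R}^d$ is absorbed by the weight $\<x^{-r}$ by splitting into dyadic shells $\{2^j\le|x|<2^{j+1}\}$, on which the weight is $\sim 2^{-jr}$, so that uniform accuracy $\varepsilon$ in the weighted norm demands only unweighted accuracy $\sim\varepsilon 2^{jr}$ on each shell, and the resulting geometric sum over $j$ converges because $r>\alpha$ (which holds since $r>1>\alpha$). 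This is exactly the weighted H\"older metric entropy estimate recorded in \cref{sec:appendix}, applied with spatial-plus-temporal dimension $d+1$; it gives
\begin{equation*}
H(\varepsilon,\mathcal{C},\norm{\cdot}_{\mathrm{L}^\infty([0,T];\mathrm{L}^{-r,\infty}(\mathbb{R}^d;\mathbb{R}^d))})\le C\varepsilon^{-(d+1)/\alpha},
\end{equation*}
that is, \eqref{eq:2nd-order-metric-entropy-of-C-assumption} holds with $k=(d+1)/\alpha$. (Replacing $\mathcal{C}$ by the full H\"older ball and back costs at most a factor $2$ in $\varepsilon$, as $\mathcal{C}$ is dense in that ball in the weighted norm by mollification, which does not affect the exponent.)

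It remains to simplify the exponent. Substituting $k=(d+1)/\alpha$ into $\gamma=1/(2+\max(d,d/(p-1),k))$ from \cref{thm:2nd-order-GV-intro} and using that $\alpha\in(2/3,1)$ forces $(d+1)/\alpha>d+1>d$, the bare-$d$ term is never active, so $\max(d,d/(p-1),(d+1)/\alpha)=\max(\tfrac{d+1}\alpha,\tfrac d{p-1})$, which is exactly the claimed value of $\gamma$. The conclusion of the corollary, including the constant $c$ and the positive part $[\cdot]_+$, is then literally the conclusion of \cref{thm:2nd-order-GV-intro}, so nothing further is needed. I expect the only place requiring care to be the weighted entropy estimate, and there the essential point is simply that an admissible weight exists -- i.e.\ that $\alpha<r<p$ can be arranged, which is possible precisely because $\alpha<1<p$.
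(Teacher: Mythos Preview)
Your proposal is correct and follows exactly the route the paper intends: apply \cref{thm:2nd-order-GV-intro} after verifying the metric entropy hypothesis \eqref{eq:2nd-order-metric-entropy-of-C-assumption} with $k=(d+1)/\alpha$ via the weighted H\"older entropy estimate in the appendix (specifically \cref{lem:metric-entropy-of-Lip1}(5) with $q=\infty$, $k=0$), then simplify the exponent using $(d+1)/\alpha>d$. A couple of your steps are more elaborate than needed---for instance, continuity of $t\mapsto b_t$ into $\mathrm{C}^{0,\alpha}$ follows immediately from $b\in\mathrm{C}([0,T];\mathrm{C}^1_b)$ and the continuous embedding $\mathrm{C}^1_b\hookrightarrow\mathrm{C}^{0,\alpha}$, so no interpolation is required; and the density/mollification remark is unnecessary since the entropy of a subset is trivially bounded by that of the ambient ball---but these do not affect correctness.
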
 
As before this can be used to bound the expectation of the supremum.

\begin{remark}
In the second order case the exponent $\gamma$ is bounded below on the range of $\alpha$ considered, (for $p>2$), i.e.
\begin{equation}
\gamma=\frac1{2+\tfrac{d+1}\alpha}> \frac2{7+3d}>0.
\end{equation}
\end{remark}
\begin{remark}
	The lower bound of $2/3$ on $\alpha$ seems unlikely to be optimal in the sense that compactness methods would likely yield a Glivenko-Cantelli theorem for $\alpha\in(0,1)$ but without an explicit convergence rate. We do not pursue such results here.
\end{remark}
\section{Prior work and discussion}\label{sec:discussion}
There has been much prior work on propagation of chaos of the particle system \eqref{eq:many-particle-system}. This has been split between the \emph{noisy case} considered in this manuscript, and the \emph{noiseless case} where the driving Brownian motions are absent.
\subsection{Lipschitz interactions}
The first quantitative results in propagation of chaos are due to Dobrushin \cite{Dobrushin} in the noiseless case, and then later Sznitman in the case with noise considered in this work. Both these results rely on the interaction kernel $K$ being Lipschitz continuous. Dobrushin observed that the empirical measure $\mu^N$ is a weak solution to the limit equation and then established that, under the assumption that $K$ is Lipschitz, the limit equation is well-posed in the space of measures using the MKW distance, from which propagation of chaos then follows from the convergence of initial data. In the case with noise, the empirical measure is no longer a weak solution to the limit equation. To get around this problem, Sznitman \cite{Sznitman} developed a coupling method to prove propagation of chaos. This will be described in detail below in \cref{subsec:coupling-method}.
\subsection{Singularity only at the origin} A subsequent line of enquiry was into interaction kernels which are Lipschitz apart from a single singularity where $K$ or its derivative blows up in a specified manner. The case $K(x,y)=W(x-y)$ with $W$ Lipschitz away from the origin\footnote{In these cases one must disallow self-interaction, so that the force term $b^{N}_t(X^{i,N}_t)$ on the $i$th particle in \eqref{eq:many-particle-system} is replaced with $\frac1{N-1}\sum_{j=1,j\ne i}^NK(X^{i,N}_t,X^{j,N}_t)$. The results of this paper also apply to this case, see \cref{subsec:simple-extensions} below.} has received much attention as it models, for example, gravitational attraction.

\subsubsection{Noiseless case} In \cite{Jabin-Hauray-Vlasov} and later papers by various authors, propagation of chaos is established in the case without noise for interaction kernels satisfying the bounds $|W(x)|\le C|x|^{-\alpha}$ and $|\nabla W(x)|\le C|x|^{-\alpha-1}$ for some $\alpha<1$. As in the proof of Dobrushin, these works rely on weak-strong stability estimates on the limit equation. However, to avoid the singularity at the origin, control must be obtained over the minimum distance between particles, and this requires specially prepared initial particle positions to control these distances at the initial time. A comprehensive review is given in \cite{Jabin2014review}. 

\subsubsection{Noisy case} Subsequently to proving the results of this manuscript, the author was surprised to find that the noisy case has been considered \emph{harder} than the noiseless case. This is in stark contrast with the comparison of existence and uniqueness theory for ODEs and SDEs where noise allows for less regular vector fields. When, however, one considers that to handle a singularity at the origin one must control the distances between particles and avoid near collisions, this makes more sense. Among recent work along these lines is \cite{Fournier-Hauray-Landau} where propagation of chaos is obtained for a system similar to \eqref{eq:many-particle-system} with $W(z)=z|z|^{\alpha-1}$ for some $\alpha\in(0,1)$, (so $W$ is $\alpha$-H\"older continuous). Another recent work is \cite{Hauray-Salim-2015propagation} where the $1$-dimensional Vlasov-Poisson-Fokker-Planck equation is considered, and the interaction kernel is the $\operatorname{sign}$ function, i.e. constant except for a jump at the origin. As in the works mentioned in the previous paragraph, the proof in \cite{Fournier-Hauray-Landau} uses control over particle distances. A review is given in \cite{JabinWang2016review}.

\subsection{Bounded interactions or bounded potentials}
In a recent work \cite{Jabin-Wang} an intriguing combinatorial argument is made to prove propagation of chaos for systems with \emph{bounded} interaction kernels $W(z)\in \mathrm{L}^\infty$, later extended to bounded potentials \cite{Jabin-Wang-bounded-potentials} (roughly speaking $W(z)\in \mathrm{W}^{-1,\infty}$) in both the noisy and noiseless cases under the condition that $\operatorname{div} W=0$. These works rely on controlling the \emph{relative-entropy} between the solution to the $N$-particle Liouville equation and the limit solution. An advantage of these works over the results in this manuscript in the $K(x,y)=W(x-y)$ case is that the assumptions on the interaction kernel are weaker in the sense that $\mathrm{L}^\infty$ (even $\mathrm{W}^{-1,\infty}$) rather than H\"older regularity is asked. However, this comes at the cost of assuming that $W$ is divergence free, and rather surprisingly, rather non-generic assumptions on the initial datum $f_0$, which cannot be taken to be smooth with compact support, for example.

\subsection{The coupling method of Sznitman}\label{subsec:coupling-method} To prove propagation of chaos for Lipschitz interactions $K$ in the noisy case Sznitman introduced a coupling method, where the particles \eqref{eq:many-particle-system} are coupled to an auxiliary particle system with the vector field $b^N$ replaced by the vector field of the limit equation $b^\infty$. In the notations of \eqref{eq:many-particle-system-empirical}, the auxiliary particle system is $(X^{b^\infty,i,N})_{i=1}^N$. We give a heuristic description of the proof below.

\subsubsection{Heuristic description}
By the triangle inequality we observe that
\begin{equation}\label{eq:Sznitman-coupling}
\mathbb{E} d(\mu^N_t,f_t)\le \mathbb{E}d(\mu^N_t,\mu^{b^\infty,N}_t)+\mathbb{E}d(\mu^{b^\infty,N}_t,f_t).
\end{equation}
The second term is the expected difference between the empirical measure of $N$ i.i.d. samples from their law $f_t$, and so tends to zero as $N\to\infty$ by the law of large numbers. Using Lipschitz continuity of the vector field $b^\infty$ we obtain the bound
\begin{equation*}
|X^{i,N}_t-X^{b^\infty,i,N}_t|\le \norm{\nabla b^\infty}_{\mathrm{L}^\infty}\int^t_0|X^{i,N}_s-X^{b^\infty,i,N}_s|\,ds+\int^t_0|b^N_s(X^{i,N}_s)-b^\infty(X^{i,N}_s)|\,ds,
\end{equation*}
and one concludes via the Gr\"onwall inequality that
\begin{equation*}
d(\mu^N_t,\mu^{b^\infty,N}_t)\le e^{t\norm{\nabla b^\infty}_{\mathrm{L}^\infty}}\averN\int^t_0|b^N_s(X^{i,N}_s)-b^\infty(X^{i,N}_s)|\,ds.
\end{equation*}
That is, the particle system depends smoothly on the vector field. Then one uses Lipschitz continuity of $K$ to obtain that the vector field depends smoothly on the particle positions, and closes the argument.

\subsubsection{Limitations}
This coupling method relies heavily on stability estimates on the particle system, and uses no stability estimates on the limit PDE. This can be seen in \eqref{eq:Sznitman-coupling} where for the first term, one uses stability estimates, and on the second term, the law of large numbers is used. By coupling in this way, we are philosophically viewing the limit PDE as a perturbation of the particle system; viewing a smoother system as a perturbation of a rougher system.

\subsection{A new coupling method}
To get around the limitation of the coupling method above, we reverse the roles of the particle system and the limit PDE in \eqref{eq:Sznitman-coupling}. We wish to apply stability estimates on the limit equation to the second term in \eqref{eq:Sznitman-coupling} and the law of large numbers to the first term. This way we make better use of the two powerful tools at our disposal: \emph{estimates on parabolic PDEs} and \emph{the law of large numbers}.

To apply the law of large numbers to the first term, we must necessarily couple with a continuum object and not a discrete particle system. The only choice is to couple with $f^{b^N}_t$ defined by \eqref{eq:empirical-limit-pde}, that is, with the limit PDE with the vector field $b^\infty$ replaced with vector field of the particle system $b^N$. (Note that $f^{b^N}_t$ is a random variable, even though it is a continuum object). Again the discussion below is heuristic. We refer the reader to the proof of \cref{thm:propagation-of-chaos} in \cref{sec:propagation-of-chaos} below for a more complete and rigorous presentation.

\subsubsection{Heuristic description} By the triangle inequality we have
\begin{equation}\label{eq:new-coupling}
\mathbb{E} d(\mu^N_t,f_t)\le \mathbb{E}d(\mu^N_t,f^{b^N}_t)+\mathbb{E}d(f^{b^N}_t,f_t).
\end{equation}
This can be rewritten as
\begin{equation*}
\mathbb{E} d(\mu^N_t,f_t)\le \mathbb{E}d(\mu^{b^N,N}_t,f^{b^N}_t)+\mathbb{E}d(f^{b^N}_t,f^{b^\infty}_t).
\end{equation*}
Using stability estimates on the limit equation one readily obtains that it is sufficient to bound the first term on the right hand side by something that tends to zero as $N\to\infty$. 

For the first term we wish to apply the law of large numbers. We have, however, a problem. The particle system is identically distributed, but not independent, so we cannot apply the law of large numbers directly. Moreover, we know very little about $b^N$. Indeed, our lack of knowledge of how to estimate $b^N$ was our motivation for constructing this method. Because of this, we give up all hope of understanding $b^N$, and instead use the trivial bound
\begin{equation*}
d(\mu^{b^N,N}_t,f^{b^N}_t)\le \sup_{b\in\mathcal{C}}d(\mu^{b,N}_t,f^{b}_t)
\end{equation*}
where $b\in\mathcal{C}$ ranges over all possible vector fields.\footnote{Of course, in practice $\mathcal{C}$ will not be all possible vector fields, but merely those in some norm bounded set. Thus there will be some asymptotically (as $N\to\infty$) small chance that $b\not\in\mathcal{C}$, which must be handled separately. We omit this here for brevity.} This coupling and the coupling of Sznitman are illustrated in \cref{fig:couplings}.

One is then left with the problem of bounding $\mathbb{E} \sup_{b\in\mathcal{C}}d(\mu^{b,N}_t,f^{b}_t)$. If the supremum where outside the expectation this would be easy, as it is the empirical measure of $N$ i.i.d. samples compared to their law $f^b_t$ and the usual law of large numbers applies. In this way, we have exchanged the non-independence of the particles with taking a supremum over a very large set. This technique is commonly used in proving consistency of estimators in theoretical statistics \cite{Van-der-Vaart-Wellner}, but to the authors knowledge has not been applied to the problem of propagation of chaos in this way before.

\begin{figure}
\centering
\begin{tikzcd}[column sep=10em,row sep=10em]
{\mu^N=\mu^{b^N,N}} \arrow{r}[above]{\text{Stability of SDE}}[below]{d(\mu^{b^N,N},\mu^{b^\infty,N})} \arrow{d}[sloped,below]{\text{Uniform LLN}}[sloped,above]{\sup_{b\in\mathcal{C}}d(\mu^{b,N},f^b)}&
{\mu^{b^\infty,N}}  \arrow{d}[sloped,above]{\text{LLN}}[sloped,below]{d(\mu^{b^\infty,N},f^{b^\infty})}\\
{f^{b^N}}\arrow{r}[below]{\text{Stability of PDE}}[above]{d(f^{b^N},f^{b^\infty})}&
{f^{b^\infty}=f}
\end{tikzcd}
\caption{Illustration of the coupling method of Sznitman and the coupling method proposed in this manuscript. To compare $\mu^N$ and $f$ one can either go right then down, which is the coupling method of Sznitman, or down then right, which is the coupling method proposed here.}
\label{fig:couplings}
\end{figure}
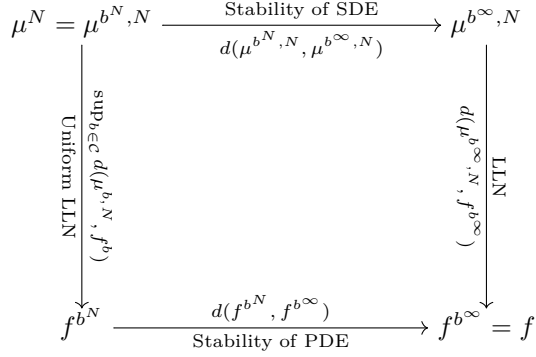
That this supremum can be bounded (\cref{thm:GV-intro}) is perhaps surprising. The proof crucially relies on the existence of a differentiable stochastic flow associated with the SDE \eqref{eq:many-particle-system-empirical} for a single particle.

\subsection{Discussion}

\subsubsection{Simple extensions}\label{subsec:simple-extensions}
\paragraph{\it No self-interaction}
The propagation of chaos result \cref{thm:propagation-of-chaos} can be easily extended to the case of no self-interaction, where $b^{N}_t(X^{i,N}_t)$ on the $i$th particle in \eqref{eq:many-particle-system} is replaced with $\frac1{N-1}\sum_{j=1,j\ne i}^NK(X^{i,N}_t,X^{j,N}_t)$. This may be done by considering instead the interaction kernel $\tilde{K}$ given by
\begin{equation*}
\tilde{K}(x,y)=K(x,y)-K(x,x).
\end{equation*}
Note that as $K$ is always at least bounded and continuous, there is no problem defining $K(x,x)$ and it will be uniformly bounded.
\paragraph{\it Multi-particle interactions} \Cref{thm:propagation-of-chaos} also easily extends to the case where $b^N$ is instead given by
\begin{equation*}
b^N_t(x)=\frac1{N^m}\sum_{i_1,\dotsc,i_m=1}^NK(x,X^{i_1,N}_t,\dotsc, X^{i_m,N}_t)
\end{equation*}
for $K\in \mathrm{C}^{0,\alpha}$. All the additional work happens at the PDE level and is straightforward.

\subsection{Open questions}

\subsubsection{The $\mathrm{C}^{0,0+}$ barrier and the curse of dimensionality}
The results of this paper show that, in the presence of noise, the regularity barrier of Lipschitz continuity of $K$ for quantitative propagation of chaos can be reduced to $K$ being H\"older continuous. However, stochastic flows are known to exist for vector fields in $\mathrm{L}^p$, $p>d$ (see \cite{Fedrizzi-Flandoli} and \cite{Banos-Duedahl} for different approaches to this problem). This leads to the following question:

\textit{Can the barrier in the noisy case be reduced to $K\in \mathrm{W}^{s,p}$, $p>d$, $s\ge0$ or $s>0$?}

The method used in this work fails in this case, as it requires the vector fields to be uniformly bounded for the key estimate \cref{cor:lipschitz-dependence-of-SDEs-upon-b}. However, one might consider applying stochastic flows to the original $Nd$ dimensional system, leading to the following related question:

\textit{Can one use these stochastic flow results on the particle system \eqref{eq:many-particle-system} and adapt the coupling method of Sznitman directly?}

In this work we have avoided this by using only the existence of a differentiable stochastic flow for a \emph{single particle}, a system of fixed dimension $d$. The main obstacle in applying stochastic flow results for $L^p$ ($p>d$) drifts to the whole system is that the dimension of the particle system \eqref{eq:many-particle-system} is $Nd$ which blows up as $N\to\infty$, and is eventually bigger than any finite $p$. However, it is conceivable that the special structure of \eqref{eq:many-particle-system} could be exploited to bypass this obstacle.

\section{Empirical process \& Glivenko-Cantelli}\label{sec:empirical-process-theory}
In this section we will prove the Glivenko-Cantelli results (\cref{thm:GV-intro,thm:2nd-order-GV-intro}) and \cref{prop:ULLN-for-h} which will be used in the proof of the propagation of chaos result in \cref{sec:propagation-of-chaos}. Before we move on to these results we will begin by establishing that the stochastic process $X^{b,i,N}$ is almost surely continuous.
\subsection{The stochastic process}
In this subsection we will prove that the stochastic process $(X_t^{b,i,N})_{i=1}^N$ indexed by $b\in\mathcal{C}$ has a continuous modification (\cref{thm:cts-empirical-process,thm:2nd-order-cts-empirical-process}) and show that it is impossible to construct a continuous modification of the same process indexed by the larger set $\tilde{ \mathcal{C}}$ (\cref{lem:impossible-to-have-cts-process}). As the proof in the second order case is no harder, we leave it to the reader. Furthermore, we may without loss of generality consider the single particle ($N=1$) case due to independence of the particles. For this reason we drop the $i,N$ indices in this subsection.

Key to understanding both continuity for $\mathcal{C}$ and discontinuity for $\tilde{\mathcal{C}}$ is the observation that if $Y_t=X_t-B_t$ and $X_t$ solves \eqref{eq:SDE-first-order-no-indices}, then $Y_t$ solves 
\begin{equation}\label{eq:SDE-transformation-to-ODE}
dY_t=b_t(Y_t+B_t)dt, \quad Y_0=X_0,
\end{equation}
which is an ODE with drift vector field $\tilde{b}_t(x,\omega)=b_t(x+B_t(\omega))$.
\begin{proof}[Proof of \cref{lem:impossible-to-have-cts-process}]
	We argue by contradiction. Suppose that an almost surely continuous version exists, and without loss of generality that $d=1$. To start with assume that $f_0=\delta_0$. Define the random vector field $\tilde{b}_t(x,\omega)=\min(|x-B_t(\omega)|^\alpha,1)$. Then by the computation above we deduce that $Y^{\tilde{b}}_t=X^{\tilde{b}}_t-B_t$ almost surely solves the ODE
	\begin{equation}\label{eq:ODE-non-unique}
	dY_t=\min(|Y_t|^\alpha,1)dt,\quad Y_0=0.
	\end{equation}
	Note that this does not uniquely determine $Y$ as the above ODE does not have unique solutions. However, as the process is continuous we can identify $Y_t^{\tilde{b}}$ as the unique limit of any sequence of $Y^{b^n}$ with (random) $b^n\in \mathrm{C}^1_b$ and $b^n\to \tilde{b}$ in $\mathrm{L}^{-r,\infty}(\mathbb{R}^d;\mathbb{R}^d)$ almost surely. As these approximating vector fields are in $\mathrm{C}^1_b$ the path $X^{b^n}_t$ is unique for each $n$. But we can easily construct a sequence $b^n$ such that $Y^{b^n}$ to converge to either the zero solution to \eqref{eq:ODE-non-unique} or a non-zero solution, contradicting uniqueness of this limit.
	
	Extending this proof to more general initial conditions than $f_0=\delta_0$ may be done by replacing the function $\min(|Y_t|^\alpha,1)$ with a vector field in $\mathrm{C}^{0,\alpha}$ that exhibits non-uniqueness for the corresponding ODE at every point in $\mathbb{R}$. We leave this to the reader.
\end{proof} 
\begin{proof}[Proof of \cref{thm:cts-empirical-process}]
	Away from a fixed null set, $\mathcal{N}$ say, $B_t$ is a continuous path on $[0,T]$. By the computation around \cref{eq:SDE-transformation-to-ODE}, we have that for any $b\in\mathcal{C}$, $Y^b_t=X^b_t-B_t$ is the solution to a random ODE, and the random vector field is continuous and has spatial Lipschitz constant bounded by $L_b:=\norm{b}_{\mathrm{C}([0,T];\mathrm{C}^1_b(\mathbb{R}^d;\mathbb{R}^d))}$ away from the null set $\mathcal{N}$. Hence we can solve this ODE to construct $Y_t^b$ (and thus also $X_t^b$) uniquely. Moreover, by standard Gr\"onwall estimates on the solution we deduce that for any $\tilde{b}\in\mathcal{C}$ it holds that
	\begin{equation*}
	\sup_{t\in[0,T]}|X_t^b-X_t^{\tilde{b}}|\le T\exp(L_bT)\norm{b-\tilde{b}}_{\mathrm{L}^\infty([0,T];\mathrm{L}^\infty(\mathbb{R}^d;\mathbb{R}^d))}
	\end{equation*}
	away from $\mathcal{N}$. To complete the proof it now suffices to replace this $\mathrm{L}^\infty$ estimate with an $\mathrm{L}^{-r,\infty}$ estimate. This may be done by a simple localisation argument. We leave this to the reader.
\end{proof}
\begin{remark}
 Although as evidenced by \cref{lem:impossible-to-have-cts-process} the process indexed by $\tilde{\mathcal{C}}$ cannot have an almost surely continuous version, and in the proof of continuity (\cref{thm:cts-empirical-process}) of the process indexed by $\mathcal{C}$ we used the $\mathrm{C}^1_b$ bound, we nevertheless have uniform `Lipschitz continuity at each point' in the sense that, due to \cref{cor:lipschitz-dependence-of-SDEs-upon-b} (presented below), we have
	\begin{equation*}
	\sup_{b\in{\tilde{\mathcal{C}}}}\mathbb{E}\sup_{t\in[0,T],\tilde{b}\in{\mathcal{C}}}\frac{|X^{b}_t-X^{\tilde{b}}_t|}{\norm{b-\tilde{b}}}<\infty
	\end{equation*} 
	where the norm on $b-\tilde{b}$ is $\mathrm{L}^{-r,\infty}([0,T]\times\mathbb{R}^d;\mathbb{R}^d)$. It is this estimate that will be key to the later analysis, and we will never use the $\mathrm{C}^1_b$ norm of the vector fields considered.
\end{remark}

\subsection{Estimates on the SDEs}
 Before we begin the proof of \cref{thm:GV-intro} proper, we will obtain some preliminary estimates on the SDEs:
  \begin{equation}\label{eq:SDE-first-order-no-indices}
 dX_t=b_t(X_t)dt+dB_t
 \end{equation}
 and
 \begin{equation}\label{eq:SDE-for-stochastic-flow}
 \left\{\begin{aligned}
&dX_t=V_tdt,\\
&dV_t=-b(X_t)dt-\kappa V_tdt+dB_t.
\end{aligned}\right.
\end{equation}

\subsubsection{Growth bounds}
We first obtain some simple a priori growth estimates for \eqref{eq:SDE-first-order-no-indices} and \eqref{eq:SDE-for-stochastic-flow} which will be used throughout the sequel.
\begin{lemma}\label{lem:SDE-bounds}
Let $X$ solve \eqref{eq:SDE-first-order-no-indices} then 
\begin{equation*}
\sup_{t\in[0,T]}|X_t|\le |X_0|+C\norm{b}_{\mathrm{L}^\infty([0,T]\times\mathbb{R}^d;\mathbb{R}^d)}+\sup_{t\in[0,T]}|B_t|.
\end{equation*}
Let $(X,V)$ solve \eqref{eq:SDE-for-stochastic-flow} then
\begin{equation*}
\sup_{t\in[0,T]}(|X_t|+|V_t|)\le C(|X_0|+|V_0|+\norm{b}_{\mathrm{L}^\infty([0,T]\times\mathbb{R}^d;\mathbb{R}^d)}+\sup_{t\in[0,T]}|B_t|).
\end{equation*}
As a consequence, $\sup_{t\in[0,T]}|X_t|$ (respectively $\sup_{t\in[0,T]}(|X_t|+|V_t|)$) possesses as many moments as $|X_0|$ (respectively $|X_0|+|V_0|$).
\end{lemma}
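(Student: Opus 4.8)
The plan is to integrate both SDEs and estimate pathwise, using Gr\"onwall's inequality to absorb the friction term in the second-order case. For the first-order equation \eqref{eq:SDE-first-order-no-indices}, integrating gives $X_t = X_0 + \int_0^t b_s(X_s)\,ds + B_t$, whence for every $t\in[0,T]$
\[
|X_t| \le |X_0| + \int_0^t \norm{b_s}_{\mathrm{L}^\infty(\mathbb{R}^d;\mathbb{R}^d)}\,ds + |B_t| \le |X_0| + T\norm{b}_{\mathrm{L}^\infty([0,T]\times\mathbb{R}^d;\mathbb{R}^d)} + \sup_{s\in[0,T]}|B_s|;
\]
taking the supremum over $t$ on the left yields the first claim with $C = T$.

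For the second-order system \eqref{eq:SDE-for-stochastic-flow} I would first handle the velocity. Integrating the $V$-equation gives $|V_t| \le |V_0| + T\norm{b}_{\mathrm{L}^\infty} + \sup_{s\in[0,T]}|B_s| + |\kappa|\int_0^t |V_s|\,ds$, so Gr\"onwall's inequality yields $\sup_{t\in[0,T]}|V_t| \le e^{|\kappa|T}\bigl(|V_0| + T\norm{b}_{\mathrm{L}^\infty} + \sup_{s\in[0,T]}|B_s|\bigr)$. Feeding this into $|X_t| \le |X_0| + \int_0^t |V_s|\,ds \le |X_0| + T\sup_{s\in[0,T]}|V_s|$ and adding the two bounds, all $T$- and $\kappa$-dependent prefactors can be absorbed into a single constant $C$, giving the second displayed inequality.

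Finally, the moment statement follows because in both inequalities the right-hand side has the form $|X_0| + (\text{deterministic constant}) + C'\sup_{t\in[0,T]}|B_t|$ (respectively with $|X_0|+|V_0|$ in the second-order case), and the running maximum $\sup_{t\in[0,T]}|B_t|$ has finite moments of every order: e.g.\ by the reflection principle it has Gaussian tails, hence is sub-Gaussian in the sense of \cref{def:nnorm}. Thus by Minkowski's inequality $\sup_{t\in[0,T]}|X_t| \in \mathrm{L}^q(\Omega)$ whenever $|X_0| \in \mathrm{L}^q(\Omega)$, while the reverse implication is immediate since $|X_0| \le \sup_{t\in[0,T]}|X_t|$ pointwise. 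The whole argument is elementary; the only named ingredients are Gr\"onwall's inequality and the integrability of the running maximum of Brownian motion, so there is no genuine obstacle here.
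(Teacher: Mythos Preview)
Your proof is correct and matches the paper's own argument essentially line for line: integrate the SDE, bound the drift by $T\norm{b}_{\mathrm{L}^\infty}$, and use Gr\"onwall to handle the $\kappa V$ term in the second-order case. The only cosmetic difference is that the paper applies Gr\"onwall to the sum $|X_t|+|V_t|$ directly, whereas you first close the estimate on $|V_t|$ and then substitute into the $X$-equation; both are equivalent here.
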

\begin{proof}
The first claim on \eqref{eq:SDE-first-order-no-indices} is immediate from the definition of solution in integral form. For the first claim on \eqref{eq:SDE-for-stochastic-flow} we first estimate
\begin{equation*}
\begin{aligned}
|X_t|&\le |X_0|+\int^t_0|V_s|\,ds\\
|V_t|&\le |V_0|+t\norm{b}_{\mathrm{L}^\infty([0,T]\times\mathbb{R}^d;\mathbb{R}^d)}+\sup_{s\in[0,t]}|B_s|+|\kappa|\int^t_0|V_s|\,ds
\end{aligned}
\end{equation*} 
and then conclude with the Gr\"onwall inequality on $|X_t|+|V_t|$. The remaining claims now follow from the triangle inequality and well known results on Brownian motion (see e.g. \cite{brownian-motion}). We omit the details.
\end{proof}
\subsubsection{Reference processes}
Next we define a `reference process' in each of the first and second order cases. This process will have the property that the difference between it and our actual process will be sub-Gaussian. For the first order case we define $(\widetilde{X}^{i,N}_t)_{i=1}^N$ as 
\begin{equation}\label{eq:1st-order-reference-process}
\widetilde{X}^{i,N}_t=X_0^{i,N},\qquad i=1,\dotsc,N,\qquad t\in[0,T].
\end{equation}
While in the second order case we instead define $(\widetilde{X}^{i,N},\widetilde{V}^{i,N}_t)_{i=1}^N$ as the solution to the following ODE with random initial condition:
\begin{equation}\label{eq:2nd-order-reference-process-ODE}
\begin{aligned} 
d\widetilde{X}^{i,N}_t&=\widetilde{V}^{i,N}_tdt,\\
d\widetilde{V}^{i,N}_t&=-\kappa \widetilde{V}^{i,N}_tdt,\qquad i=1,\dotsc,N\\
(\widetilde{X}^{i,N}_0,\widetilde{V}^{i,N}_0)&=(X^{i,N}_0,V^{i,N}_0).
\end{aligned} 
\end{equation} 
In both first and second order cases the reference process is nothing other than the solution to the corresponding SDE with $b=0$ and the driving noise removed.

Being a linear ODE, the equation \eqref{eq:2nd-order-reference-process-ODE} can be explicitly solved to give
\begin{equation}
(\widetilde{X}^{i,N}_t,\widetilde{V}^{i,N}_t)=\left(X_0^{i,N}+V_0^{i,N}\frac{1-e^{-\kappa t}}{\kappa},V^{i,N}_0e^{-\kappa t}\right),\qquad i=1,\dotsc,N.
\end{equation}
We further define (in each case) the empirical measure corresponding to the reference process as $\widetilde{\mu}^N_t$ and the common law of each reference particle as $\widetilde{f}_t$. (Although $\widetilde{f}_t$ is the solution to a transport equation, we do not have explicit need of this fact.)

As discussed above, the reason we consider these reference processes is that the increment between the stochastic process we care about and the reference process is sub-Gaussian, even though each individual process may not be sub-Gaussian (which will be the case if the initial measure $f_0$ is not sub-Gaussian).
\begin{lemma}\label{lem:SDE-subgaussian-bounds}
Let $Z_t^{b,i,N}=X^{b,i,N}_t-\widetilde{X}_t^{i,N}$ for the first order case (resp. $Z_t^{b,i,N}=(X^{b,i,N}_t-\widetilde{X}_t^{i,N},V^{b,i,N}_t-\widetilde{V}_t^{i,N})$ for the second order case), where $X^{b,i,N}_t$ solves \eqref{eq:many-particle-system-empirical} for $b\in\mathcal{C}$ and $\widetilde{X}^{i,N}_t$ is the reference process defined by \eqref{eq:1st-order-reference-process}, (resp. $(X^{b,i,N}_t,V^{b,i,N})$ solves \eqref{eq:2nd-order-many-particle-system-empirical} for $b\in\mathcal{C}$ and $(\widetilde{X}^{i,N}_t,\widetilde{V}^{i,N}_t)$ is the reference process defined by \eqref{eq:2nd-order-reference-process-ODE}.) Then for each $i$, we have the following almost sure bound:
\begin{equation*}
\sup_{t\in[0,T]}|Z^{b,i,N}_t|\le C\left(1+\sup_{t\in [0,T]}|B_t^{i,N}|\right),
\end{equation*}
and the following time increment bound for any $t\in[0,T],\varepsilon>0$,
\begin{equation*}
\mathbb{E}\sup_{s\in[0,T],|s-t|^{1/3}\le \varepsilon}|Z_t^{b,i,N}-Z_s^{b,i,N}|\le C\varepsilon.
\end{equation*}
\end{lemma}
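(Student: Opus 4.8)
The plan is to treat the first and second order cases separately, in each case writing $Z^{b,i,N}$ as the solution of an equation driven by the rough drift $b$ and controlling it using only the $\mathrm{L}^\infty$ bound $\norm{b}_{\mathrm{L}^\infty}\le C$ built into $\mathcal C$ (the H\"older norm and the $\mathrm{C}^1_b$ norm will not be needed). For the first order case $\widetilde X^{i,N}_t = X^{i,N}_0$ is constant, so $Z^{b,i,N}_t = X^{b,i,N}_t - X^{i,N}_0 = \int_0^t b_s(X^{b,i,N}_s)\,ds + B^{i,N}_t$; hence $\sup_{t}|Z^{b,i,N}_t|\le T\norm{b}_{\mathrm{L}^\infty}+\sup_t|B^{i,N}_t|\le C(1+\sup_t|B^{i,N}_t|)$, which is the almost sure bound. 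For the second order case I would subtract \eqref{eq:2nd-order-reference-process-ODE} from \eqref{eq:2nd-order-many-particle-system-empirical}: writing $z_t = X^{b,i,N}_t-\widetilde X^{i,N}_t$ and $w_t = V^{b,i,N}_t-\widetilde V^{i,N}_t$ one gets $\dot z_t = w_t$ and $\dot w_t = b_t(X^{b,i,N}_t) - \kappa w_t + \dot B^{i,N}_t$ (in integrated form), so $|w_t|\le \int_0^t\norm{b}_{\mathrm{L}^\infty}\,ds + |\kappa|\int_0^t|w_s|\,ds + \sup_{s\le t}|B^{i,N}_s|$; Gr\"onwall gives $\sup_t|w_t|\le C(1+\sup_t|B^{i,N}_t|)$ and then $\sup_t|z_t|\le\int_0^T|w_s|\,ds\le C(1+\sup_t|B^{i,N}_t|)$. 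Combining the two components gives the stated almost sure bound. (Alternatively one can simply invoke \cref{lem:SDE-bounds} applied both to $(X^{b,i,N},V^{b,i,N})$ and to the reference process, observing that the latter has $b=0$ and no noise, and use the triangle inequality.)

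For the time increment bound, in the first order case $Z^{b,i,N}_t - Z^{b,i,N}_s = \int_s^t b_u(X^{b,i,N}_u)\,du + (B^{i,N}_t - B^{i,N}_s)$, so for $|t-s|^{1/3}\le\varepsilon$, i.e. $|t-s|\le\varepsilon^3$, the drift term is bounded by $\norm{b}_{\mathrm{L}^\infty}\varepsilon^3\le C\varepsilon^3\le C\varepsilon$ and the expectation of the modulus of continuity of Brownian motion on a window of length $\varepsilon^3$ is $O(\varepsilon^{3/2}\sqrt{\log(1/\varepsilon)})=o(\varepsilon)$; more crudely, $\mathbb E\sup_{|u-s|\le\varepsilon^3,\,u,s\in[0,T]}|B^{i,N}_t-B^{i,N}_s|\le C\varepsilon^{3/2}(1+\sqrt{\log(1/\varepsilon)})\le C\varepsilon$ for $\varepsilon$ small (and for $\varepsilon$ bounded below the claim is trivial by the uniform-in-$t$ bound already proved, after possibly enlarging $C$). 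Hence $\mathbb E\sup_{|t-s|^{1/3}\le\varepsilon}|Z^{b,i,N}_t-Z^{b,i,N}_s|\le C\varepsilon$. In the second order case one uses $|z_t - z_s| = |\int_s^t w_u\,du|\le \varepsilon^3\sup_u|w_u|$, whose expectation is $\le C\varepsilon^3$ by the moment bound on $\sup_u|w_u|$ (which inherits the sub-Gaussian tail from $\sup_u|B^{i,N}_u|$), and $|w_t - w_s|\le \int_s^t(\norm{b}_{\mathrm{L}^\infty}+|\kappa||w_u|)\,du + |B^{i,N}_t-B^{i,N}_s|$, whose supremum over $|t-s|\le\varepsilon^3$ again has expectation $\le C\varepsilon^3 + C\varepsilon^{3/2}\sqrt{\log(1/\varepsilon)}\le C\varepsilon$. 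Adding the two components gives the claim; this is also where the $1/3$ exponent (as opposed to the $1/2$ natural in the first order case) is used, though here it only makes the estimate easier.

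The only genuinely delicate point is the Brownian modulus of continuity estimate: one must bound $\mathbb E\sup_{s,t\in[0,T],|t-s|\le\delta}|B_t-B_s|$ by $C\delta^{1/2}(1+\sqrt{\log(1/\delta)})$ (with $\delta=\varepsilon^3$), which is standard (e.g. L\'evy's modulus of continuity, or a chaining/Dudley argument, or Garsia–Rodemich–Rumsey), and then check that $\delta^{1/2}\sqrt{\log(1/\delta)} = \varepsilon^{3/2}\sqrt{3\log(1/\varepsilon)}\le C\varepsilon$ for all $\varepsilon\in(0,\varepsilon_0]$; for $\varepsilon>\varepsilon_0$ the bound is absorbed into the constant using the already-established uniform bound. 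Everything else is elementary Gr\"onwall bookkeeping, and the independence structure plays no role, so it suffices to fix a single $i$ throughout. I would present the first order computation in full and leave the parallel second order Gr\"onwall estimates to the reader, as is done elsewhere in this subsection.
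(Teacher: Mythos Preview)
Your proof is correct and follows essentially the same approach as the paper. The only minor tactical differences are that the paper handles the second order almost sure bound via the integrating factor $e^{\kappa t}$ (stochastic integration by parts on $d(e^{\kappa t}V_t)$) rather than Gr\"onwall, and for the Brownian time increment it simply invokes the finite expectation of the $\mathrm{C}^{0,1/3}([0,T])$ norm of Brownian motion, which directly gives $\mathbb{E}\sup_{|t-s|\le\varepsilon^3}|B_t-B_s|\le C\varepsilon$ without your small/large~$\varepsilon$ split.
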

\begin{proof}
We first prove the almost sure bounds. The first inequality in the first order case follows directly from the integral form of the SDE \eqref{eq:SDE-first-order-no-indices}, noting that $\widetilde{X}^{i,N}_t$ is nothing other than the initial condition $X^{i,N}_0$. For the second order case we first observe that
\begin{equation}
d(e^{\kappa t}V_t)=e^{\kappa t}b_t(X_t)dt+e^{\kappa t}dB_t,
\end{equation}
(where we have omitted the $b,i,N$ indices for brevity), so that
\begin{equation}\label{eq:sde-integrating-factor}
\begin{aligned}
(e^{\kappa t}V_t-V_0)&=\int^t_0e^{\kappa s}b_s(X_s)\,ds+\int^t_0e^{\kappa s}dB_s\\
&=\int^t_0e^{\kappa s}b_s(X_s)\,ds+e^{\kappa t}B_t-\kappa \int^t_0e^{\kappa s}B_s\,dt
\end{aligned}
\end{equation} 
by stochastic integration by parts. From this the bound on $|V_t-\widetilde{V}_t|=|V_t-e^{-\kappa t}V_0|$ is easily deduced as $b\in\mathcal{C}$ is uniformly bounded. The bound on $|X_t-\widetilde{X}_t|$ is now deduced by integrating this bound on $[0,t]$.

Now we prove the time increment estimates. For the first order system we have, from the integral form of the ODE,
\begin{align*}
\sup_{s\in[0,T],|s-t|^{1/3}\le \varepsilon} |(X_t-\widetilde{X}_t)-(X_s-\widetilde{X}_s)|&\le \varepsilon^2\sup_{b\in\mathcal{C}}\norm{b}_{\mathrm{L}^\infty([0,T]\times\mathbb{R}^d;\mathbb{R}^d)}\\
&\quad+\sup_{s\in[0,T],|t-s|^{1/3}\le \varepsilon}|B_s-B_t|
\end{align*}
The supremum over $\mathcal{C}$ is bounded by a constant. That the remaining part has expectation bounded by a constant times $\varepsilon$ can either be seen as a consequence of the law of the iterated logarithm (see e.g. \cite{brownian-motion}) or that the $1/3$-H\"older norm of Brownian motion has finite expectation (see e.g. \cite{Hytonen}).

The corresponding estimate for the second order case is similar using instead the integral form of \eqref{eq:sde-integrating-factor}. We leave it to the reader.
\end{proof}
Using the reference processes we can estimate the Wasserstein distance $d_{\textrm{MKW}}(\mu^{b,N}_t,f^b)$ using the following inequality. Note that, as is evident from its proof, establishing \eqref{eq:mkw-triangle-with-reference} requires no properties of the particle system other than the relationship of the empirical measures to the laws.
\begin{lemma}\label{lem:mkw-triangle-with-references}
Let $X^{b,i,N}_t$ solve \eqref{eq:many-particle-system-empirical} for $b\in\mathcal{C}$ and $\widetilde{X}^{i,N}_t$ be the reference process defined by \eqref{eq:1st-order-reference-process}. Then the following holds:
\begin{equation}\label{eq:mkw-triangle-with-reference}
\begin{aligned}
&d_{\mathrm{MKW}}(\mu^{b,N}_t,f^b_t)\le d_{\mathrm{MKW}}(\widetilde{\mu}^N_t,\widetilde{f}_t)+\\
&\quad +\sup_{h\in\mathrm{Lip1}}\left(\averN (h(X^{b,i,N}_t)-h(\widetilde{X}^{i,N}_t))-\mathbb{E}(h(X^{b,i,N}_t)-h(\widetilde{X}^{i,N}_t))\right).
\end{aligned}
\end{equation}
In the second order case where $(X^{b,i,N}_t,V^{b,i,N})$ solve \eqref{eq:2nd-order-many-particle-system-empirical} for $b\in\mathcal{C}$ and $(\widetilde{X}^{i,N}_t,\widetilde{V}^{i,N}_t)$ is the reference process defined by \eqref{eq:2nd-order-reference-process-ODE} the corresponding inequality to \eqref{eq:mkw-triangle-with-reference} holds, i.e. with $X^{b,i,N}$ replaced with $(X^{b,i,N},V^{b,i,N})$ and so on. We omit writing this inequality for brevity.
\end{lemma}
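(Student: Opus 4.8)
The plan is to work entirely from the dual (Kantorovich--Rubinstein) formulation $d_{\mathrm{MKW}}(\mu,\nu)=\sup_{h\in\mathrm{Lip1}}\bigl(\int h\,d\mu-\int h\,d\nu\bigr)$, since this is the representation that interacts cleanly with empirical averages. Fix $h\in\mathrm{Lip1}$ and a time $t$. Because the particles $(X^{b,i,N})_{i=1}^N$ are i.i.d.\ with common law $f^b_t$ by construction, one has $\int h\,d\mu^{b,N}_t-\int h\,df^b_t=\averN h(X^{b,i,N}_t)-\mathbb{E}\,h(X^{b,1,N}_t)$. I would then add and subtract the reference quantities $\averN h(\widetilde{X}^{i,N}_t)$ and $\mathbb{E}\,h(\widetilde{X}^{1,N}_t)$ and regroup the result into two blocks.

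The first block, $\averN h(\widetilde{X}^{i,N}_t)-\mathbb{E}\,h(\widetilde{X}^{1,N}_t)=\int h\,d\widetilde{\mu}^N_t-\int h\,d\widetilde{f}_t$, is at most $d_{\mathrm{MKW}}(\widetilde{\mu}^N_t,\widetilde{f}_t)$ since $h\in\mathrm{Lip1}$ and $\widetilde{f}_t$ is the law of $\widetilde{X}^{i,N}_t$. The second block is exactly $\averN\bigl(h(X^{b,i,N}_t)-h(\widetilde{X}^{i,N}_t)\bigr)-\mathbb{E}\bigl(h(X^{b,1,N}_t)-h(\widetilde{X}^{1,N}_t)\bigr)$, which is bounded above by the supremum over $h\in\mathrm{Lip1}$ appearing on the right-hand side of \eqref{eq:mkw-triangle-with-reference}. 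Taking the supremum over $h\in\mathrm{Lip1}$ on the left-hand side — the right-hand side being already independent of $h$ — yields \eqref{eq:mkw-triangle-with-reference}. The second-order version follows by the same computation with every occurrence of $X^{b,i,N}$ and $\widetilde{X}^{i,N}$ replaced by the pairs $(X^{b,i,N},V^{b,i,N})$ and $(\widetilde{X}^{i,N},\widetilde{V}^{i,N})$, and all metrics read on $\mathrm{P}(\mathbb{R}^{2d})$.

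There is no genuine obstacle here; the only points worth a word of care are (i) that adding or subtracting a constant to $h$ leaves every difference $\int h\,d\mu-\int h\,d\nu$ unchanged, so the requirement in the definition of $\mathrm{Lip1}$ that $h$ vanish at the origin plays no role, and (ii) that, as the statement itself notes, the argument uses nothing about the dynamics beyond the identification of $f^b_t$ and $\widetilde{f}_t$ as the laws of $X^{b,i,N}_t$ and $\widetilde{X}^{i,N}_t$ respectively — it is a pure triangle-inequality and regrouping argument at the level of the empirical measures and their laws.
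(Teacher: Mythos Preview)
Your proof is correct and follows essentially the same route as the paper: both start from the dual formulation $d_{\mathrm{MKW}}(\mu^{b,N}_t,f^b_t)=\sup_{h\in\mathrm{Lip1}}\bigl(\averN h(X^{b,i,N}_t)-\mathbb{E}h(X^{b,i,N}_t)\bigr)$, add and subtract the reference quantities $h(\widetilde{X}^{i,N}_t)$, and bound the resulting sum by the sum of two suprema over $\mathrm{Lip1}$, one of which is identified as $d_{\mathrm{MKW}}(\widetilde{\mu}^N_t,\widetilde{f}_t)$. Your remarks about constants in $h$ and about the argument being purely structural are accurate and in the spirit of the paper's own comment preceding the lemma.
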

\begin{proof}
We only give the proof in the first order case for brevity. The second order case is analogous and we leave it to the reader. We have
\begin{align*}
d_{\mathrm{MKW}}(\mu^N_t,f^b_t)&=\sup_{h\in\mathrm{Lip1}}\left(\averN h(X^{b,i,N}_t)-\mathbb{E}h(X^{b,i,N}_t)\right)\\
&\le\sup_{h\in\mathrm{Lip1}}\left(\averN( h(X^{b,i,N}_t)-h(\widetilde{X}^{i,N}_t))-\mathbb{E}(h(X^{b,i,N}_t)-h(\widetilde{X}^{i,N}_t))\right)\\
&\quad+ \sup_{h\in\mathrm{Lip1}}\left(\averN h(\widetilde{X}^{i,N}_t)-\mathbb{E}h(\widetilde{X}^{i,N}_t)\right).
\end{align*}
The final supremum is nothing other that $d_{\mathrm{MKW}}(\widetilde{\mu}^N_t,\widetilde{f}_t)$. The proof is complete.
\end{proof}
As the reference processes are simple (by choice) and their evolution is deterministic, we have the following control over the distance of the reference empirical measure to the law.
\begin{lemma}\label{lem:reference-process-mkw-bound}
In the first and second order cases the following holds:
\begin{equation}
\sup_{t\in[0,T]}d_{\mathrm{MKW}}(\widetilde{\mu}^N_t,\widetilde{f}_t)\le cd_{\mathrm{MKW}}(\mu^N_0,f_0).
\end{equation}
Moreover, in the first order case $c$ may be taken to be equal to $1$.
\end{lemma}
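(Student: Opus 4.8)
The plan is to exploit the fact that, by construction, each reference process is obtained from the common initial data by applying a single \emph{deterministic linear} map of $\mathbb{R}^d$ (resp. $\mathbb{R}^{2d}$); hence $\widetilde{\mu}^N_t$ and $\widetilde{f}_t$ are the pushforwards of $\mu^N_0$ and $f_0$ under the \emph{same} map, and the $d_{\mathrm{MKW}}$ distance is contracted by (at worst) the Lipschitz constant of that map. In the first order case this is trivial: the reference process \eqref{eq:1st-order-reference-process} is constant in $t$, so $\widetilde{\mu}^N_t=\mu^N_0$ and $\widetilde{f}_t=f_0$ for every $t\in[0,T]$, giving the bound with $c=1$.

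For the second order case, the explicit solution of \eqref{eq:2nd-order-reference-process-ODE} shows that $(\widetilde{X}^{i,N}_t,\widetilde{V}^{i,N}_t)=\Phi_t(X^{i,N}_0,V^{i,N}_0)$, where $\Phi_t:\mathbb{R}^{2d}\to\mathbb{R}^{2d}$ is the linear map $\Phi_t(x,v)=\bigl(x+\tfrac{1-e^{-\kappa t}}{\kappa}\,v,\ e^{-\kappa t}v\bigr)$ (read in the limiting sense $\Phi_t(x,v)=(x+tv,v)$ when $\kappa=0$). Since $(\widetilde{X}^{i,N}_0,\widetilde{V}^{i,N}_0)=(X^{i,N}_0,V^{i,N}_0)$ has law $f_0$, this gives $\widetilde{\mu}^N_t=(\Phi_t)_{\#}\mu^N_0$ and $\widetilde{f}_t=(\Phi_t)_{\#}f_0$. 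I would then set
\begin{equation*}
c:=1+\sup_{t\in[0,T]}\left|\tfrac{1-e^{-\kappa t}}{\kappa}\right|+\sup_{t\in[0,T]}e^{-\kappa t},
\end{equation*}
which is finite and depends only on $\kappa$ and $T$, and which bounds the operator norm of $\Phi_t$ uniformly in $t\in[0,T]$.

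Finally, fix $t\in[0,T]$ and $h\in\mathrm{Lip1}$. As $\Phi_t$ is linear, $\Phi_t(0)=0$, so $h\circ\Phi_t$ vanishes at the origin, and it is $c$-Lipschitz; thus $c^{-1}(h\circ\Phi_t)\in\mathrm{Lip1}$. By the change-of-variables formula for pushforwards,
\begin{equation*}
\int h\,d\widetilde{\mu}^N_t-\int h\,d\widetilde{f}_t=\int (h\circ\Phi_t)\,d\mu^N_0-\int (h\circ\Phi_t)\,d f_0\le c\,d_{\mathrm{MKW}}(\mu^N_0,f_0).
\end{equation*}
Taking the supremum over $h\in\mathrm{Lip1}$ and then over $t\in[0,T]$ yields the claim; alternatively one pushes an optimal coupling of $\mu^N_0$ and $f_0$ forward by $\Phi_t\times\Phi_t$. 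There is no genuine obstacle here — the only point needing a moment's care is the uniform-in-$t$ bound on $\mathrm{Lip}(\Phi_t)$, together with the degenerate case $\kappa=0$, both handled above.
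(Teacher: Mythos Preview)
Your proposal is correct and follows essentially the same approach as the paper: both recognise that $\widetilde{\mu}^N_t$ and $\widetilde{f}_t$ are pushforwards of $\mu^N_0$ and $f_0$ under the same deterministic (linear) flow $\Phi_t$, and bound $d_{\mathrm{MKW}}$ by the uniform Lipschitz constant of $\Phi_t$ on $[0,T]$. The paper phrases this via the optimal-coupling definition (pushing a coupling forward by $\Phi_t\times\Phi_t$), which you mention as an alternative to your primary dual (Lipschitz-function) argument; the two are equivalent and equally short.
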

\begin{proof}
For the first order case the claim (with $c=1$) is obvious from the definitions. For the second order case we argue directly by evolving an initial coupling between $f_0$ and $\mu^N_0$ along the trajectories of the ODE flow given by \eqref{eq:2nd-order-reference-process-ODE}. Indeed, let $\pi_0\in \mathrm{P}(\mathbb{R}^{2d}\times\mathbb{R}^{2d})$ be any coupling between $\mu^N_0$ and $f_0$, and define $\pi_t$ as the pushforward of $\pi_0$ by the (autonomous, deterministic, smooth) flow $\phi_{t}$ of the ODE \eqref{eq:2nd-order-reference-process-ODE}. Then $\pi_t\in \mathrm{P}(\mathbb{R}^{2d}\times\mathbb{R}^{2d})$ is a coupling of $\widetilde{\mu}^N_t$ and $\widetilde{f}_t$ and we have the bound
\begin{align*}
\int |(x^1,v^1)-(x^2,v^2)|&\,d\pi_t(x^1,v^1,x^2,v^2)\\
&=\int |\phi_t(x^1,v^1)-\phi_t(x^2,v^2)|\,d\pi_0(x^1,v^1,x^2,v^2)\\
&\le \norm{\nabla \phi_t}_{\mathrm{L}^\infty(\mathbb{R}^{2d};\mathbb{R}^{2d})}\int  |(x^1,v^1)-(x^2,v^2)|\,d\pi_0(x^1,v^1,x^2,v^2).
\end{align*} 
The ODE flow $\phi_t$ is uniformly Lipschitz in $(x,v)$ over times $t\in[0,T]$, so the $\mathrm{L}^\infty$ norm above is bounded by a constant. Thus $d_{\mathrm{MKW}}(\widetilde{f}_t,\widetilde{\mu}^N_t)$ is bounded by a constant times the final integral in the above display. The claim of the lemma now follows by taking the infimum over all couplings $\pi_0$ between $f_0$ and $\mu_0^N$.
\end{proof}

\subsubsection{Local Lipschitz dependence upon the field}
We now recall that in both the first and second order cases the SDE generates a differentiable stochastic flow. The first order case is established in \cite{Flandoli-Transport-equation} (see also \cite{Banos-Duedahl,mohammed2015,Fedrizzi-Flandoli} for results along the same lines). For the second order case see \cite{wang2015degenerate} (see also \cite{fedrizzi2016regularity}). We have need of a simple corollary that provides global bounds in space with a weight.
\begin{theorem}\label{thm:existence-of-stochastic-flow}
Let $b\in \mathrm{L}^\infty([0,T];\mathrm{C}^{0,\alpha}(\mathbb{R}^d;\mathbb{R}^d))$, with $\alpha\in(0,1)$ (resp. $\alpha\in(2/3,1)$) then the SDE \eqref{eq:SDE-first-order-no-indices} (resp. \eqref{eq:SDE-for-stochastic-flow}) generates a $\mathrm{C}^{1,\alpha'}$ stochastic flow (see \cref{def:stochastic-flow}) $\phi_{s,t}:\mathbb{R}^d\to\mathbb{R}^d$ (resp. $\mathbb{R}^{2d}\to\mathbb{R}^{2d}$), for $\alpha'$ depending only upon $\alpha$. For any $p\in[1,\infty)$ and $r>0$ there is a constant $C_{p,r}$ depending only on $p,r$ and $\norm{b}_{\mathrm{L}^\infty([0,T];\mathrm{C}^{0,\alpha}(\mathbb{R}^d;\mathbb{R}^d))}$ but not $b$ itself, such that
\begin{equation}\label{eq:Lrinfty-estimate-on-first-order-flow}
\nnorm{\sup_{0\le s\le t\le T}\norm{\nabla\phi_{s,t}}_{\mathrm{L}^{-r,\infty}(\mathbb{R}^d;\mathbb{R}^{d\times d})}}_p\le C_{p,r}<\infty
\end{equation}
(resp. the same claim with $\mathbb{R}^d$ replaced with $\mathbb{R}^{2d}$).
\end{theorem}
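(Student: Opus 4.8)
My plan is to establish the statement in two parts: first the existence of a $\mathrm{C}^{1,\alpha'}$ stochastic flow, which I would simply quote from the literature, and then the weighted estimate \eqref{eq:Lrinfty-estimate-on-first-order-flow}, which I would deduce from the construction. For the first part, recall the by-now-standard argument: in the first order case one lets $U$ solve the $\mathbb{R}^d$-valued backward parabolic equation $\partial_tU+\tfrac12\Delta U+b_t\cdot\nabla U+b_t=\lambda U$ for a large parameter $\lambda$; parabolic Schauder estimates give $U$ with two bounded spatial derivatives whose second derivatives are $\alpha$-H\"older, so that for $\lambda$ large the map $\phi^U_t(x)=x+U_t(x)$ is a near-identity $\mathrm{C}^{2,\alpha}$ diffeomorphism of $\mathbb{R}^d$ and $Y_t:=\phi^U_t(X_t)$ solves an SDE whose drift and diffusion coefficients are $\mathrm{C}^{1,\alpha}$, in particular globally Lipschitz, in space. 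Kunita's theory then produces a $\mathrm{C}^{1,\alpha'}$ flow for the $Y$-equation, and composing with the fixed diffeomorphisms $\phi^U_s$ and $(\phi^U_t)^{-1}$ yields one for \eqref{eq:SDE-first-order-no-indices}; this is the content of \cite{Flandoli-Transport-equation} (see also \cite{Banos-Duedahl,mohammed2015,Fedrizzi-Flandoli}). The second order case is the analogous construction in the degenerate setting of \cite{wang2015degenerate,fedrizzi2016regularity}, carried out with parabolic equations and H\"older spaces adapted to the Kolmogorov operator $v\cdot\nabla_x-\kappa v\cdot\nabla_v+\tfrac12\Delta_v$; the restriction $\alpha>2/3$ is precisely what keeps the transformed coefficients Lipschitz in $x$ after the anisotropic gain of regularity.

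The key point I would exploit for the corollary is that, since $b$ is here \emph{bounded} and not merely H\"older, every quantity produced by the transformation -- $\norm{U}_\infty$, $\norm{\nabla U}_\infty$, the $\mathrm{C}^{1,\alpha}$ norms of the transformed coefficients, and the $\mathrm{C}^1$ norms of $\phi^U_t$ and its inverse -- is bounded by a constant depending only on $d$, $T$, $\lambda$ and $\norm{b}_{\mathrm{L}^\infty([0,T];\mathrm{C}^{0,\alpha}(\mathbb{R}^d;\mathbb{R}^d))}$, with $\lambda$ itself fixed in terms of the same data. Feeding these \emph{global} bounds into the linear SDE satisfied by the spatial Jacobian of the $Y$-flow, into the Kolmogorov continuity estimate for its spatial H\"older regularity, and then transferring through the two fixed Lipschitz diffeomorphisms, I would obtain bounds that are \emph{uniform in the base point}: writing $J(x)=\sup_{0\le s\le t\le T}|\nabla\phi_{s,t}(x)|$ and, for $z\in\mathbb{Z}^d$, $H_z=\sup_{0\le s\le t\le T}[\nabla\phi_{s,t}]_{\mathrm{C}^{\alpha'}(Q_z)}$ with $Q_z=z+[0,1)^d$, one has $\sup_x\nnorm{J(x)}_p\le C_p$ and $\sup_{z}\nnorm{H_z}_p\le C_p$, where $C_p$ depends only on $p$, $d$, $T$ and $\norm{b}_{\mathrm{L}^\infty([0,T];\mathrm{C}^{0,\alpha}(\mathbb{R}^d;\mathbb{R}^d))}$.

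Given these, the weighted estimate follows by a covering argument. For $x\in Q_z$ one has $\<{x}\ge c\<{z}$ and, since $|x-z|\le\sqrt d$, $J(x)\le J(z)+d^{\alpha'/2}H_z$; therefore
\begin{equation*}
\sup_{0\le s\le t\le T}\norm{\nabla\phi_{s,t}}_{\mathrm{L}^{-r,\infty}(\mathbb{R}^d;\mathbb{R}^{d\times d})}\le\sup_x\<{x}^{-r}J(x)\le C\sup_{z\in\mathbb{Z}^d}\<{z}^{-r}\big(J(z)+H_z\big).
\end{equation*}
Raising to the $p$-th power, bounding the supremum over $z$ by the sum, and invoking the uniform moment bounds gives, whenever $rp>d$,
\begin{equation*}
\mathbb{E}\Big(\sup_{0\le s\le t\le T}\norm{\nabla\phi_{s,t}}_{\mathrm{L}^{-r,\infty}(\mathbb{R}^d;\mathbb{R}^{d\times d})}\Big)^p\le C\sum_{z\in\mathbb{Z}^d}\<{z}^{-rp}\,\mathbb{E}\big(J(z)+H_z\big)^p\le C_{p,r}\sum_{z\in\mathbb{Z}^d}\<{z}^{-rp}<\infty,
\end{equation*}
which is \eqref{eq:Lrinfty-estimate-on-first-order-flow} for such $p$; the remaining range $p\le d/r$ then follows from the monotonicity $\nnorm{\cdot}_p\le\nnorm{\cdot}_{p_0}$ on the probability space with any $p_0>d/r$. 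The second order case is identical, with $\mathbb{R}^d$ replaced by $\mathbb{R}^{2d}$ and $\mathbb{Z}^d$ by $\mathbb{Z}^{2d}$ (so the summability requirement becomes $rp>2d$).

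The main obstacle, to which the second paragraph reduces, is verifying that the moment estimates underlying the stochastic flow construction are genuinely uniform in the base point and depend on $b$ only through its $\mathrm{L}^\infty([0,T];\mathrm{C}^{0,\alpha})$ norm. In the first order case this is transparent from the It\^o--Tanaka transformation, because boundedness of $b$ turns every auxiliary constant from a local into a global one; in the second order case it demands a careful reading of the hypoelliptic estimates of \cite{wang2015degenerate}, and this is exactly where the threshold $\alpha>2/3$ is needed.
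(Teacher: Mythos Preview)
Your proposal is correct and follows essentially the same approach as the paper: quote the flow existence from \cite{Flandoli-Transport-equation,wang2015degenerate}, invoke the uniform local moment bounds on unit cubes, globalise by a covering argument that reduces to a summable series when $rp>d$ (resp.\ $rp>2d$), and handle small $p$ by monotonicity of $\nnorm{\cdot}_p$. The only cosmetic difference is that the paper organises the covering via dyadic annuli $A_n=\{2^n\le|x|\le 2^{n+1}\}$ each covered by $O(2^{nd})$ unit cubes and applies the elementary bound $\nnorm{\max_{i=1}^m|X_i|}_p\le m^{1/p}\max_i\nnorm{X_i}_p$, whereas you sum directly over the lattice $\mathbb{Z}^d$; both routes land on the same summability threshold $rp>d$.
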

As discussed above, the existence of the stochastic flow is shown elsewhere, so it suffices to prove \eqref{eq:Lrinfty-estimate-on-first-order-flow}.
\begin{proof}[Proof of \eqref{eq:Lrinfty-estimate-on-first-order-flow}]
	We give the proof in the first order case, the second order proof being analogous.
 Local estimates of the form
		\begin{equation*}
		\nnorm{\sup_{0\le s\le t\le T}\norm{\nabla\phi_{s,t}}_{\mathrm{L}^\infty(Q;\mathbb{R}^{d\times d})}}_p\le C
		\end{equation*}
		for $p\in[1,\infty)$ and $Q$ a unit cube in $\mathbb{R}^d$ follow from \cite{Flandoli-Transport-equation}, and as the bounds upon $b$ are global, these estimates are uniform over unit cubes $Q$. Now let $A_n$ for $n=1,2,3,\dotsc$ be the annulus $\{2^n\le |x|\le 2^{n+1}\}$ and $A_0=\{|x|\le 2\}$ be subsets of $\mathbb{R}^d$. Then it holds that
		\begin{equation}\label{eq:split-into-annuli}
		\sup_{0\le s\le t\le T}\norm{\nabla\phi_{s,t}}_{\mathrm{L}^{-r,\infty}(\mathbb{R}^d;\mathbb{R}^{d\times d})}\le \sum_{n=0}^\infty 2^{-nr}\sup_{0\le s\le t\le T}\norm{\nabla \phi_{s,t}}_{\mathrm{L}^\infty(A_n;\mathbb{R}^{d\times d})}.
		\end{equation}
		Each $A_n$ can be covered by $m_n$ unit cubes $Q_{n,i}$ and $m_n$ can be chosen to be at most $C2^{nd}$. Hence, it holds that
		\begin{equation*}
		\nnorm{\norm{\nabla \phi_{s,t}}_{\mathrm{L}^\infty(A_n;\mathbb{R}^{d\times d})}}_p\le \nnorm{\max_{i=1}^{m_n}\norm{\nabla \phi_{s,t}}_{\mathrm{L}^\infty(Q_{n,i};\mathbb{R}^{d\times d})}}_p.
		\end{equation*}
		We apply the elementary inequality $\nnorm{\max_{i=1}^m |X_i|}_p\le C_pm^{1/p}\max_{i=1}^m\nnorm{X_i}$ which may be obtained by bounding the maximum of the $X_i$ with the sum of the $|X_i|$. This yields
		\begin{equation}\label{eq:final-annulus-bound}
		\nnorm{\norm{\nabla \phi_{s,t}}_{\mathrm{L}^\infty(A_n;\mathbb{R}^{d\times d})}}_p\le C_p2^{nd/p}.
		\end{equation}
		Therefore, combining \eqref{eq:final-annulus-bound} and \eqref{eq:split-into-annuli} we obtain 
		\begin{equation*}
		\nnorm{\sup_{0\le s\le t\le T}\norm{\nabla\phi_{s,t}}_{\mathrm{L}^{-r,\infty}(\mathbb{R}^d;\mathbb{R}^{d\times d})}}_p
		\le C_p\sum_{n=0}^\infty 2^{nd/p-nr}
		\end{equation*}
		and this sum is convergent for all $p$ sufficiently large. The estimate for smaller $p$ then follows by bounding with the estimate for larger $p$.	
\end{proof}

We obtain the following corollary of \cref{thm:existence-of-stochastic-flow}, which holds for both the first and second order systems.
\begin{corollary}\label{cor:lipschitz-dependence-of-SDEs-upon-b}
Let $b\in\mathcal{C}$,  $f_0\in \mathrm{P}_p(\mathbb{R}^d)$ (respectively $\mathrm{P}_p(\mathbb{R}^{2d})$) for some $p>r>1$. Then there exists a random variable $L$ with finite expectation uniform over $b\in\mathcal{C}$, such that for any $\tilde{b}\in\mathcal{C}$ and $t\in[0,T]$ it holds that
\begin{equation*}
|X_t^{b}-X^{\tilde{b}}_t|\le L\int^t_0\norm{b_s-\tilde{b}_s}_{\mathrm{L}^{-r,\infty}(\mathbb{R}^d;\mathbb{R}^d)}\,ds
\end{equation*}
in the first order case, and 
\begin{equation*}
|X_t^{b}-X^{\tilde{b}}_t|+|V_t^{b}-V^{\tilde{b}}_t|\le  L\int^t_0\norm{b_s-\tilde{b}_s}_{\mathrm{L}^{-r,\infty}(\mathbb{R}^d;\mathbb{R}^d)}\,ds
\end{equation*}
respectively in the second order case (with a different $L$).
\end{corollary}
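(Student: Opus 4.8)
The plan is to compare $X^{b}$ and $X^{\tilde b}$ by transporting the latter into the frame of the differentiable stochastic flow $\phi^{b}$ of \eqref{eq:SDE-first-order-no-indices} (resp.\ \eqref{eq:SDE-for-stochastic-flow}) attached to the \emph{fixed} field $b$, which exists by \cref{thm:existence-of-stochastic-flow}. Subtracting the two equations gives the bounded-variation identity $d(X^{b}_t-X^{\tilde b}_t)=(b_t(X^{b}_t)-\tilde b_t(X^{\tilde b}_t))\,dt$, but because $b_t$ is only $\alpha$-H\"older a naive Gr\"onwall argument yields at best $|X^{b}_t-X^{\tilde b}_t|\lesssim\int_0^t|X^{b}_s-X^{\tilde b}_s|^{\alpha}\,ds+\int_0^t\norm{b_s-\tilde b_s}_{\mathrm{L}^{\infty}}\,ds$, which is useless. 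Regularisation by noise must enter, and the only available entry point is \eqref{eq:Lrinfty-estimate-on-first-order-flow}, whose constant depends on $b$ only through $\norm{b}_{\mathrm{L}^{\infty}([0,T];\mathrm{C}^{0,\alpha}(\mathbb{R}^d;\mathbb{R}^d))}\le C$ and crucially \emph{not} through the $\mathrm{C}^1_b$ norm (which is unbounded over $\mathcal{C}$); this uniformity is exactly what will make $\mathbb{E}L$ bounded uniformly over $\mathcal{C}$.

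The first step is the representation $X^{\tilde b}_t=\phi^{b}_{0,t}(A_t)$ for a bounded-variation process $A$. Let $A$ be a maximal solution of the random ODE $\dot A_t=\nabla\phi^{b}_{t,0}(\phi^{b}_{0,t}(A_t))\,(\tilde b_t-b_t)(\phi^{b}_{0,t}(A_t))$, $A_0=X_0$ (the right-hand side is continuous in $(t,A)$, so a local solution exists; its extension to $[0,T]$ will follow). Since $A$ is of bounded variation and $\phi^{b}_{0,t}(\cdot)$ is $\mathrm{C}^1$, the generalised It\^o (It\^o--Wentzell) formula gives $d[\phi^{b}_{0,t}(A_t)]=(d_t\phi^{b}_{0,t})(A_t)+\nabla\phi^{b}_{0,t}(A_t)\,dA_t$ with no cross-variation term; using $d_t\phi^{b}_{0,t}(x)=b_t(\phi^{b}_{0,t}(x))\,dt+dB_t$ and $\nabla\phi^{b}_{0,t}(x)\,\nabla\phi^{b}_{t,0}(\phi^{b}_{0,t}(x))=I$ this becomes $d[\phi^{b}_{0,t}(A_t)]=\tilde b_t(\phi^{b}_{0,t}(A_t))\,dt+dB_t$. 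Thus $Y_t:=\phi^{b}_{0,t}(A_t)$ solves $dY_t=\tilde b_t(Y_t)\,dt+dB_t$ with $Y_0=X_0$; by pathwise uniqueness ($\tilde b\in\mathrm{C}^1_b$) one gets $Y_t=X^{\tilde b}_t$ on the lifespan of $A$, and since $\phi^{b}_{t,0}(X^{\tilde b}_t)$ is defined on all of $[0,T]$ this forces $A_t=\phi^{b}_{t,0}(X^{\tilde b}_t)$ and hence global existence. We have thus produced, for every $\tilde b\in\mathcal{C}$, a process with $\dot A_t=\nabla\phi^{b}_{t,0}(X^{\tilde b}_t)\,(\tilde b_t-b_t)(X^{\tilde b}_t)$, $A_0=X_0$, and $X^{\tilde b}_t=\phi^{b}_{0,t}(A_t)$; taking $\tilde b=b$ gives $A_t\equiv X_0$, consistent with $X^{b}_t=\phi^{b}_{0,t}(X_0)$. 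The second order case is identical on $\mathbb{R}^{2d}$, with $\tilde b_t-b_t$ entering only the velocity component.

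The second step is the estimate. Fix $r',r''>0$ with $r+r'+r''<p$ (possible since $p>r$). By \cref{lem:SDE-bounds} together with the uniform $\mathrm{L}^{\infty}$ bound on $\mathcal{C}$, one has $\sup_{s\in[0,T]}\langle X^{\tilde b}_s\rangle\le\Xi$ for every $\tilde b\in\mathcal{C}$, where $\Xi:=\langle X_0\rangle+C+\sup_{s\in[0,T]}|B_s|$ inherits $p$ moments from $f_0$. Inserting $|\nabla\phi^{b}_{t,0}(y)|\le\langle y\rangle^{r'}\norm{\nabla\phi^{b}_{t,0}}_{\mathrm{L}^{-r',\infty}}$ and $|(\tilde b_t-b_t)(y)|\le\langle y\rangle^{r}\norm{b_t-\tilde b_t}_{\mathrm{L}^{-r,\infty}}$ into the ODE for $A$ gives $\sup_{t\in[0,T]}|A_t-X_0|\le G\,\Xi^{r+r'}\int_0^T\norm{b_s-\tilde b_s}_{\mathrm{L}^{-r,\infty}}\,ds$, where $G:=\sup_{0\le s\le t\le T}\norm{\nabla\phi^{b}_{s,t}}_{\mathrm{L}^{-r',\infty}}$; since $\int_0^T\norm{b_s-\tilde b_s}_{\mathrm{L}^{-r,\infty}}\,ds\le 2CT$, this also gives $\sup_t\langle A_t\rangle\le\Xi+C'$. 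Then the mean value inequality, $|X^{\tilde b}_t-X^{b}_t|=|\phi^{b}_{0,t}(A_t)-\phi^{b}_{0,t}(X_0)|\le G'\,(\Xi+C')^{r''}|A_t-X_0|$ with $G':=\sup_{0\le s\le t\le T}\norm{\nabla\phi^{b}_{s,t}}_{\mathrm{L}^{-r'',\infty}}$, delivers the claimed inequality with $L:=C\,G\,G'\,(\Xi+C')^{r+r'+r''}$. Finally $G,G'$ have finite $q$-th moment for every $q<\infty$ with bounds depending on $b$ only through $C$ (by \eqref{eq:Lrinfty-estimate-on-first-order-flow}), while $(\Xi+C')^{r+r'+r''}\in\mathrm{L}^{p/(r+r'+r'')}(\Omega)$ with exponent $>1$; H\"older's inequality yields $\sup_{b\in\mathcal{C}}\mathbb{E}L<\infty$, and $L$ does not depend on $\tilde b$. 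The second order case follows identically after replacing $\mathbb{R}^d$ with $\mathbb{R}^{2d}$ and invoking the corresponding parts of \cref{lem:SDE-bounds} and \cref{thm:existence-of-stochastic-flow}.

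I expect the representation step to be the main obstacle: the linear-in-$(b-\tilde b)$ dependence is invisible at the level of the difference equation because of the H\"older drift, so one is forced to route the comparison through the stochastic flow, and pinning down the bounded-variation process $A$ with $X^{\tilde b}_t=\phi^{b}_{0,t}(A_t)$ is the crux — after it the argument collapses to the elementary ODE estimate above. The one technical point that needs care is keeping every random factor integrable with only $p>r$ at hand, which is why the surplus $p-r$ is split between the inverse-flow weight $r'$ and the outer-flow weight $r''$; this is possible precisely because \cref{thm:existence-of-stochastic-flow} supplies flow-gradient moments for \emph{every} positive weight, with constants that do not see the $\mathrm{C}^1_b$ norm of $b$.
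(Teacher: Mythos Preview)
Your approach is correct and reaches the same conclusion, but it differs from the paper's in a way worth flagging. Both arguments route the comparison through the stochastic flow $\phi^b$ of the fixed field $b$: the paper interpolates along $\psi(u)=\phi^{b}_{u,t}(X^{\tilde b}_u)$ and estimates $|\psi(t)-\psi(0)|$ by a discrete telescoping (Riemann sum) argument, whereas you produce the continuous representation $X^{\tilde b}_t=\phi^{b}_{0,t}(A_t)$ via the It\^o--Wentzell formula and then bound $|A_t-X_0|$ through the random ODE. The paper's telescoping is more elementary (no generalised It\^o formula) and, crucially, uses only \emph{forward} flow increments $\phi^{b}_{s,t}$ with $s\le t$: the Lipschitz constant $J$ and hence the final $L$ contain a single factor of the forward flow gradient, exactly what \cref{thm:existence-of-stochastic-flow} supplies.

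Your argument, by contrast, uses the \emph{inverse} flow gradient $\nabla\phi^{b}_{t,0}$ in the ODE for $A_t$, and this is where you should be careful: your random constant $G$ is defined as $\sup_{0\le s\le t\le T}\norm{\nabla\phi^{b}_{s,t}}_{\mathrm{L}^{-r',\infty}}$, which covers only forward flows and does not bound $\norm{\nabla\phi^{b}_{t,0}}_{\mathrm{L}^{-r',\infty}}$. The backward estimate does hold (by time reversal the inverse flow is the forward flow of the reversed SDE, to which the same theorem applies), but as written your $G$ does not match what you use, and \eqref{eq:Lrinfty-estimate-on-first-order-flow} in the paper's statement is forward-only. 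A second minor cost is that your $L$ carries two flow-gradient factors $G\cdot G'$ rather than the single one in the paper's $L$; this is harmless for integrability but slightly less sharp. If you patch the backward-flow point (either by extending $G$ to both directions and justifying the backward bound, or by switching to the paper's telescoping so that only forward flows appear), your proof is complete.
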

\begin{proof}
We give the proof in the first order case. The second order case is analogous and no harder. Let $\phi,\tilde{\phi}$ be the associated stochastic flows given by \cref{thm:existence-of-stochastic-flow} and let
\begin{equation*}
J=\sup_{0\le s\le t\le T}\sup_{|x|\le\sup_{t\in[0,T]}|X^{\tilde{b}}_t|}|\nabla\phi_{s,t}(x)|.
\end{equation*}

Define the function $\psi(u)=(\phi_{t,u}\circ\tilde{\phi}_{0,u})(X_0)$. Then $\psi(0)=X^b_t$ and $\psi(t)=X^{\tilde{b}}_t$. We wish to estimate $|\psi(t)-\psi(0)|\le \int^t_0\left|\frac{d\psi}{ds}\right|\,ds$, but it is not immediately clear how to evaluate the derivative due to the presence of the non-differentiable Brownian motions. Instead we prove the moral equivalent using Riemann sums. Let $0=t_0<t_1<\dotsb<t_n=t$ be a partition of $[0,t]$ of maximum width $h$. Now consider
\begin{equation}\label{eq:long-equation-for-J}
\begin{aligned}
|\psi(t_{k+1})-\psi(t_k)|&=|(\phi_{t_{k+1},t}\circ\tilde{\phi}_{0,t_{k+1}})(X_0)-(\phi_{t_{k},t}\circ\tilde{\phi}_{0,t_k})(X_0)|\\
&= |(\phi_{t_{k+1},t}\circ\tilde{\phi}_{0,t_{k+1}})(X_0)-(\phi_{t_{k+1},t}\circ \phi_{t_k,t_{k+1}}\circ \tilde{\phi}_{0,t_k})(X_0)|\\
&\le J|\tilde{\phi}_{0,t_{k+1}}(X_0)-(\phi_{t_k,t_{k+1}}\circ\tilde{\phi}_{0,t_k})(X_0)|\\
&\le J|(\tilde{\phi}_{t_k,t_{k+1}}\circ\tilde{\phi}_{0,t_{k}})(X_0)-(\phi_{t_k,t_{k+1}}\circ\tilde{\phi}_{0,t_k})(X_0))|\\
&\le J\left|\int^{t_{k+1}}_{t_k}\tilde{b}_s(\tilde{\phi}_{t_k,s}(X^{\tilde{b}}_{t_k}))-b_s(\phi_{t_k,s}(X^{\tilde{b}}_{t_k}))\,ds\right|\\
&\le J\int^{t_{k+1}}_{t_k}|\tilde{b}_s(X^{\tilde{b}}_{t_k})-b_s(X^{\tilde{b}}_{t_k})|\,ds\\
&\quad+J\int^{t_{k+1}}_{t_k}|b_s(\tilde{\phi}_{t_k,s}(X^{\tilde{b}}_{t_k}))-b_s(\phi_{t_k,s}(X^{\tilde{b}}_{t_k}))|\,ds,
\end{aligned}
\end{equation}
Note that we have the simple estimate
\begin{equation*}
\sup_x|\tilde{\phi}_{u,s}(x)-\phi_{u,s}(x)|\le \int^s_u\norm{b_\tau}_{\mathrm{L}^\infty(\mathbb{R}^d;\mathbb{R}^d)}+\norm{\tilde{b}_\tau}_{\mathrm{L}^\infty(\mathbb{R}^d;\mathbb{R}^d)}\,d\tau\le C|s-u|.
\end{equation*}
Therefore, as $b_s$ is $\alpha$-H\"older continuous, the final integral in \eqref{eq:long-equation-for-J} is bounded by $CJ|t_{k+1}-t_k|^{1+\alpha}$ for some constant $C$, and when we take the partition width $h$ to zero in the sum in the display below, this term contributes nothing. Hence,
\begin{align*}
|X^b_t-X^{\tilde{b}}_t|&=|\psi(t)-\psi(0)|\\
&\le \lim_{h\to0}\sum_{k=0}^{n-1}|\psi(t_{k+1})-\psi(t_k)|\\
&=J\int^t_0|b_s(X^{\tilde{b}}_s)-\tilde{b}_s(X^{\tilde{b}}_s)|\,ds.
\end{align*}
Next we note that
\begin{equation*}
J\int^t_0|b_s(X^{\tilde{b}}_s)-\tilde{b}_s(X^{\tilde{b}}_s)|\,ds\le J\sup_{0\le s\le T}\<{X^{\tilde{b},i,N}_s}^{r}\int^t_0\norm{b_s-\tilde{b}_s}_{\mathrm{L}^{-r,\infty}(\mathbb{R}^d;\mathbb{R}^d)}\,ds.
\end{equation*}
Now define 
\begin{equation*}
\begin{aligned}
L:&=J\sup_{0\le s\le T}\<{X^{\tilde{b},i,N}_s}^{r}\\
&=\left(\sup_{0\le s\le T}\<{X^{\tilde{b},i,N}_s}^{r}\right)\left(\sup_{0\le s\le t\le T}\sup_{|x|\le\sup_{t\in[0,T]}|X^{\tilde{b}}_t|}|\nabla\phi_{s,t}(x)|\right).
\end{aligned}
\end{equation*}
Note that for any $\varepsilon>0$,
\begin{equation*}
L\le \left(\sup_{0\le t\le T}\<{X^{\tilde{b}}_t}^{r+\varepsilon}\right) \sup_{0\le s\le t\le T}\norm{\nabla\phi_{s,t}}_{\mathrm{L}^{-\varepsilon,\infty}}
\end{equation*}
so that by H\"older's inequality 
\begin{equation*}
\mathbb{E}L\le \nnorm{\sup_{0\le t\le T}\<{X^{\tilde{b}}_t}}_p^\varepsilon\nnorm{\sup_{0\le s\le t\le T}\norm{\nabla\phi_{s,t}}_{\mathrm{L}^{-\varepsilon,\infty}}}_q
\end{equation*} 
with $(1/p')+(1/q)=1$ with $p'(r+\varepsilon)=p$. (We ensure $p'>1$ by taking $\varepsilon$ sufficiently small and using $r<p$.) These are both finite by \cref{lem:SDE-bounds} and \cref{thm:existence-of-stochastic-flow} respectively. The proof is complete.
\end{proof}

\subsection{The empirical process theory argument}
To control the supremum in \eqref{eq:supremum} we will use the following key proposition. 

Recall that a semi-metric space is a metric space without the triangle inequality. The definition of metric entropy extends without modification to semi-metric spaces. For a random variable $X$ valued in a Banach space $\mathrm{V}$ we say that $X$ is centred if $\mathbb{E}g(X)=0$ for all $g$ in the dual of $\mathrm{V}$.
\begin{proposition}\label{prop:main-entropy-argument}
Let $(\mathcal{X},d)$ be a totally bounded semi-metric space and $(\mathrm{V},\norm{\cdot}_{\mathrm{V}})$ be a separable Banach space. Let $\varphi:\mathcal{X}\to \mathrm{V}$ be a centred random map, $Y$ a non-negative sub-Gaussian random variable and $(G(x,\varepsilon))_{x\in \mathcal{X},\varepsilon\in(0,1]}$ be a family of random variables. Let $(\varphi^{1,N},Y^{i,N},G^{i,N}),\dotsc,(\varphi^{N,N},Y^{N,N},G^{N,N})$ be i.i.d. copies of $(\varphi,Y,G)$ and assume the following:
\begin{enumerate}[font=\bf,label=(\roman*)]
\item \textbf{Metric entropy bounds:} The semi-metric space $\mathcal{X}$ obeys the following bound
\begin{equation*}
H(\varepsilon,\mathcal{X},d)\le C_h\varepsilon^{-k}
\end{equation*}
for constants $C_h,h>0$.
\item \textbf{`Pointwise Lipschitz' condition:} For every $x\in \mathcal{X},\varepsilon\in(0,1]$ we have
\begin{equation}\label{eq:pointwise-Holder-condition}
\sup_{d(x,\tilde{x})\le\varepsilon}\norm{\varphi(x)-\varphi(\tilde{x})}_{\mathrm{V}}\le G(x,\varepsilon)
\end{equation}
and there is a constant $C_G$ such that for all $\varepsilon\in(0,1]$,
\begin{equation}\label{eq:pointwise-Holder-condition-finite-expectation}
\sup_{x\in \mathcal{X}}\mathbb{E}G(x,\varepsilon)\le C_G\varepsilon.
\end{equation}
\item \textbf{Dominating sub-Gaussians (envelope function):} We have
\begin{equation*}
\sup_{x\in \mathcal{X}}\norm{\varphi(x)}_{\mathrm{V}}\le Y
\end{equation*}
with $\nnorm{Y}\le C_Y$, and if $V\ne\mathbb{R}$ then also $Y\le C_Y$ almost surely.
\item \textbf{Pointwise law of large numbers: }We have
\begin{equation*}
\sup_{x\in \mathcal{X}}\mathbb{E}\norm{\averN \varphi^{i,N}(x)}_{\mathrm{V}}\le C_{\mathrm{V}}N^{-1/2}.
\end{equation*}
\end{enumerate}
Then it holds that
\begin{equation*}
\nnorm{\sup_{x\in \mathcal{X}}\norm{\averN \varphi^{i,N}(x)}_{\mathrm{V}}}\le C(C_G+(C_Y+C_{\mathrm{V}})\sqrt{C_h})N^{-\gamma},\quad \gamma=\frac{1}{2+k}.
\end{equation*}
\end{proposition}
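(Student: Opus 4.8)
The plan is to handle the supremum by a \emph{single-scale} discretisation of $\mathcal{X}$ rather than by full Dudley chaining (the latter is unavailable here, since $\int_0^1\sqrt{H(\varepsilon,\mathcal X,d)}\,d\varepsilon$ diverges once $k\ge2$), and to estimate the resulting finite maximum by the sub-Gaussian maximal inequality, feeding in the pointwise law of large numbers (iv) and the envelope (iii). I would first make two harmless reductions: enlarging $C_h$ we may assume $C_h\ge1$; and replacing $G(x,\varepsilon)$ by the smallest admissible choice $\sup_{d(x,\tilde x)\le\varepsilon}\norm{\varphi(x)-\varphi(\tilde x)}_{\mathrm V}$, which still obeys \eqref{eq:pointwise-Holder-condition-finite-expectation}, we may and do assume $G(x,\varepsilon)\le 2Y$ pointwise, so that $\nnorm{G(x,\varepsilon)}\le 2C_Y$ and $\mathbb{E}G(x,\varepsilon)\le\min(C_G\varepsilon,2C_Y)$. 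Fix $\varepsilon\in(0,1]$ and a minimal $\varepsilon$-net $x_1,\dots,x_m$ of $(\mathcal X,d)$; then $\log m=H(\varepsilon,\mathcal X,d)\le C_h\varepsilon^{-k}$, and since $\varepsilon^{-k}\ge1$ and $C_h\ge1$ we get $\sqrt{\log(2m)}\le C\sqrt{C_h}\,\varepsilon^{-k/2}$.

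Assigning to each $x\in\mathcal X$ a net point $x_{j(x)}$ with $d(x,x_{j(x)})\le\varepsilon$, the triangle inequality in $\mathrm V$ together with the pointwise Lipschitz bound \eqref{eq:pointwise-Holder-condition} gives
\[
\sup_{x\in\mathcal X}\norm{\averN\varphi^{i,N}(x)}_{\mathrm V}\le\underbrace{\max_{j\le m}\norm{\averN\varphi^{i,N}(x_j)}_{\mathrm V}}_{=:A}\;+\;\underbrace{\max_{j\le m}\averN G^{i,N}(x_j,\varepsilon)}_{=:D}.
\]
For a fixed net point the summands $\varphi^{i,N}(x_j)$ are i.i.d.\ and centred, and I would show $\nnorm{\norm{\averN\varphi^{i,N}(x_j)}_{\mathrm V}}\le C(C_Y+C_V)N^{-1/2}$: when $\mathrm V=\mathbb{R}$ this is the sub-Gaussian sum estimate (each term has sub-Gaussian norm $\le\nnorm Y\le C_Y$); when $\mathrm V\ne\mathbb{R}$, the mean is $\le C_VN^{-1/2}$ by (iv), while $\norm{\varphi^{i,N}(x_j)}_{\mathrm V}\le Y^{i,N}\le C_Y$ almost surely makes $(\varphi^{1,N},\dots,\varphi^{N,N})\mapsto\norm{\averN\varphi^{i,N}(x_j)}_{\mathrm V}$ a bounded-differences functional with increments $\le2C_Y/N$, so McDiarmid's inequality furnishes sub-Gaussian concentration about that mean with norm $\le CC_YN^{-1/2}$. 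The sub-Gaussian maximal inequality over the $m$ points then yields $\nnorm A\le C\sqrt{\log(2m)}\,(C_Y+C_V)N^{-1/2}\le C(C_Y+C_V)\sqrt{C_h}\,\varepsilon^{-k/2}N^{-1/2}$.

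The term $D$ is where the only real subtlety lies: one must subtract the mean $\mathbb{E}G(x_j,\varepsilon)$ \emph{before} invoking the maximal inequality, since that mean is of size $\varepsilon$ (not $N^{-1/2}$) and multiplying it by $\sqrt{\log m}\sim\varepsilon^{-k/2}$ would ruin the estimate once $k\ge2$. Thus I write $D\le\max_{j}\mathbb{E}G(x_j,\varepsilon)+\max_{j}\Xi_j$ with $\Xi_j=\averN\bigl(G^{i,N}(x_j,\varepsilon)-\mathbb{E}G(x_j,\varepsilon)\bigr)$; the first maximum is $\le C_G\varepsilon$ by \eqref{eq:pointwise-Holder-condition-finite-expectation}, while each $\Xi_j$ is a centred i.i.d.\ sum of real variables with per-term sub-Gaussian norm $\le\nnorm{G(x_j,\varepsilon)}+\mathbb{E}G(x_j,\varepsilon)\le4C_Y$ (using $G\le2Y$), so $\nnorm{\Xi_j}\le CC_YN^{-1/2}$ and $\nnorm{\max_j\Xi_j}\le C\sqrt{C_h}\,\varepsilon^{-k/2}C_YN^{-1/2}$ by the maximal inequality. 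Hence $\nnorm D\le C_G\varepsilon+CC_Y\sqrt{C_h}\,\varepsilon^{-k/2}N^{-1/2}$. Adding, $\nnorm{\sup_{x}\norm{\averN\varphi^{i,N}(x)}_{\mathrm V}}\le\nnorm A+\nnorm D\le C_G\varepsilon+C(C_Y+C_V)\sqrt{C_h}\,\varepsilon^{-k/2}N^{-1/2}$, and choosing $\varepsilon=N^{-1/(2+k)}$, for which $\varepsilon^{-k/2}N^{-1/2}=N^{-1/(2+k)}$, yields the claimed bound with $\gamma=1/(2+k)$.

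The main obstacle, as indicated, is organisational rather than any hard estimate. The two points I expect to need care are: (a) the mean/fluctuation split for $D$ that keeps the $\sqrt{\log m}$ factor off the $C_G\varepsilon$ term, which is exactly what lets $k$ be arbitrary; and (b) the dichotomy in (iii) — in the non-scalar case the almost-sure envelope $Y\le C_Y$ is precisely what is needed to run the bounded-differences argument for $A$, whereas for $\mathrm V=\mathbb{R}$ the weaker sub-Gaussian envelope suffices via the sub-Gaussian sum bound. (Measurability of the suprema involved, which holds in all of the paper's applications, is assumed throughout; see \cite{vershynin2010introduction,Van-der-Vaart-Wellner} for the sub-Gaussian sum and maximal inequalities used above.)
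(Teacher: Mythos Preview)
Your proposal is correct and follows essentially the same approach as the paper: a single-scale $\varepsilon$-net, the same triangle-inequality split into a net-point term and an increment term, the same mean/fluctuation decomposition of the increment term to keep the $\sqrt{\log m}$ factor off the $C_G\varepsilon$ contribution, and the same optimisation $\varepsilon=N^{-1/(2+k)}$. The only cosmetic differences are that the paper invokes Talagrand's inequality for Banach spaces where you use McDiarmid (these coincide here), and that the paper writes $\min(2Y^{i,N},G^{i,N}(x^m,\varepsilon))$ inline rather than making your preliminary WLOG reduction $G\le 2Y$.
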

Note that in the above proposition the usual case is that $V=\mathbb{R}$. In which case assumption (iv) follows from assumption (iii) and the usual law of large numbers under second moment conditions.

In order to prove this proposition and also for later proofs, we will need a couple of standard results on the sub-Gaussian norm.
\begin{lemma}[Law of large numbers]\label{lem:sub-Gaussian-LLN}
Let $X_1,\dotsc, X_N$ be i.i.d. centred sub-Gaussian random variables. Then
\begin{equation*}
\nnorm{\averN X_i}\le C\nnorm{X_1}N^{-1/2}
\end{equation*}
for an absolute constant $C$.
\end{lemma}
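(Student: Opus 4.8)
The plan is to reduce the statement to the additivity of the sub-Gaussian parameter under sums of independent centred random variables, which is transparent at the level of the moment generating function.

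First I would recall the standard fact (see \cite{Van-der-Vaart-Wellner,vershynin2010introduction}) that the sub-Gaussian norm $\nnorm{\cdot}$ of \cref{def:nnorm} is equivalent, up to absolute multiplicative constants, both to the exponential Orlicz norm $\inf\{t>0:\mathbb{E}\exp(X^2/t^2)\le 2\}$ and --- crucially, for a \emph{centred} random variable --- to the best constant in the Gaussian moment generating function bound $\mathbb{E}\exp(\lambda X)\le\exp(c\lambda^2 t^2)$ valid for all $\lambda\in\mathbb{R}$. Thus there is an absolute constant $c_1$ such that each $X_i$ satisfies $\mathbb{E}\exp(\lambda X_i)\le\exp(c_1\lambda^2\nnorm{X_i}^2)=\exp(c_1\lambda^2\nnorm{X_1}^2)$, the last equality because the $X_i$ are identically distributed.

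Next I would use independence to multiply these bounds: writing $S_N=\sum_{i=1}^N X_i$, we get $\mathbb{E}\exp(\lambda S_N)=\prod_i\mathbb{E}\exp(\lambda X_i)\le\exp(c_1 N\lambda^2\nnorm{X_1}^2)$, so $S_N$ is again centred sub-Gaussian with parameter $\sqrt{N}\,\nnorm{X_1}$ (up to the constant $c_1$). Converting back through the equivalence of norms yields an absolute constant $c_2$ with $\nnorm{S_N}\le c_2\sqrt{N}\,\nnorm{X_1}$. Finally, the positive homogeneity $\nnorm{cX}=|c|\,\nnorm{X}$, immediate from \cref{def:nnorm}, gives $\nnorm{\averN X_i}=N^{-1}\nnorm{S_N}\le c_2 N^{-1/2}\nnorm{X_1}$, which is the claim with $C=c_2$.

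The only point that needs care is the equivalence between the several characterisations of sub-Gaussianity, and in particular the fact that centredness is what allows one to pass to the moment generating function formulation; all of this is completely standard and can simply be quoted from \cite{Van-der-Vaart-Wellner,vershynin2010introduction}. Alternatively one could avoid the moment generating function entirely and argue directly on the $\mathrm{L}^p$ norms by symmetrisation followed by Khintchine's inequality in $\mathrm{L}^p$ (giving $\nnorm{S_N}_p\le C\sqrt{p}\,(\sum_i\nnorm{X_i}_p^2)^{1/2}$ for $p\ge 2$, the range $1\le p<2$ being handled trivially) and then optimising over $p$ in \cref{def:nnorm}; this route is more hands-on but also more self-contained.
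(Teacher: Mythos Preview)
Your argument is correct and is essentially the same as the paper's: the paper simply states that the lemma is a corollary of \cite[Lemma~5.9]{vershynin2010introduction}, and your moment generating function computation is precisely the content of that lemma (rotation invariance of the sub-Gaussian norm for independent centred summands). Your alternative route via symmetrisation and Khintchine's inequality is also fine but is more work than needed here.
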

The proof is a simple corollary of \cite[Lemma 5.9.]{vershynin2010introduction}.
\begin{lemma}[Orlicz maximal inequality]\label{lem:Orlicz-maximal-inequality}
Let $X_1,\dotsc, X_m$ be real sub-Gaussian random variables, not necessarily independent. Then
\begin{equation*}
\nnorm{\max_{i=1}^m|X_i|}\le C\max_{i=1}^m\nnorm{X_i}\sqrt{\log (1+m)}
\end{equation*}
for an absolute constant $C$.
\end{lemma}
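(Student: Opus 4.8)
The plan is to argue directly from the definition $\nnorm{X}=\sup_{p\ge1}p^{-1/2}\nnorm{X}_p$, estimating $\nnorm{\max_{i\le m}|X_i|}_p$ for each fixed $p$ by the elementary passage from a maximum to a sum. Indeed, for any $p\ge1$,
\begin{equation*}
\mathbb{E}\max_{i\le m}|X_i|^p\le\sum_{i\le m}\mathbb{E}|X_i|^p\le m\max_{i\le m}\nnorm{X_i}_p^p,
\end{equation*}
and since $\nnorm{X_i}_p\le\sqrt p\,\nnorm{X_i}$ is immediate from the definition of $\nnorm{\cdot}$, this gives
\begin{equation*}
p^{-1/2}\nnorm{\max_{i\le m}|X_i|}_p\le m^{1/p}\max_{i\le m}\nnorm{X_i}\qquad\text{for all }p\ge1.
\end{equation*}
Taking the supremum over $p\ge1$ of the right-hand side as it stands would produce the hopeless factor $\sup_{p\ge1}m^{1/p}=m$; the substance of the proof is to extract the $\sqrt{\log(1+m)}$ gain by treating small and large $p$ separately.

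Concretely, I would assume first $m\ge2$ and freeze the exponent at $p_0:=\log(1+m)\ge1$. For $p\ge p_0$ the last display already yields $p^{-1/2}\nnorm{\max_i|X_i|}_p\le m^{1/p_0}\max_i\nnorm{X_i}$, and $m^{1/p_0}=m^{1/\log(1+m)}\le(1+m)^{1/\log(1+m)}=e$, so the quantity is at most $e\max_i\nnorm{X_i}\le e\sqrt{\log(1+m)}\max_i\nnorm{X_i}$ (using $\log(1+m)\ge1$). For $1\le p\le p_0$ I would instead use $p^{-1/2}\le1$ and the monotonicity of $\mathrm{L}^q$ norms on a probability space to write $p^{-1/2}\nnorm{\max_i|X_i|}_p\le\nnorm{\max_i|X_i|}_{p_0}$, and then bound the right-hand side by the last display evaluated at $p=p_0$, namely by $\sqrt{p_0}\,m^{1/p_0}\max_i\nnorm{X_i}\le e\sqrt{\log(1+m)}\max_i\nnorm{X_i}$. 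Taking the supremum over all $p\ge1$ then gives the claim with an absolute constant (one may take $C=e$). The case $m=1$ is trivial after enlarging $C$, since then $\max_i|X_i|=|X_1|$ and $\sqrt{\log2}$ is a fixed positive number.

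The only real obstacle is the one highlighted above: the ``maximum $\le$ sum'' moment bound is wasteful for exponents $p$ near $1$, and the remedy is the classical device of choosing $p_0\sim\log m$ and splitting the range of $p$. Everything else --- the two moment comparisons and the elementary identity $(1+m)^{1/\log(1+m)}=e$ --- is routine, and in particular no independence among the $X_i$ is needed, since only a union/summation bound on the $p$th moments is invoked. (An equivalent route would be to pass to the exponential Orlicz norm $\inf\{t>0:\mathbb{E}\exp(X^2/t^2)\le2\}$, which is comparable to $\nnorm{\cdot}$, and run the standard union-bound argument there; the direct $\mathrm{L}^p$ computation above is shorter given the normalisation used in this paper.)
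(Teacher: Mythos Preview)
Your proof is correct. The paper does not actually supply its own argument here---it simply refers the reader to \cite[\S2.2]{Van-der-Vaart-Wellner}---so your direct $\mathrm{L}^p$ computation with the split at $p_0=\log(1+m)$ is a complete and standard way to fill in the details, and is essentially equivalent (via the known comparability of $\nnorm{\cdot}$ with the $\psi_2$-Orlicz norm) to the union-bound argument in that reference that you mention at the end.
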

We refer the reader to \cite[\S 2.2.]{Van-der-Vaart-Wellner} for details of the proof.

We will also need a variant of Talagrand's inequality for empirical processes \cite{talagrand1996new}.
\begin{theorem}[Talagrand's inequality for Banach spaces]\label{thm:talagrand}
Let $(\mathrm{V},\norm{\cdot}_{\mathrm{V}})$ be a separable Banach space and $X_1,\dotsc,X_N$ be i.i.d. centred $\mathrm{V}$-valued random variables with $\norm{X_1}_{\mathrm{V}}\le 1$ almost surely. Then,
\begin{equation*}
\nnorm{\norm{\averN X_i}_{\mathrm{V}}-\mathbb{E}\norm{\averN X_i}_{\mathrm{V}}}\le CN^{-1/2}
\end{equation*}
For an absolute constant $C$.
\end{theorem}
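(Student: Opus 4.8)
The plan is to view the map $(x_1,\dots,x_N)\mapsto\norm{\averN x_i}_{\mathrm{V}}$ as a function of $N$ independent coordinates that is stable under changing any single coordinate, apply a bounded differences (McDiarmid-type) concentration inequality to obtain a sub-Gaussian tail bound with width of order $N^{-1/2}$, and then convert that tail bound into a bound on $\nnorm{\cdot}$ via the equivalence recorded around \eqref{eq:nnorm-subgaussian-tail-bound}.

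First I would set $f(x_1,\dots,x_N)=\norm{\averN x_i}_{\mathrm{V}}$, which is Borel measurable because $x\mapsto\norm{x}_{\mathrm{V}}$ is continuous and $\mathrm{V}$ is separable, and which is bounded: on the support of $(X_1,\dots,X_N)$ we have $0\le f\le \averN\norm{x_i}_{\mathrm{V}}\le 1$. The key elementary estimate is that replacing one argument $x_j$ by another $x_j'$ with $\norm{x_j'}_{\mathrm{V}}\le 1$ changes $f$ by at most
\begin{equation*}
\left|\,\norm{\tfrac1N\sum_{i=1}^N x_i}_{\mathrm{V}}-\norm{\tfrac1N\sum_{i\ne j}x_i+\tfrac1N x_j'}_{\mathrm{V}}\,\right|\le\tfrac1N\norm{x_j-x_j'}_{\mathrm{V}}\le\tfrac2N,
\end{equation*}
by the reverse triangle inequality together with $\norm{x_j}_{\mathrm{V}},\norm{x_j'}_{\mathrm{V}}\le1$. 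Thus $f$ satisfies the bounded differences condition with all $N$ constants equal to $2/N$, so that $\sum_{j=1}^N(2/N)^2=4/N$.

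Next, applying the bounded differences inequality (McDiarmid) to the independent random variables $X_1,\dots,X_N$ gives, for every $t>0$,
\begin{equation*}
\mathbb{P}\bigl(\bigl|f(X_1,\dots,X_N)-\mathbb{E}f(X_1,\dots,X_N)\bigr|>t\bigr)\le 2\exp\!\left(-\frac{N t^2}{2}\right).
\end{equation*}
This is precisely a sub-Gaussian tail bound of the form \eqref{eq:nnorm-subgaussian-tail-bound} with parameter $c$ of order $N^{-1/2}$; the standard equivalence between such tail bounds and finiteness of the sub-Gaussian norm (see \cite{Van-der-Vaart-Wellner,vershynin2010introduction}) then yields $\nnorm{f(X_1,\dots,X_N)-\mathbb{E}f(X_1,\dots,X_N)}\le CN^{-1/2}$ for an absolute constant $C$, which is the assertion since $f(X_1,\dots,X_N)=\norm{\averN X_i}_{\mathrm{V}}$.

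There is no serious obstacle here: the only points needing a little care are the measurability of $f$ (handled by separability of $\mathrm{V}$) and the precise passage from the tail bound to the bound on $\nnorm{\cdot}$. I note that the hypothesis that the $X_i$ are centred is not actually used in this argument, since the bounded differences inequality controls deviations around $\mathbb{E}\norm{\averN X_i}_{\mathrm{V}}$ regardless of the mean of the $X_i$; centring only matters in the applications, where it makes that mean itself small. One could replace the bounded differences inequality by the full Talagrand concentration inequality for empirical processes \cite{talagrand1996new} to obtain a sharper variance-dependent constant, but the crude bound above is all that will be needed.
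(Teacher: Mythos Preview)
Your argument is correct. The paper does not actually supply a proof of this theorem: it simply states the result and cites \cite{talagrand1996new}, treating it as a known concentration inequality. Your route via the bounded differences (McDiarmid) inequality is more elementary than invoking the full Talagrand inequality and gives exactly what is needed here, since only the crude $N^{-1/2}$ sub-Gaussian width is used downstream in the proof of \cref{prop:main-entropy-argument}, not any variance-dependent refinement. Your remark that the centring hypothesis is not used in the concentration step itself is also correct; centring is only used elsewhere (in assumption~(iv) of \cref{prop:main-entropy-argument}) to make $\mathbb{E}\norm{\averN X_i}_{\mathrm{V}}$ small.
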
 

\begin{proof}[Proof of \cref{prop:main-entropy-argument}]
Let $\varepsilon>0$ to be chosen and let $(x^m)_{m=1}^M$ be an $\varepsilon$-net of $\mathcal{X}$. By assumption (i) $M$ may be taken to be at most $\exp(C_h\varepsilon^{-k})$. Let $m\in\{1,\dotsc,M\}$ be arbitrary, and $x\in \mathcal{X}$ be in the $\varepsilon$-ball centred at $x^m$. Then we have the bound
\begin{align}\label{eq:main-entropy-first-eq}
\norm{\averN \varphi^{i,N}(x)}_{\mathrm{V}}&\le\norm{\averN(\varphi^{i,N}(x)-\varphi^{i,N}(x^m))}_{\mathrm{V}}+\norm{\averN\varphi^{i,N}(x^m)}_{\mathrm{V}}.
\end{align}
Consider the summands in the first term on the right hand side. By assumptions (ii) and (iii), we have
\begin{equation}\label{eq:key-prop-first-bound}
\begin{aligned}
\norm{\varphi^{i,N}(x)-\varphi^{i,N}(x^m)}_{\mathrm{V}}&\le \min(2Y^{i,N},G^{i,N}(x^m,\varepsilon))\\
&\qquad=:\mathbb{E}\min(2Y^{i,N},G^{i,N}(x^m,\varepsilon))+A^{i,N,m}
\end{aligned}
\end{equation}
where $(A^{i,N,m})_{i=1}^N$ (defined by the last equality) are i.i.d. uniformly sub-Gaussian centred random variables. To control the last term in \eqref{eq:main-entropy-first-eq} we split into two cases. Firstly, if $\mathrm{V}\ne \mathbb{R}$ then we write the last term in \eqref{eq:main-entropy-first-eq} as
\begin{align*}
\norm{\averN\varphi^{i,N}(x^m)}_{\mathrm{V}}&=\mathbb{E}\norm{\averN\varphi^{i,N}(x^m)}_{\mathrm{V}}\\
&\quad+\left( \norm{\averN\varphi^{i,N}(x^m)}_{\mathrm{V}}-\mathbb{E}\norm{\averN\varphi^{i,N}(x^m)}_{\mathrm{V}}\right).
\end{align*}
Hence, by assumption (iv) and Talagrand's inequality (\cref{thm:talagrand}), we have the bound
\begin{equation}\label{eq:key-prop-second-bound1}
\nnorm{\norm{\averN\varphi^{i,N}(x^m)}_{\mathrm{V}}}\le C(C_{\mathrm{V}}+C_Y)N^{-1/2}.
\end{equation}
Secondly, if $\mathrm{V}=\mathbb{R}$ then we can directly apply the law of large numbers for sub-Gaussian random variables (\cref{lem:sub-Gaussian-LLN}) to obtain that
\begin{equation}\label{eq:key-prop-second-bound2}
\nnorm{\norm{\averN\varphi^{i,N}(x^m)}_{\mathrm{V}}}\le CC_YN^{-1/2}.
\end{equation} 
Hence, by assumption (ii), \eqref{eq:key-prop-first-bound} and whichever of \eqref{eq:key-prop-second-bound1} or \eqref{eq:key-prop-second-bound2} applies we have
\begin{align*}
\sup_{x\in \mathcal{X},d(x,x^m)\le\varepsilon}\norm{\averN \varphi^{i,N}(x)}_{\mathrm{V}}&\le \mathbb{E}\min(2Y^{i,N},G^{i,N}(x^m,\varepsilon))\\
&\quad+\underbrace{\left|\averN A^{i,N,m}\right|+\norm{\averN \varphi^{i,N}(x^m)}_{\mathrm{V}}}_{=:B^m}\\
&\le C_G\varepsilon+B^m.
\end{align*}
By the law of large numbers for sub-Gaussian random variables (\cref{lem:sub-Gaussian-LLN}), the uniform sub-Gaussian bounds on $A^{i,N,m}$ and the above bounds on the average of $\varphi^{i,N}(x^m)$, we have
\begin{equation*}
\nnorm{B^m}\le C\nnorm{A^{1,N,m}}N^{-1/2}+C(C_{\mathrm{V}}+C_Y)N^{-1/2}\le 4C(C_Y+C_{\mathrm{V}})N^{-1/2}.
\end{equation*} 
Putting the estimates over the $\varepsilon$-net together, and using the Orlicz maximal inequality (\cref{lem:Orlicz-maximal-inequality}), we obtain
\begin{align*}
\nnorm{\sup_{x\in \mathcal{X}}\norm{\averN\varphi^{i,N}(x)}_{\mathrm{V}}}&\le\nnorm{\max_{m=1,\dotsc,M}\sup_{x\in \mathcal{X},d(x,x^m)\le\varepsilon}\norm{\averN\varphi^{i,N}(x)}_{\mathrm{V}}}\\
&\le \nnorm{\max_{m=1,\dotsc,M}C_G\varepsilon+B^m}\\
&\le C_G\varepsilon+\nnorm{\max_{m=1,\dotsc,M}B^m}\\
&\le C_G\varepsilon+C\sqrt{\log(1+M)}\max_{i=1,\dotsc,M}\nnorm{B^m}\\
&\le C_G\varepsilon+C(C_Y+C_{\mathrm{V}})\sqrt{C_h}\varepsilon^{-k/2}N^{-1/2}.
\end{align*}
By choosing $\varepsilon=N^{-1/(2+k)}$ we obtain the claimed result.
\end{proof}
In addition to \cref{thm:GV-intro,thm:2nd-order-GV-intro} we will also prove a proposition that will be used in the proofs of \cref{thm:propagation-of-chaos,thm:2nd-order-propagation-of-chaos}.
\begin{proposition}\label{prop:ULLN-for-h}
Let $f^b,\mu^{b,N}$ be as in \eqref{eq:empirical-limit-pde},\eqref{eq:empirical-process} in the first order case, and respectively \eqref{eq:2nd-order-empirical-limit-pde},\eqref{eq:2nd-order-empirical-process} in the second order case. Let $f_0\in \mathrm{P}_p(\mathbb{R}^d)$ (respectively $f_0\in \mathrm{P}_p(\mathbb{R}^{2d})$) for some $p>1$. Let $\mathcal{C}$ be a bounded subset of $\mathcal{C}^\alpha$ for some $\alpha\in(0,1)$ (respectively $\alpha\in(2/3,1)$) which satisfies 
\begin{equation}\label{eq:metric-entropy-assumption-ULLN}
H(\varepsilon,\mathcal{C},\norm{\cdot}_{\mathrm{L}^\infty([0,T];\mathrm{L}^{-r,\infty}(\mathbb{R}^d;\mathbb{R}^d))})\le C\varepsilon^{-k}
\end{equation}
for some $r\in(1,p)$. Let $h:\mathbb{R}^d\times\mathbb{R}^d\to\mathbb{R}$ be a bounded function satisfying
\begin{equation*}
\sup_{y,\delta\in\mathbb{R}^d,\delta\ne0}\frac{\norm{h(\cdot,y+\delta)-h(\cdot,y)}_{\mathrm{L}^{-r',q}(\mathbb{R}^{d})}}{|\delta|^\beta}<\infty ,
\end{equation*}
where $\beta\in(0,1]$, $q\in[1,\infty]$ and $r'>d/q$. Then we have
\begin{equation*}
\nnorm{\sup_{b\in\mathcal{C}}\sup_{t\in[0,T]}\norm{\averN h(\cdot,X^{b,i,N}_t)-\mathbb{E}h(\cdot,X^{b,i,N}_t)}_{\mathrm{L}^{-r',q}(\mathbb{R}^d)}}\le CN^{-\gamma}, \quad \gamma=\frac{1}{2+\frac k\beta}.
\end{equation*}
\end{proposition}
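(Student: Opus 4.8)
The plan is to deduce the statement from the abstract empirical-process estimate \cref{prop:main-entropy-argument}. By independence of the particles we work with a single particle, dropping the indices $i,N$, and the i.i.d.\ copies $\varphi^{i,N}$ fed into \cref{prop:main-entropy-argument} will be the per-particle maps. Set $\mathcal{X}=[0,T]\times\mathcal{C}$, let $\mathrm{V}=\mathrm{L}^{-r',q}(\mathbb{R}^d)$, and define the centred $\mathrm{V}$-valued random map
\begin{equation*}
\varphi(t,b)=h(\cdot,X^b_t)-\mathbb{E}\,h(\cdot,X^b_t),\qquad (t,b)\in\mathcal{X},
\end{equation*}
so that $\averN\varphi^{i,N}(t,b)=\averN h(\cdot,X^{b,i,N}_t)-\mathbb{E}\,h(\cdot,X^{b,i,N}_t)$ and the conclusion of \cref{prop:main-entropy-argument} is exactly the asserted bound. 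On $\mathcal{X}$ I would use the semimetric
\begin{equation*}
d\bigl((t,b),(\tilde t,\tilde b)\bigr)=\Bigl(|t-\tilde t|^{1/3}+\norm{b-\tilde b}_{\mathrm{L}^\infty([0,T];\mathrm{L}^{-r,\infty}(\mathbb{R}^d;\mathbb{R}^d))}\Bigr)^{\beta}=:\rho\bigl((t,b),(\tilde t,\tilde b)\bigr)^\beta,
\end{equation*}
the $\beta$-th power being precisely what turns the exponent $1/(2+k)$ of \cref{prop:main-entropy-argument} into $1/(2+k/\beta)$. Since $[0,T]$ has only logarithmic metric entropy in the snowflaked time metric $|\cdot|^{1/3}$ while $\mathcal{C}$ is totally bounded by \eqref{eq:metric-entropy-assumption-ULLN}, the space $\mathcal{X}$ is totally bounded under $d$, and $H(\varepsilon,\mathcal{X},d)=H(\varepsilon^{1/\beta},\mathcal{X},\rho)\le C\varepsilon^{-k/\beta}$ for $\varepsilon\in(0,1]$, which is hypothesis (i) with exponent $k/\beta$.

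Hypotheses (iii) and (iv) are routine. Boundedness of $h$ together with $\norm{\langle\cdot\rangle^{-r'}}_{\mathrm{L}^q(\mathbb{R}^d)}<\infty$ (valid since $r'>d/q$) gives $\norm{h(\cdot,y)}_{\mathrm{V}}\le\norm{h}_{\mathrm{L}^\infty}\norm{\langle\cdot\rangle^{-r'}}_{\mathrm{L}^q}=:C_0$ for every $y$, so $\sup_{\mathcal{X}}\norm{\varphi}_{\mathrm{V}}\le 2C_0$ almost surely and we may take the deterministic envelope $Y\equiv 2C_0$, which satisfies $\nnorm{Y}\le 2C_0$ and $Y\le 2C_0$ a.s.\ (as needed since $\mathrm{V}\ne\mathbb{R}$ in general). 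For (iv), the $\varphi^{i,N}(t,b)$ are i.i.d.\ and centred with $|\varphi^{i,N}(t,b)(x)|\le 2\norm{h}_{\mathrm{L}^\infty}$ pointwise, so for fixed $x$ a Marcinkiewicz--Zygmund/Rosenthal bound gives $\mathbb{E}\bigl|\averN\varphi^{i,N}(t,b)(x)\bigr|^q\le C N^{-q/2}$; integrating against $\langle x\rangle^{-r'q}\,dx\in\mathrm{L}^1$ (again $r'q>d$) and taking $q$-th roots yields $\mathbb{E}\norm{\averN\varphi^{i,N}(t,b)}_{\mathrm{V}}\le C N^{-1/2}$, uniformly in $(t,b)$.

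The substance is hypothesis (ii). The Hölder assumption on $h$ reads $\norm{h(\cdot,y)-h(\cdot,y')}_{\mathrm{V}}\le[h]_\beta|y-y'|^\beta$ with $[h]_\beta$ the seminorm in the statement, so for every $\tilde x=(\tilde t,\tilde b)\in\mathcal{X}$,
\begin{equation*}
\norm{\varphi(t,b)-\varphi(\tilde t,\tilde b)}_{\mathrm{V}}\le R+\mathbb{E}R,\qquad R:=[h]_\beta\,|X^b_t-X^{\tilde b}_{\tilde t}|^\beta,
\end{equation*}
and one may take $G\bigl((t,b),\varepsilon\bigr):=\sup_{d((t,b),\tilde x)\le\varepsilon}R+\mathbb{E}\sup_{d((t,b),\tilde x)\le\varepsilon}R$; it then suffices to show $\sup_{(t,b)\in\mathcal{X}}\mathbb{E}\sup_{d((t,b),\tilde x)\le\varepsilon}|X^b_t-X^{\tilde b}_{\tilde t}|^\beta\le C\varepsilon$ for $\varepsilon\in(0,1]$. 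The condition $d\le\varepsilon$ forces $|t-\tilde t|^{1/3}\le\varepsilon^{1/\beta}$ and $\norm{b-\tilde b}\le\varepsilon^{1/\beta}$, whence by the triangle inequality
\begin{equation*}
\sup_{d\le\varepsilon}|X^b_t-X^{\tilde b}_{\tilde t}|\le \sup_{|\tilde t-t|^{1/3}\le\varepsilon^{1/\beta}}|X^b_t-X^b_{\tilde t}|+\sup_{\tilde t,\ \norm{b-\tilde b}\le\varepsilon^{1/\beta}}|X^b_{\tilde t}-X^{\tilde b}_{\tilde t}|.
\end{equation*}
The second term is bounded by \cref{cor:lipschitz-dependence-of-SDEs-upon-b} by $L\int_0^T\norm{b_s-\tilde b_s}_{\mathrm{L}^{-r,\infty}}\,ds\le LT\varepsilon^{1/\beta}$, with $\mathbb{E}L<\infty$ uniformly over $b\in\mathcal{C}$ (here the hypothesis $r\in(1,p)$ is used). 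The first term, which involves only the fixed field $b$, is controlled by the time-increment estimate of \cref{lem:SDE-subgaussian-bounds} in the first-order case --- writing $X^b_t-X^b_{\tilde t}=Z^b_t-Z^b_{\tilde t}$ as the reference process is constant --- and by $|X^b_t-X^b_{\tilde t}|\le|t-\tilde t|\sup_s|V^b_s|$ together with \cref{lem:SDE-bounds} in the second-order case; in either case the expectation of that supremum is $\le C\varepsilon^{1/\beta}$. Using $(a+b)^\beta\le a^\beta+b^\beta$, Jensen's inequality ($\beta\le 1$), and $\varepsilon\le1$ we obtain $\mathbb{E}\sup_{d\le\varepsilon}|X^b_t-X^{\tilde b}_{\tilde t}|^\beta\le C\varepsilon$ uniformly in $(t,b)$, which is (ii) with $C_G$ proportional to $[h]_\beta$. \cref{prop:main-entropy-argument}, applied with metric-entropy exponent $k/\beta$, then gives the claim with $\gamma=1/(2+k/\beta)$.

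Besides the bookkeeping of powers of $\varepsilon$ under the $\beta$-snowflaked metric (the source of the $k/\beta$), the delicate point is the endpoint $q=\infty$, where $\mathrm{V}=\mathrm{L}^{-r',\infty}(\mathbb{R}^d)$ is not separable, so \cref{thm:talagrand} does not apply verbatim. I expect this to be the main obstacle; it can be handled by restricting $\mathrm{V}$ to the closed linear span of $\{h(\cdot,y):y\in\mathbb{R}^d\}$, which is separable because $y\mapsto h(\cdot,y)$ is $\beta$-Hölder into $\mathrm{V}$, and by re-deriving (iv) for $q=\infty$ via a spatial truncation (for $|x|>R$ the weight forces $\langle x\rangle^{-r'}\bigl|\averN\varphi^{i,N}(t,b)(x)\bigr|\le 2\norm{h}_{\mathrm{L}^\infty}\langle R\rangle^{-r'}$, with $R$ a small power of $N$), or, as happens in the applications, by reducing the case $q=\infty$ to finite $q$ before invoking the proposition; the remainder of the argument is unchanged.
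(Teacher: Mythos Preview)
Your argument for $q\in[1,\infty)$ is correct and is essentially the paper's own proof: same index set $\mathcal{X}=[0,T]\times\mathcal{C}$ with a $\beta$-snowflaked product semimetric, same target space $\mathrm{V}=\mathrm{L}^{-r',q}$, and the same verification of (i)--(iv) of \cref{prop:main-entropy-argument}. The minor cosmetic differences (your $|\cdot|^{1/3}$ on time versus the paper's $|\cdot|^{\alpha\beta/3}$; your Rosenthal route to (iv) versus the paper's Jensen-then-sup-over-$x$ computation) are inconsequential.

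For $q=\infty$ the paper takes a genuinely different route. Rather than wrestling with $\mathrm{V}=\mathrm{L}^{-r',\infty}$, it \emph{enlarges the index set} to $\mathcal{X}=[0,T]\times\mathcal{C}\times(\mathbb{R}^d,\rho_{r',\beta})$ and takes $\mathrm{V}=\mathbb{R}$, where $\rho_{r',\beta}(x,y)=\min(|x-y|^\beta,1)/(1+\min(|x|,|y|))^{\min(r',1)}$ is a semimetric on $\mathbb{R}^d$ with only logarithmic entropy. With $\mathrm{V}=\mathbb{R}$, hypothesis (iii) needs only a sub-Gaussian envelope (not an a.s.\ bound) and (iv) is just the scalar law of large numbers --- both immediate since $h$ is bounded. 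The work shifts entirely to checking the pointwise Lipschitz condition (ii) in the new $x$-variable.

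Your proposed fixes for $q=\infty$ are less complete. Restricting to the closed span of $\{h(\cdot,y)\}$ does give a separable subspace, but you still owe assumption (iv): a bound on $\mathbb{E}\sup_x\langle x\rangle^{-r'}\bigl|\averN\varphi^{i,N}(t,b)(x)\bigr|$. Your truncation handles $|x|>R$, but for $|x|\le R$ you would need a covering argument in $x$, which requires some continuity of $x\mapsto h(x,z)$ --- not provided by the proposition's hypotheses (which concern only $y$-regularity). In effect you would be reinventing the paper's $\rho_{r',\beta}$ argument inside the verification of (iv), at which point it is cleaner to move $x$ into $\mathcal{X}$ from the start. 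Your third suggestion, reducing $q=\infty$ to finite $q$ before invoking the proposition, is not what happens in the applications: case (1) of \cref{thm:propagation-of-chaos} applies this proposition with $q=\infty$ directly.
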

\begin{proof}[Proof of \cref{thm:GV-intro,thm:2nd-order-GV-intro,prop:ULLN-for-h}]
We begin with the proofs of \cref{thm:GV-intro,thm:2nd-order-GV-intro}. We first note that by \cref{lem:mkw-triangle-with-references,lem:reference-process-mkw-bound} we have 
\begin{equation}\label{eq:mkw-inequality-in-LLN}
\begin{aligned}
&\left[\sup_{t\in[0,T],b\in\mathcal{C}}d_{\mathrm{MKW}}(\mu^{b,N}_t,f^b_t)-cd_{\mathrm{MKW}}(\mu^N_0,f_0)\right]_+\le \\
&\quad \sup_{h\in\mathrm{Lip1},t\in[0,T],b\in\mathcal{C}}\left(\averN 
(h(X^{b,i,N}_t)-h(\widetilde{X}^{i,N}_t))-\mathbb{E}(h(X^{b,i,N}_t)-h(\widetilde{X}^{i,N}_t))\right),
\end{aligned}
\end{equation}
in the first order case (with $c=1$) and the corresponding inequality in the second order case (with $c>1$).

From here on we give the proof for the first order system (\cref{thm:GV-intro}). The proof for the second order system (\cref{thm:2nd-order-GV-intro}) is analogous (using instead the second order versions of the above lemmas) and we leave it to the reader. The proof follows from the application of \cref{prop:main-entropy-argument} with a carefully chosen map $\varphi$ and semi-metric space $(\mathcal{X},d)$.

We set $(\mathcal{X},d)$ to be
\begin{equation*}
([0,T],|\cdot|^{1/3})\times (\mathcal{C},\norm{\cdot}_{\mathrm{L}^{\infty}([0,T];\mathrm{L}^{-r,\infty}(\mathbb{R}^d;\mathbb{R}^d))})\times(\mathrm{Lip1},\norm{\cdot}_{\mathrm{L}^{-p,\infty}(\mathbb{R}^d;\mathbb{R}^d)}) 
\end{equation*} with the product metric, where $|\cdot|$ is the standard Euclidean norm. By assumption the metric entropy of the second space in the above display is bounded by $C\varepsilon^{-k}$. By the results in \cite{Edmunds-Triebel,Nickl} the metric entropy of the third space in the above display is bounded by $C\varepsilon^{-\min(d,d/(p-1))}$ (see \cref{lem:metric-entropy-of-Lip1} for details.) As the metric entropy of $([0,T],|\cdot|^{1/3})$ is logarithmic in $\varepsilon$, the metric entropy of $(X,d)$ is controlled, using \cref{lem:metric-entropy-of-product-spaces} by
\begin{equation}\label{eq:metric-entropy-estimate-i}
H(\varepsilon,X,d)\le C\varepsilon^{-d/(p-1)}+C\varepsilon^{-k}+C\log(1/\varepsilon)\le C \varepsilon^{-\max(d,d/(p-1),k)}.
\end{equation}
We define $\varphi^{i,N}:X\to\mathbb{R}$ for $i=1,\dotsc,N$, by
\begin{equation}\label{eq:varphi-i}
\varphi^{i,N}(t,b,h)=(h(X^{b,i,N}_{t})-h(\widetilde{X}^{i,N}_t))-\mathbb{E}(h(X^{b,i,N}_{t})-h(\widetilde{X}^{i,N}_t)).
\end{equation}
With this choice of $\varphi$ and $X$ the supremum on the right hand side of \eqref{eq:mkw-inequality-in-LLN} will be equal to 
\begin{equation*}
\sup_{x\in X}\left|\averN\varphi^{i,N}(x)\right|.
\end{equation*}

Using that $h$ is $1$-Lipschitz, it follows from \cref{lem:SDE-subgaussian-bounds} that $\sup_{x\in X}|\varphi^{i,N}(x)|$ is bounded by a sub-Gaussian random variable $Y^{i,N}$. Thus assumptions (i) and (iii) of \cref{prop:main-entropy-argument} are satisfied, and as the target space $\mathrm{V}$ is simply $\mathbb{R}$, assumption (iv) is also satisfied. This just leaves the verification of assumption (ii). We will compute this for each variable $(t,b,h)$ in turn. For brevity we compute this for $\varphi^{i,N}(t,b,h)=h(X^{b,i,N}_{t})-h(\widetilde{X}^{i,N}_t)$, the estimate for the centred version \eqref{eq:varphi-i} follows easily.

Let $L^{i,N}$ be as in \cref{cor:lipschitz-dependence-of-SDEs-upon-b} applied respectively to $X^{b,i,N}$, so that $L^{i,N}$ are i.i.d. with finite expectation. Then as $h\in \mathrm{Lip1}$ we have
\begin{equation*}
\begin{aligned}
|\varphi^{i,N}(t,b,h)-\varphi^{i,N}(t,\tilde{b},h)|&\le L^{i,N}T\norm{b-\tilde{b}}_{\mathrm{L}^{\infty}([0,T];\mathrm{L}^{-r,\infty}(\mathbb{R}^d;\mathbb{R}^d))}.
\end{aligned}
\end{equation*}
Next we consider
\begin{equation*}
\begin{aligned}
|\varphi^{i,N}(t,b,h)-\varphi^{i,N}(t,b,\tilde{h})|&= |(h(X^{b,i,N}_{t})-h(\widetilde{X}^{i,N}_t))-(\tilde{h}(X^{b,i,N}_{t})-\tilde{h}(\widetilde{X}^{i,N}_t))|\\
&\le \left(\sup_{s\in[0,T]}\<{X^{b,i,N}_s}^p+\<{\widetilde{X}^{i,N}_s}^p\right)\norm{h-\tilde{h}}_{\mathrm{L}^{-p,\infty}(\mathbb{R}^d)}
\end{aligned} 
\end{equation*}
and we can control the expectation of this supremum by \cref{lem:SDE-bounds} for $X^{b,i,N}$ (the estimate for $\tilde{X}^{i,N}$ being easier) and the $p$th moment of $X_0$.

Lastly we control the dependence on time. Let $t\in[0,T]$, then we have
\begin{equation*}
\begin{aligned}
&\sup_{s\in[0,T],|t-s|^{1/3}\le\varepsilon}|\varphi^{i,N}(t,b,h)-\varphi^{i,N}(s,b,h)|\le\\
&\quad \sup_{s\in[0,T],|t-s|^{1/3}\le\varepsilon} \norm{h}_{\mathrm{Lip}}|(X^{b,i,N}_{t}-\widetilde{X}^{i,N}_t)-(X^{b,i,N}_{s}-\widetilde{X}^{i,N}_s)|
\end{aligned} 
\end{equation*}
and the expectation of the supremum on the right hand side is controlled by \cref{lem:SDE-subgaussian-bounds}.

This completes the proof of \cref{thm:GV-intro}.

We now prove \cref{prop:ULLN-for-h}. The proof again relies on \cref{prop:main-entropy-argument} and a carefully chosen map $\varphi$ and semi-metric space $(\mathcal{X},d)$. We split the proof into two cases. Firstly we handle $q\in[1,\infty)$. Let $\mathrm{V}=\mathrm{L}^{-r',q}(\mathbb{R}^d)$ and 
\begin{equation*}
\mathcal{X}=([0,T]\times|\cdot|^{\alpha\beta/3})\times (\mathcal{C},\norm{\cdot}_{\mathrm{L}^{\infty}([0,T];\mathrm{L}^{-r,\infty}(\mathbb{R}^d;\mathbb{R}^d))}^\beta)
\end{equation*}
 with the product metric where $r\in(1,p)$. As in the above proof, we estimate the metric entropy of $X$,
\begin{equation*}
H(\varepsilon,\mathcal{X},d)\le C\varepsilon^{-k/\beta}
\end{equation*}
where the dominant term comes from the second space in the definition of $X$ (using \cref{lem:change-of-metric}). We define $\varphi^{i,N}:\mathcal{X}\to \mathrm{V}$ by
\begin{equation*}
(\varphi^{i,N}(t,b))(x)=h(x,X^{b,i,N}_t)-\mathbb{E}h(x,X^{b,i,N}_t).
\end{equation*}
This is uniformly bounded in $\mathrm{V}=\mathrm{L}^{-r',q}(\mathbb{R}^d)$ by $2\sup_{y\in\mathbb{R}^d}\norm{h(\cdot,y)}_{\mathrm{L}^{-r',q}(\mathbb{R}^d)}$ which is finite as $h$ is uniformly bounded and $\<x^{-r'q}$ is integrable by assumption. To verify the `Pointwise Lipschitz' condition we argue in the same way as in the proof of \cref{thm:GV-intro}. We have, again working with the uncentred version for brevity,
\begin{equation*}
\norm{\varphi^{i,N}(t,b)-\varphi^{i,N}(s,\tilde{b})}_{\mathrm{V}}\le |X^{i,N,b}_t-X^{i,N,\tilde{b}}_s|^\beta \sup_{\delta,y\in\mathbb{R}^d,\delta\ne0}\frac{\norm{h(\cdot,y+\delta)-h(\cdot,y)}_{\mathrm{V}}}{|\delta|^\beta}
\end{equation*}
This supremum is finite by assumption, and the expectation of $|X^{i,N,b}_t-X^{i,N,\tilde{b}}|^\beta$ can be controlled as in the proof of \cref{thm:GV-intro} using also the $\beta$ appearing in the metric of $\mathcal{X}$.

It remains to check the pointwise law of large numbers. We compute
\begin{equation*}
\begin{aligned}
&\sup_{(t,b)\in X}\mathbb{E}\norm{\averN\varphi^{i,N}(t,b)}_{\mathrm{V}}\\
&=\sup_{(t,b)\in X}\mathbb{E}\left(\int_{\mathbb{R}^d}\left|\averN h(x,X^{i,b,N}_t)-\mathbb{E}h(x,X^{i,b,N}_t)\right|^q\<x^{-qr'}\,dx\right)^{1/q}\\
&\le C\sup_{(t,b)\in X}\left(\int_{\mathbb{R}^d}\mathbb{E}\left|\averN h(x,X^{i,b,N}_t)-\mathbb{E}h(x,X^{i,b,N}_t)\right|^q\<x^{-qr'}\,dx\right)^{1/q}\\
&\le C\sup_{(t,b)\in X}\left(\int_{\mathbb{R}^d}\left(\sup_{x'\in\mathbb{R}^d}\mathbb{E}\left|\averN h(x',X^{i,b,N}_t)-\mathbb{E}h(x',X^{i,b,N}_t)\right|^q\right)\<x^{-qr'}\,dx\right)^{1/q}\\
&\le C\sup_{x\in\mathbb{R}^d,(t,b)\in X}\nnorm{\averN h(x,X^{i,b,N}_t)-\mathbb{E}h(x,X^{i,b,N}_t)}_q,
\end{aligned}
\end{equation*}
where we have used that $\<x^{-qr'}$ is integrable on $\mathbb{R}^d$ twice, first on the third line to apply Jensen's inequality and then on the fourth line after bringing the expectation out of the integral. As $h$ is a uniformly bounded function, we can apply \cref{lem:sub-Gaussian-LLN} (or just the usual law of large numbers) to obtain that
\begin{equation*}
\sup_{(t,b)\in \mathcal{X}}\mathbb{E}\norm{\averN\varphi^{i,N}(t,b)}_{\mathrm{V}}\le CN^{-1/2}
\end{equation*}
as required. This completes the proof of \cref{prop:ULLN-for-h} for $p<\infty$.

Now suppose that $q=\infty$. In this case we set $\mathrm{V}=\mathbb{R}$ and 
\begin{equation*}
\mathcal{X}=([0,T]\times|\cdot|^{\alpha\beta/3})\times (\mathcal{C},\norm{\cdot}_{\mathrm{L}^{\infty}([0,T];\mathrm{L}^{-r,\infty}(\mathbb{R}^d;\mathbb{R}^d))}^\beta)\times (\mathbb{R}^d,\rho_{r',\beta})
\end{equation*}
with the product metric, where $r\in(1,p)$. Here $\rho_{r',\beta}$ is a semi-metric on $\mathbb{R}^d$ defined by
\begin{equation*}
\rho_{r',\beta}(x,y)=\frac{\min(|x-y|^\beta,1)}{(1+\min(|x|,|y|))^{\min(r',1)}}.
\end{equation*}
It is easy to check that the metric entropy of $(\mathbb{R}^d,\rho_{r',\beta})$ is logarithmic for any $r'>0$ and $\beta\in(0,1]$. Hence, as in the $p<\infty$ case, the metric entropy of $\mathcal{X}$ has the bound
\begin{equation*}
H(\varepsilon,\mathcal{X},d)\le C_\gamma\varepsilon^{-k/\beta}.
\end{equation*}
We define $\varphi^{i,N}:\mathcal{X}\to\mathbb{R}$ by
\begin{equation*}
\varphi^{i,N}(t,b,x)=h(x,X^{b,i,N}_{t})-\mathbb{E}h(x,X^{b,i,N}_{t}).
\end{equation*}
This is uniformly bounded by $2$. The `Pointwise Lipschitz' conditions for $[0,T]$ and $\mathcal{C}$ are the same as in the proof of \cref{thm:GV-intro}, except that $h$ is only $\beta$-H\"older continuous instead of Lipschitz, which is taken into account in the choice of metrics on $[0,T]$ and $\mathcal{C}$. This leaves the estimate for $(\mathbb{R}^d,\rho_{r',\beta})$. Again consider the uncentred case to ease notation. Let $x,y\in\mathbb{R}^d$ with $|x-y|\le1$, then we have, where without loss of generality $|x|\ge |y|$,
\begin{equation*}
\begin{aligned}
|\varphi^{i,N}(t,b,x)-\varphi^{i,N}(t,b,y)|&=|h(x,X^{b,i,N}_{t})-h(y,X^{b,i,N}_{t})|\\
&\le |\<x^{r'}h(x,X^{b,i,N}_{t})-\<y^{r'}h(y,X^{b,i,N}_{t})|\<x^{-{r'}}\\
&\quad+\<y^{r'}|h(y,X^{b,i,N}_{t})||\<x^{-{r'}}-\<y^{-{r'}}|\\
&\le C|x-y|^\beta\<x^{-{r'}}+C\<y^{r'}\<y^{-{r'}-1}|x-y|\\
&\le C\rho_{{r'},\beta}(x,y).
\end{aligned}
\end{equation*}
This completes the proof of \cref{prop:ULLN-for-h}.
\end{proof}

\section{Propagation of chaos}\label{sec:propagation-of-chaos}
In this section we prove the propagation of chaos results \cref{thm:propagation-of-chaos,thm:2nd-order-propagation-of-chaos}. This section is organised as follows. In \cref{subsec:first-order-prop-of-chaos} we prove \cref{thm:propagation-of-chaos} after stating a pair of preliminary lemmas without proof. In \cref{subsec:second-order-prop-of-chaos} we present the proof of \cref{thm:2nd-order-propagation-of-chaos} again after stating without proof a pair of lemmas. Note that the proof of \cref{thm:2nd-order-propagation-of-chaos} is very similar to that of \cref{thm:propagation-of-chaos} so we give only the differences. Finally in \cref{subsec:proof-of-the-time-regularity-lemmas,subsec:proof-of-the-energy-estimates} we provide the postponed proofs of the lemmas.

\subsection{The first order case}\label{subsec:first-order-prop-of-chaos}
As a first step, we must obtain a prior estimates on the time regularity of the vector field $b^N$ and show that the contribution of $b^N$ not being sufficiently regular to \eqref{eq:propagation-of-chaos} is of lower order. As the proofs are technical we present them at the end of this section.

In the first order case we expect that $b^N\in \Lambda^{0,\alpha'}_{para}(\mathrm{L}^{q}([0,T]\times\mathbb{R}^d;\mathbb{R}^d))$ for $\alpha'<\alpha$ as $X^{i,N}$ will be merely (almost $1/2$)-H\"older continuous in time due to the driving noise.
\begin{lemma}\label{lem:a-priori-time-regularity}[Time regularity (first order case)]
Let $K(x,y)\in \Lambda^{0,\alpha}(\mathrm{L}^\infty_y(\mathbb{R}^d;\mathrm{L}^{q}_x(\mathbb{R}^d)))$ with $\alpha\in(0,1]$ and $f_0\in \mathrm{P}_1(\mathbb{R}^d)$. Define the event $E_A$ by
\begin{equation*}
E_A=\{\norm{b^N}_{\Lambda^{0,\alpha'}_{para}(\mathrm{L}^{q}([0,T]\times\mathbb{R}^d))}>A\}
\end{equation*}
for any $\alpha'\in(0,\alpha)$. Then there exists $A>0$ such that we have the bound
\begin{equation*}
\nnorm{1_{E_A}\left(\sup_{t\in[0,T]}d_{\mathrm{MKW}}(\mu_t^N,f_t)-d_{\mathrm{MKW}}(\mu^N_0,f_0)\right)}\le CN^{-1/2}.
\end{equation*}
where $C$ and $A$ depend only on $\alpha'$ and the norm of $K$.
\end{lemma}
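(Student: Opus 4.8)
The plan is to show that the exceptional event $E_A$ is extremely rare — with probability exponentially small in $N$ — and that on $E_A$ the bracketed quantity, although not sub-Gaussian on its own, is dominated by a single sub-Gaussian random variable. It is worth noting \emph{why} exponential, rather than merely polynomial, smallness of $\mathbb{P}(E_A)$ is what the statement forces: a direct computation gives $\nnorm{1_E}=(2e\log(1/\mathbb{P}(E)))^{-1/2}$ once $\mathbb{P}(E)\le e^{-1/2}$, so the factor $N^{-1/2}$ can only be extracted if $\mathbb{P}(E_A)\lesssim e^{-cN}$, which in turn will come from the i.i.d.\ structure of the particles' driving noises.

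\emph{Step 1: a pathwise bound on $\norm{b^N}_{\Lambda^{0,\alpha'}_{para}(\mathrm{L}^q)}$.} Write $b^N_t(x)=\averN K(x,X^{i,N}_t)$ and, for a space-time shift, split each summand $K(\xi+z,X^{i,N}_{\tau+t})-K(\xi+y,X^{i,N}_{\tau+s})$ into a pure $x$-increment $K(\xi+z,X^{i,N}_{\tau+t})-K(\xi+y,X^{i,N}_{\tau+t})$ and a pure $y$-increment $K(\xi+y,X^{i,N}_{\tau+t})-K(\xi+y,X^{i,N}_{\tau+s})$. Since $K\in\Lambda^{0,\alpha}(\mathrm{L}^\infty_y(\mathbb{R}^d;\mathrm{L}^q_x(\mathbb{R}^d)))$ controls $\mathrm{L}^q_x$-increments of $K$ in the $x$- and in the $y$-variable separately, taking $\mathrm{L}^q_\xi$ norms, integrating in $\tau$ over $[0,T]$ (the indicator only shrinks this range) and averaging over $i$ bounds the $x$-part by $C|z-y|^\alpha$ — harmless against the denominator $|y-z|^{\alpha'}+|s-t|^{\alpha'/2}$, using $\alpha>\alpha'$ for $|z-y|\le1$ and $\norm{b^N}_{\mathrm{L}^q([0,T]\times\mathbb{R}^d)}\le T^{1/q}\sup_y\norm{K(\cdot,y)}_{\mathrm{L}^q_x}$ for $|z-y|>1$ — and bounds the $y$-part by $C\,\averN\big(\int_0^T|X^{i,N}_{\tau+t}-X^{i,N}_{\tau+s}|^{q\alpha}\,d\tau\big)^{1/q}$. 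From the integral form of the SDE and the uniform boundedness of $b^N$ (which holds in the settings where this lemma is invoked), $|X^{i,N}_{\tau+t}-X^{i,N}_{\tau+s}|\le \norm{b^N}_{\mathrm{L}^\infty}|t-s|+\norm{B^{i,N}}_{\mathrm{C}^{0,\beta}([0,T];\mathbb{R}^d)}|t-s|^\beta$ for every $\beta\in(0,1/2)$; picking $\beta\in(\alpha'/2\alpha,\,1/2)$ — possible exactly because $\alpha'<\alpha$ — makes $\alpha\beta-\alpha'/2>0$, so after dividing by $|y-z|^{\alpha'}+|s-t|^{\alpha'/2}$ and using $|t-s|\le T$ the $y$-part is $\le C(1+\norm{B^{i,N}}_{\mathrm{C}^{0,\beta}})$. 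Collecting, and noting the $\mathrm{L}^q([0,T]\times\mathbb{R}^d)$ part of the norm is deterministically bounded,
\begin{equation*}
\norm{b^N}_{\Lambda^{0,\alpha'}_{para}(\mathrm{L}^q([0,T]\times\mathbb{R}^d))}\le C_0\big(1+R\big),\qquad R:=\averN \norm{B^{i,N}}_{\mathrm{C}^{0,\beta}([0,T];\mathbb{R}^d)},
\end{equation*}
with $C_0$ depending only on $\alpha,\alpha',q,T,d$ and the norm of $K$.

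\emph{The remaining steps.} The $\norm{B^{i,N}}_{\mathrm{C}^{0,\beta}}$ are i.i.d.\ and sub-Gaussian (Fernique's theorem; cf.\ \cite{Hytonen}), with mean $m$ and sub-Gaussian norm bounded by constants depending only on $\beta,T,d$; by the sub-Gaussian law of large numbers (\cref{lem:sub-Gaussian-LLN}), $\nnorm{R-m}\le C_1N^{-1/2}$. Setting $A:=C_0(1+2m)$, Step 1 forces $E_A\subseteq\{R-m>m\}$, and \eqref{eq:nnorm-subgaussian-tail-bound} gives $\mathbb{P}(E_A)\le\mathbb{P}(R-m>m)\le C\exp(-c_2N)$ with $c_2>0$ depending only on $\alpha,\alpha',T,d$. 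For the bracketed quantity, the reference-process inequalities (\cref{lem:mkw-triangle-with-references,lem:reference-process-mkw-bound}), or a direct estimate, give $d_{\mathrm{MKW}}(\mu^N_t,f_t)-d_{\mathrm{MKW}}(\mu^N_0,f_0)\le \averN|X^{i,N}_t-X^{i,N}_0|+d_{\mathrm{MKW}}(f_0,f_t)\le C(1+R)$, using $\sup_t|X^{i,N}_t-X^{i,N}_0|\le\norm{b^N}_{\mathrm{L}^\infty}T+\sup_t|B^{i,N}_t|\le C(1+\norm{B^{i,N}}_{\mathrm{C}^{0,\beta}})$ and $d_{\mathrm{MKW}}(f_0,f_t)\le C$ (bounded drift of the limit equation); thus $0\le W:=\sup_{t\in[0,T]}d_{\mathrm{MKW}}(\mu^N_t,f_t)-d_{\mathrm{MKW}}(\mu^N_0,f_0)\le C(1+m+(R-m))$. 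Finally $1_{E_A}W\le C\,1_{\{R-m>m\}}(1+m)+C\,1_{\{R-m>m\}}(R-m)$: since $0\le 1_{\{R-m>m\}}(R-m)\le(R-m)_+$ the second term has sub-Gaussian norm $\le\nnorm{(R-m)_+}\le C_1N^{-1/2}$, and for the first the opening-paragraph identity together with $\mathbb{P}(R-m>m)\le Ce^{-c_2N}$ gives $\nnorm{1_{\{R-m>m\}}}\le CN^{-1/2}$ for all large $N$, the finitely many remaining $N$ being handled by $\nnorm{1_{E_A}W}\le\nnorm{W}\le C$. This gives the claim.

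\emph{Main obstacle.} The substance is Step 1: keeping the $x$- and $y$-roles in the mixed Hölder norm of $K$ separate and spending a sliver of the spatial exponent $\alpha$ to buy a Brownian time-Hölder exponent $\beta$ strictly below $1/2$ (which is precisely what $\alpha'<\alpha$ permits), together with the realisation that polynomial decay of $\mathbb{P}(E_A)$ would be worthless here, forcing one to exploit the i.i.d.\ sub-Gaussian structure to make $E_A$ exponentially rare in $N$.
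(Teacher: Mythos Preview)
Your proof is correct, and its skeleton matches the paper's: bound $\norm{b^N}_{\Lambda^{0,\alpha'}_{para}(\mathrm{L}^q)}$ pathwise by $\averN C(1+\norm{B^{i,N}}_{\mathrm{C}^{0,\beta}})$ with $\beta\in[\alpha'/(2\alpha),1/2)$, then control the probability of the bad event and pair this with a sub-Gaussian bound on the bracketed quantity. Where you genuinely diverge is in the concentration step, and your divergence is not cosmetic. The paper appeals to its \cref{lem:little-lemma}, which asserts $\nnorm{1_E[\,\cdot\,]_+}\le C\mathbb{P}(E)+CN^{-1/2}$, and then only needs $\mathbb{P}(E_A)\le CN^{-1}$ via Chebyshev. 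But the step ``$\nnorm{1_E\mathbb{E}A^{i,N}}\le(\mathbb{E}A^{i,N})\mathbb{P}(E)$'' in the paper's proof of that lemma is false: as you compute, $\nnorm{1_E}=(2e\log(1/\mathbb{P}(E)))^{-1/2}$ for small $\mathbb{P}(E)$, which is \emph{larger} than $\mathbb{P}(E)$, and with only $\mathbb{P}(E_A)\le CN^{-1}$ one gets $\nnorm{1_{E_A}}\sim(\log N)^{-1/2}$, not $N^{-1/2}$.

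Your fix---upgrading Chebyshev to the sub-Gaussian tail bound \eqref{eq:nnorm-subgaussian-tail-bound} via \cref{lem:sub-Gaussian-LLN}, yielding $\mathbb{P}(E_A)\le Ce^{-cN}$ and hence $\nnorm{1_{E_A}}\le CN^{-1/2}$---is exactly what is needed, and costs nothing since the Brownian H\"older norms are indeed sub-Gaussian (as the paper itself notes). Your handling of the bracketed quantity via $1_{\{R-m>m\}}(R-m)\le(R-m)_+$ is also clean. In short: same architecture, but you have spotted and repaired a genuine gap in the paper's auxiliary lemma.
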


Next we bound the dependence of the laws $f^b_t$ using simple energy estimates. As we will work throughout the proof with mollified kernels, we prove the results for smooth vector fields (with constants independent of the degree of smoothness) which avoids any issues with existence or uniqueness. Again, we delay the proof until the end of this section. In the first order case we have:
\begin{lemma}[Weighted energy estimate (first order case)]\label{lem:weighted-energy-estimate}
Let $b,\tilde{b}\in \mathrm{L}^\infty([0,T]\times\mathbb{R}^d;\mathbb{R}^d)$ be continuous in $t$ and $C^1_b$ in $x$, and $f_0\in \mathrm{L}^{p+r,q}(\mathbb{R}^{d})$ for some $r,p>0$ and $q\in[2,\infty)$, then
\begin{equation*}
\norm{f^b_t-f^{\tilde{b}}_t}_{\mathrm{L}^{p,2}(\mathbb{R}^d)}\le C\int^t_0\norm{b_s-\tilde{b}_s}_{\mathrm{L}^{-r,q'}(\mathbb{R}^d;\mathbb{R}^d)}\,dt,\qquad \frac1{q'}+\frac1q=\frac12,
\end{equation*}
where $C$ depends only on $f_0$, $\norm{b}_{\mathrm{L}^\infty([0,T]\times\mathbb{R}^d)}$ and $\norm{\tilde{b}}_{\mathrm{L}^\infty([0,T]\times\mathbb{R}^d)}$.
\end{lemma}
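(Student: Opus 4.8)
The plan is to set $g_t = f^b_t - f^{\tilde b}_t$, derive the PDE it satisfies, test it against the weight $\langle x\rangle^{2p} g_t$, and run a Grönwall argument on $\|g_t\|_{\mathrm{L}^{p,2}}^2$. Subtracting the two copies of \eqref{eq:empirical-limit-pde} (one with drift $b$, one with $\tilde b$), we get
\begin{equation*}
\partial_t g_t + \nabla\cdot(b_t g_t) - \tfrac12\Delta g_t = -\nabla\cdot\big((b_t-\tilde b_t) f^{\tilde b}_t\big), \qquad g_0 = 0.
\end{equation*}
Since $b,\tilde b\in C^1_b$ in $x$ and continuous in $t$, both $f^b$ and $f^{\tilde b}$ are smooth, bounded, and decay fast enough (given the weighted integrability of $f_0$, which is propagated) to justify all the integrations by parts below; this is where smoothness of the fields is used, as advertised, so existence/uniqueness is not an issue.

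The key computation: multiply the equation by $\langle x\rangle^{2p} g_t$ and integrate over $\mathbb{R}^d$. The diffusion term gives, after integration by parts, $-\tfrac12\int \langle x\rangle^{2p}|\nabla g_t|^2 - \tfrac12\int \nabla(\langle x\rangle^{2p})\cdot\nabla g_t\, g_t$; the second piece is controlled by absorbing a fraction of the good term $\int\langle x\rangle^{2p}|\nabla g_t|^2$ at the cost of a term $C\int\langle x\rangle^{2p}|g_t|^2$, using $|\nabla\langle x\rangle^{2p}|\le C\langle x\rangle^{2p-1}\le C\langle x\rangle^{2p}$. The transport term $\int \langle x\rangle^{2p}g_t\,\nabla\cdot(b_t g_t)$ is handled similarly: write it as $-\int b_t\cdot\nabla(\langle x\rangle^{2p} g_t)\,g_t$, expand the gradient, note $\int b_t\cdot\nabla g_t\,\langle x\rangle^{2p} g_t = \tfrac12\int b_t\cdot\nabla(g_t^2)\langle x\rangle^{2p}$ integrates by parts to a term bounded by $\|b_t\|_{\mathrm{L}^\infty}(\|\nabla\langle x\rangle^{2p}\|\text{-type bound})\int\langle x\rangle^{2p}g_t^2$, and the leftover $\int b_t\cdot\nabla(\langle x\rangle^{2p})g_t^2$ is likewise $\le C\|b_t\|_{\mathrm{L}^\infty}\int\langle x\rangle^{2p}g_t^2$. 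So the drift contributes only $C(\|b\|_{\mathrm{L}^\infty})\|g_t\|_{\mathrm{L}^{p,2}}^2$ plus absorbed gradient terms. For the source term, by Cauchy–Schwarz in the $\langle x\rangle^{2p}$-weighted inner product,
\begin{equation*}
\Big|\int \langle x\rangle^{2p} g_t\,\nabla\cdot\big((b_t-\tilde b_t)f^{\tilde b}_t\big)\Big| = \Big|\int \nabla(\langle x\rangle^{2p} g_t)\cdot(b_t-\tilde b_t)f^{\tilde b}_t\Big|,
\end{equation*}
and expanding $\nabla(\langle x\rangle^{2p}g_t)$ into a $\langle x\rangle^{2p}\nabla g_t$ piece and a $\langle x\rangle^{2p-1}g_t$ piece: the first is bounded (after another weighted Cauchy–Schwarz and Young's inequality) by $\tfrac18\int\langle x\rangle^{2p}|\nabla g_t|^2 + C\int\langle x\rangle^{2p}|b_t-\tilde b_t|^2|f^{\tilde b}_t|^2$, the second by $C\|g_t\|_{\mathrm{L}^{p,2}}\big(\int\langle x\rangle^{2p}|b_t-\tilde b_t|^2|f^{\tilde b}_t|^2\big)^{1/2}$. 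Now by Hölder with exponents $q/2$ and $(q/2)'$ (whose conjugate relation gives exactly $1/q'+1/q=1/2$ after taking square roots),
\begin{equation*}
\Big(\int\langle x\rangle^{2p}|b_t-\tilde b_t|^2|f^{\tilde b}_t|^2\Big)^{1/2}\le \|b_t-\tilde b_t\|_{\mathrm{L}^{-r,q'}}\,\big\|\langle x\rangle^{p+r}f^{\tilde b}_t\big\|_{\mathrm{L}^{?}},
\end{equation*}
and the norm of $\langle x\rangle^{p+r}f^{\tilde b}_t$ is finite and bounded uniformly in $t$ by propagation of the weighted integrability $f_0\in \mathrm{L}^{p+r,q}$ along the (smooth, bounded-drift) flow — this is the one auxiliary fact I would record as a sub-step, proved by the same energy method applied to $f^{\tilde b}$ alone.

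Assembling: with a fixed choice of the absorption constants we obtain $\tfrac{d}{dt}\|g_t\|_{\mathrm{L}^{p,2}}^2 \le C\|g_t\|_{\mathrm{L}^{p,2}}^2 + C\|g_t\|_{\mathrm{L}^{p,2}}\|b_t-\tilde b_t\|_{\mathrm{L}^{-r,q'}} + C\|b_t-\tilde b_t\|_{\mathrm{L}^{-r,q'}}^2$, where $C$ depends only on $d,p,r$, the constant from the weighted integrability of $f_0$, and $\|b\|_{\mathrm{L}^\infty},\|\tilde b\|_{\mathrm{L}^\infty}$. Writing $y(t)=\|g_t\|_{\mathrm{L}^{p,2}}$ and $m(t)=\|b_t-\tilde b_t\|_{\mathrm{L}^{-r,q'}}$, this gives $y y' \le C y^2 + Cy m + Cm^2$, hence (e.g. on the set where $y>0$, or after a standard regularisation $\sqrt{y^2+\delta}$) $y'\le C(y+m)$, so by Grönwall with $y(0)=0$, $y(t)\le C\int_0^t e^{C(t-s)}m(s)\,ds \le C\int_0^t m(s)\,ds$, which is the claim.

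\textbf{Main obstacle.} The genuinely delicate point is bookkeeping the weights through the source term: matching the Hölder exponent $q'$ in the hypothesis on $b-\tilde b$ against the integrability of $f^{\tilde b}_t$ requires the weight $\langle x\rangle^{p+r}$ to land exactly on $f^{\tilde b}$, which forces the $\mathrm{L}^{p+r,q}$ hypothesis on $f_0$ and the propagation-of-weighted-moments sub-lemma; getting the powers of $\langle x\rangle$ to balance (one factor $\langle x\rangle^{p}$ staying with $g_t$ in its $\mathrm{L}^{p,2}$ norm, one factor $\langle x\rangle^{r}$ going with $b-\tilde b$ in its $\mathrm{L}^{-r,q'}$ norm, consistency of the remaining power with $f^{\tilde b}$) is the step that must be done carefully. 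Everything else is a routine weighted $\mathrm{L}^2$ energy estimate plus Grönwall.
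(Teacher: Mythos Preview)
Your proposal is correct and follows essentially the same route as the paper: derive the PDE for $g_t=f^b_t-f^{\tilde b}_t$, test against $\langle x\rangle^{2p}g_t$, absorb the gradient terms coming from the weight and from the source via Young's inequality, invoke propagation of $\|f^{\tilde b}_t\|_{\mathrm{L}^{p+r,q}}$ (which the paper also defers as a sub-step), and close with Gr\"onwall. Your final step --- passing from $yy'\le Cy^2+Cym+Cm^2$ to $y'\le C(y+m)$ and then Gr\"onwall on $y$ --- is in fact a bit cleaner than the paper's presentation, which applies Gr\"onwall to $y^2$ and then asserts the passage from $y^2\le C\int m^2$ to $y\le C\int m$ without comment.
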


With these lemmas we are ready to prove the main propagation of chaos result.
\begin{proof}[Proof of \cref{thm:propagation-of-chaos}]
We divide the proof into $5$ steps.

\textbf{Step 1. Mollification of the interaction kernel.} Note first that Sobolev embedding implies that $K(x,y)$ is in $\mathrm{C}^{0,\alpha}(\mathbb{R}^d;\mathbb{R}^d)$ for some $\alpha>0$ in all cases of the theorem. Now let $K_n$ be a sequence of smooth interaction kernels obeying the same bounds as $K$. Then using \cref{cor:lipschitz-dependence-of-SDEs-upon-b} on the entire $N\cdot d$ dimensional system we deduce that the solutions $X^{n,i,N}_t$ to the SDE system \eqref{eq:many-particle-system} with $K$ replaced by $K_n$ converge almost surely to the solution $X^{i,N}_t$ of the original system \eqref{eq:many-particle-system}. Using this, it is sufficient to prove propagation of chaos for a smooth kernel $K$ with constants depending only on the bounds assumed in the theorem. Thus from here on in the proof $K$ shall be assumed to be in $\mathrm{C}^1_b$. As a consequence all the considered vector fields $b$ will also lie in $\mathrm{C}([0,T];\mathrm{C}^1_b(\mathbb{R}^d;\mathbb{R}^d))$, and we can freely apply \cref{thm:GV-intro} to such fields.

\textbf{Step 2. Choice of functional space and exponents.}
We have assumed that $K(x,y)\in \Lambda^{0,s}(\mathrm{L}^\infty_y(\mathbb{R}^d;\mathrm{L}^q_x(\mathbb{R}^d)))$ (note that, as explained in \cref{rem:relax-assumption-on-K}, case (1) of \cref{thm:propagation-of-chaos} is included in case (2) as $q=\infty$). By the assumption on $f_0$ we may choose $r,r'$ such that 
\begin{equation*}
f_0\in \mathrm{L}^{r+r',q'}(\mathbb{R}^d), \quad r>d/q, \quad r'>(d/2)+1.
\end{equation*}
Note that with these choices we have the continuous inclusions:
\begin{align}\label{eq:continuous-inclusions}
\mathrm{L}^{r',2}(\mathbb{R}^d)\hookrightarrow \mathrm{L}^{1,1}(\mathbb{R}^d)\hookrightarrow (\mathrm{P}_1(\mathbb{R}^d),d_{\mathrm{MKW}}),
\end{align} 
which will be how we control the Wasserstein distance between functions (as opposed to measures).

\textbf{Step 3. Regularity of the interaction field.}
Let $s'<s$, then by \cref{lem:a-priori-time-regularity} we may assume $b^N\in\mathcal{C}$ where
\begin{equation}\label{eq:C-in-prop-chaos}
\mathcal{C}=\mathrm{C}([0,T];\mathrm{C}^1_b(\mathbb{R}^d;\mathbb{R}^d))\cap \{b:\norm{b}_{\Lambda^{0,s'}_{para}(\mathrm{L}^{q}([0,T]\times\mathbb{R}^d;\mathbb{R}^d))}\le A\},
\end{equation}
for some $A<\infty$. Note that by \cref{lem:metric-entropy-of-Lip1}(4) we have the metric entropy bound
\begin{equation}\label{eq:gamma_1}
H(\varepsilon,\mathcal{C},\mathrm{L}^\infty([0,T];\mathrm{L}^{-p',\infty}(\mathbb{R}^d;\mathbb{R}^d)))\le C\varepsilon^{-(d+2)/s'}
\end{equation}
for any $p'\in(d/q,p)$. [Recall that $p>d/q$ by assumption.] Here we have used that $q>(d+2)/s$ so that $s'$ can be chosen large enough for $q>(d+2)/s'$ to hold. Note further that, due to Sobolev embedding, all elements $b\in\mathcal{C}$ have a uniform bound in $\mathrm{C}^{0,\alpha}_{para}$, i.e. $\norm{b}_{\mathrm{C}^{0,\alpha}_{para}([0,T]\times\mathbb{R}^d;\mathbb{R}^d))}\le CA$ for $\alpha=s-(d+2)/q>0$ and an absolute constant $C$. 

\textbf{Step 4. Consistency: The uniform law of large numbers on the particles.}
Note that the particle system $(X^{i,N}_t)_{i=1}^N$ is equal to $(X^{b^N,i,N}_{t})_{i=1}^N$, and the limit process $f_t$ is equal to $f_{t}^{b^\infty}$. 

By the triangle inequality,
\begin{align*}
&\sup_{t\in[0,T]}d_{\mathrm{MKW}}(\mu^N_t,f_t)-d_{\mathrm{MKW}}(\mu^N_0,f_0)\\
&\quad\le \left(\sup_{t\in[0,T]}d_{\mathrm{MKW}}(\mu^{b^N,N}_{t},f^{b^N}_t)-d_{\mathrm{MKW}}(\mu^N_0,f_0)\right)+\sup_{t\in[0,T]}d_{\mathrm{MKW}}(f^{b^N}_t,f^{b^\infty}_t).
\end{align*}
Using \cref{thm:GV-intro} with $\mathcal{C}$ given by \eqref{eq:C-in-prop-chaos} and using \eqref{eq:gamma_1}, the first term can be bounded as
\begin{align*}
&\nnorm{\sup_{t\in[0,T]}d_{\mathrm{MKW}}(\mu^{b^N,N}_{t},f^{b^N}_t)-d_{\mathrm{MKW}}(\mu^N_0,f_0)}\\
&\quad\le \nnorm{\sup_{b\in\mathcal{C}}\sup_{t\in[0,T]}d_{\mathrm{MKW}}(\mu^{b,N}_t,f_{t}^b)-d_{\mathrm{MKW}}(\mu^N_0,f_0)}\le CN^{-\gamma_1}, 
\end{align*}
where
\begin{equation*}
\gamma_1=\frac{1}{2+\max(\tfrac{d+2}{s'},\tfrac{d}{p-1})}.
\end{equation*} 

\textbf{Step 5. Stability: Estimates on the limit equation.}  This leaves the other distance $d_{\mathrm{MKW}}(f^{b^N}_t,f^{b^\infty}_t)$, for which we will use estimates on the limit equation. 

\textbf{Step 5.1. Dependence of $f$ upon the field.}
By the energy estimate (\cref{lem:weighted-energy-estimate}) we have
\begin{equation}\label{eq:prop-of-chaos-main-gronwall-estimate-part1}
\begin{aligned}
\norm{f^{b^N}_t-f^{b^\infty}_t}_{\mathrm{L}^{1,1}(\mathbb{R}^d)}&\le C\norm{f^{b^N}_t-f^{b^\infty}_t}_{\mathrm{L}^{r',2}(\mathbb{R}^d)}\\
&\le C\int^t_0\norm{b^N_s-b^\infty_s}_{\mathrm{L}^{-r,q}(\mathbb{R}^d;\mathbb{R}^d)}\,ds
\end{aligned}
\end{equation}
where the first continuous inclusion in \eqref{eq:continuous-inclusions} is used for the first line and $f_0\in \mathrm{L}^{r+r',q'}(\mathbb{R}^d)$ is needed to apply the energy estimate for the second.

\textbf{Step 5.2. Dependence of the field upon $f$.}
For $b\in\mathcal{C}$, define $b^{b,N}_{t}$ by
\begin{equation*}
b^{b,N}_{t}(x)=\averN K(x,X^{b,i,N}_{t}),
\end{equation*} 
so that $b^N_t=b^{b^N,N}_{t}$. Next define $b^{b,\infty}_{t}$ by
\begin{equation*}
b^{b,\infty}_{t}(x)=\int K(x,y)f^b_{t}(y)\,dy,
\end{equation*}
so that $b^\infty_t=b^{b^\infty,\infty}_t$. Then we have
\begin{equation}\label{eq:prop-of-chaos-main-gronwall-estimate-part2}
\begin{aligned}
\norm{b^N_t-b^\infty_t}_{\mathrm{L}^{-r,q}(\mathbb{R}^d;\mathbb{R}^d)}&\le \norm{b^N_{t}-b^{b^N,\infty}_t}_{\mathrm{L}^{-r,q}(\mathbb{R}^d;\mathbb{R}^d)}
+\norm{b^{b^N,\infty}_t-b^{b^\infty,\infty}_t}_{\mathrm{L}^{-r,q}(\mathbb{R}^d;\mathbb{R}^d)}\\
&\le\sup_{b\in\mathcal{C}}\norm{b^{b,N}_{t}-b^{b,\infty}_{t}}_{\mathrm{L}^{-r,q}(\mathbb{R}^d;\mathbb{R}^d)}+\norm{b^{b^N,\infty}_t-b^{b^\infty,\infty}_t}_{\mathrm{L}^{-r,q}(\mathbb{R}^d;\mathbb{R}^d)}
\end{aligned}
\end{equation}
By applying \cref{prop:ULLN-for-h} to $K$ we can control the first of these by 
\begin{equation}\label{eq:prop-of-chaos-main-gronwall-estimate-part3}
\nnorm{\sup_{b\in\mathcal{C}}\sup_{t\in[0,T]}\norm{b^{b,N}_{t}-b^{b,\infty}_{t}}_{\mathrm{L}^{-r,q}(\mathbb{R}^d;\mathbb{R}^d)}}\le CN^{-\gamma_2},\qquad \gamma_2=\frac{1}{2+\tfrac{d+2}{s'^2}}.
\end{equation}
Here we have used that $b^{b,\infty}_{t}=\mathbb{E}b^{b,N}_{t}$ for $b$ deterministic, that $r>d/q$, that $K$ is bounded and that $K\in \Lambda^{0,s}(\mathrm{L}^\infty_y(\mathbb{R}^d;\mathrm{L}^q_x(\mathbb{R}^d;\mathbb{R}^d)))$ implies
\begin{equation*}
\sup_{0\ne(\delta_1,\delta_2)\in\mathbb{R}^d\times\mathbb{R}^d}\norm{\frac{K(x+\delta_1,y+\delta_2)-K(x,y)}{|\delta_1|^2+|\delta_2|^2}}_{\mathrm{L}^\infty_y(\mathbb{R}^d;\mathrm{L}^q_x(\mathbb{R}^d;\mathbb{R}^d))}<\infty
\end{equation*}
and by taking $\delta_1=0$ and using that $\mathrm{L}^q$ embeds continuously into $\mathrm{L}^{-r,q}$, we recover the assumption of \cref{prop:ULLN-for-h}.

 The second of the terms on the right of \cref{eq:prop-of-chaos-main-gronwall-estimate-part2} can be controlled by
\begin{equation}\label{eq:prop-of-chaos-main-gronwall-estimate-part4}
\begin{aligned}
\norm{b^{b^N,\infty}_t-b^{b^\infty,\infty}_t}_{\mathrm{L}^{-r,q}(\mathbb{R}^d;\mathbb{R}^d)}&\le C\sup_{x\in\mathbb{R}^d}\int |K(x,y)||f^{b^N}_t(y)-f^{b^\infty}_t(y)|\,dy\\
&\le C \norm{f^{b^N}_t-f^{b^\infty}_t}_{\mathrm{L}^1(\mathbb{R}^d)}\le C \norm{f^{b^N}_t-f^{b^\infty}_t}_{\mathrm{L}^{1,1}(\mathbb{R}^d)}
\end{aligned}
\end{equation}
where for $q=\infty$ the first inequality is clear, and for $q<\infty$ we have used that $\<x^{-rq}$ is integrable on $\mathbb{R}^d$ to obtain it.

\textbf{Step 5.3. Gr\"onwall estimate.}  Combining \eqref{eq:prop-of-chaos-main-gronwall-estimate-part4} with the previous estimates \eqref{eq:prop-of-chaos-main-gronwall-estimate-part1},\eqref{eq:prop-of-chaos-main-gronwall-estimate-part2},\eqref{eq:prop-of-chaos-main-gronwall-estimate-part3} yields
\begin{equation*}
\norm{f^{b^N}_t-f^{b^\infty}_t}_{\mathrm{L}^{1,1}(\mathbb{R}^d)}\le Y+C\int^t_0\norm{f_s^{b^N}-f_s^{b^\infty}}_{\mathrm{L}^{1,1}(\mathbb{R}^d)}\,ds
\end{equation*}
where $Y$ is a non-negative sub-Gaussian random variable with norm bound $\nnorm{Y}\le CN^{-\gamma_2}$. Therefore, applying the Gr\"onwall inequality we have
\begin{equation*}
\sup_{t\in[0,T]}\norm{f^{b^N}_t-f^{b^\infty}_t}_{\mathrm{L}^{1,1}(\mathbb{R}^d)}\le CY,
\end{equation*} 
and as the $\mathrm{L}^{1,1}$ distance controls the Wasserstein distance (this is the second continuous inclusion in \eqref{eq:continuous-inclusions}), we have proved the theorem.
\end{proof}

\subsection{The second order case}\label{subsec:second-order-prop-of-chaos}
We now move onto the second order case. We begin, as before, with estimates on the time regularity of the interaction field.

In the second order case we expect higher regularity for $b^N$ as $K$ is evaluated at the spatial positions $X^{i,N}$ which are time differentiable. However, we need additional moments to control the velocities.
\begin{lemma}\label{lem:2nd-order-a-priori-time-regularity}[Time regularity (second order case)]
Let $c$ be the constant in the claim of \cref{lem:reference-process-mkw-bound}. Then the following hold:
\begin{enumerate}
\item Let $K(x,y)\in \Lambda^{0,\alpha}(\mathrm{L}^\infty_y(\mathbb{R}^d;\mathrm{L}^{q}_x(\mathbb{R}^d)))$ for some $\alpha\in(0,1]$ and let $f_0\in \mathrm{P}_2(\mathbb{R}^d\times\mathbb{R}^d)$. Let $E_A$ be the event
\begin{equation*}
E_A=\{\norm{b^N}_{\Lambda^{0,\alpha}(\mathrm{L}^{q}([0,T]\times\mathbb{R}^d))}>A\}.
\end{equation*}
Then there exists $A>0$ such that we have the bound
\begin{equation*}
\nnorm{1_{E_A}\left[\sup_{t\in[0,T]}d_{\mathrm{MKW}}(\mu_t^N,f_t)-cd_{\mathrm{MKW}}(\mu^N_0,f_0)\right]_+}\le CN^{-1/2},
\end{equation*}
where $C$ and $A$ depend only on the norm of $K$.
\item Let $K(x,y)=W(x-y)$ for $W\in \Lambda^{1,\alpha}(\mathrm{L}^{q}(\mathbb{R}^d;\mathbb{R}^d))$ for some $\alpha\in(0,1/2)$ and let $f_0\in \mathrm{P}_4(\mathbb{R}^{2d})$. Let $E_A$ be the event
\begin{equation*}
E_A=\{\norm{b^N}_{\Lambda^{1,\alpha}(\mathrm{L}^{q}([0,T]\times\mathbb{R}^d))}>A\}.
\end{equation*}
Then there exists $A>0$ such that we have the bound
\begin{equation*}
\nnorm{1_{E_A}\left[\sup_{t\in[0,T]}d_{\mathrm{MKW}}(\mu_t^N,f_t)-cd_{\mathrm{MKW}}(\mu^N_0,f_0)\right]_+}\le CN^{-1/2},
\end{equation*}
where $C$ and $A$ depend only on the norm of $K$.
\end{enumerate}
\end{lemma}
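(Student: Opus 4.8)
The plan is to follow the blueprint of the first–order statement \cref{lem:a-priori-time-regularity}: reduce the event $E_A$ to an event about the particles' \emph{initial velocities} and driving Brownian motions, control its probability, and estimate the compensated Wasserstein quantity on it. The new feature, and the source of the difficulty, is that in the second–order system the time regularity of $b^N$ is dictated by the velocities, which carry only as many moments as $f_0$.

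\paragraph{\it Step 1: locating the bad event.} After mollifying $K$ as in Step 1 of the proof of \cref{thm:2nd-order-propagation-of-chaos}, averaging shows that $\norm{b^N_t}_{\mathrm{L}^q_x}$ and the spatial $\alpha$–Hölder seminorm of $b^N_t$ (and, in case (2), the $\Lambda^{1,\alpha}$–seminorm, using that $\nabla b^N$ is an average of translates of $\nabla W$) are bounded \emph{pathwise} by the corresponding norm of $K$. Since $X^{i,N}_t-X^{i,N}_s=\int_s^t V^{i,N}_u\,du$, the time increment of $b^N$ in $\mathrm{L}^q_x$ satisfies $\norm{b^N_t-b^N_s}_{\mathrm{L}^q_x}\le C|t-s|^\alpha\,\averN(\sup_u|V^{i,N}_u|)^\alpha$, and the same for $\nabla b^N$ in case (2). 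By \cref{lem:SDE-bounds}, $\sup_u|V^{i,N}_u|\le C(1+|(X^{i,N}_0,V^{i,N}_0)|+\sup_u|B^{i,N}_u|)$ with $C$ depending only on $\norm{K}_{\mathrm{L}^\infty}$, $\kappa$ and $T$; as $\alpha\le1$ and $r\mapsto r^\alpha$ is subadditive, on $E_A$ we must have
\begin{equation*}
\averN\Big(|(X^{i,N}_0,V^{i,N}_0)|^\alpha+\sup_{u}|B^{i,N}_u|^\alpha\Big)>A',
\end{equation*}
with $A'\to\infty$ as $A\to\infty$. Hence $E_A\subseteq E^{\mathrm{ic}}_A\cup E^{\mathrm{b}}_A$, where $E^{\mathrm{ic}}_A=\{\averN|(X^{i,N}_0,V^{i,N}_0)|^\alpha>A'/2\}$ is measurable with respect to the initial data and $E^{\mathrm{b}}_A=\{\averN\sup_u|B^{i,N}_u|^\alpha>A'/2\}$ with respect to the Brownian motions; the two $\sigma$–algebras are independent.

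\paragraph{\it Step 2: a pathwise bound on the compensated distance.} Arguing as in the proof of \cref{lem:mkw-triangle-with-references} (which used no independence of the particles) and then \cref{lem:reference-process-mkw-bound}, with $(\widetilde X^{i,N},\widetilde V^{i,N})$ the reference process \eqref{eq:2nd-order-reference-process-ODE},
\begin{equation*}
\Big[\sup_{t\in[0,T]}d_{\mathrm{MKW}}(\mu^N_t,f_t)-c\,d_{\mathrm{MKW}}(\mu^N_0,f_0)\Big]_+\le \averN\sup_{u\in[0,T]}\big|(X^{i,N}_u-\widetilde X^{i,N}_u,V^{i,N}_u-\widetilde V^{i,N}_u)\big|+D,
\end{equation*}
where $D:=\sup_td_{\mathrm{MKW}}(\widetilde f_t,f_t)$ is finite (bound it by $\sup_t\mathbb{E}|(\widetilde X_t-\bar X_t,\widetilde V_t-\bar V_t)|$, $\,\bar{}\,$ denoting the limit process, via \cref{lem:SDE-bounds} applied to the bounded field $b^\infty$). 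By the proof of \cref{lem:SDE-subgaussian-bounds}, which only uses that $b^N$ is bounded (true on all of $\Omega$), each increment is $\le C(1+\sup_u|B^{i,N}_u|)$, so the right–hand side is a Brownian–measurable sub-Gaussian variable $W_N=W_N^{\mathrm{det}}+W_N^{\mathrm{fl}}$ with $W_N^{\mathrm{det}}=O(1)$ deterministic and $\nnorm{W_N^{\mathrm{fl}}}\le CN^{-1/2}$ by \cref{lem:sub-Gaussian-LLN}.

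\paragraph{\it Step 3: combining the two pieces.} On $E^{\mathrm b}_A$ the variables $\sup_u|B^{i,N}_u|^\alpha$ are i.i.d.\ with a finite exponential moment (as $\sup_u|B^{i,N}_u|$ is sub-Gaussian and $\alpha<2$), so by Cramér's theorem $\mathbb{P}(E^{\mathrm b}_A)\le e^{-cN}$ for $A$ large; since $\nnorm{1_E}\le C(\log(1/\mathbb{P}(E)))^{-1/2}$ for $\mathbb{P}(E)$ small and $W_N$ is near-deterministic, this gives $\nnorm{1_{E^{\mathrm b}_A}W_N}\le CN^{-1/2}$. On $E^{\mathrm{ic}}_A$ one uses the compensation: the event forces $\max_i|(X^{i,N}_0,V^{i,N}_0)|$, hence $d_{\mathrm{MKW}}(\mu^N_0,f_0)$, to be atypically large — an atypically fast particle at time $0$ sits far from the bulk and contributes a matching mass to that distance — and this is exactly what $-c\,d_{\mathrm{MKW}}(\mu^N_0,f_0)$, with $c$ the constant of \cref{lem:reference-process-mkw-bound}, strips off $\sup_td_{\mathrm{MKW}}(\mu^N_t,f_t)$, leaving an i.i.d.\ fluctuation estimated by \cref{lem:sub-Gaussian-LLN}. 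Taking $A$ large enough in both pieces yields the claimed $CN^{-1/2}$; case (2) is identical with $W$ in place of $K$ and $\mathrm{P}_4$ in place of $\mathrm{P}_2$ (the extra two moments paying for the additional velocity factor in the $\Lambda^{1,\alpha}$–seminorm of $b^N$).

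\paragraph{\it Main obstacle.} The hard part is Step 3 on $E^{\mathrm{ic}}_A$. In contrast with the first–order setting, $\mathbb{P}(E^{\mathrm{ic}}_A)$ is only polynomially small in $N$ — the velocities are merely $\mathrm{P}_2$– (resp.\ $\mathrm{P}_4$–) integrable — so a crude estimate of $1_{E^{\mathrm{ic}}_A}W_N$ cannot produce the rate $N^{-1/2}$, and one must genuinely show that the atypically large initial velocities which spoil the time regularity of $b^N$ inflate $d_{\mathrm{MKW}}(\mu^N_0,f_0)$ by a comparable amount, so that after subtraction of $c\,d_{\mathrm{MKW}}(\mu^N_0,f_0)$ only an $O(N^{-1/2})$ remainder survives. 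Quantifying this cancellation — controlling how a small number of fast particles displace the empirical measure at times $0$ and $t$ and checking that for $c$ large the two displacements offset, uniformly over $t$ — is the technical heart of the argument; the rest is the routine bookkeeping of Steps 1–2, together with \cref{cor:lipschitz-dependence-of-SDEs-upon-b} and \cref{thm:existence-of-stochastic-flow}, whose hypotheses involve only the (pathwise controlled) spatial regularity of $b^N$ and so remain available on $E_A$.
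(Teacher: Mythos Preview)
Your route diverges sharply from the paper's, and the divergence creates the very obstacle you then struggle with.

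The paper's argument is much shorter. It first isolates a general estimate (\cref{lem:little-lemma}): for \emph{any} event $E$,
\[
\nnorm{1_E\Big[\sup_{t}d_{\mathrm{MKW}}(\mu^N_t,f_t)-c\,d_{\mathrm{MKW}}(\mu^N_0,f_0)\Big]_+}\le C\,\mathbb{P}(E)+CN^{-1/2},
\]
obtained exactly as in your Step~2 (bound the bracket by $\averN C(1+\sup_t|B^{i,N}_t|)$ via \cref{lem:mkw-triangle-with-references}, \cref{lem:reference-process-mkw-bound}, \cref{lem:SDE-subgaussian-bounds}, then split mean plus fluctuation). With this in hand, the lemma reduces to showing $\mathbb{P}(E_A)\le CN^{-1/2}$. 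For part~(1) the paper bounds
\[
\norm{b^N}_{\Lambda^{0,\alpha}(\mathrm{L}^q)}\le \averN C\big(1+\sup_{t}|V^{i,N}_t|\big)=:\averN A^{i,N},
\]
and since $\sup_t|V^{i,N}_t|\le C(|(X^{i,N}_0,V^{i,N}_0)|+\sup_t|B^{i,N}_t|)$ by \cref{lem:SDE-bounds}, the $A^{i,N}$ are dominated by i.i.d.\ variables with finite second moment (this is precisely where $f_0\in\mathrm{P}_2$ enters). Choosing $A=2\mathbb{E}A^{1,N}$, Chebyshev gives $\mathbb{P}(E_A)\le CN^{-1}$. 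Part~(2) is the same after computing $\partial_t b^N$ and $\nabla b^N$ via \cref{lem:simple-chain-rule}; the extra factor of $|V^{i,N}|$ (and the $C^{0,\alpha}$ norm of $t\mapsto V^{i,N}_t$ for $\alpha<1/2$) in the $\Lambda^{1,\alpha}$ seminorm is why $f_0\in\mathrm{P}_4$ is needed there. No splitting of $E_A$, no Cram\'er, no cancellation argument.

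Your Steps~1--2 essentially reprove the ingredients of \cref{lem:little-lemma}, so the content is there; the problem is Step~3. The decomposition $E_A\subseteq E^{\mathrm{ic}}_A\cup E^{\mathrm b}_A$ is unnecessary, and the treatment of $E^{\mathrm{ic}}_A$ is a genuine gap. Your proposed mechanism---that large initial speeds force $d_{\mathrm{MKW}}(\mu^N_0,f_0)$ to be so large that the subtraction $-c\,d_{\mathrm{MKW}}(\mu^N_0,f_0)$ wipes out the remainder---is not established and is not obviously true at the quantitative level you need: a single particle with $|(X^{i,N}_0,V^{i,N}_0)|\sim R$ contributes $\sim R/N$ to the Wasserstein distance but can contribute $\sim R^\alpha/N$ to $\averN|(X^{i,N}_0,V^{i,N}_0)|^\alpha$, and there is no matching of scales. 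Moreover, you speak of taking ``$c$ large'', but $c$ is the \emph{fixed} constant supplied by \cref{lem:reference-process-mkw-bound}; it is not at your disposal. Finally, \cref{cor:lipschitz-dependence-of-SDEs-upon-b} and \cref{thm:existence-of-stochastic-flow} play no role whatsoever in the paper's proof of this lemma---everything runs on the elementary bounds of \cref{lem:SDE-bounds} and \cref{lem:SDE-subgaussian-bounds}.
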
 
The second order energy estimate is:
\begin{lemma}[Weighted energy estimate (second order case)]\label{lem:2nd-order-weighted-energy-estimate}
Let $b,\tilde{b}\in \mathrm{L}^\infty([0,T]\times\mathbb{R}^d;\mathbb{R}^d)$ be continuous in $t$ and $C^1_b$ in $x$, and $f_0\in \mathrm{L}^{p+r,q}(\mathbb{R}^{d}\times\mathbb{R}^d)$ for some $r,p>0$ and $q\in[2,\infty)$, then
\begin{equation*}
\norm{f^b_t-f^{\tilde{b}}_t}_{\mathrm{L}^{p,2}(\mathbb{R}^{2d})}\le C\int^t_0\norm{b_s-\tilde{b}_s}_{\mathrm{L}^{-r,q'}(\mathbb{R}^d;\mathbb{R}^d)}\,dt,\qquad \frac1{q'}+\frac1q=\frac12,
\end{equation*}
where $C$ depends only on $f_0$,  $\norm{b}_{\mathrm{L}^\infty([0,T]\times\mathbb{R}^d;\mathbb{R}^d)}$ and $\norm{\tilde{b}}_{\mathrm{L}^\infty([0,T]\times\mathbb{R}^d;\mathbb{R}^d)}$.
\end{lemma}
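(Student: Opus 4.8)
The plan is to run a weighted $\mathrm{L}^2$ energy estimate on the difference $g_t:=f^b_t-f^{\tilde b}_t$, after first establishing a weighted $\mathrm{L}^q$ bound on $f^{\tilde b}$ itself. Subtracting the two copies of \eqref{eq:2nd-order-empirical-limit-pde} and using that $b_t,\tilde b_t$ do not depend on $v$, one finds that $g$ solves the same degenerate parabolic equation with drift $b$ and a source term:
\begin{equation*}
\partial_t g+v\cdot\nabla_x g-\kappa\nabla_v\cdot(vg)+b_t\cdot\nabla_v g-\tfrac12\Delta_v g=-\nabla_v\cdot\big((b_t-\tilde b_t)f^{\tilde b}_t\big),\qquad g_0=0.
\end{equation*}
Throughout I would use the \emph{symmetric} weight $\<z:=\<{(x,v)}=\sqrt{1+|x|^2+|v|^2}$. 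This choice is forced by the hypoellipticity: since there is no diffusion in $x$, the transport term $v\cdot\nabla_x$ cannot be absorbed by a dissipation and must be controlled directly by the weight, which works because $|v\cdot\nabla_x\<z^\lambda|\le C_\lambda\<z^\lambda$ (using $|v\cdot x|\le\tfrac12\<z^2$), together with $|\nabla_v\<z^\lambda|+|\Delta_v\<z^\lambda|\le C_\lambda\<z^\lambda$; a weight in $x$ alone would fail here.

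First I would show $\sup_{t\in[0,T]}\norm{f^{\tilde b}_t}_{\mathrm{L}^{p+r,q}(\mathbb{R}^{2d})}\le C$, with $C$ depending only on $f_0\in\mathrm{L}^{p+r,q}$ and $\norm{\tilde b}_{\mathrm{L}^\infty}$. Testing \eqref{eq:2nd-order-empirical-limit-pde} against $|f^{\tilde b}|^{q-2}f^{\tilde b}\<z^{(p+r)q}$ and integrating by parts, the transport, friction and drift terms are each bounded by a constant multiple of $\int|f^{\tilde b}_t|^q\<z^{(p+r)q}$ (by the weight estimates above and boundedness of $\tilde b$), while the $\Delta_v$ term gives a non-negative dissipation plus a similar lower-order remainder; a Gr\"onwall argument then closes it. Next I would run the weighted $\mathrm{L}^2$ estimate on $g$ by testing its equation against $g\<z^{2p}$: the $x$-transport, friction and drift terms each contribute at most $C\norm{g_t}_{\mathrm{L}^{p,2}}^2$, and the $\Delta_v$ term contributes $-\tfrac12\int|\nabla_v g|^2\<z^{2p}$ plus $O(\norm{g_t}_{\mathrm{L}^{p,2}}^2)$. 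For the source, integrate by parts once in $v$; since $b_t-\tilde b_t$ is $v$-independent this produces $\int(b_t-\tilde b_t)\cdot\nabla_v g\,\<z^{2p}f^{\tilde b}_t+\int g\,(b_t-\tilde b_t)\cdot\nabla_v(\<z^{2p})f^{\tilde b}_t$; the first integral is absorbed into the $\nabla_v g$ dissipation via Young's inequality at the cost of $\int|b_t-\tilde b_t|^2|f^{\tilde b}_t|^2\<z^{2p}$, and the second is at most $C\norm{g_t}_{\mathrm{L}^{p,2}}\big(\int|b_t-\tilde b_t|^2|f^{\tilde b}_t|^2\<z^{2p}\big)^{1/2}$ by Cauchy--Schwarz. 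So everything reduces to the integral $I_t:=\int_{\mathbb{R}^{2d}}|b_t-\tilde b_t|^2|f^{\tilde b}_t|^2\<z^{2p}$.

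Bounding $I_t$ by $C\norm{b_t-\tilde b_t}_{\mathrm{L}^{-r,q'}}^2$ using only $f^{\tilde b}_t\in\mathrm{L}^{p+r,q}$ is the key step, and I expect it to be the main obstacle. The difficulty, absent in the first-order case (\cref{lem:weighted-energy-estimate}), is that $b_t-\tilde b_t$ lives on $\mathbb{R}^d_x$ while $f^{\tilde b}_t$ lives on $\mathbb{R}^{2d}$, so one must integrate out $v$ before applying H\"older in $x$. Writing $|b_t-\tilde b_t|^2=|(b_t-\tilde b_t)\<x^{-r}|^2\<x^{2r}$: when $q=2$ (so $q'=\infty$) one pulls out $\norm{b_t-\tilde b_t}_{\mathrm{L}^{-r,\infty}}^2$ and uses $\<x^{2r}\<z^{2p}\le\<z^{2(r+p)}$; when $q>2$ one first applies H\"older in $v$ with exponents $q/2$ and $q/(q-2)$ against an integrable power of $\<z$, trading $\int_v|f^{\tilde b}_t|^2\<z^{2p}\,dv$ for a power of $\<x$ times $\big(\int_v|f^{\tilde b}_t|^q\<z^{\theta q}\,dv\big)^{2/q}$, with $\theta$ chosen so that the subsequent H\"older in $x$ (exponents $q'/2$ and $q/2$) recovers $\norm{b_t-\tilde b_t}_{\mathrm{L}^{-r,q'}}^2$ times $\norm{f^{\tilde b}_t}_{\mathrm{L}^{p+r,q}}^2$; this anisotropic bookkeeping is precisely why the second-order result asks a heavier weight on $f_0$ than the first-order one. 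Combined with the first step, $I_t\le C\norm{b_t-\tilde b_t}_{\mathrm{L}^{-r,q'}}^2$.

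Collecting all the estimates yields a differential inequality $\tfrac{d}{dt}\norm{g_t}_{\mathrm{L}^{p,2}}\le C\norm{g_t}_{\mathrm{L}^{p,2}}+C\norm{b_t-\tilde b_t}_{\mathrm{L}^{-r,q'}}$, and Gr\"onwall's inequality with $g_0=0$ gives the claimed bound. As in the first-order proof, one works with $\mathrm{C}^1_b$ drifts so there are no well-posedness issues and all the integrations by parts are legitimate. The only genuinely new inputs relative to \cref{lem:weighted-energy-estimate} are the weight identities for $\<{(x,v)}$ under $v\cdot\nabla_x$ and $\Delta_v$, and the H\"older-in-$v$ estimate for the source integral $I_t$; the remainder of the argument runs in parallel with the first-order case.
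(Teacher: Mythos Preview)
Your proof is essentially the paper's: write the equation for $g=f^b-f^{\tilde b}$, multiply by $g\langle(x,v)\rangle^{2p}$ and integrate by parts, control the transport/friction/drift terms as lower order using the weight identities you list, treat the source term by moving $\nabla_v$ onto $g\langle(x,v)\rangle^{2p}$ and then applying H\"older and Young to extract $\|b-\tilde b\|_{\mathrm{L}^{-r,q'}}\|f^{\tilde b}\|_{\mathrm{L}^{p+r,q}}$, absorb the $\nabla_v g$ contribution into the dissipation, and close with Gr\"onwall together with the analogous energy estimate on $f^{\tilde b}$ to bound $\sup_t\|f^{\tilde b}_t\|_{\mathrm{L}^{p+r,q}}$. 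The paper is terser on the mixed-variable H\"older step (it simply writes down the resulting inequality, noting that $b_t,\tilde b_t$ are $v$-independent), whereas you spell out a $v$-then-$x$ H\"older scheme; but the overall structure and the ingredients are identical.
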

\begin{proof}[Proof of \cref{thm:2nd-order-propagation-of-chaos}]
We model the proof on that of \cref{thm:propagation-of-chaos}, and thus split it into $5$ steps. Much of the proof is analogous to that of \cref{thm:propagation-of-chaos}. Therefore we only explain the differences.

\textbf{Step 1. Mollification of the interaction kernel.} This is identical to the corresponding step in the proof of \cref{thm:propagation-of-chaos}. We thus omit it.

\textbf{Step 2. Choice of functional space and exponents.}
By the assumptions on $f_0$ we may choose $r,r'$ such that the following holds:
\begin{equation*}
f_0\in \mathrm{L}^{r+r',q'}(\mathbb{R}^d\times\mathbb{R}^d), \quad r>d/q, \quad r'>d+1.
\end{equation*}
As in the proof of \cref{thm:propagation-of-chaos} we have the continuous inclusions:
\begin{align*}
\mathrm{L}^{r',2}(\mathbb{R}^d\times\mathbb{R}^d)\hookrightarrow \mathrm{L}^{1,1}(\mathbb{R}^d\times\mathbb{R}^d)\hookrightarrow (\mathrm{P}_1(\mathbb{R}^d\times\mathbb{R}^d),d_{\mathrm{MKW}}).
\end{align*} 
\textbf{Step 3. Regularity of the interaction field.} The choice of $\mathcal{C}$ depends upon which of assumptions (1) and (2) is made. More precisely which of the relaxed assumptions in \cref{rem:2nd-order-relax-assumption-on-K} is made. In each case:
\begin{enumerate}
\item By \cref{lem:2nd-order-a-priori-time-regularity}(1) we may assume that $b^N\in\mathcal{C}$ where
\begin{equation*}
\mathcal{C}=\mathrm{C}([0,T];\mathrm{C}^1_b(\mathbb{R}^d;\mathbb{R}^d))\cap \{b:\norm{b}_{\Lambda^{0,s}(\mathrm{L}^q([0,T]\times\mathbb{R}^d;\mathbb{R}^d))}\le A\},
\end{equation*}
for some $A<\infty$. Note that the metric entropy of $\mathcal{C}$ is bounded using \cref{lem:metric-entropy-of-Lip1} as
\begin{equation}\label{eq:2nd-order-metric-entropy-estimate}
H(\varepsilon,\mathcal{C},\norm{\cdot}_{\mathrm{L}^\infty([0,T];\mathrm{L}^{-p',\infty}(\mathbb{R}^d;\mathbb{R}^d))})\le C\varepsilon^{-(d+1)/s}
\end{equation}
where we have used that $q>(d+1)/s$ and used $p>d/q$ to take $p'\in (d/q,p)$.
\item By \cref{lem:2nd-order-a-priori-time-regularity}(2) we may assume that $b^N\in\mathcal{C}$ where
\begin{equation*}
\mathcal{C}=\mathrm{C}([0,T];\mathrm{C}^1_b(\mathbb{R}^d;\mathbb{R}^d))\cap \{b:\norm{b}_{\Lambda^{1,(s-1)}(\mathrm{L}^q([0,T]\times\mathbb{R}^d;\mathbb{R}^d))}\le A\},
\end{equation*}
where we have used that $p\ge4$. The corresponding metric entropy estimate provided by \cref{lem:metric-entropy-of-Lip1} is given again by \eqref{eq:2nd-order-metric-entropy-estimate}.(5) for the same choice of $p'$ and use of assumptions, but note here $s>1$.
\end{enumerate}
Note that Sobolev embedding implies the bound $\norm{b}_{\mathrm{C}^{0,\beta}([0,T]\times\mathbb{R}^d;\mathbb{R}^d)}\le CA$ for $\beta=\min(1,s-(d+1)/q)$ for any $b\in\mathcal{C}$. Moreover, Sobolev embedding also implies the bound $\norm{b}_{\mathrm{C}([0,T];\mathrm{C}^{0,\alpha}(\mathbb{R}^d;\mathbb{R}^d))}\le CA$ for $b\in\mathcal{C}$ and $\alpha=s-d/q>2/3$ by assumption. Therefore, we have sufficient regularity to apply \cref{thm:2nd-order-GV-intro} in the next step.

\textbf{Step 4. Consistency: The uniform law of large numbers on the particles.} This is identical to the corresponding step in the proof of \cref{thm:propagation-of-chaos}, except we instead apply \cref{thm:2nd-order-GV-intro} and here 
\begin{equation*}
\gamma_1=\frac{1}{2+\max(\frac{d+1}{s},d)}
\end{equation*}
(noting that $p>2$ by assumption).

\textbf{Step 5. Stability: Estimates on the limit equation.}

\textbf{Step 5.1. Dependence of $f$ upon the field.}  This is analogous to step 5 in the proof of \cref{thm:propagation-of-chaos} using the energy estimate \cref{lem:2nd-order-weighted-energy-estimate} and we leave it to the reader.

\textbf{Step 5.2. Dependence upon the field upon $f$.} The only differences between this step and the corresponding step in the proof of \cref{thm:propagation-of-chaos} are that here $b^{b,\infty}_t$ is defined by
\begin{equation*}
b^{b,\infty}_t(x)=\int K(x,y)\left(\int f^b(y,v)\,dv\right)\,dy
\end{equation*}
and \eqref{eq:prop-of-chaos-main-gronwall-estimate-part4} is replaced by
\begin{equation*}
\begin{aligned}
\norm{b^{b^N,\infty}_t-b^{b^\infty,\infty}_t}_{\mathrm{L}^{-r,q}(\mathbb{R}^d;\mathbb{R}^d)}&\le C\sup_{x\in\mathbb{R}^d}\int |K(x,y)|\left|\int f^{b^N}_t(y,v)\,dv-\int f^{b^\infty}_t(y,v)\,dv\right|\,dy\\
&\le C \norm{f^{b^N}_t-f^{b^\infty}_t}_{\mathrm{L}^1(\mathbb{R}^d\times\mathbb{R}^d)}\le C \norm{f^{b^N}_t-f^{b^\infty}_t}_{\mathrm{L}^{1,1}(\mathbb{R}^d\times\mathbb{R}^d)}.
\end{aligned}
\end{equation*}
Lastly, $\gamma_2$ is here instead given by
\begin{equation*}
\gamma_2=
\begin{cases}
\frac{1}{2+\tfrac{d+1}{s^2}},&\quad \text{if }s\le 1\\
\frac{1}{2+\tfrac{d+1}{s}}, &\quad \text{otherwise.}
\end{cases}
\end{equation*}

\textbf{Step 5.3. Gr\"onwall estimate.} This is identical to the proof of \cref{thm:propagation-of-chaos} and we omit it.
\end{proof} 
\subsection{Proof of the time regularity lemmas}\label{subsec:proof-of-the-time-regularity-lemmas}
For the proof of \cref{lem:a-priori-time-regularity,lem:2nd-order-a-priori-time-regularity} we require the following simple estimate.
\begin{lemma}\label{lem:little-lemma}
Let $E$ be an event and $K$ be bounded. Then
\begin{equation}\label{eq:little-lemma-bound}
\nnorm{1_E\left[\sup_{t\in[0,T]}d_{\mathrm{MKW}}(\mu^N_t,f_t)-cd_{\mathrm{MKW}}(\mu^N_0,f_0)\right]_+}\le C\mathbb{P}(E)+CN^{-1/2},
\end{equation}
where $c$ is chosen as in \cref{lem:reference-process-mkw-bound}.
\end{lemma}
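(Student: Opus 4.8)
The plan is to exploit the boundedness of $K$ to control the particle system by the (deterministic) reference process of \eqref{eq:1st-order-reference-process}/\eqref{eq:2nd-order-reference-process-ODE}: this will reduce $\left[\sup_{t\in[0,T]}d_{\mathrm{MKW}}(\mu^N_t,f_t)-cd_{\mathrm{MKW}}(\mu^N_0,f_0)\right]_+$ to a sub-Gaussian random variable of the shape (deterministic constant) $+$ (centred average of i.i.d.\ sub-Gaussians), after which one multiplies by $1_E$.

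First I would set $R:=T\norm{K}_{\mathrm{L}^\infty}$ and $W^{i,N}:=\sup_{t\in[0,T]}|B^{i,N}_t|$, noting that the $W^{i,N}$ are i.i.d.\ sub-Gaussian since the running supremum of Brownian motion has Gaussian tails. Since $K$ is bounded, the drift $b^N$ and the limit velocity field $b^\infty$ are pointwise bounded by $\norm{K}_{\mathrm{L}^\infty}$, so \cref{lem:SDE-bounds} in the first order case and \cref{lem:SDE-subgaussian-bounds} in the second give the almost sure bound $|X^{i,N}_t-\widetilde{X}^{i,N}_t|\le C(1+W^{i,N})$ (with velocities included in the second order case). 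Coupling a solution of the limit (McKean--Vlasov) equation with the reference process, both started from the same $f_0$-distributed point, their difference is controlled over $[0,T]$ by $b^\infty$ and the noise alone (in the second order case via a Grönwall estimate on the linear difference equation), which yields $d_{\mathrm{MKW}}(\widetilde{f}_t,f_t)\le C(1+\mathbb{E}W)$ uniformly in $t$, where $W$ has the common law of the $W^{i,N}$.

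Next I would apply the triangle inequality. Matching particle $i$ with reference particle $i$ in the optimal-transport definition of $d_{\mathrm{MKW}}$ gives $d_{\mathrm{MKW}}(\mu^N_t,\widetilde{\mu}^N_t)\le\averN|X^{i,N}_t-\widetilde{X}^{i,N}_t|$; \cref{lem:reference-process-mkw-bound} gives $\sup_{t}d_{\mathrm{MKW}}(\widetilde{\mu}^N_t,\widetilde{f}_t)\le cd_{\mathrm{MKW}}(\mu^N_0,f_0)$; combining these with the estimate on $d_{\mathrm{MKW}}(\widetilde{f}_t,f_t)$ above,
\begin{equation*}
\sup_{t\in[0,T]}d_{\mathrm{MKW}}(\mu^N_t,f_t)\le cd_{\mathrm{MKW}}(\mu^N_0,f_0)+C(1+\mathbb{E}W)+C\averN W^{i,N}.
\end{equation*}
Since the right-hand side minus $cd_{\mathrm{MKW}}(\mu^N_0,f_0)$ is almost surely nonnegative (it is at least $C(1+\mathbb{E}W)>0$), the positive part obeys $\left[\sup_{t}d_{\mathrm{MKW}}(\mu^N_t,f_t)-cd_{\mathrm{MKW}}(\mu^N_0,f_0)\right]_+\le C(1+2\mathbb{E}W)+C\averN(W^{i,N}-\mathbb{E}W)$.

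Finally I would multiply by $1_E$ and split into the constant and the fluctuation. The fluctuation is handled with the elementary bound $\nnorm{1_EX}\le\nnorm{X}$ and the i.i.d.\ sub-Gaussian law of large numbers \cref{lem:sub-Gaussian-LLN}: $\nnorm{C\,1_E\averN(W^{i,N}-\mathbb{E}W)}\le C\nnorm{W^{1,N}-\mathbb{E}W}N^{-1/2}\le CN^{-1/2}$. The constant term contributes $\nnorm{C(1+2\mathbb{E}W)\,1_E}\le C\mathbb{P}(E)$ by the elementary estimate for the sub-Gaussian norm of a constant restricted to the event $E$. Adding the two estimates yields \eqref{eq:little-lemma-bound}. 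I expect the only step requiring genuine care — rather than routine bookkeeping with the triangle inequality — to be that the comparison is with the \emph{nonlinear} limit $f_t$ and not with the linear laws $f^b_t$ of \cref{lem:mkw-triangle-with-references}: this forces the auxiliary coupling of the limit process with the reference process and uses the boundedness of $b^\infty$ essentially, which is precisely why the estimate breaks down if $K$ is not assumed bounded.
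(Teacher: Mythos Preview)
Your proof is correct and follows essentially the same route as the paper: reduce the bracketed quantity to a deterministic constant plus an average of i.i.d.\ sub-Gaussians via comparison with the reference process, then split after multiplying by $1_E$ and apply \cref{lem:sub-Gaussian-LLN}. The only organisational difference is that the paper reaches the reduction through the dual (Lip1) computation of \cref{lem:mkw-triangle-with-references} adapted to the nonlinear system, whereas you use the explicit triangle inequality $\mu^N_t\to\widetilde{\mu}^N_t\to\widetilde{f}_t\to f_t$ together with a coupling of the limit process to the reference process; your version is arguably cleaner in that it makes transparent why $f_t$ (which is not the one-particle law of $X^{i,N}_t$) can still be handled.
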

\begin{proof}
We present only the first order case for brevity, the second order case being analogous. From an identical computation to that used in the proof of \cref{lem:mkw-triangle-with-references} and then using \cref{lem:reference-process-mkw-bound} we deduce that
\begin{align*}
&\left[\sup_{t\in[0,T]}d_{\mathrm{MKW}}(\mu^N_t,f_t)-cd_{\mathrm{MKW}}(\mu^N_0,f_0)\right]_+\\
&\quad \le \sup_{t\in[0,T]}\sup_{h\in \mathrm{Lip1}}\left(\averN (h(X^{i,N}_t)-h(\widetilde{X}^{i,N}_t))-\mathbb{E}(h(X^{i,N}_t)-h(\widetilde{X}^{i,N}_t))\right)\\
&\quad\le \averN\underbrace{C\left(1+\sup_{t\in[0,T]}|B^{i,N}_t|\right)}_{=:A^{i,N}}
\end{align*}
where we have used \cref{lem:SDE-subgaussian-bounds} to obtain the final line. Hence the left hand side of \eqref{eq:little-lemma-bound} is bounded by
\begin{align*}
\nnorm{1_E\averN A^{i,N}}&\le\nnorm{1_E\mathbb{E}A^{i,N}}+\nnorm{1_E\averN(A^{i,N}-\mathbb{E}A^{i,N})}\\
&\le(\mathbb{E}A^{i,N})\mathbb{P}(E)+\nnorm{\averN (A^{i,N}-\mathbb{E}A^{i,N})}\\
&\le C\mathbb{P}(E)+C\nnorm{A^{1,N}}N^{-1/2}
\end{align*}
where we have used the law of large numbers for sub-Gaussian random variables (\cref{lem:sub-Gaussian-LLN}) on the last line. As $A^{1,N}$ is a sub-Gaussian random variable, the proof is complete.
\end{proof}

\begin{proof}[Proof of \cref{lem:a-priori-time-regularity}]
By using \cref{lem:little-lemma} it suffices to find $A$ such that 
\begin{equation*}
\mathbb{P}(\norm{b^N}_{\Lambda^{0,\alpha'}_{para}(\mathrm{L}^{q}([0,T]\times\mathbb{R}^d;\mathbb{R}^d))}>A)\le CN^{-1/2}.
\end{equation*}
By the definition of $b^N$ we have the estimate
\begin{equation*}
\begin{aligned}
\norm{b^N}&_{\Lambda_{para}^{0,\alpha'}(\mathrm{L}^{q}([0,T]\times\mathbb{R}^d;\mathbb{R}^d))}\\
&=\norm{\averN K(x,X^{i,N}_t)}_{\Lambda^{0,\alpha'}_{para}(\mathrm{L}^{q}([0,T]\times\mathbb{R}^d;\mathbb{R}^d))}\\
&\le \averN\norm{K(x,X^{i,N}_t)}_{\Lambda^{0,\alpha'}_{para}(\mathrm{L}^{q}([0,T]\times\mathbb{R}^d;\mathbb{R}^d))}\\
&\le \norm{K}_{\Lambda^{0,\alpha}(\mathrm{L}^\infty_y(\mathbb{R}^d;\mathrm{L}^{q}_x(\mathbb{R}^d;\mathbb{R}^d)))}\left(1+\averN\norm{X^{i,N}_t}_{\mathrm{C}^{0,{\alpha'/(2\alpha)}}([0,T];\mathbb{R}^d)}\right)\\
&\le \averN\underbrace{C\left(1+\norm{B^{i,N}_t}_{\mathrm{C}^{0,{\alpha'/(2\alpha)}}([0,T];\mathbb{R}^d)}\right)}_{=:A^{i,N}}
\end{aligned}
\end{equation*}
where $(A^{i,N})_{i=1}^N$ are i.i.d. random variables with finite second moments (sub-Gaussian even, see \cite{Hytonen}). Set $A=2\mathbb{E}A^{1,N}$, then from Chebyshev's inequality we have
\begin{align*}
\mathbb{P}(\norm{b^N}_{\Lambda^{0,\alpha'}_{para}(\mathrm{L}^{q}([0,T]\times\mathbb{R}^d;\mathbb{R}^d))}>A)&\le \mathbb{P}\left(\averN (A^{i,N}-\mathbb{E}A^{i,N})>2\mathbb{E}A^{1,N}\right)\\
&\le  \frac{\var\left(\averN A^{i,N}\right)}{|\mathbb{E}A^{1,N}|^2}\le CN^{-1},
\end{align*}
which completes the proof of the lemma.

\end{proof}
To prove the second claim of \cref{lem:2nd-order-a-priori-time-regularity} we shall need a simple lemma.
\begin{lemma}\label{lem:simple-chain-rule}
Let $W\in \mathrm{W}^{1,1}_{loc}$ and $g\in \mathrm{C}^1([0,T];\mathbb{R}^d)$. Then $W(x-g(t))$ has weak time derivative given by
\begin{equation*} 
\partial_t[W(x-g(t))]=-g'(t)\cdot(\nabla W)(x-g(t)).
\end{equation*} 
\end{lemma}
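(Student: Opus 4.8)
The plan is the standard mollification argument. Working componentwise I may assume $W:\mathbb{R}^d\to\mathbb{R}$ is scalar, and I interpret ``weak time derivative'' as the distributional $\partial_t$ on $(0,T)\times\mathbb{R}^d$; thus the goal is to show
\begin{equation*}
\int_0^T\int_{\mathbb{R}^d}W(x-g(t))\,\partial_t\phi\,dx\,dt=\int_0^T\int_{\mathbb{R}^d}g'(t)\cdot(\nabla W)(x-g(t))\,\phi\,dx\,dt
\end{equation*}
for every $\phi\in \mathrm{C}^\infty_c((0,T)\times\mathbb{R}^d)$.

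First I would record the elementary observation underpinning everything: since $g$ is bounded on $[0,T]$, say $\sup_{[0,T]}|g|=M$, the map $T_g:(t,x)\mapsto(t,x-g(t))$ is a measure-preserving homeomorphism of $[0,T]\times\mathbb{R}^d$ that sends compact sets to compact sets. Hence for any $h\in \mathrm{L}^1_{loc}(\mathbb{R}^d)$ the change of variables $y=x-g(t)$ gives, on a cylinder $[a,b]\times\overline{B_R}$, the bound $\int_a^b\int_{\overline{B_R}}|h(x-g(t))|\,dx\,dt\le (b-a)\int_{\overline{B_{R+M}}}|h(y)|\,dy$, so composition with $T_g$ maps $\mathrm{L}^1_{loc}(\mathbb{R}^d)$ continuously into $\mathrm{L}^1_{loc}((0,T)\times\mathbb{R}^d)$. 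In particular $W(x-g(t))$ and $(\nabla W)(x-g(t))$ are locally integrable on $(0,T)\times\mathbb{R}^d$, so both sides of the identity above are well defined.

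Next I would mollify: with $W_\varepsilon=W*\rho_\varepsilon\in \mathrm{C}^\infty(\mathbb{R}^d)$ one has $W_\varepsilon\to W$ and $\nabla W_\varepsilon=(\nabla W)*\rho_\varepsilon\to\nabla W$ in $\mathrm{L}^1_{loc}(\mathbb{R}^d)$, using $W\in \mathrm{W}^{1,1}_{loc}$. For the smooth function $W_\varepsilon$ the classical chain rule gives $\partial_t[W_\varepsilon(x-g(t))]=-g'(t)\cdot(\nabla W_\varepsilon)(x-g(t))$ pointwise, and since $t\mapsto W_\varepsilon(x-g(t))$ is $\mathrm{C}^1$ on $[0,T]$ and $\phi$ vanishes near $t=0,T$, integration by parts in $t$ gives the target identity with $W$ replaced by $W_\varepsilon$. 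Finally I would let $\varepsilon\to0$: both integrals in that identity depend on $W_\varepsilon$ and $\nabla W_\varepsilon$ only through their restrictions to a fixed compact subset of $\mathbb{R}^d$ (the projection onto the space variable of the $T_g$-image of $\supp\phi$), so the $\mathrm{L}^1_{loc}$ convergence from the previous step, together with boundedness of $g'$, $\phi$, and $\partial_t\phi$, yields convergence of each side to the claimed limit.

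There is no genuine obstacle here --- it is the textbook chain rule for a Sobolev function composed with a curve. The only point requiring a little care is the bookkeeping in the second paragraph, namely that $(t,x)\mapsto(t,x-g(t))$ is both measure preserving and proper, which is exactly what lets local-integrability and local-convergence statements transfer between $\mathbb{R}^d$ and $(0,T)\times\mathbb{R}^d$.
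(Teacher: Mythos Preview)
Your proof is correct, but the paper takes a different (and slightly slicker) route that avoids mollification entirely. Instead of approximating $W$, the paper performs the change of variables $x\mapsto x+g(t)$ at the level of the distributional pairing, which transfers the shift from $W$ onto the test function $\varphi$. One then applies the chain rule to the \emph{smooth} function $\varphi(t,x+g(t))$ --- where it is elementary --- and integrates by parts in $x$ using the weak gradient of $W$ directly (this is where $W\in\mathrm{W}^{1,1}_{loc}$ enters), before shifting the variable back.

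Both arguments are standard. The paper's approach is shorter and requires no limiting argument or bookkeeping about propriety of $T_g$ and transfer of $\mathrm{L}^1_{loc}$ convergence; your mollification argument is the generic workhorse method one would reach for without thinking, and makes the role of the hypothesis $W\in\mathrm{W}^{1,1}_{loc}$ (namely $\nabla W_\varepsilon\to\nabla W$ in $\mathrm{L}^1_{loc}$) perhaps more transparently visible.
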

\begin{proof}
Let $\varphi\in \mathcal{D}([0,T]\times\mathbb{R}^d)$ be a test function, and let the pairing of a distribution in $\mathcal{D}'([0,T]\times\mathbb{R}^d)$ and a test function in $\mathcal{D}([0,T]\times\mathbb{R}^d)$ be $\ip{\cdot}{\cdot}$. Then we have
\begin{align*}
\ip{\partial_t[W(x-g(t))]}{\varphi(t,x)}&=-\ip{W(x-g(t))}{(\partial_t\varphi)(t,x)}\\
&=-\ip{W(x)}{(\partial_t\varphi)(t,x+g(t))}\\
&=-\ip{W(x)}{\partial_t[\varphi(t,x+g(t))]-g'(t)\cdot(\nabla\varphi)(t,x+g(t))}\\
&=0+\ip{W(x)}{g'(t)\cdot(\nabla[\varphi(t,x+g(t))]}\\
&=-\ip{g'(t)\cdot\nabla W(x)}{\varphi(t,x+g(t))}\\
&=-\ip{g'(t)\cdot(\nabla W)(x-g(t))}{\varphi(t,x)}.\qedhere
\end{align*}
\end{proof}
\begin{proof}[Proof of \cref{lem:2nd-order-a-priori-time-regularity}]
We prove each claim in turn. For both claims, by \cref{lem:little-lemma} it is sufficient to bound the probability of the bad event.
\begin{enumerate}
\item As in the proof of \cref{lem:a-priori-time-regularity} we compute
\begin{equation*}
\begin{aligned}
\norm{b^N}&_{\Lambda^{0,\alpha}(\mathrm{L}^{q}([0,T]\times\mathbb{R}^d;\mathbb{R}^d))}\\
&=\norm{\averN K(\cdot,X^{i,N}_t)}_{\Lambda^{0,\alpha}(\mathrm{L}^{q}([0,T]\times\mathbb{R}^d;\mathbb{R}^d))}\\
&\le \averN \norm{K(\cdot,X^{i,N}_t)}_{\Lambda^{0,\alpha}(\mathrm{L}^{q}([0,T]\times\mathbb{R}^d;\mathbb{R}^d))}\\
&\le \norm{K}_{\Lambda^{0,\alpha}(\mathrm{L}^\infty_y(\mathbb{R}^d;\mathrm{L}^{q}_x(\mathbb{R}^d;\mathbb{R}^d)))}\left(1+\averN\norm{X^{i,N}_t}_{\mathrm{C}^{0,1}([0,T];\mathbb{R}^d;\mathbb{R}^d)}\right)\\
&\le \averN\underbrace{C(1+\sup_{t\in[0,T]}|V^{i,N}_t|)}_{=:A^{i,N}}.
\end{aligned}
\end{equation*}
Define $A=2\mathbb{E}A^{1,N}$ and $E=\{\averN A^{i,N}\ge A\}$. Then,
\begin{equation*}
\mathbb{P}(E)\le \frac{\var\left(\averN A^{i,N}\right)}{|\mathbb{E}A^{1,N}|^2}\le CN^{-1}
\end{equation*}
by Chebyshev's inequality using that $A^{i,N}$ are i.i.d. with finite second moment by \cref{lem:SDE-bounds}.
\item As $X^{i,N}$ is continuously time differentiable, we can apply \cref{lem:simple-chain-rule} to obtain
\begin{align*}
\partial_tb^N_t(x)&=\averN\partial_t[K(x,X^{i,N}_t)]=\averN\partial_t[W(x-X^{i,N}_t)]\\
&=\averN V^{i,N}_t\cdot (\nabla W)(x-X^{i,N}_t).
\end{align*}
Furthermore, the $x$ derivatives satisfy
\begin{equation*}
\nabla b^N_t(x)=\averN(\nabla W)(x-X^{i,N}_t),
\end{equation*}
which is always easier to bound than $\partial_tb^N$, so we omit these bounds.

Taking the $\mathrm{L}^{q}$ norm we have
\begin{equation*}
\norm{\partial_tb^N_t}_{\mathrm{L}^{q}([0,T]\times \mathbb{R}^d;\mathbb{R}^d)}\le C\norm{\nabla W}_{\mathrm{L}^q(\mathbb{R}^d;\mathbb{R}^{d\times d})}\averN \sup_{s\in[0,T]}|V^{i,N}_s|,
\end{equation*}
and similarly,
\begin{equation*}
\begin{aligned} 
\sup_{x,y\in\mathbb{R}^d,x\ne y}\frac1{|x-y|^\alpha}&\norm{\partial_tb^N(x+\cdot)-\partial_tb^N(y+\cdot)}_{\mathrm{L}^{q}([0,T]\times \mathbb{R}^d;\mathbb{R}^d)}\\
&\le C\norm{\nabla W}_{\Lambda^{0,\alpha}(\mathrm{L}^{q}(\mathbb{R}^d;\mathbb{R}^d))}\averN \sup_{s\in[0,T]}|V^{i,N}_s|,
\end{aligned} 
\end{equation*}
and we can estimate the $V^{i,N}$ terms in the same way as part (1). In the same way
\begin{equation*}
\begin{aligned}
\sup_{s,t\in[0,T],s\ne t}\frac1{|t-s|^\alpha}&\norm{\partial_tb_t^N-\partial_tb_s^N}_{\mathrm{L}^{q}([0,T]\times \mathbb{R}^d;\mathbb{R}^d)}\\
&\le C\norm{\nabla W}_{\Lambda^{0,\alpha}(\mathrm{L}^{q}(\mathbb{R}^d;\mathbb{R}^d))}\left(\averN \sup_{s\in[0,T]}|V^{i,N}_s|\right)^2\\
&\quad+C\norm{\nabla W}_{\mathrm{L}^{q}(\mathbb{R}^d;\mathbb{R}^{d\times d})}\averN \norm{V^{i,N}}_{\mathrm{C}^{0,\alpha}([0,T];\mathbb{R}^d)}.
\end{aligned}
\end{equation*}
All of these terms may be controlled using the methods in part (1) and the proof of \cref{lem:a-priori-time-regularity} as $\alpha<1/2$. We omit the details.\qedhere 
\end{enumerate}
\end{proof}
\subsection{Proof of the energy estimates}\label{subsec:proof-of-the-energy-estimates}
We now provide the proofs of the two energy estimates.
\begin{proof}[Proof of \cref{lem:weighted-energy-estimate}]
For brevity, let $f_t=f^b_t$ and $\tilde{f}_t=f^{\tilde{b}}_t$. [We abuse notation in this proof and use $\tilde{f}$ to refer to the definition in the previous sentence rather than the law of the reference process.] Let $g=f-\tilde{f}$, then $g_t$ solves
\begin{equation*}
\left\{\begin{aligned}
&\partial_t g_t+\nabla\cdot(b_tg_t)-\frac12\Delta g_t=-\nabla\cdot(\tilde{f}_t(b_t-\tilde{b}_t)), \quad (t,x)\in[0,T]\times\mathbb{R}^d,\\
&g_0=0.
\end{aligned}\right. 
\end{equation*}
We multiply this equation by $g_t\<x^{2p}$ and integrate by parts. This yields
\begin{equation*}
\begin{aligned}
\frac{d}{dt}\norm{g_t\<x^{p}}^2_{\mathrm{L}^2(\mathbb{R}^d)}&-\int g_tb_t\cdot\nabla(g_t\<x^{2p})\,dx+\int\nabla g_t\cdot\nabla(g_t\<x^{2p})\,dx\\
&=\int\tilde{f}_t(b_t-\tilde{b}_t)\cdot\nabla (g_t\<x^{2p})\,dx.
\end{aligned}
\end{equation*}
We bound the right hand side using H\"older's inequality by
\begin{equation*}
\left|RHS\right|\le \norm{b_t-\tilde{b}_t}_{\mathrm{L}^{-r,q'}(\mathbb{R}^d;\mathbb{R}^d)}\norm{\tilde{f}_t}_{\mathrm{L}^{p+r,q}(\mathbb{R}^d)}\left(\norm{g_t}_{\mathrm{L}^{p,2}(\mathbb{R}^d)}+\norm{\nabla g_t}_{\mathrm{L}^{p,2}(\mathbb{R}^d;\mathbb{R}^d)}\right)
\end{equation*}
and similarly the second term on the left hand side using instead $\norm{b}_{\mathrm{L}^\infty([0,T]\times\mathbb{R}^d;\mathbb{R}^d)}\le C$.
Using Young's inequality, and that $\nabla$ hitting $\<x^{2p}$ produces terms of lower order, we obtain
\begin{equation*}
\frac{d}{dt}\norm{g_t}^{2}_{\mathrm{L}^{p,2}(\mathbb{R}^d)}\le C\norm{g_t}_{\mathrm{L}^{p,2}(\mathbb{R}^d)}^2+C\norm{\tilde{f}_t}^2_{\mathrm{L}^{p+r,q}(\mathbb{R}^d)}\norm{b_t-\tilde{b}_t}^2_{\mathrm{L}^{-r,q'}(\mathbb{R}^d;\mathbb{R}^d)}.
\end{equation*}
Hence, by Gr\"onwall's inequality, we have
\begin{equation*}
\norm{g_t}^2_{\mathrm{L}^{p,2}(\mathbb{R}^d)}\le C\int^t_0\norm{\tilde{f}_s}^2_{\mathrm{L}^{p+r,q}(\mathbb{R}^d)}\norm{b_s-\tilde{b}_s}_{\mathrm{L}^{-r,q'}(\mathbb{R}^d;\mathbb{R}^d)}^2\,ds,
\end{equation*}
which implies that
\begin{equation*}
\norm{g_t}_{\mathrm{L}^{p,2}(\mathbb{R}^d)}\le C\int^t_0\norm{\tilde{f}_s}_{\mathrm{L}^{p+r,q}(\mathbb{R}^d)}\norm{b_s-\tilde{b}_s}_{\mathrm{L}^{-r,q'}(\mathbb{R}^d;\mathbb{R}^d)}\,ds.
\end{equation*}
Thus it suffices to obtain a bound, independent of $\tilde{b}$,
\begin{equation*}
\norm{\tilde{f}}_{\mathrm{L}^\infty([0,T];\mathrm{L}^{p+r,q}(\mathbb{R}^d))}\le C.
\end{equation*}
This may be done using the equation for $\tilde{f}_t$, the assumed $\mathrm{L}^q$ moment bound on $f_0$ and the same technique as above multiplying by $|\tilde{f}_t|^{q-1}\<x^{q(p+r)}$ instead of $g_t\<x^{2p}$. We omit the details.
\end{proof}
The proof of the weighted energy estimate in the second order case is slightly different.
\begin{proof}[Proof of \cref{lem:2nd-order-weighted-energy-estimate}]
As in the proof of \cref{lem:weighted-energy-estimate}, let $f_t=f^b_t$, $\tilde{f}_t=f^{\tilde{b}}_t$ and $g=f-\tilde{f}$. Then $g$ solves
\begin{equation*}
\left\{\begin{aligned}
&\partial_t g_t+v\cdot\nabla_{x}g_t-\kappa \nabla_v\cdot(vg_t)+b_t\cdot\nabla_v g_t-\frac12\Delta_v g_t=-(b_t-\tilde{b}_t)\cdot \nabla_v\tilde{f}_t,\quad (t,x,v)\in[0,T]\times\mathbb{R}^d\times\mathbb{R}^d,\\
&g_0=0.
\end{aligned}\right. 
\end{equation*}
By multiplying this equation by $g_t{\<{(x,v)}}^{2p}$ and then integrating by parts, we obtain the following weighted energy estimate
\begin{equation*}
\begin{aligned}
&\frac{d}{dt}\norm{g_t}^2_{\mathrm{L}^{p,2}(\mathbb{R}^{2d})}+\int v\cdot \nabla_x(g_t{\<{(x,v)}}^{2p})g_t\,dxdv+\kappa \int g_tv\cdot \nabla_v(g_t{\<{(x,v)}}^{2p})\,dxdv+\\
&-\int g_tb_t\cdot \nabla_v(g_t{\<{(x,v)}}^{2p})\,dxdv+\frac12\int\nabla_vg_t\cdot \nabla_v(g_t{\<{(x,v)}}^{2p})\,dxdv\\
&=\int \tilde{f}_t(b_t-\tilde{b}_t)\cdot \nabla_v(g_t{\<{(x,v)}}^{2p})\,dxdv.
\end{aligned}
\end{equation*} 
In the similar way as in the proof of \cref{lem:weighted-energy-estimate}, as $\nabla_x,\nabla_v$ hitting ${\<{(x,v)}}^{2p}$ give terms of lower order and as $b_t,\tilde{b}_t$ are independent of $v$, we have
\begin{equation*}
\begin{aligned}
&\frac{d}{dt}\norm{g_t}^2_{\mathrm{L}^{p,2}(\mathbb{R}^{2d})}+\norm{\nabla_vg_t}^2_{\mathrm{L}^{p,2}(\mathbb{R}^{2d};\mathbb{R}^d)}\le  C\norm{g_t}^2_{\mathrm{L}^{p,2}(\mathbb{R}^{2d})}+\\
&\quad+C\norm{b_t-\tilde{b}_t}_{\mathrm{L}^{-r,q'}(\mathbb{R}^d;\mathbb{R}^d)}\norm{\tilde{f}_t}_{\mathrm{L}^{p+r,q}(\mathbb{R}^{2d})}(\norm{g_t}_{\mathrm{L}^{p,2}(\mathbb{R}^{2d})}+\norm{\nabla_vg_t}_{\mathrm{L}^{p,2}(\mathbb{R}^{2d};\mathbb{R}^d)})
\end{aligned}
\end{equation*}
By using Young's inequality and then the Gr\"onwall inequality we obtain, as in the proof of \cref{lem:weighted-energy-estimate},
\begin{equation*}
\norm{g_t}_{\mathrm{L}^{p,2}(\mathbb{R}^{2d})}\le C\int^t_0\norm{\tilde{f}_s}_{\mathrm{L}^{p+r,q}(\mathbb{R}^{2d})}\norm{b_s-\tilde{b}_s}_{\mathrm{L}^{-r,q'}(\mathbb{R}^d;\mathbb{R}^d)}\,ds.
\end{equation*}
The claim of the lemma then follows from a bound on $\sup_{t\in[0,T]}\norm{\tilde{f}_t}_{\mathrm{L}^{p+r,q}(\mathbb{R}^{2d})}$ which may be obtained using the assumption that $f_0\in \mathrm{L}^{p+r,q}(\mathbb{R}^{2d})$ and similar energy estimates to the above. We leave this to the reader.
\end{proof}
\section{Counterexample}\label{sec:counterexample}
In this section we will prove \cref{prop:counterexample}. We begin by introducing a sorting problem, which if the uniform law of large numbers holds over a class $\mathcal{C}$, is unsolvable.
\begin{problem}\label{problem:sorting}
Given a class of vector fields $\mathcal{C}$ and even $N$, consider $N$ particles evolving as \eqref{eq:many-particle-system-empirical} with initial law $f_0$. Tag the first $N/2$ particles \emph{red} and the rest \emph{blue}. Can we choose a (random) $b^N\in\mathcal{C}$ (depending on $N$) so that the red and blue particles are sorted to the right and left respectively, uniformly in $N$, i.e.
	\begin{equation}\label{eq:prob:task}
	\inf_{N\to\infty}\mathbb{E}\left[\averN  h(X^{b^N,i,N}_t,\mathrm{colour}(X^{b^N,i,N}))-\mathbb{E}h(X^{b^N,i,N}_t,\mathrm{colour}(X^{b^N,i,N}))\right]>0
	\end{equation}
	where 
	\begin{equation}
	h(x_1,\dotsc,x_d,c)=g(x_1)\begin{cases}
	1&\text{ if }c=\text{red}\\
	-1&\text{ if }c=\text{blue}
	\end{cases}
	\end{equation}
	and $g(x)$ is a smoothed version of the sign function.
\end{problem}
A simple argument by contradiction implies the following lemma.
\begin{lemma}
If \cref{problem:sorting} is solvable for a class of vector fields $\mathcal{C}$, then the uniform law of large numbers for SDEs cannot hold over this class. In particular, if \cref{problem:sorting} is solvable for $\mathcal{C}^0$ given by \eqref{eq:counterexample-class} then \cref{prop:counterexample} is true.
\end{lemma}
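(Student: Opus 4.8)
The plan is to run precisely the contradiction the authors announce. Suppose \cref{problem:sorting} is solvable for a class $\mathcal{C}\subset\mathcal{C}^0$: then there exist (random) fields $b^N\in\mathcal{C}$ and a constant $c_0>0$ so that the quantity in \eqref{eq:prob:task} is at least $c_0$ for all even $N$ (for all sufficiently large even $N$ is enough). I want to deduce that $\mathbb{E}\sup_{t\in[0,T],\,b\in\mathcal{C}}d_{\mathrm{BL}}(\mu^{b,N}_t,f^b_t)$ cannot tend to $0$, i.e. the uniform law of large numbers for SDEs fails over $\mathcal{C}$; specialising to $\mathcal{C}=\mathcal{C}^0$ from \eqref{eq:counterexample-class} then gives \cref{prop:counterexample}. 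First I would rewrite the sorting functional in terms of empirical measures: label the first $N/2$ particles red ($i\in R$), the rest blue ($i\in B$), and set $\mu^{R,N}_t=\frac{2}{N}\sum_{i\in R}\delta_{X^{b^N,i,N}_t}$ and $\mu^{B,N}_t=\frac{2}{N}\sum_{i\in B}\delta_{X^{b^N,i,N}_t}$, each an empirical measure of $N/2$ points. Writing $g$ for the smoothed sign — bounded by $1$, globally Lipschitz, vanishing at $0$ — regarded as the function $x\mapsto g(x_1)$ on $\mathbb{R}^d$, one has $\frac1N\sum_{i=1}^N h\big(X^{b^N,i,N}_t,\mathrm{colour}(i)\big)=\frac12\int g\,d\mu^{R,N}_t-\frac12\int g\,d\mu^{B,N}_t$, while the natural compensator (the integrand integrated against the colour-blind limit density $f^{b^N}_t$) carries opposite signs for the two colours and hence cancels. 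So solvability yields, for all even $N$, the bound $\mathbb{E}\big[\int g\,d\mu^{R,N}_t-\int g\,d\mu^{B,N}_t\big]\ge 2c_0$.

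Next I would compare each colour to $f^{b^N}_t$, uniformly over $\mathcal{C}$. Inserting $f^{b^N}_t$ gives $\int g\,d\mu^{R,N}_t-\int g\,d\mu^{B,N}_t=\big(\int g\,d\mu^{R,N}_t-\int g\,df^{b^N}_t\big)-\big(\int g\,d\mu^{B,N}_t-\int g\,df^{b^N}_t\big)$. Since $g$ is bounded by $1$, vanishes at the origin and has some finite Lipschitz constant $L$, the function $g/\max(1,L)$ lies in $\mathrm{Lip1}$ with $\mathrm{L}^\infty$ norm at most $1$, so $|\int g\,d\nu-\int g\,d\rho|\le \max(1,L)\,d_{\mathrm{BL}}(\nu,\rho)$ for all $\nu,\rho\in\mathrm{P}(\mathbb{R}^d)$. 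The key and essentially cost-free observation is that $b^N(\omega)\in\mathcal{C}$ for every $\omega$, no matter how anticipatingly $b^N$ is chosen, so pointwise $d_{\mathrm{BL}}(\mu^{R,N}_t,f^{b^N}_t)\le\sup_{b\in\mathcal{C}}d_{\mathrm{BL}}\big(\frac2N\sum_{i\in R}\delta_{X^{b,i,N}_t},f^b_t\big)$, and likewise for blue. Moreover the family $\big(X^{b,i,N}\big)_{b\in\mathcal{C},\,i\in R}$ is produced from the $N/2$ i.i.d. pairs $(X^{i,N}_0,B^{i,N})_{i\in R}$ by precisely the solution map of \eqref{eq:many-particle-system-empirical} that produces $\big(X^{b,i,N/2}\big)_{b\in\mathcal{C},\,1\le i\le N/2}$ from $N/2$ analogous i.i.d. pairs; hence $\sup_{b\in\mathcal{C},\,s\in[0,T]}d_{\mathrm{BL}}\big(\frac2N\sum_{i\in R}\delta_{X^{b,i,N}_s},f^b_s\big)$ has the same distribution as $\sup_{b\in\mathcal{C},\,s\in[0,T]}d_{\mathrm{BL}}(\mu^{b,N/2}_s,f^b_s)$.

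Chaining the two steps, $2c_0\le \mathbb{E}\big[\int g\,d\mu^{R,N}_t-\int g\,d\mu^{B,N}_t\big]\le 2\max(1,L)\,\mathbb{E}\sup_{b\in\mathcal{C},\,s\in[0,T]}d_{\mathrm{BL}}(\mu^{b,N/2}_s,f^b_s)$, so $\mathbb{E}\sup_{b\in\mathcal{C},\,s\in[0,T]}d_{\mathrm{BL}}(\mu^{b,M}_s,f^b_s)\ge c_0/\max(1,L)=:c>0$ for every $M$ arising as $N/2$. This contradicts convergence to $0$, so no uniform law of large numbers can hold over $\mathcal{C}$; taking $\mathcal{C}=\mathcal{C}^0$ and noting that for the finitely many excluded small values of $M$ the same expectation is still strictly positive (e.g. because $0\in\mathcal{C}^0$ and $d_{\mathrm{BL}}(\mu^{0,M}_T,f^0_T)>0$ almost surely, $f^0_T$ being a genuine density for $T>0$ whereas $\mu^{0,M}_T$ is atomic), one gets $\inf_{N\ge1}\mathbb{E}\sup_{t\in[0,T],\,b\in\mathcal{C}^0}d_{\mathrm{BL}}(\mu^{b,N}_t,f^b_t)\ge c'>0$, which is exactly \cref{prop:counterexample}. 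The argument is soft; the only delicate point is the second step — identifying the red half of the $N$-particle empirical process, in distribution and uniformly in $b$, with the full $(N/2)$-particle empirical process, and observing that the bound $d_{\mathrm{BL}}(\mu^{R,N}_t,f^{b^N}_t)\le\sup_{b\in\mathcal{C}}(\cdots)$ is valid for an arbitrary, possibly anticipating, choice of the random field $b^N$, precisely because the uniform law of large numbers already supremises over the whole class in which $b^N$ lives.
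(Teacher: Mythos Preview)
Your argument is correct and is precisely the ``simple argument by contradiction'' that the paper invokes without spelling out. The paper gives no details beyond that phrase, so there is nothing to compare: you have supplied the missing steps, and they are the natural ones---splitting the colour-weighted functional into a red and a blue empirical measure, observing that the compensator against $f^{b^N}_t$ cancels by the sign convention on colours, bounding each half by the uniform supremum over $\mathcal{C}$ (valid pointwise in $\omega$ because $b^N(\omega)\in\mathcal{C}$), and identifying the red (or blue) half-empirical process in law with the full $(N/2)$-particle process. The final remark handling small $M$ by the atomic-versus-density observation is a nice touch to upgrade $\liminf$ to $\inf$.
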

We will now exhibit an explicit vector field $b\in\mathcal{C}^0$ that solves \cref{problem:sorting}, thus proving \cref{prop:counterexample}. This turns out to be quite simple. 

Firstly, we note that, whatever $b\in\mathcal{C}^0$ is chosen, the set of times at which any two particles are in the same position is of measure zero almost surely. This is due to the absolute continuity with respect to Brownian motion due to Girsanov's theorem. Define the function $\psi_\varepsilon(t)$ as
\begin{equation}
\psi_\varepsilon(t)=\prod_{i=1}^{N/2}\prod_{j=N/2+1}^N\psi\left(\frac{X^{b^N,i,N}_t-X^{b^N,j,N}_t}\varepsilon\right)
\end{equation}
for $\varepsilon>0$ arbitrary, and $\psi(x)$ a function that is zero for $|x|\le 1/2$ and $1$ for $|x|\ge 1$. Then $\psi_\varepsilon$ is adapted, almost surely continuous and zero whenever a red and blue particle are within $\varepsilon/2$ of each other, and $1$ when no such particles are within $\varepsilon$ of each other. Furthermore, it is a simple computation to show that 
\begin{equation*}
\lim_{\varepsilon\to0}\mathbb{E}\int^T_01_{\psi_\varepsilon(t)\ne1}\,dt=T,
\end{equation*} 
no matter what $b^N\in \mathcal{C}^0$ is chosen, although (of course) this limit will not be uniform in $N$. Now let $\varepsilon>0$ be chosen so that the expectation of the above integral is at least $3T/4$. 

Let $\eta_\varepsilon(x)=\eta(x/\varepsilon)$ where $\eta(x)$ is a smooth bump function with $\eta(0)=1$ and $\eta(x)=0$ for $|x|\ge 1/2$. Now define $b^N$ as 
\begin{equation*}
b^N(x)=\psi_\varepsilon(t)\left(\sum_{i=1}^{N/2}\eta_\varepsilon(x-X^{b^N,i,N}_t)-\sum_{i=N/2+1}^{N}\eta_\varepsilon(x-X^{b^N,i,N}_t)\right),
\end{equation*}
then $b^N$ is adapted, uniformly bounded by $1$, smooth in $x$ and continuous in $t$ almost surely. Moreover, when $\psi_\varepsilon(t)=1$, $b^N$ is equal to $1$ on every red particle and $-1$ on every blue particle. Therefore, every red particle is pushed by $b^N$ at least the distance $\int^T_01_{\psi_\varepsilon=1}-1_{\psi_\varepsilon\ne1}\,dt$ to the right and similarly every blue particle to the left. By choice of $\varepsilon$ the expectation of this is at least $T/2$. Hence, the expectation \eqref{eq:prob:task} is bounded away from zero by a fixed constant independent of $N$, completing the proof.
\appendix
\section{Metric entropy}
\label{sec:appendix}
In this section we summarise the properties and estimates of metric entropy that are used in the rest of the manuscript. The results henceforth are either well known or simple corollaries of well known results. The reader is encouraged to consult \cite{Edmunds-Triebel,Function-spaces-III} for an exposition of metric entropy in the context of functional analysis and \cite{Van-der-Vaart-Wellner} for a more statistical viewpoint (cf. \cite{Nickl}).
\begin{lemma}[Metric entropy of product spaces]\label{lem:metric-entropy-of-product-spaces}
Let $(X,d_X)$, $(Y,d_Y)$ be totally bounded metric spaces. Define the product metric $d_{X\times Y}$ on $X\times Y$, by
\begin{equation*}
d_{X\times Y}((x,y),(x',y'))=\max(d_X(x,x'),d_Y(y,y')).
\end{equation*}
Then it holds that
\begin{equation*}
H(\varepsilon,X\times Y,d_{X\times Y})\le H(\varepsilon,X,d_X)+H(\varepsilon,Y,d_Y).
\end{equation*}
In particular, if $H(\varepsilon,X,d_X)\le C\varepsilon^{-k_X}$ and $H(\varepsilon,Y,d_Y)\le C\varepsilon^{-k_Y}$ then $H(\varepsilon,X\times Y,d)\le C\varepsilon^{-\max(k_X,k_Y)}$ for any metric $d$ equivalent to $d_{X\times Y}$.
\end{lemma}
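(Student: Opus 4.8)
The plan is to build an $\varepsilon$-net of the product space directly from $\varepsilon$-nets of the two factors. First I would fix $\varepsilon>0$ and, using total boundedness of $X$ and $Y$, choose a finite $\varepsilon$-net $(x_i)_{i=1}^{m}\subseteq X$ and a finite $\varepsilon$-net $(y_j)_{j=1}^{n}\subseteq Y$. Since $H(\varepsilon,\cdot,\cdot)$ is an infimum of $\log(\text{cardinality})$ over such nets, and the attainable cardinalities form a nonempty subset of $\mathbb{N}$ (hence the infimum is attained), I may take these nets so that $\log m=H(\varepsilon,X,d_X)$ and $\log n=H(\varepsilon,Y,d_Y)$; alternatively one works with nets approximating the infima up to an arbitrary $\delta>0$ and lets $\delta\to0$ at the end.

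The key observation is that $\{(x_i,y_j):1\le i\le m,\ 1\le j\le n\}$ is an $\varepsilon$-net of $X\times Y$ for the product metric: given $(x,y)\in X\times Y$, pick $i$ with $d_X(x,x_i)\le\varepsilon$ and $j$ with $d_Y(y,y_j)\le\varepsilon$, so that $d_{X\times Y}((x,y),(x_i,y_j))=\max(d_X(x,x_i),d_Y(y,y_j))\le\varepsilon$. This net has cardinality $mn$, whence
\begin{equation*}
H(\varepsilon,X\times Y,d_{X\times Y})\le\log(mn)=\log m+\log n=H(\varepsilon,X,d_X)+H(\varepsilon,Y,d_Y),
\end{equation*}
which is the first assertion.

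For the ``in particular'' statement, if $H(\varepsilon,X,d_X)\le C\varepsilon^{-k_X}$ and $H(\varepsilon,Y,d_Y)\le C\varepsilon^{-k_Y}$ then for $\varepsilon\in(0,1]$ the first part gives $H(\varepsilon,X\times Y,d_{X\times Y})\le C\varepsilon^{-k_X}+C\varepsilon^{-k_Y}\le 2C\varepsilon^{-\max(k_X,k_Y)}$. Finally, if $d$ is a metric equivalent to $d_{X\times Y}$, say $d\le c\,d_{X\times Y}$ for some constant $c\ge1$, then any $(\varepsilon/c)$-net for $d_{X\times Y}$ is an $\varepsilon$-net for $d$, so
\begin{equation*}
H(\varepsilon,X\times Y,d)\le H(\varepsilon/c,X\times Y,d_{X\times Y})\le 2C\,(c/\varepsilon)^{\max(k_X,k_Y)}=C'\varepsilon^{-\max(k_X,k_Y)},
\end{equation*}
with $C'=2C\,c^{\max(k_X,k_Y)}$, as claimed.

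There is no genuine obstacle here: the argument is a one-line covering construction. The only point that warrants a word of care is the passage from the infimum in the definition of metric entropy to concrete nets realising (or nearly realising) it, which is immediate because the attainable cardinalities form a discrete set bounded below by $1$.
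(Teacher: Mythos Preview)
Your proof is correct and follows exactly the same approach as the paper: form the product of $\varepsilon$-nets of the factors to obtain an $\varepsilon$-net of the product, then take logarithms. The paper's own proof is a one-liner that omits the details you supply (attainment of the infimum, the explicit bound for the ``in particular'' claim, and the equivalent-metric reduction), but the argument is identical.
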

\begin{proof}
Let $x_1,\dotsc x_n$ be an $\varepsilon$-net of $(X,d_X)$ and $y_1,\dotsc, y_m$ be an $\varepsilon$-net of $(Y,d_Y)$. Then $\{(x_i,y_j):1\le i\le n, 1\le j\le m\}$ is an $\varepsilon$-net of $(X\times Y,d_{X\times Y})$. The claims follow.
\end{proof}
\begin{lemma}[Metric entropy of finite dimensional spaces]
	Let $K$ be a compact set in $\mathbb{R}^d$, and $|\cdot|$ be the Euclidean norm
	\begin{equation*}
	H(\varepsilon,K,|\cdot|)\le C\log(1/\varepsilon) .
	\end{equation*}
\end{lemma}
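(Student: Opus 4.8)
The plan is the standard grid-covering argument. Since $K$ is compact it is bounded, so there is $R<\infty$ with $K\subseteq [-R,R]^d$. For a given $\varepsilon>0$ set $\delta=\varepsilon/\sqrt d$ and partition the cube $[-R,R]^d$ into translates of $[0,\delta)^d$; the number of such sub-cubes needed is at most $\lceil 2R/\delta\rceil^d\le (CR\sqrt d/\varepsilon)^d$ for an absolute constant $C$ (one may assume $\varepsilon\le 2R$, as otherwise $K$ is covered by a single point and $H=0$). Each sub-cube has Euclidean diameter $\sqrt d\,\delta=\varepsilon$, so the collection of sub-cube centres forms an $\varepsilon$-net of $[-R,R]^d$, hence of $K$.

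Taking $m=(CR\sqrt d/\varepsilon)^d$ centres gives
\begin{equation*}
H(\varepsilon,K,|\cdot|)\le \log m\le d\log\!\big(CR\sqrt d\big)+d\log(1/\varepsilon).
\end{equation*}
For $\varepsilon$ bounded away from zero (say $\varepsilon\ge 1$, where the left side is in any case bounded by the fixed quantity $H(1,K,|\cdot|)$) the claimed inequality holds after enlarging the constant; for $\varepsilon\in(0,1]$ one has $\log(1/\varepsilon)\ge 0$ and the additive constant $d\log(CR\sqrt d)$ can be absorbed into the multiplicative constant, e.g. by writing $d\log(CR\sqrt d)+d\log(1/\varepsilon)\le C'\log(1/\varepsilon)+C'$ and noting that in the regime of interest ($\varepsilon$ small) $\log(1/\varepsilon)\to\infty$. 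Thus $H(\varepsilon,K,|\cdot|)\le C\log(1/\varepsilon)$ with $C$ depending only on $d$ and $R=R(K)$.

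There is no real obstacle here; the only mild bookkeeping point is the behaviour of the bound for $\varepsilon$ comparable to $\diam(K)$, where $\log(1/\varepsilon)$ may be small or negative, which is handled by restricting attention to $\varepsilon\le 1$ (or $\varepsilon\le\diam K$) and folding the leftover bounded contribution into the constant, exactly as is implicit in the way the lemma is used elsewhere in the paper.
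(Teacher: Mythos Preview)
Your proof is correct and follows essentially the same approach as the paper: cover a cube containing $K$ by an evenly spaced grid and count. The only cosmetic difference is that the paper first invokes the product-space lemma (\cref{lem:metric-entropy-of-product-spaces}) to reduce to $K=[0,1]$ in dimension one and then writes down the net $\{k\varepsilon:k\le 1/\varepsilon\}$, whereas you carry out the $d$-dimensional grid count directly; neither gains anything over the other.
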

\begin{proof}
	It suffices to consider $K=[0,1]^d$ and by \cref{lem:metric-entropy-of-product-spaces} we need only consider $K=[0,1]$. Then an explicit $\varepsilon$-net is given by $\{k\varepsilon:k\in\mathbb{N},k\le 1/\varepsilon\}$.
\end{proof}
\begin{lemma}[Change of metric]\label{lem:change-of-metric}
	Let $\mathcal{C}$ be a totally bounded subset of a metric space $(X,d)$, then it holds that
	\begin{equation}
	H(\varepsilon,\mathcal{C},d^\alpha)\le CH(\epsilon^{1/\alpha},\mathcal{C},d)
	\end{equation}
	where $d^\alpha(x,x')=|d(x,x')|^\alpha$ for $\alpha\in(0,1]$. In particular, if $H(\varepsilon,\mathcal{C},d^\alpha)\le C\varepsilon^{-k}$ then $H(\varepsilon,\mathcal{C},d^\alpha)\le C\varepsilon^{-k/\alpha}$.
\end{lemma}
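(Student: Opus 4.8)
The plan is to obtain the inequality directly from the definition of metric entropy by observing that an $\varepsilon^{1/\alpha}$-net in the metric $d$ is automatically an $\varepsilon$-net in the metric $d^\alpha$. Let $\varepsilon > 0$ and suppose $(x_i)_{i=1}^m$ is an $\varepsilon^{1/\alpha}$-net of $\mathcal{C}$ with respect to $d$, where $m$ achieves (or nearly achieves) the infimum defining $H(\varepsilon^{1/\alpha},\mathcal{C},d)$, so that $\log m = H(\varepsilon^{1/\alpha},\mathcal{C},d)$ (or within a constant of it, which only affects the constant $C$). Then for any $x \in \mathcal{C}$ there is an $i$ with $d(x,x_i) \le \varepsilon^{1/\alpha}$, hence
\begin{equation*}
d^\alpha(x,x_i) = |d(x,x_i)|^\alpha \le (\varepsilon^{1/\alpha})^\alpha = \varepsilon,
\end{equation*}
using that $t \mapsto t^\alpha$ is monotone increasing on $[0,\infty)$ for $\alpha \in (0,1]$. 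Thus the same points $(x_i)_{i=1}^m$ form an $\varepsilon$-net of $\mathcal{C}$ with respect to $d^\alpha$, and therefore
\begin{equation*}
H(\varepsilon,\mathcal{C},d^\alpha) \le \log m = H(\varepsilon^{1/\alpha},\mathcal{C},d),
\end{equation*}
which gives the claimed bound (one may take $C = 1$).

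For the final sentence, one simply substitutes the polynomial entropy bound: if $H(\delta,\mathcal{C},d) \le C\delta^{-k}$ for all $\delta$, then applying the inequality just proved with $\delta = \varepsilon^{1/\alpha}$ yields
\begin{equation*}
H(\varepsilon,\mathcal{C},d^\alpha) \le C(\varepsilon^{1/\alpha})^{-k} = C\varepsilon^{-k/\alpha}.
\end{equation*}
(I note that the excerpt's statement of this consequence appears to contain a typo, writing $H(\varepsilon,\mathcal{C},d^\alpha)$ on both sides where the hypothesis should be phrased in terms of $d$; the intended statement is the implication $H(\cdot,\mathcal{C},d)\le C\varepsilon^{-k} \implies H(\cdot,\mathcal{C},d^\alpha)\le C\varepsilon^{-k/\alpha}$, which is what the argument above delivers.)

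There is essentially no obstacle here: the only point requiring the hypothesis $\alpha \in (0,1]$ is the monotonicity of $t \mapsto t^\alpha$, and in fact that step works for any $\alpha > 0$ — the restriction $\alpha \le 1$ is needed elsewhere (so that $d^\alpha$ is again a genuine metric, via the subadditivity $(a+b)^\alpha \le a^\alpha + b^\alpha$), but the entropy estimate itself does not use it. The whole proof is three lines; the care needed is only in keeping track of which metric and which radius appear on each side.
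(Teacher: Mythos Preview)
Your proof is correct and follows essentially the same approach as the paper: both argue that an $\varepsilon^{1/\alpha}$-net in $d$ is an $\varepsilon$-net in $d^\alpha$ (the paper phrases it with the substitution $\varepsilon \mapsto \varepsilon^\alpha$, but the content is identical). You also correctly flag the typo in the ``In particular'' clause.
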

\begin{proof}
	Let $(x_n)_{n=1}^m$ be a $\varepsilon$-net with respect to $d$ of $\mathcal{C}$, then $(x_n)_{n=1}^m$ is also an $\varepsilon^\alpha$ net of $\mathcal{C}$ with respect to $d^\alpha$. The particular claim follows easily.
\end{proof}
The main estimates of metric entropy required for the rest of the manuscript are given in the proposition below.
\begin{proposition}[Metric entropy of smooth functions]\label{lem:metric-entropy-of-Lip1}
	Let $p>1$, then the following hold:
\begin{enumerate}
	\item \textbf{Lipschitz functions:} Let $p\ne2$, then the Lipschitz functions on $\mathbb{R}^d$ obey:
	\begin{equation*}
	H(\varepsilon,\mathrm{Lip1},\norm{\cdot}_{\mathrm{L}^{-p,\infty}(\mathbb{R}^d)})\le C\varepsilon^{-\min(d,d/(p-1))}.
	\end{equation*}
	\item \textbf{H\"older functions:} For $\alpha\in (0,1]$, the H\"older functions $\mathcal{C}^\alpha=\{f\in \mathrm{C}^{0,\alpha}(\mathbb{R}^d):\norm{f}_{\mathrm{C}^{0,\alpha}(\mathbb{R}^d)}\le C\},$
	obey the bound:
	\begin{equation*}
	H(\varepsilon,\mathcal{C}^{\alpha},\norm{\cdot}_{\mathrm{L}^{-p,\infty}(\mathbb{R}^d)})\le C\varepsilon^{-d/\alpha}.
	\end{equation*}
	\item \textbf{Parabolic H\"older functions:} For $\alpha\in (0,1]$, the parabolic H\"older functions $\mathcal{C}^\alpha_{para}=\{f\in \mathrm{C}^{0,\alpha}_{para}([0,T]\times\mathbb{R}^d):\norm{f}_{\mathrm{C}^{0,\alpha}_{para}([0,T]\times\mathbb{R}^d)}\le C\},$ obey the bound:
		\begin{equation*}
		H(\varepsilon,\mathcal{C}^{\alpha}_{para},\norm{\cdot}_{\mathrm{L}^\infty([0,T];\mathrm{L}^{-p,\infty}(\mathbb{R}^d))})\le C\varepsilon^{-(d+2)/\alpha}.
		\end{equation*}
	\item \textbf{Parabolic $\mathrm{L}^q$ H\"older functions:} Let $k\in\{0,1,\dotsc\}$, $\alpha\in(0,1]$ and $q\in[1,\infty]$ with $k+\alpha>(d+2)/q$. The set of parabolic $\mathrm{L}^q$ H\"older functions $\mathcal{C}^{k,\alpha}_{q,para}=\{f\in \Lambda^{k,\alpha}_{para}(\mathrm{L}^q([0,T]\times\mathbb{R}^d)):\norm{f}_{\Lambda^{k,\alpha}_{para}(\mathrm{L}^q([0,T]\times\mathbb{R}^d))}\le C\}$ obeys the bound:
	\begin{equation*}
	H(\varepsilon,\mathcal{C}^{k,\alpha}_{q,para},\norm{\cdot}_{\mathrm{L}^\infty([0,T];\mathrm{L}^{-p,\infty}(\mathbb{R}^d))})\le C\varepsilon^{-(d+2)/(k+\alpha)}.
	\end{equation*}
	for any $p>d/q$.
	\item \textbf{$\mathrm{L}^q$ H\"older functions:} Let $k\in\{0,1,\dotsc\}$, $\alpha\in(0,1]$ and $q\in[1,\infty]$ with $k+\alpha>(d+1)/q$. The set of $\mathrm{L}^q$ H\"older functions $\mathcal{C}^{k,\alpha}_{q}=\{f\in \Lambda^{k,\alpha}(\mathrm{L}^q([0,T]\times\mathbb{R}^d)):\norm{f}_{\Lambda^{k,\alpha}(\mathrm{L}^q([0,T]\times\mathbb{R}^d))}\le C\}$ obeys the bound:
		\begin{equation*}
		H(\varepsilon,\mathcal{C}^{k,\alpha}_{q},\norm{\cdot}_{\mathrm{L}^\infty([0,T];\mathrm{L}^{-p,\infty}(\mathbb{R}^d))})\le C\varepsilon^{-(d+1)/(k+\alpha)},
		\end{equation*}
		for any $p>(k+\alpha)-(d+1)/q$.
\end{enumerate}
In all cases the estimates for vector valued (i.e. in $\mathbb{R}^n$) functions are the same up to change in constants due to \cref{lem:metric-entropy-of-product-spaces}.
\end{proposition}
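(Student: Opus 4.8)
The plan is to observe that each of the five classes is a bounded subset of a (weighted) Besov space on $\mathbb{R}^d$ or on $[0,T]\times\mathbb{R}^d$, and that the target norm $\norm{\cdot}_{\mathrm{L}^{-p,\infty}}$ (and its parabolic variants) is itself a weighted Besov norm of smoothness zero; the estimates then follow from the known asymptotics of entropy numbers of embeddings between weighted Besov spaces over $\mathbb{R}^n$ (see \cite{Edmunds-Triebel}, and \cite{Function-spaces-III,Nickl,Van-der-Vaart-Wellner}), with \cref{lem:metric-entropy-of-product-spaces} used to append the time variable and \cref{lem:change-of-metric} to pass to powers of the metrics. The single quantitative input I would quote is: for a compact embedding $\mathrm{id}\colon B^{s_1}_{q,\infty}(\mathbb{R}^n,\langle\cdot\rangle^{\beta_1})\hookrightarrow B^{s_2}_{\infty,\infty}(\mathbb{R}^n,\langle\cdot\rangle^{\beta_2})$ the $k$-th entropy number behaves like $k^{-\theta/n}$ with $\theta=\min(s_1-s_2,\,\beta_2-\beta_1)$ (up to a logarithmic factor precisely when $s_1-s_2=\beta_2-\beta_1$), so that $H(\varepsilon,\cdot)\lesssim\varepsilon^{-n/\theta}$. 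The classical local ingredient behind this — entropy of balls of $\mathrm{C}^{0,\alpha}$ (Kolmogorov--Tikhomirov) and of $B^{s}_{q,\infty}$ (Birman--Solomyak) on a fixed cube in $\mathrm{L}^\infty$ — is what the hands-on alternative below uses directly.

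Applying this to each item: for (1), every $h\in\mathrm{Lip1}$ satisfies $|h(x)|\le|x|\lesssim\langle x\rangle$ and $[h]_{\mathrm{Lip}}\le1$, so $\mathrm{Lip1}$ sits in a ball of $\mathrm{C}^{0,1}$ with growth weight $\langle\cdot\rangle^{-1}$, while the target $\mathrm{L}^{-p,\infty}$ carries weight $\langle\cdot\rangle^{-p}$; the smoothness gap is $1$ and the weight gap is $p-1$, which yields the exponent $d/\min(1,p-1)$ and explains why the critical weight is $p=2$ (hence the $p\ne2$ hypothesis in the theorems). For (2)--(3) the class $\mathcal{C}^\alpha$ is already $\mathrm{L}^\infty$-bounded, so only its smoothness $\alpha$ competes and the exponent is $d/\alpha$, becoming $(d+2)/\alpha$ once the time coordinate is adjoined via \cref{lem:metric-entropy-of-product-spaces} (the parabolic metric counts time with weight $2$, so the effective dimension is $d+2$). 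For (4)--(5) one uses that $\Lambda^{k,\alpha}(\mathrm{L}^q)=B^{k+\alpha}_{q,\infty}$ (as recalled in \cref{subsec:preliminaries}), which embeds into $\mathrm{C}^0$ exactly under $k+\alpha>(d+2)/q$ (resp.\ $(d+1)/q$); the hypotheses $p>d/q$ in (4) and $p>(k+\alpha)-(d+1)/q$ in (5) are what make the weight gap exceed the smoothness gap, so $k+\alpha$ is the bottleneck and the exponent is $(d+2)/(k+\alpha)$ (parabolic, time weight $2$), resp.\ $(d+1)/(k+\alpha)$ (non-parabolic, time weight $1$). The vector-valued versions follow from \cref{lem:metric-entropy-of-product-spaces} with a change of constants.

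I expect the only real work to be matching the function-space hypotheses of \cref{lem:metric-entropy-of-Lip1} to the precise admissibility conditions (on smoothnesses, integrabilities and weight exponents) under which the cited entropy-number asymptotics hold, and tracking which hypothesis encodes compactness, which encodes smoothness-dominance, and which sits on the logarithmic borderline. As a self-contained alternative I would also sketch the elementary proof: decompose $\mathbb{R}^d=\bigcup_{j\ge0}A_j$ into the unit ball and dyadic annuli $A_j=\{2^j\le|x|<2^{j+1}\}$, note that on $A_j$ the weight is $\asymp 2^{-jp}$ so a weighted $\varepsilon$-net there is an unweighted $2^{jp}\varepsilon$-net, rescale $A_j$ to unit size and invoke the classical local entropy bound for the relevant H\"older or Besov ball, then sum the resulting geometric series in $j$; this directly displays the competition between the local-smoothness cost of the innermost annulus and the tail cutoff radius (e.g.\ $\varepsilon^{-1/(p-1)}$ in case (1)) beyond which the zero function already lies within tolerance. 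Glueing everything with \cref{lem:metric-entropy-of-product-spaces,lem:change-of-metric} and the logarithmic entropy of compact subsets of Euclidean space completes the argument.
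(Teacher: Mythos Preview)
Your proposal is correct and follows essentially the same route as the paper: for (1) you use $h(0)=0\Rightarrow|h(x)|\le\langle x\rangle$ to reduce $\mathrm{Lip1}$ to a bounded set in a weighted $\mathrm{C}^1$-type space and then invoke known entropy-number asymptotics for weighted embeddings (the paper cites \cite{Edmunds-Triebel,Nickl} at exactly this point), for (2) and (5) you cite the same Besov/Nickl estimates the paper does, and for (3)--(4) your ``self-contained alternative'' via dyadic annuli is precisely the content of the paper's auxiliary \cref{lem:metric-entropy-of-weighted-anisotropic-spaces}. The only cosmetic difference is that you package everything through a single quoted asymptotic for weighted Besov entropy numbers, whereas the paper is more piecemeal, citing specific results for (1), (2), (5) and proving the anisotropic case (3)--(4) by hand.
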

\begin{proof}
	We prove each in turn.
	\begin{enumerate}
		\item Let $h\in \mathrm{Lip1}$ be arbitrary, then we have $h(0)=0$ and therefore $|h(x)|\le |x|$. As a result we have $x\mapsto h(x)\<x\in \mathrm{C}^1_b(\mathbb{R}^d)$ with a bound on the $\mathrm{C}^1_b$ norm independent of $h\in \mathrm{Lip1}$. Let $B$ be the unit ball in $\mathrm{C}^1_b$. We deduce that
		\begin{equation*}
		H(\varepsilon,\mathrm{Lip1},\norm{\cdot}_{\mathrm{L}^{-p,\infty}(\mathbb{R}^d)})\le H(\varepsilon,B,\norm{\cdot}_{\mathrm{L}^{-(p-1),\infty}(\mathbb{R}^d)}).
		\end{equation*}
		The bound then follows from the estimates on entropy numbers in \cite{Edmunds-Triebel} (cf. \cite{Nickl} for an exposition in terms of metric entropy).
		\item This result follows directly from \cite[Corollary 3.1]{Nickl}.
		\item This follows from \cref{lem:metric-entropy-of-weighted-anisotropic-spaces} below and the identification \eqref{eq:identification-of-parabolic-spaces1},\eqref{eq:identification-of-parabolic-spaces1} with $d_1=1$, $d_2=d$, $p=\infty$, and the parabolic anisotropy $\mathfrak{p}$ defined below.
		\item This follows from \cref{lem:metric-entropy-of-weighted-anisotropic-spaces} in the same way as (3).
		\item This follows from \cite[Theorem 1.1]{Nickl}.
		\qedhere
	\end{enumerate}
\end{proof}

We now provide a simple estimate on the metric entropy of weighted spaces of anisotropic regularity, which was we needed for \cref{lem:metric-entropy-of-Lip1}(3)-(4). We make no claim of optimality or originality in this result, which the author has included because of the inability to find a reference.
\begin{definition}[Anisotropy]
An \emph{anisotropy} of $\mathbb{R}^d$ is tuple $\mathfrak{a}=(\mathfrak{a}_1,\dotsc,\mathfrak{a}_d)\in\mathbb{R}^d$ such that
\begin{equation*}
\mathfrak{a}_i>0 \text{ for each }i=1,\dotsc,d,\text{ and }\sum_{i=1}^d\mathfrak{a}_i=d.
\end{equation*}
\end{definition}
An anisotropy $\mathfrak{a}$ corresponds to the anisotropic distance $|\cdot|_{\mathfrak{a}}$ on $\mathbb{R}^d$ given by
\begin{equation*}
|x|_{\mathfrak{a}}=|(x_1,\dotsc,x_d)|_{\mathfrak{a}}=\sum_{i=1}^d|x_i|^{\mathfrak{a}_i}.
\end{equation*}
Note that for $\mathfrak{a}=(1,\dotsc,1)$ the distance $|\cdot|_{\mathfrak{a}}$ is equivalent to the usual Euclidean distance on $\mathbb{R}^d$.

We record in particular that 
\begin{equation*}
\mathfrak{p}:=\frac{d+1}{d+(1/2)}(1/2,\underbrace{1,1,\dotsc,1}_{d\text{ times}})\in\mathbb{R}^{1+d}
\end{equation*}
is the \emph{parabolic} anisotropy. Here the prefactor is to ensure that the sum of the indices is $d+1$.

Given an anisotropy $\mathfrak{a}$ and a subset $U\subseteq \mathbb{R}^d$ it is possible to define the Besov space of anisotropic regularity $B^{s,\mathfrak{a}}_{p,q}(U)$ for $p,q\in(0,\infty]$ and $s\in\mathbb{R}$.  As we do not require the full (somewhat lengthy) definition of these spaces we do not provide them. Instead we refer the reader to  \cite[\S5]{Function-spaces-III} for their full definition and for more details about anisotropies and spaces of anisotropic regularity.

For our purposes it is sufficient to note that
\begin{equation}\label{eq:identification-of-parabolic-spaces1}
\Lambda^{k,\alpha}_{para}(\mathrm{L}^q([0,T]\times\mathbb{R}^d))=B^{\mathfrak{p},s}_{q,q}([0,T]\times\mathbb{R}^d)
\end{equation}
for any $q\in[1,\infty]$, non-negative integer $k$, $\alpha\in(0,1]$ and
\begin{equation}\label{eq:identification-of-parabolic-spaces2}
s=\frac{d+1}{d+2}(k+\alpha).
\end{equation}

\begin{lemma}\label{lem:metric-entropy-of-weighted-anisotropic-spaces}
Fix $d_1,d_2\in\mathbb{N}$, $s>0,\in[1,\infty],r>0$ and an anisotropy $\mathfrak{a}$ such that $s>d/q$ and $r>d_2/q$ where $d=d_1+d_2$. Define the set $\mathcal{C}$ as those functions $h\in B_{q,q,loc}^{\mathfrak{a},s}([0,1]^{d_1}\times\mathbb{R}^{d_2})$ satisfying the estimate
\begin{equation*}
\sup_{Q}\norm{h}_{B^{s,\mathfrak{a}}_{q,q}([0,1]^{d_1}\times Q)}\le C'
\end{equation*}
for a fixed constant $C'$, where the supremum is over unit cubes $Q\subseteq\mathbb{R}^{d_2}$. Then,
\begin{equation*}
H(\varepsilon,\mathcal{C},\norm{\cdot}_{\mathrm{L}^{-r,\infty}([0,1]^{d_1}\times\mathbb{R}^{d_2})})\le C\varepsilon^{-d/s}.
\end{equation*}
\end{lemma}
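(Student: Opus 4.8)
The plan is to prove \cref{lem:metric-entropy-of-weighted-anisotropic-spaces} by a \emph{localise-and-sum-over-dyadic-annuli} argument, of the same flavour as the proof of \eqref{eq:Lrinfty-estimate-on-first-order-flow}. Two facts are needed at the local (unit-box) level. First, since $s>d/q$ with $d=\sum_i\mathfrak{a}_i$ the effective dimension of the anisotropy (normalised so $\sum_i\mathfrak{a}_i=d$), the anisotropic Sobolev embedding $B^{s,\mathfrak{a}}_{q,q}\hookrightarrow \mathrm{L}^\infty$ holds on a unit box with a constant independent of translations; hence every $h\in\mathcal{C}$ obeys $\norm{h}_{\mathrm{L}^\infty([0,1]^{d_1}\times Q)}\le C''$ uniformly over unit cubes $Q\subseteq\mathbb{R}^{d_2}$. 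Second, the entropy numbers of the compact embedding $B^{s,\mathfrak{a}}_{q,q}(P)\hookrightarrow \mathrm{L}^\infty(P)$ on a unit box $P=[0,1]^{d_1}\times Q$ satisfy $e_n\asymp n^{-s/d}$, i.e.
\begin{equation*}
H\bigl(\delta,\{h:\norm{h}_{B^{s,\mathfrak{a}}_{q,q}(P)}\le C'\},\norm{\cdot}_{\mathrm{L}^\infty(P)}\bigr)\le C\delta^{-d/s},
\end{equation*}
uniformly over translates $Q$; this is the anisotropic analogue of the classical estimates in \cite{Edmunds-Triebel,Function-spaces-III}, and if one prefers it can be deduced from the isotropic case by the standard decomposition into anisotropic dyadic boxes.

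Given these, I would build an $\varepsilon$-net of $\mathcal{C}$ in $\mathrm{L}^{-r,\infty}([0,1]^{d_1}\times\mathbb{R}^{d_2})$ as follows. Decompose $\mathbb{R}^{d_2}$ into $A_0=\{|y|\le 1\}$ and the dyadic annuli $A_n=\{2^{n-1}\le|y|\le 2^n\}$ for $n\ge1$; each $A_n$ meets at most $C2^{nd_2}$ unit cubes, and $\<{(x,y)}\asymp 2^n$ on $[0,1]^{d_1}\times A_n$. By the uniform $\mathrm{L}^\infty$ bound, for $n>N_0:=\lceil\tfrac1r\log_2(2C''/\varepsilon)\rceil\lesssim\log(1/\varepsilon)$ we have $\<{(x,y)}^{-r}|h(x,y)|\le C''2^{-nr}<\varepsilon$ on $[0,1]^{d_1}\times A_n$, so no approximation is needed there. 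For $n\le N_0$, cover the restriction of $\mathcal{C}$ to $[0,1]^{d_1}\times A_n$ in $\mathrm{L}^\infty$ at scale $\varepsilon 2^{nr}$, working cube by cube and taking the product of the per-cube nets. Concatenating these choices over $0\le n\le N_0$ and declaring the resulting function to vanish on the remaining annuli produces, for each $h\in\mathcal{C}$, a function $g$ with $\norm{h-g}_{\mathrm{L}^{-r,\infty}}\le\varepsilon$ (on $A_n$, $n\le N_0$, the error is $\le 2^{-nr}\cdot\varepsilon 2^{nr}=\varepsilon$; on the far annuli it is $\le C''2^{-N_0r}<\varepsilon$). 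Replacing each such $g$ by a point of $\mathcal{C}$ within $\varepsilon$ of it and relabelling $\varepsilon\mapsto\varepsilon/2$ keeps the net inside $\mathcal{C}$, as the definition of metric entropy requires, at the cost of harmless constants.

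By the local entropy estimate and subadditivity of metric entropy under covers (together with \cref{lem:metric-entropy-of-product-spaces} for the product structure $[0,1]^{d_1}\times\mathbb{R}^{d_2}$), this net has size controlled by
\begin{equation*}
H(\varepsilon,\mathcal{C},\norm{\cdot}_{\mathrm{L}^{-r,\infty}})\le C\sum_{n=0}^{N_0}2^{nd_2}\,(\varepsilon 2^{nr})^{-d/s}=C\,\varepsilon^{-d/s}\sum_{n=0}^{N_0}2^{n(d_2-rd/s)}.
\end{equation*}
The crux, and the step I expect to be the main obstacle, is to bound the geometric sum $\sum_{n=0}^{N_0}2^{n(d_2-rd/s)}$ by an absolute constant: this is precisely where the quantitative relations between $r$, $s$, $q$ and the dimensions $d_1,d_2$ enter. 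One needs the exponent $d_2-rd/s$ to be negative — with the borderline case $d_2=rd/s$ costing at most a factor $\log(1/\varepsilon)\lesssim\varepsilon^{-\eta}$ that can be absorbed into a marginally smaller power — so that $\sum_{n\ge0}2^{n(d_2-rd/s)}\le C$ and hence $H(\varepsilon,\mathcal{C},\norm{\cdot}_{\mathrm{L}^{-r,\infty}})\le C\varepsilon^{-d/s}$. Extracting this negativity from the standing assumptions $s>d/q$ and $r>d_2/q$, and checking that the local Sobolev and entropy estimates are genuinely uniform over translates of the unit box, are the points requiring care; the identification of the effective dimension of $\mathfrak{a}$ and the assembly of the per-annulus nets into a single net of $\mathcal{C}$ are routine bookkeeping.
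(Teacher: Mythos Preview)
Your approach is essentially the paper's: localise to unit boxes, apply the local entropy bound $H(\delta,\cdot,\mathrm{L}^\infty)\le C\delta^{-d/s}$ at a weight-adjusted scale, and sum. The paper indexes boxes directly by $n\in\mathbb{Z}^{d_2}$ with $|n|\le R$ (taking an $\varepsilon|n|^r$-net on $Q_n$) and arrives at $\log|E|\le C\varepsilon^{-d/s}\sum_{n}|n|^{-rd/s}$; you group the boxes into dyadic annuli and obtain the same sum in dyadic form $\sum_n 2^{n(d_2-rd/s)}$. The bookkeeping of gluing per-box nets into a net for $\mathcal{C}$ is done in the paper exactly as you describe.

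You are right that the crux is the convergence of this sum, i.e.\ the condition $rd/s>d_2$. The paper simply writes ``as $rd/s>d_2$ by assumption'', but your caution is justified: from $s>d/q$ one gets $d_2 s/d>d_2/q$, so the stated hypothesis $r>d_2/q$ alone does \emph{not} force $r>d_2 s/d$. In the paper's applications of this lemma (\cref{lem:metric-entropy-of-Lip1}(3)--(5)) the weight exponent is a free parameter that can be taken large, so one may simply impose $rd/s>d_2$ directly; but as a standalone statement you should record that stronger condition on $r$ rather than try to extract it from $r>d_2/q$, which will not succeed in general.
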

\begin{proof}[Proof of \cref{lem:metric-entropy-of-weighted-anisotropic-spaces}]
Fix $\varepsilon>0$ and let $R=R(\varepsilon)$ to be chosen be a constant. (All constants $C$ will be uniform in $\varepsilon$ and $R$). For $n\in\mathbb{Z}^{d_2}$, let $Q_n$ be the cube $[0,1]^{d_1}\times (n+[-1/2,1/2]^{d_2})$ and let $\mathcal{Q}=\{Q_n:|n|\le R\}$. Define the set $E$ by
\begin{equation*}
E=\prod_{Q_n\in \mathcal{Q}}E_n
\end{equation*}
where $E_n$ is a $\varepsilon |n|^r$-net of $\mathcal{C}\vert_{Q_n}$ (the restriction of functions in $\mathcal{C}$ to $Q_n$). Note that $H(\varepsilon|n|^r,\mathcal{C}\vert_{Q_n},\norm{\cdot}_{\mathrm{L}^\infty(Q_n)})\le C|n|^{-rd/s}\varepsilon^{-d/s}$ by \cite[theorem 5.30.]{Function-spaces-III}. Therefore, $E_n$ can be chosen so that $\log|E_n|\le C|n|^{-rd/s}\varepsilon^{-d/s}$ and hence
\begin{equation*}
\log|E|\le C\varepsilon^{-k}\sum_{n\in\mathbb{Z}^{d_2},|n|<R}|n|^{-rd/s}\le C\varepsilon^{-d/s}\sum_{n\in\mathbb{Z}^{d_2}}|n|^{-rd/s}\le C\varepsilon^{-d/s}
\end{equation*}
as $rd/s>d_2$ by assumption, so the sum is finite. 

Define the set of functions $F$, by collecting for each $(e_n)_{n\in\mathbb{Z}^{d_2},|n|\le R}\in E$ a function $h\in \mathcal{C}$ with the property that for each $n$, $\norm{h-e_n}_{\mathrm{L}^\infty(Q_n)}\le \varepsilon|n|^r$ if such a function exists. Then $\log|F|\le \log|E|\le C\varepsilon^{-d/s}$ and we claim that $F$ is a $C\varepsilon$-net of $\mathcal{C}$ in the $\mathrm{L}^{-r,\infty}([0,1]^{d_1}\times\mathbb{R}^{d_2})$. Indeed, suppose that $h\in\mathcal{C}$, then for each cube $Q_n$ ($|n|<R$) we have a function $e_n\in E_n$ with $\norm{h-e_n}_{\mathrm{L}^{\infty}(Q_n)}\le \varepsilon|n|^r $ by construction. Then by construction of $F$ we have $g\in F$ with $\norm{g-e_n}_{\mathrm{L}^{\infty}(Q_n)}\le \varepsilon |n|^r\varepsilon$, (such a function must exist as we could have chosen $h$ in the construction of $F$). Therefore,
\begin{equation*}
\norm{g-h}_{\mathrm{L}^{-r,\infty}(\bigcup_{Q_n\in\mathcal{Q}}Q_n)}\le \norm{g-e}_{\mathrm{L}^{-r,\infty}(\bigcup_{Q_n\in\mathcal{Q}}Q_n)}+\norm{g-e}_{\mathrm{L}^{-r,\infty}(\bigcup_{Q_n\in\mathcal{Q}}Q_n)}\le  C\varepsilon
\end{equation*}
where $e(x)=e_n(x)$ for $x\in Q_n$.  While on $\{x:|x|>R\}$ we have that $|h-g|\<x^{-r}\le 2CR^{-r}$ as functions in $\mathcal{C}$ are uniformly bounded by Sobolev embedding. By choosing $R$ sufficiently large we may ensure that this is less than $\varepsilon$.
\end{proof}

\bibliographystyle{plain}
\bibliography{paper}
\end{document}